\documentclass{amsart}
\usepackage{amssymb,amsmath,amsthm,amsfonts,mathtools}
\usepackage[utf8]{inputenc} 
\usepackage[T1]{fontenc}    
\usepackage{hyperref}       
\usepackage{url}            
\usepackage{booktabs}       
\usepackage{lipsum}         
\usepackage{graphicx}
\usepackage[eprint=false, url=false, isbn=false]{biblatex}
\addbibresource{references.bib}
\usepackage{doi}
\usepackage{tikz}
\usepackage{pgfplots}
\usepgfplotslibrary{patchplots}
\pgfplotsset{compat=1.18} 
\usepackage{caption}
\usepackage{subcaption}
\usepackage{subfiles}
\usepackage{xcolor}
\theoremstyle{plain}
\newtheorem{algo}{Algorithm}[] 
\newtheorem{thm}{Theorem}[] 
\newtheorem{lem}{Lemma}[section]
\newtheorem{prop}[lem]{Proposition}
\newtheorem{cor}[lem]{Corollary}
\theoremstyle{definition}
\newtheorem{defn}[lem]{Definition}

\theoremstyle{remark}
\newtheorem{rem}[lem]{Remark}

\theoremstyle{plain}
\usepackage{comment}
\newcommand{\fat}[1]{{\pmb{ #1}}}
\newcommand{\vv}{\tilde{\fat v}}

\newcommand*\diff{\mathop{}\!\mathrm{d}} 
\newcommand*\discreteDiff{\boldsymbol{d}}

\DeclareMathOperator{\normal}{\eta}
\newcommand{\weakto}{\rightharpoonup}
\newcommand{\weakstarto}{\overset{\ast}{\rightharpoonup}}
\newcommand{\abs}[1]{\left \lvert #1 \right \rvert}
\newcommand{\norm}[1]{\left\lVert#1\right\rVert}

\DeclareMathOperator{\tr}{tr}
\DeclareMathOperator{\BV}{BV}
\DeclareMathOperator{\identitymatrix}{I}

\DeclareMathOperator{\Cont}{\mathcal{C}}
\newcommand{\He}{H^1(\Omega; \mathbb{S}^2)}

\newcommand{\V}{H^1_{0,\sigma}(\Omega)}

\newcommand{\Ha}{{L}^2_{\sigma}(\Omega)}
\newcommand{\W}[2]{W^{#1,#2}\left(\Omega \right)}
\renewcommand{\L}[1]{L^{#1} \left(\Omega \right)}

\renewcommand{\H}[1]{H^{#1} \left(\Omega \right) }
\newcommand{\qspace}[1]{H^{1}_0 (\Omega)}
\newcommand{\velspace}{H^1_{0,\sigma}  (\Omega) \cap  H^2(\Omega)}
\newcommand{\sobnorm}[3]{\left\lVert#1\right\rVert_{\W{#2}{#3}}  }
\newcommand{\Hsobnorm}[2]{\left\lVert#1\right\rVert_{\H{#2}}  }

\newcommand{\lebnorm}[2]{\left\lVert#1\right\rVert_{\L{#2}}  }

\newcommand{\ltwonorm}[1]{\lebnorm{#1}{2}}
\newcommand{\hnorm}[1]{\left\lVert#1\right\rVert_{h}}

\newcommand{\mesh}{\mathcal{T}_h}
\newcommand{\facets}{\mathcal{F}_h}
\newcommand{\nodes}{\mathcal{N}_h}

\newcommand{\adlifting}[2]{{R}_h^{0,#2} (\jump{#1})}
\newcommand{\adliftingl}[3]{{R}_h^{#3,#2} (\jump{#1})}
\newcommand{\mixedfemspacecg}{V_h \times [\cgonezero]^{\dimension}}
\newcommand{\mixedfemspacedg}{V_h \times [\dgzero]^{\dimension}}

\newcommand{\projectcg}{\mathcal{P}^c_h}
\newcommand{\projectcgzero}{\mathcal{P}^c_{h,0}}
\newcommand{\projectdg}{\mathcal{P}^d_h}
\newcommand{\projectV}{\mathcal{Q}_h}
\newcommand{\interpol}{\mathcal{I}_{h} }
\newcommand{\jump}[1]{ \pmb{[} #1 \pmb{]}  }
\newcommand{\avg}[1]{ \pmb{\langle} #1 \pmb{\rangle}  }
\newcommand{\dg}{\mathrm{DG}_h}
\newcommand{\dgzero}{\mathrm{DG}_h^0}
\newcommand{\cg}{\mathrm{CG}_h}
\newcommand{\cgone}{\mathrm{CG}_h^1}
\newcommand{\cgonezero}{\mathrm{CG}_{h,0}^1}
\DeclareMathOperator{\dimension}{N}

\newcommand{\energy}{\mathcal{E}}
\newcommand{\energykin}{\energy_{\text{kin}}}

\newcommand{\energyeladG}{\energy_{\text{ela}}^{h}}
\newcommand{\energystabdG}{\energy_J^h}
 
\newcommand{\velh}{v_h} 
\newcommand{\dir}{{d}} 
\newcommand{\dirh}{{d}_h} 
\newcommand{\Deltah}{\Delta_h} 
\newcommand{\dbc}{d_h^{\Gamma}} 
\DeclareMathOperator{\discretedt}{\boldsymbol{d}_t}
\begin{filecontents*}{spiral-comparison.csv}
time,Lh,LhP,FP
0.1,0.966617302,1.077079835,0.755935028
0.2,0.298396675,0.427526774,0.142466463
0.3,0.052404331,0.166345624,0.012833593
0.4,0.043607598,0.062798941,0.029178259
0.5,0.074780827,0.022241748,0.0189073
0.6,0.086047247,0.006810201,0.010140678
0.7,0.089974935,0.002626023,0.006050707
0.8,0.091310365,0.003353761,0.00468118
0.9,0.091751493,0.003953945,0.004338805
1,0.091891857,0.004199825,0.004294524
1.1,0.091934214,0.004291909,0.004311725
1.2,0.091945963,0.004325159,0.00432888
1.3,0.091948734,0.004336836,0.00433758
1.4,0.091949137,0.004340816,0.004340833
1.5,0.091949043,0.004342123,0.004341737
\end{filecontents*}
\begin{filecontents*}{spiral-dg-comp.csv}
time,lpdga1,ldga1,ldga05,lpdga01,ldga01,lpdga005,lpdga001
0.08,1.46560561,1.151714556,1.167279576,1.333935615,1.176700453,1.389467919,1.394744396
0.15,0.738014346,0.491962207,0.5169959,0.703874542,0.536291701,0.758034797,0.768853901
0.23,0.356988486,0.182368258,0.216236307,0.345856949,0.461198057,0.394353003,0.413591219
0.3,0.207576438,0.137697492,0.165746479,0.204620828,0.645135388,0.243305419,0.271145082
0.38,0.14473581,0.159672319,0.178374493,0.145210497,0.774817139,0.16737603,0.202817048
0.45,0.130473336,0.173329222,0.188961885,0.131615245,0.828380871,0.140731409,0.17906022
0.53,0.127237742,0.180280598,0.194696033,0.128405441,0.85768817,0.128092739,0.166763886
0.6,0.126909825,0.182591454,0.196648059,0.127978998,0.869919675,0.123058785,0.160793892
0.68,0.126957523,0.183487785,0.1974101,0.127907403,0.87671555,0.120036688,0.156074994
0.75,0.1270133,0.18369135,0.197581961,0.127874776,0.879630947,0.11851188,0.15279577
0.83,0.12704729,0.183713052,0.197598147,0.127830875,0.881284974,0.117420998,0.149556799
0.9,0.127059932,0.18368446,0.197571997,0.127794071,0.881995601,0.116793269,0.147005132
0.98,0.127065321,0.183649837,0.197540965,0.12776015,0.882398579,0.116299161,0.144323511
1.05,0.127066732,0.183627957,0.197493477,0.127738446,0.882571516,0.11599279,0.142146823
1.13,0.127067012,0.183612384,0.195913886,0.127721331,0.882669465,0.115737804,0.139805239
1.2,0.127066902,0.183604596,0.195324322,0.127711406,0.882711449,0.11557274,0.137915638
1.28,0.127066715,0.183599745,0.195378449,0.127704049,0.882735205,0.115430933,0.135912031
1.35,0.127066582,0.18359754,0.195932514,0.127699958,0.88274538,0.115336877,0.13428085
1.43,0.12706648,0.183596262,0.197011698,0.127697007,0.882751134,0.115254602,0.1325456
1.5,0.127066425,0.183595715,0.19825222,0.127695397,0.882753597,0.115199259,0.131133732    
\end{filecontents*}
\hypersetup{
pdftitle={Projection Methods Ericksen-Leslie, Reiter},
pdfsubject={35A35, 35Q35, 65M60, 76A15},
pdfauthor={Maximilian E.V.~Reiter},
pdfkeywords={Ericksen--Leslie, nematic liquid crystal flow, unit-norm constraint, geometric constraint, projection, Finite Element method, energy-variational solutions},
}
\begin{document}
\title{Projection Methods in the Context of Nematic Crystal Flow}
\author{Maximilian E.V.~Reiter}
% ORCID of the author: https://orcid.org/0000-0001-9137-7978
\address{TU Berlin}
\email{m.reiter@tu-berlin.de}
\begin{abstract}
We present a continuous and a discontinuous linear Finite Element method based on a predictor-corrector scheme for the numerical approximation of the Ericksen--Leslie equations, a model for nematic liquid crystal flow including a non-convex unit-sphere constraint.
As predictor step we propose a linear semi-implicit Finite Element discretization which naturally offers a local orthogonality relation between the approximate director field and its time derivative.
Afterwards an explicit discrete projection onto the unit-sphere constraint is applied without increasing the modeled energy.
For the Finite Element approximation of the director field, we compare the usage of a discrete inner product, usually referred to as \textit{mass-lumping}, for a globally continuous, piecewise linear discretization to a piecewise constant, discontinuous Galerkin approach.
Discrete well-posedness results and energy laws are established. 
Conditional convergence of the approximate solutions to energy-variational solutions of the Ericksen-Leslie equations \cite{lasarzik-reiter23} is shown for a time-step restriction, see Theorems \ref{thm:main-result-subconvergence-cg} and \ref{thm:main-result-subconvergence-dg}.
Computational studies indicate the efficiency of the proposed linearization and the improved accuracy by including a projection step in the algorithm.
\end{abstract}
\keywords{Ericksen--Leslie \and nematic liquid crystal flow \and unit-norm constraint \and geometric constraint \and projection \and Finite Element method \and energy-variational solutions}
\maketitle
\section{Introduction}
Liquid crystals combine the properties of liquids and solid crystals. 
In their so called nematic phase, the material behaves like a liquid
and the molecules absent positional order.
However, due to the the rod-like shape of the molecules, they still experience orientational order exhibiting self-alignment along one direction. This allows anisotropic dynamics.
The Ericksen--Leslie model is used to mathematically describe the behavior of instationary nematic liquid crystal flow.
\\
By $\fat v: \overline{\Omega} \times [0,T] \to \mathbb{R}^3$ we denote the fluid's velocity, its pressure by $\fat p: \overline{\Omega} \times [0,T] \to \mathbb{R}$ and its director field, which has to be understood as the rod-like molecules' local average, by $\fat d: \overline{\Omega} \times [0,T] \to \mathbb{R}^3$.
We consider the Ericksen--Leslie model governed by the equations (cf. \cite{zhang-wang13})
\begin{subequations}\label{system}
\begin{align}
\begin{split}\label{system_a}
    \partial_t \fat v - \mu \Delta \fat v  + (\fat v \cdot \nabla) \fat v  + \nabla \fat p +\nabla \cdot\left ( (\nabla \fat d )^T\nabla \fat d  \right )    &= 0 \, ,
\end{split}
\\
\begin{split}\label{system_b}    
    \nabla \cdot \fat v &= 0 \, ,
\end{split}
\\
\begin{split}
    \label{system_c}
    \partial_t \fat d 
    +
    (\fat v \cdot \nabla) \fat d 
    + 
    \fat d \times (\fat d \times \Delta \fat d )
     &=0 \, ,
\end{split}
\\
\begin{split}\label{system_d}
\vert \fat d \vert & = 1  \, ,
\end{split}
\end{align}
\end{subequations}
where
the Oseen--Frank energy is simplified to the Dirichlet energy.
We assume that $\mu > 0$ in order to ensure the dissipative character of our model.
Further, we employ the initial and constant-in-time boundary conditions
\begin{align*}
    \fat v(0) &= \fat v_0, \quad 
    & \fat  d(0) = \fat d_0 \text{ with } 
    & \abs{\fat d_0} = 1 \text{ in } \Omega, \\
    \fat v &= 0, \quad 
    & \fat d =\fat  d_{0}  
    \text{ with } 
    & 
    \abs{\fat d_{0}  } = 1 
    \text{ on } \partial \Omega .
\end{align*}
Since the unit-sphere constraint is non-convex and challenging, it is often approximated by a Ginzburg--Landau penalization term added to the energy \cite{becker-feng-prohl08,lin-liu00,lin-liu95} such that equations \eqref{system_c} and \eqref{system_d} would be replaced by
\begin{equation}
\label{system_c-gl}
\partial_t \fat d + (\fat v \cdot \nabla) \fat d  -    \Delta \fat d
     + \frac{1}{\varepsilon} (\abs{ \fat d}^2 -1 ) \fat d
     =0 \, ,
\end{equation}
for some $\varepsilon>0$.
For this penalized model (\ref{system_a}, \ref{system_b}, \ref{system_c-gl}), existence of global weak solutions is well established \cite{lin-liu95}. 
Recently, it was shown that in two spatial dimensions the weak solutions to the penalized system (\ref{system_a}, \ref{system_b}, \ref{system_c-gl}) converge to a weak solution of the original model \eqref{system} as $\varepsilon \to 0 $, see \cite{kortum20}.
\\ 
For our system of equations \eqref{system}, the existence of global weak solutions has only been shown in two spatial dimensions and existence of global weak solutions remains an open question in three dimensions. However, there are results on local well-posedness \cite{zhang-wang13} and existence of global weak solutions under further assumptions, such as an upper hemisphere condition for the initial value of the director field \cite{lin_3d}.
Further, the existence of energy-variational solutions --- a superset of weak solutions --- was proven recently \cite{lasarzik-reiter23}. This is the framework we also follow in this work.
\\
The penalization approach \eqref{system_c-gl} also leads to some disadvantages from a numerical perspective. On the one hand, a further parameter $\varepsilon$ is introduced that has to be chosen jointly with the time-step size and the mesh-width. Balancing these simultaneously can be challenging, see \cite{becker-feng-prohl08, prohl01}.
On the other hand, the penalization term only gives a global bound on the fulfillment of the unit-sphere constraint and therewith allows for local deviations which in turn can lead to a higher error magnitude, see \cite{badia-et-al11}.
\\
Conserving the unit-sphere constraint implicitly --- using a (mass-lumped) midpoint discretization \cite{bartels-prohl06,lasarzik-reiter23} or using a saddle point approach \cite{badia-et-al11} --- leads to non-linear equation systems which increases the computational demand of the formulation. 
In the context of the Ericksen--Leslie equations, solving such a non-linear system iteratively using e.g. a fixed point solver can further lead to high restrictions on the time-step size \cite{reiter-ecmi}.
\\
This is what motivates the projection method as a predictor-corrector method. 
Abstaining from an implicit unit-norm conservation allows to linearize the system, increasing the computational efficiency.
The conformity with the unit-sphere constraint can be guaranteed afterwards by applying an explicit discrete projection as a correction step.
\\
To the author's knowledge the first use of a projection method for the numerical treatment of a unit-sphere constraint can be found in \cite{alouges97} as a strategy to find energy minimizers for stable configurations of liquid crystals.
In \cite{wang00} the first examination of such a projection method in the instationary case of the Landau-Lifshitz-Gilbert equations has been conducted in a solely semi-discrete setting keeping the spatial variables continuous.
In \cite{prohl01} the projection schemes \cite{alouges97, wang00} have been investigated after a reformulation to a global penalization approach.
That is: For a sequence of discrete solutions $(\fat d^{j})_j$ for $1\leq j \leq n$ and time step size $k>0$, the equations \eqref{system_c}, \eqref{system_d} are replaced by the semi-discretization in time
\begin{equation}\label{system-c-prohl}
\partial_t \fat d^{n+1} 
+
( \fat v^{n+1} \cdot \nabla) \fat d^{n+1}
+
\fat d^{n+1} \times [ \fat d^{n+1} \times (  \Delta \fat d^{n+1})]
+\frac{1}{k} (1- \frac{1}{\abs{\fat d^n}^{2-\gamma}})\fat d^n
=0 \, ,
\end{equation}
for some $\gamma \in \{0,1\}$. 
\\
The first transfer of a projection method to a \textit{Finite Element} setting has been developed in parallel in two publications.
In \cite{alouges06}, this transfer succeeds in the context of the Landau--Lifshitz equations using a stability criterion of the type $\frac{\Delta t}{(\Delta x)^p}$ without specifying the exact order $p$.
During the same time frame, \cite{bartels05} introduced the assumption of a weakly acute mesh as a sufficient condition for energy-decreasing projections in the context of harmonic maps.
Since then, the projection method has been widely used, e.g. \cite{barret-bartels-feng-prohl07,bartels09,bartels-feng-prohl08}. 
\\
Most of the mentioned publications describing convergent projection methods consider some orthogonality condition a part of their algorithm, see also \cite{bartels2015}.
This stems from a necessary condition for geometrically constrained partial differential equations.
Let a geometric constraint be implicitly defined by the zeros of a differentiable function $ g : \mathbb{R}^{\dimension} \to \mathbb{R}$ and let $\fat d :\mathbb{R}^{\dimension} \times [0,T] \to \mathbb{R}^{\dimension} $ be a smooth solution to some partial differential equation of the form $\fat d_t + F( \fat d) =0$, that also fulfills the constraint $g$ everywhere in time and space. The chain rule leads to the equality
\begin{equation*}
0 = \partial_t g( \fat d) = g'(\fat d) \cdot \fat d_t .
\end{equation*}
For the unit-sphere constraint, $g(.) = \abs{.}^2 -1$, this boils down to the identity
\begin{equation}\label{eq:orthogonality-of-geometrically-constrained-pdes}
0 = \fat d \cdot \fat d_t.
\end{equation}
In the context of the Ericksen--Leslie equations there are only two instances of a projection method known to the author.
The already mentioned \cite[Section 6]{prohl01} reformulates projection schemes to a penalization approach as in Equation \eqref{system-c-prohl}.
Secondly, in \cite[Section 5.3]{badia-et-al11} the projection method has been applied as a formal linearization of a non-linear saddle point constraint, however without a mathematical analysis. 
A discontinuous Galerkin approach on the other hand, has only been applied to the Ericksen--Leslie equations in \cite{ericksen-leslie-dg}. There it is used to achieve an increased local accuracy for the approximation of weak solutions with higher regularity.
\\
In \cite[p. 2]{bartels2015} it is well argued that "the projection step in predictor-corrector approaches can often be omitted".
In this work we will give some evidence that the projection step can nevertheless be advantageous in the context of nematic crystal flow.
The reason for this mainly stems from the fact that the projection yields $L^{\infty}(\Omega)$-bounds for the discrete director field. This regularity helps us to achieve the necessary \textit{a priori} estimates and the convergence of subsequences to energy-variational solutions, see Section \ref{sec:convergence-to-envar-sols-cg}.
\\
The remainder of the paper is structured as followed. In Section \ref{sec:preliminaries}, all necessary preliminaries will be introduced, especially the definition of energy-variational solutions and sufficient mesh conditions for energy-decreasing projections.
In Section \ref{sec:discrete-system-cg}, we introduce a Finite Element scheme and its properties based on a globally continuous, piecewise linear approximation of the director field.
In the following Section \ref{sec:convergence-to-envar-sols-cg}, convergence to energy-variational solutions in the sense of Definition \ref{def:envar} is proven. 
The following two Sections \ref{sec:discrete-system-dg} and \ref{sec:convergence-to-envar-sols-dg} repeat this, but this time for a Finite Element scheme based on an elementwise constant approximation of the director field.
Computational studies of both approaches are presented in the last section, Section \ref{sec:computational-studies}.
\section{Preliminaries and Notation}
\label{sec:preliminaries}
Throughout this work, we assume $\Omega \subset \mathbb{R}^{\dimension}$ for $\dimension=2,3$ to be a convex, polyhedral and bounded domain.
We use standard notation for Lebesgue, Sobolev and Bochner spaces.
By $\tr$ we denote the trace operator on Sobolev spaces.
The spaces $L^p_{\sigma} (\Omega)$ for $1 \leq p \leq \infty$ and $H^{1}_{0,\sigma} (\Omega)$ are defined as closures of the space of smooth, solenoidal and compactly supported functions $\Cont^{\infty}_{c,\sigma}(\Omega; \mathbb{R}^{\dimension})$ with respect to the norms of $L^p (\Omega)$ and $H^{1} (\Omega)$.
By $\Cont_w ([0,T], \mathbb{B})$ we define the space of weakly continuous functions for some Banach space $\mathbb{B}$.
The spaces $\BV ([0,T])$ and $\BV (\mathbb{R}^{\dimension})$ are the spaces of bounded variation in time and space, respectively.
The Euclidean unit sphere in $\dimension$ dimensions is referred to as $\mathbb{S}^{\dimension -1}$. 
By $\identitymatrix$ we denote the identity matrix in dimension $\dimension$.
We define the standard cross product in three spatial dimensions implicitly via a matrix multiplication, i.e.~$v \times w = [v]_{\times} w = [w]_{\times}^T v$ for vectors $v,w \in\mathbb{R}^3$. Coherently, we can define the cross product between a vector $v\in\mathbb{R}^3$ and matrix $A\in\mathbb{R}^{3\times 3}$ by $ v\times A = [v]_{\times} A$.
For a unit vector $d \in  \mathbb{R}^3$, we further note the identity
$$
-d\times (d\times v) = [d]_{\times}^T [d]_{\times} v
=
(\identitymatrix - d\otimes d )v
,
$$
for $v\in\mathbb{R}^3$, where $\otimes$ is the outer product. This identity allows us to interpret the double cross product also in two spatial dimensions in the following work. We will also make use of the triple product rule
\begin{equation}
    a \cdot (b \times c) = b \cdot (c \times a) = c \cdot (a \times b)
    \label{eq:triple-prodcut}
\end{equation}
for vectors $a,b,c\in\mathbb{R}^3$.
For asymptotic behaviour we use standard big \textit{O} notation.
\subsection{Solution Concept}
For the rest of this work, the total energy will be defined as
\begin{equation}
    \label{eq:definition-energy}
    \mathcal{E} (\fat v, \fat d) 
    \coloneqq
    \mathcal{E}_{kin} (\fat v) 
    +
    \mathcal{E}_{ela} (\fat d) 
    =
    \frac{1}{2} \lebnorm{\fat v}{2}^2
    +
    \frac{1}{2} \lebnorm{ \nabla \fat d}{2}^2
    .
\end{equation}
In the penalized case, for Equations (\ref{system_a}, \ref{system_b}, \ref{system_c-gl}), a basic energy law allows to bound the Laplacian of the director field $\Delta \fat d \in L^2(0,T;\L{2})$, see \cite{lin-liu95}. Under sufficient smoothness of the domain's boundary, standard elliptic estimates then yield the regularity of the director field to be $\fat d \in L^2(0,T;\H{2})$, such that compact embeddings deliver the strong convergence of subsequences needed to take the limit of the Ericksen stress tensor $\nabla \fat d ^T \nabla \fat d$. In our case, for Equations \eqref{system}, one only obtains the bound $\fat d \times \Delta \fat d \in L^2(0,T;\L{2})$, see also Equation \eqref{eq:discrete-energy-inequality-projection}.
To make up for the lack of compactness, we introduce an auxiliary variable $E$ that is an upper bound for our total energy $\mathcal{E}(\fat v, \fat d) $. The difference of these two, $E - \mathcal{E}(\fat v, \fat d) $, can be interpreted as a form of defect, measuring the difference of the strong and weak limits of our approximate solutions.
The scaled difference is then added to a variational inequality to convexify the Ericksen stress tensor.
In order to do so, we also have to reformulate the Ericksen stress tensor by
\begin{align*}
\int_{\Omega} (\nabla \fat d^T \nabla \fat d ): \nabla \vv \diff  x 
=
    -  \int_{\Omega} ( \vv \cdot \nabla ) \fat d\cdot  [\fat d ]_{\times} ^T  [\fat d ]_{\times} \Delta \fat d \diff  x
\end{align*} 
for $\fat d \in H^2(\Omega; \mathbb{S} ^2)$ and $\vv \in \V$, see \cite[Remark 1]{lasarzik-reiter23}.
Note that these regularity assumptions would be fulfilled in the case of global weak solutions to the penalized System (\ref{system_a}, \ref{system_b}, \ref{system_c-gl}).
\begin{defn}[Energy-variational solutions]\label{def:envar}
We call ${(\fat v, \fat d , E )}$ an energy-va\-riational solution, if 
\begin{align}
\begin{split}
&\fat v \in \Cont_w([0,T];\Ha) \cap L^2(0,T;\V)\,, \quad E \in \BV([0,T]) \,,\\
&\fat d \in  \Cont_w([0,T];\He) \cap H^1(0,T;L^{3/2}(\Omega;\mathbb{R}^3) ) 
\text{ such that }
\\&
\fat d \times \Delta \fat d \in L^2(0,T;L^2(\Omega;\mathbb{R}^3))  \, ,
\end{split}
\label{reg}
\end{align}
and $E$ is non-increasing as well as an upper bound for the energy, i.e.~$E \geq \mathcal{E} (\fat v,\fat d ) $ on $[0,T]$ with $E (0) = \mathcal{E} (\fat v(0),\fat d (0))  $. Further, the unit-sphere constraint has to be fulfilled, $ \vert \fat d  \vert =1$ a.e.~in $\Omega \times (0,T)$. The term $ \fat d \times \Delta \fat d$ has to be understood as the weak divergence of $\fat d \times \nabla \fat d $.
The triple ${(\fat v, \fat d , E )}$ fulfills the energy-variational inequality 
\begin{multline}
\left
[ 
     E -  \int_{\Omega} \fat v \cdot\vv  
    \diff x 
 \right 
 ] 
 \Big \vert_{s-}^t 
 \\
 + \int_s^t   \int_{\Omega} \fat v \cdot \partial_t \vv 
  -  ( \fat v \cdot \nabla) \fat v \cdot  \vv   
  + ( \fat d \times ( \vv \cdot \nabla ) \fat d) \cdot ( \fat d \times  \Delta \fat d)  \diff x \diff \tau 
\\
+\int_s^t \int_{\Omega} \mu 
\left (  \nabla \fat v 
  \right )
  :
  (\nabla \fat v - \nabla \vv) 
  \diff x \diff \tau 
+ \vert \fat d \times \Delta \fat d \vert^2 
     \diff x \diff \tau 
    \\
     + \int_s^t 
 \mathcal{K}( \vv ) 
 \left [
    \mathcal{E} (\fat v,\fat d ) - E 
 \right ]  
 \diff \tau  \leq 0 
 . 
 \label{envarform}
\end{multline}
 for all $s$, $t\in (0,T)$ and for all $\vv \in \Cont^1([0,T]; (\V)^*) \cap L^2(0,T;\V)$ such that $ \mathcal{K}(\vv) \in L^1(0,T)$. The initial values, $\fat v(0) = \fat v_0 $, $ \fat d(0) = \fat d_0$, are fulfilled in a weak sense and $\fat d$ fulfills the constant-in-time inhomogeneous Dirichlet boundary conditions in the sense of the trace $ \tr(\fat d(t)) = \tr (\fat d_0)$ for a.e.~$t\in(0,T)$. 
Additionally, it holds 
\begin{align}
\partial_t \fat d + ( \fat v \cdot \nabla ) \fat d + (\identitymatrix - \fat d \otimes \fat d)  \Delta \fat d  = 0 \label{weak:d}
\end{align}
a.e. in $\Omega \times (0,T)$. 
In this work, we  choose $ \mathcal{K}(\vv) = \frac{1}{2}\Vert \vv \Vert_{L^{\infty}(\Omega)}^2$.
\end{defn}
\begin{rem}[Strong continuity of initial values, cf. \cite{eiter-lasarzik24,lasarzik21,lasarzik-reiter23}]
Due to the weak lower semi-continuity of the energy $\mathcal{E}$ and the monotony of $E$ we observe that
\begin{align*}
\mathcal{E} (\fat v (0), \fat d (0))
\leq 
\liminf_{t \searrow 0} \mathcal{E} (\fat v (t) , \fat d (t))
\leq
\liminf_{t \searrow 0} E (t)
\leq
E(0)
=
\mathcal{E} (\fat v (0), \fat d (0)) 
.
\end{align*}
All inequalities are therefore in fact equalities.
This implies norm convergence of the map $t \mapsto (\fat v (t) , \fat d (t))$ in $L^2(\Omega) \times H^1_0(\Omega)$. Together with the weak continuity \eqref{reg}, this yields strong convergence to the initial values, i.e.~$\lim_{t \searrow 0}(\fat v (t) , \fat d (t)) = (\fat v_0 , \fat d_0)$ in $L^2(\Omega) \times H^1_0(\Omega)$.
\end{rem}
\begin{rem}[Properties]
Energy-variational solutions possess a lot of desirable properties: Every weak solution is an energy-variational solution with $E(\tau) = \mathcal{E} (\fat v (\tau),\fat d (\tau)) $ for a.e.~$\tau \in (s,t)$. But also vice versa, $E(\tau) = \mathcal{E} (\fat v (\tau),\fat d (\tau))$ for a.e.~$\tau \in (s,t)$ implies that an energy-variational solution is indeed a weak solution, which can be confirmed by a rescaling argument (cf. \cite{lasarzik21,lasarzik-reiter23}).
Every energy-variational solution fulfills an Energy--Dissipation inequality, which follows from testing the variational Inequality \eqref{envarform} with $\vv =0$ (cf. \cite{eiter-lasarzik24}).
Further, every energy-variational solution is a dissipative solution implying that it fulfills the weak-strong uniqueness property (cf. \cite{agosti-etal23,eiter-etal23,eiter-lasarzik24,lasarzik-reiter23}). 
Lastly, energy-variational solutions fulfill the physically relevant semi-flow property: Every restriction of a solution and every concatenation of two solutions with non-increasing $E$ at the concatenation point are again energy-variational solutions (cf. \cite{agosti-etal23,eiter-lasarzik24}).
\end{rem}
\subsection{Discrete Preliminaries}
By $k$ we denote the time-step size and by $h$ the spatial discretization parameter.
The discrete time derivative of a sequence of approximate solutions $(u^j)_{j}$ for $0\leq j \leq N$ is denoted by $\discreteDiff_t u^j \coloneqq k^{-1} (u^j - u^{j-1})$ for $1\leq j \leq N$.
We will frequently make use of a discrete product rule for $(u^j)_{j}, (v^j)_{j}$, i.e.
\begin{equation}\label{eq:discrete-integration-by-parts-in-time}
\discreteDiff_t (u^j \cdot v^j)
= 
\discreteDiff_t (u^j) \cdot v^j
+
u^{j-1} \cdot \discreteDiff_t (v^j).
\end{equation}
By $C$ we denote generic constants independent of the discretization parameters $k$ and $h$. In particular, we use the symbol $\lesssim$ for inequalities up to a generic constant: $A\lesssim B \Leftrightarrow \exists C>0 \text{ s.t.~}A \leq C B$.
\subsection{Mesh}
We assume $\mesh$ to be a \textit{quasi-uniform} (in the sense of \cite[Def. 4.4.13]{scott-brenner08}) subdivision of the domain $\Omega$ into triangles for $N=2$ or tetrahedra for $N=3$ respectively.
All $T \in \mesh$ have at least one node that is not on the boundary of the domain $\partial \Omega$. The set of nodes of a subdivision $\mesh$ is denoted by $\nodes$ with the the inner nodes $\nodes^i$ and the nodes on the boundary $\nodes^D$. The nodes contained in the closure of some cell $T \in \mesh$ are referred to as $\nodes^T$.
The facets of the mesh are denoted by $\facets$, again with $\facets^i$ for interior facets and $\facets^D$ for the facets on the boundary.
For a cell $T \in \mesh$, we denote its facets by $\facets^T$.
The facet normal is denoted by $\normal_F$ where as the outer normal of a cell $T$ at facet $F$ is denoted by $\normal_{T,F}$. Accordingly, they only differ in their sign.
The set of cells sharing facet $F\in\facets$ is denoted by $\mathcal{M}(F)$.
The quasi-uniformity assumption allows us to estimate the asymptotic scaling of the cells and facets, i.e.
\begin{align*}
    C_{\dimension} (\rho h)^{\dimension} &\leq \abs{T} \leq C_{\dimension} h^{\dimension}
    \\
    C_{\dimension-1} (\rho h)^{\dimension-1} &\leq \abs{F} \leq C_{\dimension-1}  h^{\dimension-1}
\end{align*}
for all $T\in \mesh$ and $F\in \facets$, where $\rho>0$ is the according \textit{quasi-uniformity} constant and $C_N$ is some generic constant depending only on $\dimension = 2,3$.
\\
The barycenter of a facet $F\in \facets$ is denoted by $x_F$. By $x_T\in \stackrel{\circ}{T}$ we index the \textit{cell centers} of every cell $T\in\mesh$. The cell centers are not necessarily the barycenters of the cells. Instead, we later assume some properties regarding the relation of the facet barycenters $x_F$ and the cell centers $x_T$. 
\\
We introduce the notions of a \textit{non-obtuse} and \textit{weakly acute} mesh.
\begin{defn}[Non-Obtuse Mesh]
\label{def:non-obtuse}
A domain decomposition $\mesh$ into triangles ($\dimension =2$) or tetrahedra ($\dimension =3$) is called \textit{non-obtuse} if for all distinct facets $F,\hat{F}\in\facets^T$ of a cell $T\in\mesh$, it holds that
\begin{align}\label{eq:property-non-obtuse-mesh}
    \normal_{T,F}
    \cdot
    \normal_{T,\hat{F}}
    \leq 0
    .
\end{align}
\end{defn}
This means that a \textit{non-obtuse} mesh consists of triangles or tetrahedra such that the angle between edges in two spatial dimensions or the dihedral angle between two faces in three dimensions may not exceed $\frac{\pi}{2}$.
\begin{defn}[Weakly Acute Mesh, see \cite{bartels09}]
\label{def:weakly-acute}
Let $\mesh$ be a domain decomposition into triangles ($\dimension =2$) or tetrahedra ($ \dimension =3$).
Let $(\phi_z)_{z\in\nodes}$ be the basis of Lagrangian shape functions for the space of globally continuous, elementwise affine-linear functions on $\mesh$, such that
$$
\phi_z (\hat{z}) = 
\begin{cases}
    1 \quad \text{ if } z = \hat{z},
    \\
    0 \quad \text{ else }
\end{cases} 
\quad \forall z,\hat{z} \in \nodes .
$$
Then, we call the mesh $\mesh$ \textit{weakly acute}, if
\begin{equation*}
    \omega_{z,\hat{z}} \coloneqq \int_{\Omega} \nabla \phi_z \cdot \nabla \phi_{\hat{z}} \diff x
    \leq 0
\end{equation*}
for all $z,\hat{z} \in \nodes$ with $z \neq \hat{z}$.
\end{defn}
Note that every \textit{non-obtuse} mesh is also \textit{weakly acute}, see \cite{bartels05}. The reverse, however, is not true.
\begin{rem}[On the restrictiveness of non-obtuse and weakly acute meshes]
\label{rem:weakly-acute-restrictive}
    In two spatial dimensions, a non-obtuse triangulation in the sense of Definition \ref{def:non-obtuse} exists \cite{baker-grosse-rafferty88} and can be attained using the Delaunay algorithm and refinement techniques, cf. \cite{shewchuk02}.
    In three spatial dimensions, this property is more restrictive since acute tetrahedra do in general not contain their circumspherecenters --- that can be used as a meshing criterion \cite{korotov-krizek01} --- and also tetrahedra containing their own circumspherecenters are in general not acute.
    Nevertheless, refinement techniques allow to obtain non-obtuse meshes of tetrahedra, cf. \cite{eppstein-sullivan04,shewchuk02,korotov-krizek01}.
\end{rem}
Now, returning to the further specification of the cell centers, the next definition follows in the spirit of \cite{eymard-etal00}.
\begin{defn}[Admissible Meshes]
    \label{def:admissible-mesh}
    We call a mesh $\mesh$ with cell centers $(x_T)_{T\in\mesh}$ \textit{admissible} if the facet barycenter is
    the average of the two neighbouring cell centers, i.e.~for $x_F \in \facets^i$
    \begin{equation}
    x_F 
    =
    \frac{x_T + x_{\tilde{T}}}{2}
    ,
    \label{eq:admissible-mesh}
    \end{equation}
    for $T , {\tilde{T}} \in \mathcal{M}(F)$.
\end{defn}
This notion of admissible meshes is crucial for the strong consistency of the reconstructed gradient, that is Lemma \ref{lem:appendix-strong-consistency-gradient}, which will be used for the convergence of discrete Laplacian and for the strong convergence of the initial condition.
\begin{rem}[On the restrictiveness of the admissibility condition]
\label{rem:admissible-mesh-restrictive}
The assumption, that Definition \ref{def:admissible-mesh} holds, is very restrictive. It can mostly be applied to structured triangulations, but cannot be expected to hold for unstructured domain decompositions.
\end{rem}
Finally, we collect a geometric property of our mesh.
\begin{prop}
Let $T$ be a polytope or polyhedron with facets $\facets^T$. Then, it holds that
$$
\sum_{F \in \facets^T} \abs{F} \normal_{T,F} =0 .
$$
\label{prop:appendix-divergence-theorem}
\end{prop}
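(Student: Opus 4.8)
The plan is to derive this purely geometric identity from the divergence theorem applied to constant vector fields. Since the statement asserts the vanishing of a vector in $\reals^{\dimension}$, it suffices to test it against an arbitrary fixed vector $\fat c \in \reals^{\dimension}$ and show that the resulting scalar vanishes; equivalently, one may verify the identity componentwise by letting $\fat c$ range over the standard basis.

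First I would fix $\fat c \in \reals^{\dimension}$ and consider the constant vector field $x \mapsto \fat c$ on $T$. Its divergence vanishes identically, so the divergence theorem on the bounded polytope $T$ yields
\[
0 = \int_T \nabla \cdot \fat c \diff x = \int_{\partial T} \fat c \cdot \normal_T \diff S,
\]
where $\normal_T$ denotes the outer unit normal along $\partial T$.

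Next I would decompose the boundary $\partial T = \bigcup_{F \in \facets^T} F$ into its flat facets. On each facet $F$ the outer normal is the constant vector $\normal_{T,F}$, so that
\[
\int_{\partial T} \fat c \cdot \normal_T \diff S = \sum_{F \in \facets^T} \int_F \fat c \cdot \normal_{T,F} \diff S = \sum_{F \in \facets^T} \abs{F} \, \fat c \cdot \normal_{T,F} = \fat c \cdot \sum_{F \in \facets^T} \abs{F} \normal_{T,F}.
\]
Combining the two displays gives $\fat c \cdot \sum_{F \in \facets^T} \abs{F} \normal_{T,F} = 0$. Since $\fat c \in \reals^{\dimension}$ was arbitrary, the vector $\sum_{F \in \facets^T} \abs{F} \normal_{T,F}$ is orthogonal to every vector in $\reals^{\dimension}$ and must therefore vanish, which is the claim.

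I expect no genuine obstacle here: the only points requiring a word of justification are the applicability of the divergence theorem to a polytope (equivalently, an elementary projection argument could replace it, observing that the signed projections of the facets onto any hyperplane cancel) and the passage from the scalar identity to the vector identity, which is immediate from the arbitrariness of $\fat c$.
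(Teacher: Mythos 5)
Your proposal is correct and follows essentially the same route as the paper: both apply the divergence theorem to an arbitrary constant vector field over $T$, use the constancy of the outer normal on each flat facet to reduce the boundary integral to the sum $\sum_{F \in \facets^T} \abs{F}\, \normal_{T,F}$, and conclude by the arbitrariness of the test vector. Your write-up merely spells out the facet decomposition and the scalar-to-vector step that the paper's one-line computation leaves implicit.
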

\begin{proof}
Take an arbitrary but fixed vector $v\in \mathbb{R}^{\dimension}$. Applying the divergence theorem over $T$ leads to
$$
0 = \int_T \nabla \cdot v \diff x 
= v \cdot \int_{\partial T} \normal \diff s
= v \cdot 
\left(
    \sum_{F \in \facets^T} \abs{F} \normal_{T,F}
    \right)
    .
$$
\end{proof}
\subsection{Finite Element Spaces}
By $\mathbb{P}_k(T)$ we denote the set of polynomials of degree $k$ or less on the domain $T$.
For the velocity and pressure we consider \textit{P2-P1 Taylor-Hood} spaces given by
\begin{align*}
    M_h &\coloneqq \{  q \in   C (\overline{\Omega}) :  \int_{\Omega} q \diff x =0, \quad q \lvert_K \in \mathbb{P}_1 (K) \quad\forall K \in  \mesh  \}
    ,
    \\
    X_h &\coloneqq \{  v \in C (\overline{\Omega}) : v= 0 \text{ on }\partial \Omega, \quad v \lvert_K \in \mathbb{P}_2 (K) \quad\forall K \in  \mesh 
    \}
    ,
    \\ 
    V_h &\coloneqq \{  v\in [X_h]^3 : \quad
    (\nabla \cdot v, q ) = 0 \text{ for all } q\in M_h 
    \} \, ,
\end{align*}
as they are well known to fulfill the \textit{inf-sup} condition under our assumptions. 
In Section \ref{sec:discrete-system-cg} we consider globally continuous, piecewise linear functions without and with homogeneous Dirichlet boundary conditions defined by
\begin{align*}
    \cgone &\coloneqq \{  v \in C (\overline{\Omega}) :  \quad v \lvert_K \in \mathbb{P}_1 (K) \quad\forall K \in  \mesh \}
    ,\\
    \cgonezero &\coloneqq \{  v \in C (\overline{\Omega}) :  \quad v \lvert_K \in \mathbb{P}_1 (K) \quad\forall K \in  \mesh , \quad v= 0 \text{ on }\partial \Omega \} .
\end{align*}
Note that any function $f\in\cgone$ can be represented in terms of its nodal values and shape functions,
$$
f = \sum_{z \in \nodes} f(z) \phi_z \eqcolon \sum_{z \in \nodes} f_z \phi_z 
.
$$
In Section \ref{sec:discrete-system-dg} on the other hand, we use discontinuous, piecewise constant functions as the approximation space. We denote that space by
\begin{align*}
    \dgzero &\coloneqq \{  v \in L^2( \Omega ) :  \quad v \lvert_K \in \mathbb{P}_0 (K) \quad\forall K \in  \mesh \}
    .
\end{align*}
The base of $\dgzero$ consists of indicator functions on the elements that we denote by $\chi_T$ for $T\in\mesh$.
We introduce notions for the discontinuities of the spaces $\dg^0$.
The jump $\jump{\cdot}_F$ and average $\avg{\cdot}_F$ over an interior facet $F\in \facets^i$ are defined in a standard fashion (see \cite{dipietro-ern12}). For boundary facets $F \in \facets^D$ the average and jump will be evaluated as $\jump{v_h}_F = v_h\vert_F$, $\avg{v_h}_F = \frac{1}{2} v_h\vert_F$ for $v_h\in L^2 (F)$. In most situations, we will waive the subscript of the facet.
We will make regular use of the standard identities
\begin{align}
\begin{split}
\label{eq:jump-identities}
    \jump{a\cdot b}_F
    &= \avg{a}_F\cdot\jump{b}_F+\jump{a}_F\cdot \avg{b}_F
    \, ,
    \\
    \avg{a\cdot b}_F
    &=\avg{a}_F\cdot \avg{b}_F + \frac{1}{4} \jump{a}_F\cdot \jump{b}_F
    \, ,
\end{split}
\end{align}
that hold for all $a,b \in \dg^0$ and $F\in\facets$.
For any $v_h \in \dg^0$ we define the jump semi-norm with exponent $1\leq p \leq \infty$ by
\begin{equation*}
    \abs{v_h}_{J,p}^p
    \coloneqq
    \sum_{F \in \facets} \frac{1}{h_F^{p-1}} \norm{\jump{v_h}}^p_{L^p (F)}
\end{equation*}
where $h_F$ is the local length scale given by $h_F = \mathrm{diam} (F)$. For $p=2$, we often drop the according subscript and simply write $\abs{v_h}_{J}$.
We also use restrictions of the semin-norm to the interior and the boundary facets, i.e.
\begin{align*}
    \abs{v_h}_{J,p,i}^p
    \coloneqq &
    \sum_{F \in \facets^i} \frac{1}{h_F^{p-1}} \norm{\jump{v_h}}^p_{L^p (F)}
    ,
    \\
    \abs{v_h}_{J,p,D}^p
    \coloneqq &
    \sum_{F \in \facets^D} \frac{1}{h_F^{p-1}} \norm{v_h}^p_{L^p (F)}.
\end{align*}
\subsection{Interpolation}
\subsubsection*[into CG1]{into $\cgone$}
The standard global nodal interpolation operator for the Finite Element space $[\cgone]^{\dimension}$ is denoted by $\interpol^1: \mathcal{C}(\bar{\Omega}) \to [\cgone]^{\dimension}$. We will make frequent use of standard interpolation error estimates \cite[Thm. 4.4.20]{scott-brenner08}. That is for $f\in \Cont (\bar{\Omega})\cap W^{2,p} (\Omega) $, $p\in (3/2,\infty]$ and $k=0,1,2$, there is a constant $C>0$ such that
\begin{align}\label{eq:interpolation-error-estimate}
\sobnorm{(\identitymatrix -\interpol^1) f}{k}{p}
\leq
C h^{2-k}
\lebnorm{\nabla^2 f }{p}.
\end{align}
\subsubsection*[into DG0]{into $\dgzero$}
Interpolation into $\dgzero$ is simply attained by interpolation at the center $x_T$ of a cell $T\in\mesh$ in the sense of Definition \ref{def:admissible-mesh}. Therefore, we define the local interpolation of some continuous function $\phi\in \Cont (\bar{\Omega})$ by
$$
\mathcal{I}^0_{T} \phi = \phi(x_T)
.
$$
The global interpolation into $[\dgzero]^{\dimension}$ is accordingly defined by $\interpol^0: \mathcal{C}(\bar{\Omega}) \to [\dgzero]^{\dimension}$ such that
$$
\interpol^0 \phi = \sum_{T\in\mesh} \mathcal{I}^0_{T} (\phi) \chi_T.
$$
We will also frequently apply the inverse estimate (e.g. \cite[Thm. 4.5.11]{scott-brenner08}): Let $v$ be some Finite Element function with $v\vert_T \in \mathbb{P}_k(T) \cap W^{l,p}(T) \cap W^{m,q}(T)$ for $k\in\mathbb{N}$ and all $T \in \mesh$. Then, there exists a generic constant $C>0$ independent of $h$ such that
\begin{equation*}
\left(
\sum_{T \in \mesh}
\norm{v}_{W^{l,p}(T)}^p
\right)^{1/p}
\leq
Ch^{l-m + \min (0,\dimension /p- \dimension /q)}
\left(
\sum_{T \in \mesh}
\norm{v}_{W^{m,q}(T)}^q
\right)^{1/p}
\end{equation*}
holds for $1\leq p,q \leq \infty$, $0 \leq m \leq l$.
\subsubsection*{Mass Lumping}
By $(.,.)_2$ we denote the standard $L^2(\Omega)$ inner product, where as $(.,.)_h$ refers to a discrete inner product, often called \textit{mass-lumped} inner product, defined by
\begin{align}\label{def:lumped_norm_product}
    (y_1,y_2)_h \coloneqq 
    \int_{\Omega}
    \interpol^1 (y_1 \cdot y_2) \diff x
    =\sum_{z \in \mathcal{N}_h} y_1(z) \cdot y_2(z) \int_{\Omega} \phi_z \diff x
    , \quad
    \norm{y_1}_h^2 \coloneqq (y_1,y_1)_h.
\end{align}
for $y_1,y_2 \in C(\bar{\Omega};\mathbb{R}^{\dimension})$.
The induced norm is thereby equivalent to the $L^2(\Omega)$-norm on the discrete space with constants independent of the spatial discretization parameter $h$, i.e.~there exists a constant $c_L>0$ independent of $h$ such that
\begin{align}\label{eq:norm-equivalence-mass-lumping}
    \ltwonorm{f_h} \leq \hnorm{f_h}
    \leq c_L \ltwonorm{f_h}
\end{align}
for all $f_h \in [\cgone]^{\dimension}$.
On quasi-uniform meshes, this even holds for every $1\leq p \leq \infty$, cf. \cite{metzger-diss,raviart73}, i.e.~there exists a constant $c_{L,p}>0$ independent of $h$ such that
\begin{align}\label{eq:norm-equivalence-mass-lumping-p}
    \lebnorm{f_h}{p}^p \leq \int_{\Omega} \interpol^1 (\abs{f_h}^p ) \diff x
    \leq c_{L,p} \lebnorm{f_h}{p}^p
\end{align}
holds for all $f_h \in [\cgone]^{\dimension}$.
We further collect a preliminary lemma to be able to do local estimates when using the discrete inner product, see Lemma \ref{lem:a-priori-estimates-projection}.
\begin{lem}\label{lem:global-local-norm-vs-Lp-norm}
Let $\mesh$ be quasi-uniform and $f \in [\cgone]^{\dimension}$. For $1\leq p < \infty$ there exists a constant $C>0$ independent of $h$ such that the following estimate holds,
\begin{align*}
\lebnorm{f}{p}^p
\leq
\sum_{T \in \mesh} \lebnorm{\sum_{z \in \nodes^T}  f(z) \phi_z }{p}^p
\leq 
C
\lebnorm{f}{p}^p
 \, ,
\end{align*}
where $(\phi_z)_{z\in\nodes}$ are the Lagrangian shape functions of $\cg^1$, i.e.~$\phi_z \in \cg^1$ with
$$
\phi_z (\hat{z}) = 
\begin{cases}
    1 \quad \text{ if } z = \hat{z},
    \\
    0 \quad \text{ else }
\end{cases} 
\quad \forall z,\hat{z} \in \nodes .
$$
\end{lem}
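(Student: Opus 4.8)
The plan is to reduce everything to a single local norm equivalence on one cell, together with the bounded overlap of a quasi-uniform mesh. Writing $g_T \coloneqq \sum_{z\in\nodes^T} f(z)\phi_z$ for the $T$-th summand, the first observation is that, since $\phi_z(\hat z)=\delta_{z\hat z}$, the function $g_T$ attains the nodal value $f(z)$ at each $z\in\nodes^T$ and vanishes at every other node; in particular, on the cell $T$ itself the only non-vanishing shape functions are exactly those indexed by $\nodes^T$, so $g_T\vert_T = f\vert_T$. Since the cells partition $\Omega$, this already delivers the lower bound,
\begin{equation*}
\lebnorm{f}{p}^p
=\sum_{T\in\mesh}\norm{f}_{L^p(T)}^p
=\sum_{T\in\mesh}\norm{g_T}_{L^p(T)}^p
\leq\sum_{T\in\mesh}\lebnorm{g_T}{p}^p,
\end{equation*}
where the last step only enlarges each domain of integration from $T$ to $\Omega$.

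For the upper bound I would first record the local norm equivalence: for every cell $K\in\mesh$ and every $v\in[\cgone]^{\dimension}$ there are constants $c,C>0$, independent of $h$, with
\begin{equation*}
c\,h^{\dimension}\sum_{z\in\nodes^K}\abs{v(z)}^p
\leq
\norm{v}_{L^p(K)}^p
\leq
C\,h^{\dimension}\sum_{z\in\nodes^K}\abs{v(z)}^p .
\end{equation*}
This is the equivalence of norms on the finite-dimensional space $[\mathbb{P}_1(K)]^{\dimension}$, transported to a reference cell and scaled back using quasi-uniformity, $\abs{K}\sim h^{\dimension}$. The crux of the argument --- and the only place where care with the $h$-powers is really needed --- is keeping this scaling explicit and uniform over all cells.

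The rest is bookkeeping of overlapping supports. Because $g_T$ is supported on the cells sharing a node with $T$ and attains the values $f(z)$, $z\in\nodes^T$, the upper estimate above gives, for each such cell $K$, $\norm{g_T}_{L^p(K)}^p\lesssim h^{\dimension}\sum_{z\in\nodes^K\cap\nodes^T}\abs{f(z)}^p$. Summing over $K$ and interchanging the order of summation, using that the number of cells meeting a fixed node is bounded uniformly by the shape regularity implied by quasi-uniformity, yields
\begin{equation*}
\lebnorm{g_T}{p}^p
\lesssim
h^{\dimension}\sum_{z\in\nodes^T}\abs{f(z)}^p .
\end{equation*}
Summing this over $T\in\mesh$ and again exploiting that each node lies in a bounded number of cells gives $\sum_{T\in\mesh}\lebnorm{g_T}{p}^p\lesssim h^{\dimension}\sum_{z\in\nodes}\abs{f(z)}^p$. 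Finally the lower local equivalence, summed over all cells, gives $h^{\dimension}\sum_{z\in\nodes}\abs{f(z)}^p\lesssim\sum_{K\in\mesh}\norm{f}_{L^p(K)}^p=\lebnorm{f}{p}^p$, and combining the last two displays closes the upper bound. I expect no genuine obstacle beyond the careful tracking of the factor $h^{\dimension}$ and of the uniform overlap constants, both of which are furnished by quasi-uniformity.
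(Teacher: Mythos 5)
Your proof is correct and follows essentially the same route as the paper: the lower bound by noting $g_T\vert_T = f\vert_T$ and enlarging the domain of integration, and the upper bound by combining a nodal-value $L^p$-norm equivalence with the uniformly bounded overlap of cells and nodes furnished by quasi-uniformity. The only difference is that where the paper cites the mass-lumping equivalence \eqref{eq:norm-equivalence-mass-lumping-p} from the literature, you rederive the same ingredient by hand, cellwise, via reference-element scaling with the factor $h^{\dimension}$ kept explicit --- a self-contained unpacking of the cited lemma rather than a genuinely different argument.
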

\begin{proof}
    The first inequality follows simply from a decomposition over all cells $T\in\mesh$ and enlarging the integral domain.
    For the second inequality, one combines Equation \eqref{eq:norm-equivalence-mass-lumping-p} with the fact that the amount of neighbouring cells is bounded by a constant $C_{\rho}$ independent of $h$ (see \cite[Prop. 11.6]{ern-guermond21}) since the subdivision is assumed to be \textit{quasi-uniform} with \textit{quasi-uniformity} constant $\rho>0$ (cf. \cite[Def. 4.4.13]{scott-brenner08}).
\end{proof}
\subsection{Projections and Discrete Operators}
We define the operators $\projectcg$, $\projectdg$ and $\projectV$ via the following linear equation systems,
\begin{align*}
    (\projectcg f, c)_2 &= (f,c)_2 \quad \forall c\in [\cgone]^{\dimension}
    \\
    (\projectdg f, c)_2 &= (f,c)_2 \quad \forall c\in [\dgzero]^{\dimension}
    \\
    (\projectV f, a)_2 &= (f,a)_2 \quad \forall a\in V_h
    .
\end{align*}
Accordingly, we denote the operator as $\projectcgzero$ when $\cgone$ is replaced by $\cgonezero$ in the above equations.
For the projections $\projectcg, \projectcgzero, \projectdg$, standard stability and error estimates are well-known. We will make use of
\begin{align*}
\lebnorm{\projectcg f}{p} +
\lebnorm{\projectcgzero f}{p}+
\lebnorm{\projectdg f}{p} \leq C \lebnorm{f}{p} \, ,
\end{align*}
for $f\in L^2 (\Omega) \cap L^p(\Omega)$ with $p\in [1,\infty]$.
The projection onto the Taylor-Hood space $V_h$ yields the following estimates \cite[Lemma 4.3]{rannacher-heywood82},
\begin{align}\label{eq:approximation-properties-convergence-projection-projectV}
\lebnorm{\projectV \vv - \vv}{2}
+ h 
\lebnorm{\nabla \projectV \vv - \nabla \vv}{2}
+h^2 
\lebnorm{\nabla^2 \projectV \vv }{2}
\leq 
C h^2 
\lebnorm{ \Delta \vv}{2}
\end{align}
for all $\vv \in \velspace$
since the solution to the continuous incompressible Stokes problem with Dirichlet boundary conditions is regular enough under our assumpions on the domain \cite[Corollary 1.2.2]{mitrea-wright12}.
\subsection{Discontinuous Galerkin Preliminaries}
\label{sec:dg-preliminaries}
In this subsection, we collect some standard results following \cite{dipietro-ern12} and \cite[Sec. 5]{eymardetal10}.
Therewith, we will make use of a mix of Finite Element and Finite Volume theory for our approach of using piecewise constant functions.
Proofs that need an adaption to our case will be outlined.
We start by defining a discrete gradient based on the lifting operator in the Finite Element theory.
For a given mesh $\mesh$ we define the local lifting operator on a facet $F \in \facets$ by
$$
r^0_F : L^2(F;\mathbb{R}^{\dimension}) \to [\dg^0]^{\dimension \times \dimension}
$$
such that for all $\phi \in L^2(F;\mathbb{R}^{\dimension})$
\begin{align}
\begin{split}
    \int_{\Omega} r^0_F(\phi) : w_h \diff x
=
\begin{cases}
- \int_F \phi  \cdot \avg{ w_h } \normal_F \diff s
&, \forall w_h \in [\dg^0]^{\dimension \times \dimension}
, \text{ if } F \in \facets^i,
\\
\int_F (g_h - \phi) \cdot w_h  \normal_F \diff s
&, \forall w_h \in [\dg^0]^{\dimension \times \dimension}
, \text{ if } F \in \facets^D \, ,
\end{cases}
\end{split}
\label{eq:def-discrete-lifting-local}
\end{align}
holds for a given discretization $g_h \in L^2 (\partial \Omega; \mathbb{R}^{\dimension})$ of our Dirichlet boundary condition.
The inclusion of the boundary condition $g_h$ is a slight deviation from the standard case, cf. \cite{dipietro-ern12}.
The existence of such a lifting operator follows from the Lax--Milgram theorem.
Accordingly, the global lifting operator for boundary jumps of a vector valued function $v \in [\dg^0]^{\dimension}$ is defined by
\begin{equation}
\adliftingl{v}{g_h}{0}  \coloneqq \sum_{F \in \facets} r^0_F (\jump{v}).
\label{eq:def-discrete-lifting-reconstructed-gradient}
\end{equation}
While lifting operators exist also for functions of higher order, in the case of a piecewise constant function space, evaluating $\int_{T}\adliftingl{v}{g_h}{0} :A \diff x$ for $T\in \mesh$ and an arbitrary constant matrix $A\in \mathbb{R}^{\dimension \times \dimension}$ allows to derive an explicit representation of $\adlifting{v}{g_h}$, here given by (cf. \cite[Eq. 4.43]{dipietro-ern12})
\begin{align}
    \nonumber
    \adlifting{v}{g_h}
    \vert_T
    & \coloneqq
    \sum_{F \in\facets^T} \frac{\abs{F}}{\abs{T}} (v_F -v_T)\otimes \normal_{T,F},
    \\
    \label{eq:def-v_F}
    \text{ with }
    v_F
    &=
    \begin{cases}
    \avg{v}_F, &\text{ if } F\in\facets^i
    ,
    \\
    g_h , &\text{ if } F\in\facets^D
    .
    \end{cases}
\end{align}
Due to the construction of our admissible mesh (see Definition \ref{def:admissible-mesh}), the above formula can also be considered a reconstruction of the gradient in the spirit of Finite Volume theory, cf. \cite{eymardetal10,eymard-etal00}.
We hereby intersect the Finite Element and Finite Volume theory by choosing a mesh and cell centers such that the average operator on a facet --- often used in the Finite Element theory --- and the barycentric interpolator for the facet barycenter --- often used in the Finite Volume theory --- coincide.
\\
For the following Lemmata, assume that $\dbc \in [\dgzero]^{\dimension}$ is an approximation of our initial condition $\fat{d}_0$ on the boundary fulfilling
$$
\norm{\dbc - \fat{d}_0 }_{L^{\infty} (\partial \Omega)}
\lesssim h \norm{\fat{d}_0 }_{C^1 (\partial \Omega)}.
$$
\begin{lem}[Strong consistency of discrete gradient, cf. {\cite[Lem. 4.4]{eymardetal10}}]
    \label{lem:appendix-strong-consistency-gradient}
    Let $\phi, \fat{d}_0 \in C^2(\bar{\Omega})$ with $\phi \vert_{\partial \Omega} = \fat{d}_0 \vert_{\partial \Omega}$. 
    Let $(\mesh)_h$ be a sequence of admissible meshes in the sense of Definition \ref{def:admissible-mesh}.
    Then, it holds
    \begin{align*}
        \ltwonorm{
            \adlifting{\interpol^0 \phi}{\dbc}
            -
            \nabla \phi
        }
        \to 0
    \end{align*}
    as $h \to 0$.
\end{lem}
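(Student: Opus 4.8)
The plan is to compare the reconstructed gradient cell by cell with the exact gradient and to exploit the admissibility of the mesh to gain second-order consistency at interior facets. First I would fix a cell $T\in\mesh$ and start from the explicit representation \eqref{eq:def-v_F}. Since the cell value $v_T = (\interpol^0\phi)\big\vert_T = \phi(x_T)$ is constant on $T$, it may be pulled out of the facet sum and eliminated via Proposition \ref{prop:appendix-divergence-theorem}, which gives $\sum_{F\in\facets^T}\abs{F}\normal_{T,F}=0$. One thus obtains
$$
\adlifting{\interpol^0 \phi}{\dbc}\big\vert_T
=
\frac{1}{\abs{T}} \sum_{F\in\facets^T} \abs{F}\, v_F \otimes \normal_{T,F}.
$$
On the other hand, componentwise integration by parts yields $\int_T \nabla\phi \diff x = \sum_{F\in\facets^T} \left(\int_F \phi \diff s\right)\otimes \normal_{T,F}$, so that the cellwise error against the mean gradient takes the form
$$
\adlifting{\interpol^0 \phi}{\dbc}\big\vert_T - \frac{1}{\abs{T}}\int_T \nabla\phi \diff x
=
\frac{1}{\abs{T}} \sum_{F\in\facets^T} \left( \abs{F}\, v_F - \int_F \phi \diff s \right)\otimes \normal_{T,F}.
$$

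The key step is to estimate the facet-local consistency defect $\abs{F}\, v_F - \int_F \phi \diff s$. Since $x_F$ is the barycenter of $F$, the barycentric quadrature rule gives $\int_F \phi \diff s = \abs{F}\phi(x_F) + O(\abs{F}h^2)$ with a constant depending on $\norm{\phi}_{C^2(\bar\Omega)}$, the linear term vanishing by definition of the barycenter. For an interior facet $F\in\facets^i$ shared by $T$ and $\tilde T$, the value $v_F = \tfrac12(\phi(x_T)+\phi(x_{\tilde T}))$ is a second-order accurate approximation of $\phi(x_F)$: expanding $\phi$ to second order around $x_F$ and invoking the admissibility relation \eqref{eq:admissible-mesh}, i.e.~$x_T + x_{\tilde T} - 2x_F = 0$, cancels the first-order term and leaves $v_F = \phi(x_F) + O(h^2)$. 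Hence the defect is $O(\abs{F}h^2)$ on interior facets. On a boundary facet $F\in\facets^D$ one only has $v_F = \dbc\big\vert_F$, and since $\phi = \fat{d}_0$ on $\partial\Omega$ together with $\norm{\dbc - \fat{d}_0}_{L^\infty(\partial\Omega)}\lesssim h$, this defect is merely $O(\abs{F}h)$.

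Collecting these bounds over the $O(1)$ facets of each cell and using quasi-uniformity ($\abs{F}\lesssim h^{\dimension-1}$, $\abs{T}\gtrsim (\rho h)^{\dimension}$) turns the prefactor $\abs{F}/\abs{T}$ into $O(h^{-1})$, so the cellwise error is $O(h)$ on interior cells and only $O(1)$ on cells touching $\partial\Omega$. To conclude I would pass to the $L^2$-norm via
$$
\ltwonorm{\adlifting{\interpol^0\phi}{\dbc} - \nabla\phi}
\leq
\ltwonorm{\adlifting{\interpol^0\phi}{\dbc} - \Pi_0\nabla\phi}
+
\ltwonorm{\Pi_0\nabla\phi - \nabla\phi},
$$
where $\Pi_0$ denotes the cellwise $L^2$-projection onto piecewise constants. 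The second term is $O(h)$ because $\nabla\phi\in C^1(\bar\Omega)$. For the first term, summing $\abs{T}$ times the squared cellwise error produces $O(h^2)$ from the interior cells and, since the boundary cells number $O(h^{-(\dimension-1)})$ and each has volume $O(h^{\dimension})$, a contribution of order $h$ from the boundary cells; hence the first term is $O(h^{1/2})$, and the total tends to $0$.

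The main obstacle is the boundary: the first-order accuracy of the boundary approximation $\dbc$ degrades the pointwise consistency on boundary cells from $O(h)$ to $O(1)$, and only the asymptotically negligible total volume of these cells rescues $L^2$-convergence, at the reduced rate $O(h^{1/2})$. By contrast, the admissibility condition \eqref{eq:admissible-mesh} is precisely what is needed in the interior to cancel the first-order Taylor term and produce the $O(\abs{F}h^2)$ facet defect; without it the reconstructed gradient would fail to be consistent.
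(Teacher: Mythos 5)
Your proof is correct and takes essentially the same route as the paper, which simply defers to the consistency argument of \cite[Lem. 4.4]{eymardetal10}: the explicit facet representation, the geometric identity of Proposition \ref{prop:appendix-divergence-theorem}, the barycentric quadrature on facets, and the first-order Taylor cancellation enabled by the admissibility condition \eqref{eq:admissible-mesh} are exactly the ingredients of that argument. Your handling of the boundary facets under the weaker assumption $\norm{\dbc - \fat{d}_0}_{L^\infty(\partial\Omega)} \lesssim h$ --- giving a cellwise $\mathcal{O}(1)$ defect that is absorbed by the $\mathcal{O}(h)$ total volume of boundary cells, hence the reduced but sufficient rate $\mathcal{O}(h^{1/2})$ --- supplies precisely the adaptation to inhomogeneous discrete boundary data that the paper flags as its ``slight deviation from the standard case'' but leaves implicit.
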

The proof follows as in \cite[Lem. 4.4]{eymardetal10}.
\begin{lem}[Weak consistency of the discrete gradient]
\label{lem:weak-consistency-gradient}
Let $\fat{d}_0 \in C^1(\bar{\Omega};\mathbb{R}^{\dimension})$ and $v_h: [0,T] \to [\dgzero ]^{\dimension}$ be a sequence of functions indexed by $h$ that fulfills
\begin{align*}
    v_h  \weakto \fat{v}  & \in L^p (0,T;L^p (\Omega; \mathbb{R}^{\dimension}))
    ,
    \\
    \sup_h \abs{v_h}_{J,1} & \in L^1(0,T)
    ,
    \\
    \adlifting{v_h}{\dbc} \weakto \fat{W} & \in L^p (0,T;L^{p}(\Omega, \mathbb{R}^{\dimension \times \dimension}))
     \, ,
\end{align*}
for $1< p < \infty$.
Then, we have
$$
\fat W = \nabla \fat v \text{ a.e. in } (0,T) \times \Omega, \quad \mathrm{tr} (\fat{v}) = \fat{d}_0 \vert_{\partial \Omega} \text{ a.e. in }(0,T) \times \partial \Omega
.
$$
\end{lem}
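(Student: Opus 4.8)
The plan is to verify, in the distributional sense, the two identities $\fat{W}=\nabla\fat{v}$ and $\tr(\fat{v})=\fat{d}_0$ simultaneously by pairing the reconstructed gradient against smooth, matrix‑valued test fields and passing to the limit $h\to0$. Fix $\fat\Phi\in C^\infty_c\big(0,T;C^\infty(\bar\Omega;\reals^{\dimension\times\dimension})\big)$. Since $\adlifting{v_h}{\dbc}$ is elementwise constant, I would first replace $\fat\Phi$ by its cellwise $L^2$‑projection $\projectdg\fat\Phi$ in the pairing $\int_\Omega\adlifting{v_h}{\dbc}:\fat\Phi\diff x$ (the projection error is orthogonal to piecewise constants) and then invoke the defining relation \eqref{eq:def-discrete-lifting-local}–\eqref{eq:def-v_F} to obtain
\[
\int_\Omega\adlifting{v_h}{\dbc}:\fat\Phi\diff x
= -\sum_{F\in\facets^i}\int_F\jump{v_h}\cdot\avg{\projectdg\fat\Phi}\normal_F\diff s
+\sum_{F\in\facets^D}\int_F(\dbc-v_h)\cdot(\projectdg\fat\Phi)\normal_F\diff s.
\]
In parallel, starting from $-\int_\Omega v_h\cdot(\nabla\cdot\fat\Phi)\diff x+\int_{\partial\Omega}\dbc\cdot\fat\Phi\normal\diff s$ and applying the divergence theorem cell by cell (using that $v_h$ is constant on each $T$ and reorganizing facet contributions via $\normal_{\tilde T,F}=-\normal_{T,F}$, cf.\ Proposition \ref{prop:appendix-divergence-theorem}), I get the same structure with $\avg{\projectdg\fat\Phi}$ and $\projectdg\fat\Phi$ replaced by $\fat\Phi$ itself on the facets.

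The heart of the argument is then the consistency estimate comparing these two expressions. Their difference equals $-\sum_{F\in\facets^i}\int_F\jump{v_h}\cdot(\avg{\projectdg\fat\Phi}-\fat\Phi)\normal_F\diff s$ on interior facets and $\sum_{F\in\facets^D}\int_F(\dbc-v_h)\cdot(\projectdg\fat\Phi-\fat\Phi)\normal_F\diff s$ on boundary facets. Because $\fat\Phi$ is Lipschitz and each cell has diameter $\lesssim h$, both $\avg{\projectdg\fat\Phi}-\fat\Phi$ and $\projectdg\fat\Phi-\fat\Phi$ are bounded pointwise on every facet by $\lesssim h\norm{\fat\Phi}_{C^1}$. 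Using $\jump{v_h}=v_h$ on boundary facets together with the uniform bound on $\dbc$, the whole difference is controlled by $h\norm{\fat\Phi}_{C^1}\big(\abs{v_h}_{J,1}+1\big)$. Integrating in time and invoking the hypothesis $\sup_h\abs{v_h}_{J,1}\in L^1(0,T)$, this consistency error tends to $0$ as $h\to0$.

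Consequently, $\int_0^T\!\int_\Omega\adlifting{v_h}{\dbc}:\fat\Phi\diff x\diff t$ and $\int_0^T\big(-\int_\Omega v_h\cdot(\nabla\cdot\fat\Phi)\diff x+\int_{\partial\Omega}\dbc\cdot\fat\Phi\normal\diff s\big)\diff t$ share the same limit. Passing to the limit with $\adlifting{v_h}{\dbc}\weakto\fat{W}$, $v_h\weakto\fat{v}$ (both against $\fat\Phi\in L^{p'}$), and $\dbc\to\fat{d}_0$ in $L^\infty(\partial\Omega)$ yields
\[
\int_0^T\!\!\int_\Omega\fat{W}:\fat\Phi\diff x\diff t
= -\int_0^T\!\!\int_\Omega\fat{v}\cdot(\nabla\cdot\fat\Phi)\diff x\diff t
+\int_0^T\!\!\int_{\partial\Omega}\fat{d}_0\cdot\fat\Phi\normal\diff s\diff t .
\]
Restricting to $\fat\Phi$ whose spatial support is compactly contained in $\Omega$ kills the boundary term and identifies $\fat{W}=\nabla\fat{v}$ distributionally; since $\fat{v},\fat{W}\in L^p(0,T;L^p)$ this gives $\fat{v}\in L^p(0,T;\W{1}{p})$ with $\nabla\fat{v}=\fat{W}$. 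The trace of $\fat{v}(t)$ is then well defined, and comparing the displayed identity with the classical Green formula for $\fat{v}\in\W{1}{p}$ forces $\int_0^T\!\int_{\partial\Omega}(\tr\fat{v}-\fat{d}_0)\cdot\fat\Phi\normal\diff s\diff t=0$. Letting $\fat\Phi\normal$ range over a dense set of boundary fields (for instance $\fat\Phi=\fat\psi\otimes\tilde\normal$ with $\tilde\normal$ a smooth extension of the outer normal, so $\fat\Phi\normal=\fat\psi$ on $\partial\Omega$) then yields $\tr(\fat{v})=\fat{d}_0\vert_{\partial\Omega}$ a.e.\ on $(0,T)\times\partial\Omega$.

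I expect the main obstacle to be precisely the consistency estimate of the second paragraph: the entire argument hinges on bounding the mismatch between the facet values of the smooth test field and its cell averages by $\mathcal{O}(h)$ and absorbing it into the $L^1$ jump seminorm, which is exactly why the hypothesis $\sup_h\abs{v_h}_{J,1}\in L^1(0,T)$ is imposed. It is worth emphasizing that, in contrast to the strong consistency result Lemma \ref{lem:appendix-strong-consistency-gradient}, no admissibility of the mesh in the sense of Definition \ref{def:admissible-mesh} is required here, since the proof uses only the defining variational relation of the lifting operator and the cellwise divergence theorem.
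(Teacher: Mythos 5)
Your proof is correct and follows essentially the same route as the paper's: the cellwise divergence theorem combined with the variational definition of the lifting operator, a facet-by-facet consistency estimate of order $h\norm{\fat{\Phi}}_{C^1}$ absorbed by the hypothesis $\sup_h\abs{v_h}_{J,1}\in L^1(0,T)$, and then identification of the weak gradient (via compactly supported test fields) followed by the trace via Green's formula with test fields not vanishing on the boundary. The only cosmetic differences are your use of the $L^2$ projection $\projectdg \fat{\Phi}$, whose orthogonality makes the pairing with the piecewise-constant lifted gradient exact, where the paper uses the interpolant $\interpol^0 \phi$ and must let $\interpol^0\phi\to\phi$; note merely that on a polyhedral boundary the outer normal admits no globally smooth extension, so your final density argument should be carried out facewise --- a point the paper's proof glosses over just as briefly.
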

\begin{proof}
    We follow the proof of \cite[Thm. 5.7]{dipietro-ern12}.
    We consider a smooth and compactly supported function in time and space $\phi \in C_c^\infty ([0,T] \times \Omega; \mathbb{R}^{\dimension \times \dimension})$.
    First, we note that an application of the divergence theorem yields
    \begin{multline}
    \label{eq:weak-consistency-discrete-lhs}
        \int_{\Omega}
        v_h ( \nabla \cdot  \phi ) \diff x
    =
        \sum_{T\in \mesh}
        \sum_{F \in \facets^T}
        \int_F v_h \vert_T \cdot \phi \normal_{T,F}\diff s
    \\
    =
    \sum_{F \in \facets^i}
    \int_F \jump{v_h} \cdot {\phi} \normal \diff s
    +
    \sum_{F \in \facets^D}
    \int_F {v_h} \cdot \phi \normal \diff s
    ,
    \end{multline}
    where the last term vanishes due to the zero trace of $\phi$.
    On the other hand, the definition of the discrete gradient yields
    \begin{multline}
    \label{eq:weak-consistency-discrete-rhs}
        \int_{\Omega} \adlifting{v_h}{\dbc} : \interpol^0 \phi \diff x
    =
        -\sum_{F\in \facets^i}
        \int_F
        \jump{v_h} \cdot \avg{ \interpol^0 \phi}  \normal \diff s
        \\
        +
        \sum_{F\in \facets^D}
        \int_F
        (\dbc - v_h)  \cdot (\interpol^0 \phi) \normal \diff s.
    \end{multline}
    Adding Equations \eqref{eq:weak-consistency-discrete-lhs} and \eqref{eq:weak-consistency-discrete-rhs} and integrating in time leaves us with 
    \begin{multline*}
        \int_0^T \int_{\Omega}
        v_h ( \nabla \cdot  \phi ) 
        +
        \adlifting{v_h}{\dbc} : \interpol^0 \phi 
        \diff x
        \diff \tau
        \\=
        \int_0^T
        \sum_{F \in \facets^i}
        \int_F \jump{v_h} \cdot (\phi - \avg{\interpol^0 \phi}) \normal \diff s
        +
        \sum_{F\in \facets^D}
        \int_F
        (\dbc - v_h)  \cdot (\interpol^0 \phi) \normal \diff s
        \diff \tau
        .
    \end{multline*}
    For the first term on the right-hand side, we observe that it vanishes as $h \to 0$, since
    \begin{equation*}
        \sum_{F \in \facets^i}
        \int_F \jump{v_h} \cdot (\phi - \avg{\interpol^0 \phi}) \normal \diff s
        \lesssim
        h \norm{\phi }_{C^1 ( \bar{\Omega})}
        \sum_{F\in \facets^D}
            \int_F  \abs{v_h} \diff s
        =
        h \norm{\phi }_{C^1 ( \bar{\Omega})}
        \abs{v_h}_{J,1}
        .
    \end{equation*}   
    The second term vanishes as $h \to 0$ due to the zero trace of $\phi$, i.e.
    \begin{multline*}
    \sum_{F\in \facets^D}
        \int_F
        (\dbc - v_h)  \cdot (\interpol^0 \phi - 0) \normal_F \diff s
    \lesssim
    h \norm{\phi }_{C^1 (\partial \Omega)}
    \sum_{F\in \facets^D}
        \int_F \abs{\dbc} + \abs{v_h} \diff s
    \\
    \lesssim
    h
    \left (
    \int_{\partial \Omega} \abs{\dbc} \diff s+ \abs{v_h}_{J,D,1}
    \right )
    .
    \end{multline*}
    Now, we can infer that $\fat{v}\in H^1(\Omega)$ with the weak gradient $\nabla \fat{v}=\fat{W}$. Accordingly, the Sobolev trace $\tr(\fat{v})$ exists.
    Adding Equation \eqref{eq:weak-consistency-discrete-lhs} and \eqref{eq:weak-consistency-discrete-rhs} again, this time with a test function not vanishing on the boundary, and taking the limit $h\to 0$ yields
    \begin{multline*}
        \int_{\partial \Omega} \tr (\fat{v}) \cdot \phi \normal \diff s
        =
        \int_{\Omega} \nabla v : \phi \diff x
        +
        \int_{\Omega} v (\nabla \cdot \phi) \diff x
    \\
        =
        \lim_{h \to 0}
        \int_{\Omega}
        v_h ( \nabla \cdot  \phi ) \diff x
        +
        \int_{\Omega} \adlifting{v_h}{\dbc} : \interpol^0 \phi \diff x
    \\
        =
        \lim_{h \to 0}
        \sum_{F\in \facets^D}
        \int_F
        \dbc  \cdot \phi \normal \diff s
        =
        \int_{\partial \Omega} \fat{d}_0 \cdot \phi \normal \diff s
        ,
    \end{multline*}
    for all $\phi \in C^\infty ([0,T] \times \Omega; \mathbb{R}^{\dimension \times \dimension})$. This proves the result.
\end{proof}
\begin{lem}[An Aubin--Lions--Simon lemma for $\dgzero$]
\label{lem:dg-aubin-lions-simon}
    Let a sequence of functions $v_h: [0,T] \to [\dgzero ]^{\dimension}$ indexed by $h$ fulfill
    \begin{align*}
        \norm{ v_h }_{L^p (0,T; \L{q})}
        +
        \norm{ \abs{v_h}_{J,1} }_{L^p (0,T)}
        +
        \norm{ \partial_t v_h }_{L^m (0,T; \L{s})}
        \leq M
    \end{align*}
    for $1 \leq p,m \leq \infty$, $1\leq s < q \leq \infty$ and some $M>0$ that does not depend on $h$. If $p<\infty$, there exists a subsequence that we do not relabel such that
    $$
    v_h \to v \in L^p (0,T;\L{r})
    $$
    for $s\leq r<q$.
    If $p= \infty$ and $m>1$, then there exists a subsequence that we do not relabel such that
    $$
    v_h \to v \in C^0([0,T];\L{r})
    $$
    for $s\leq r<q$.
\end{lem}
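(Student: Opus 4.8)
The plan is to recast the statement as a discrete Aubin--Lions--Simon theorem in which the role of the compactly embedded space is played by the discrete $\mathrm{BV}$-type control furnished by the jump seminorm $\abs{\cdot}_{J,1}$, the role of the ``middle'' space by $\L{r}$ with $s \le r < q$, and the role of the continuously embedded large space by $\L{s}$. Concretely, I would first establish a uniform-in-$h$ \emph{spatial compactness} statement: the set $\{ w_h \in [\dgzero]^{\dimension} : \lebnorm{w_h}{q} + \abs{w_h}_{J,1} \le M \}$ is relatively compact in $\L{r}$ for every $s \le r < q$. Extending $w_h$ by zero outside $\Omega$, which is consistent with the boundary part $\sum_{F\in\facets^D}\norm{w_h}_{L^1(F)}$ of $\abs{\cdot}_{J,1}$, the standard Finite Volume estimate for space translates of piecewise constant functions, cf.\ \cite{eymardetal10}, gives $\norm{w_h(\cdot + \xi) - w_h}_{L^1(\mathbb{R}^{\dimension})} \lesssim \abs{\xi}\,\abs{w_h}_{J,1}$. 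Interpolating this $L^1$-smallness against the uniform $\L{q}$-bound yields smallness of the translates in $\L{r}$ for every $r<q$, so Kolmogorov--Riesz--Fr\'echet delivers the claimed relative compactness.

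Next I would treat the time direction. From $\norm{\partial_t v_h}_{L^m(0,T;\L{s})} \le M$ one gets, by Young's inequality, the time-translate estimate $\norm{v_h(\cdot + \sigma) - v_h}_{L^m(0,T-\sigma;\L{s})} \le \sigma M$, that is, equicontinuity in time with values in $\L{s}$. The bridge between the space and the time estimates is a discrete Ehrling-type inequality: for every $\eta > 0$ there is a $C_\eta$, independent of $h$, with $\lebnorm{w_h}{r} \le \eta\,(\lebnorm{w_h}{q} + \abs{w_h}_{J,1}) + C_\eta \lebnorm{w_h}{s}$ for all $w_h \in [\dgzero]^{\dimension}$; this follows from the spatial compactness of the first step by the usual contradiction argument, since a normalized sequence violating it would converge in $\L{r}$ to a nonzero limit while tending to zero in $\L{s}$. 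Applying this inequality to $v_h(\cdot+\sigma) - v_h$ and taking norms in time converts the time equicontinuity in $\L{s}$, together with the uniform $\L{q}$- and $\abs{\cdot}_{J,1}$-bounds, into equicontinuity of the time translates at the $\L{r}$-level.

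With spatial compactness and time equicontinuity in $\L{r}$ in hand, the case $p < \infty$ follows from the characterisation of relatively compact sets in $L^p(0,T;\L{r})$, namely the uniform smallness of the time translates together with relative compactness of the averages $\int_{t_1}^{t_2} v_h \diff\tau$ in $\L{r}$; this is exactly the abstract Aubin--Lions--Simon mechanism adapted to the $h$-dependent spaces, and extracting a subsequence gives $v_h \to v$ in $L^p(0,T;\L{r})$. For $p = \infty$ and $m > 1$, the bound on $\partial_t v_h$ upgrades to H\"older continuity $v_h \in C^{0,1-1/m}([0,T];\L{s})$, so the family is equicontinuous in $\L{s}$ and, by the discrete Ehrling inequality, equicontinuous in $\L{r}$; combined with the pointwise-in-time spatial compactness this is precisely the hypothesis of Arzel\`a--Ascoli in $C^0([0,T];\L{r})$, yielding the second conclusion.

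The main obstacle I expect is the \emph{uniformity in $h$} of the spatial compactness and of the derived Ehrling inequality: because the functions live in mesh-dependent spaces there is no single compactly embedded Banach space to invoke, so one must replace the compact embedding by the quantitative translate estimate for piecewise constants and verify, independently of $h$, both the Kolmogorov criterion and the interpolation inequality. A secondary technical point is reconciling the mismatched exponents, $p$ versus $m$ in time and $q$ versus $s$ versus $r$ in space: the requirement $r \ge s$ is what makes $\L{r} \hookrightarrow \L{s}$ continuous and lets the time estimate be transferred to the $\L{r}$-level, while $r < q$ is what the spatial interpolation needs; care is also required near the temporal endpoints $t = 0, T$, which is handled using the uniform $L^p(0,T;\L{q})$-bound.
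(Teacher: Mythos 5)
Your argument is correct in substance, but it takes a deliberately self-contained route where the paper instead reduces everything to two known black boxes, and your stated ``main obstacle'' --- that the functions live in mesh-dependent spaces with no single compactly embedded Banach space to invoke --- is precisely what the paper's proof shows does not arise. The paper observes that functions in $[\dgzero]^{\dimension}$ belong to the fixed, $h$-independent space $\BV(\Omega)\cap L^q(\Omega)$, with the $\BV$-norm bounded by $\abs{\cdot}_{J,1}$ (citing \cite[Lemma 5.2]{dipietro-ern12}); interpolation arguments (cf.\ \cite[Thm.\ 5.6]{dipietro-ern12}, \cite[Lemma 5.4]{eymardetal10}) then give the fixed embedding triple $\BV(\Omega)\cap L^q(\Omega) \stackrel{c}{\hookrightarrow} \L{r} \hookrightarrow \L{s}$ for $s\leq r<q$, and a general Aubin--Lions--Simon lemma (e.g.\ \cite{boyer-fabrie13,chen-etal13}) applied to this triple yields both the $L^p(0,T;\L{r})$ case and the $C^0([0,T];\L{r})$ case at once. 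Your translate estimate $\norm{w_h(\cdot+\xi)-w_h}_{L^1(\mathbb{R}^{\dimension})}\lesssim \abs{\xi}\,\abs{w_h}_{J,1}$ is exactly the $\BV$-bound in disguise, your Kolmogorov--Riesz step re-proves the compactness of that embedding, and your Ehrling-by-contradiction together with Simon's translate criterion and Arzel\`a--Ascoli re-prove the abstract lemma; so in effect you unpack both black boxes. What your route buys is transparency about uniformity in $h$ and independence from citations; what the paper's route buys is brevity and the conceptual point that, once $\dgzero\subset\BV$ is recognized, no mesh-dependent functional analysis is needed. One loose end you gesture at but do not close: when $m<p$, H\"older does not transfer your time-translate smallness from $L^m(0,T-\sigma;\L{s})$ to the $L^p$-in-time level that Simon's criterion requires. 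The standard fix is to note that $t\mapsto \norm{v_h(t+\sigma)-v_h(t)}_{\L{s}}$ is bounded pointwise by $\norm{\partial_t v_h}_{L^1(0,T;\L{s})}$ and in $L^1(0,T-\sigma)$ by $\sigma$ times that quantity, so $L^1$--$L^\infty$ interpolation in time yields smallness of order $\sigma^{1/p}$ in $L^p(0,T-\sigma;\L{s})$; this is exactly the step performed inside the abstract lemma the paper cites, and it completes your argument.
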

\begin{proof}
The $\mathrm{BV}(\mathbb{R}^{\dimension})$-norm of functions in $\dgzero$ can be bounded from above by $\abs{.}_{J,1}$, see \cite[Lemma 5.2]{dipietro-ern12}. 
Interpolating $\BV (\Omega) \cap L^q$ between $\BV (\Omega) \cap L^1$ and $L^r$ (cf. \cite[Thm. 5.6]{dipietro-ern12} and \cite[Lemma 5.4]{eymardetal10}), allows to derive the embedding
$$
\BV (\Omega) \cap L^{q}(\Omega)
\stackrel{c}{\hookrightarrow}
L^r(\Omega) 
\hookrightarrow
L^s (\Omega)
$$
for $s\leq r < q$. 
On that embedding triple, we can apply a general version of the Aubin--Lions--Simon lemma, see e.g. \cite{boyer-fabrie13,chen-etal13}, which yields the result.
\end{proof}
\section{A Continuous Finite Element Scheme}
\label{sec:discrete-system-cg}
We can now introduce our Finite Element Approximation based on a globally continuous, piecewise linear approximation of the director field.
Since we work with constant-in-time Dirichlet boundary conditions, we will implicitly use the following decomposition for the director field
$$
 d =  d_\circ +  \interpol^1 \fat d_0 \, ,
$$
such that $d, \interpol^1 \fat d_0 \in [\cgone]^{\dimension}$ and $d_\circ \in [\cgonezero]^{\dimension}$.
\begin{algo}
\label{algo:cg}
Let $(v^{0}, d^{0})= (\projectV \fat v_0, \interpol^1 \fat d_0)$.
For $1 \leq j\leq J$, $(v^{j-1}, d^{j-1}_\circ )\in \mixedfemspacecg$, we want to find $(v^{j}, \tilde{d}^{j}_\circ )\in \mixedfemspacecg$, such that
\begin{subequations}\label{eqs:projection-scheme}
        \begin{multline}
        \label{eq:projection_scheme_a}
            (\discreteDiff_t v^j,a)_2 + \mu (\nabla v^j, \nabla a)_2
           +((v^{j-1}  \cdot \nabla) v^j,a)_2 + \frac{1}{2} ([\nabla \cdot v^{j-1}]v^j,a)_2 
           \\
            -
            \left (
                [\nabla d^{j-1}]^T 
                d^{j-1}\times 
                \interpol^1 ( d^{j-1} \times \Deltah \tilde{d}^{j} )
                ,a
            \right )_2  
            = 0 \,,
        \end{multline}
        \begin{multline}
        \label{eq:projection_scheme_c}
            \frac{1}{k}(\tilde{d}^{j} - d^{j-1},c)_h
             + 
            \left (
                d^{j-1}\times [{\nabla d^{j-1}}v^j], \interpol^1 ( d^{j-1} \times c ) 
            \right )_2
            \\
            {-}  
            \left (
                 d^{j-1} \times \Deltah \tilde{d}^{j} , d^{j-1} \times c 
            \right )_h
            = 0,
        \end{multline}
    \end{subequations}
for all $(a,c)\in \mixedfemspacecg$.
\\
Then, we apply the discrete normalization operator
\begin{align}\label{eq:projection-step-cg}
d^j \coloneq \interpol^1   \frac{\tilde{d}^j}{\vert{\tilde{d}^j}\vert}
 .
\end{align}
\end{algo}
Since the second derivative of a linear function vanishes, 
we introduce the (mass-lumped) discrete Laplacian $\Deltah: [H^1_0(\Omega)]^{\dimension} \to [\cgonezero]^{\dimension}$ which is defined as solution to the equation system
\begin{align*}
    (-\Deltah f, b)_h
    = 
    (\nabla f, \nabla b)
\end{align*}
for all $b \in [\cgonezero]^{\dimension}$.
Choosing $f \in [H^1_0(\Omega)]^{\dimension}$, testing with $b= f$ and $b= \Deltah f$ respectively as well as applying the inverse estimate, yields the estimate
\begin{equation}
    \label{eq:discrete-norm-equivalence-laplacian-gradient}
    c h
    \lebnorm{\Deltah f}{2}
    \leq
    \lebnorm{\nabla f}{2}
    \leq
    C
    \lebnorm{\Deltah f}{2}
\end{equation}
for generic constants $c,C>0$.
Since we only defined the discrete Laplacian for $[H^1_0 (\Omega)]^{\dimension}$, the Laplacian of the approximate director field $ \tilde{d}^j \in [\cgone]^{\dimension}$ for $1 \leq j\leq J$ has to be implicitly understood as
$$
\Deltah \tilde{d}^j  = \Deltah \tilde{d}^j_{\circ} + \projectcg \Delta \fat{d}_0.
$$
\begin{rem}
In Algorithm \ref{algo:cg}, we adapted the convection term for the director equation (cf. Equations \eqref{system_c} and \eqref{eq:projection_scheme_c}) as in \cite{lasarzik-reiter23}. This can be justified if the unit-norm constraint $\abs{ \fat d}=1$ is fulfilled almost everywhere. Then, it formally holds
$$ 
- \fat d \times ( \fat d\times [(\fat v \cdot  \nabla) \fat d])
=
(\identitymatrix - \fat d \otimes \fat d) \nabla \fat d \fat v
=
(\fat v \cdot  \nabla) \fat d - (\fat d \otimes \frac{1}{2} \nabla \abs{\fat{d}}^2) \fat v
=
(\fat v \cdot  \nabla) \fat d
.
$$
\end{rem}
\begin{rem}
    Note that Algorithm \ref{algo:cg} can also be understood as a linearization of the fully implicit discretization in \cite{lasarzik-reiter23}. Due to the linearization, the implicit norm conservation is lost. The projection step corrects that effect.
\end{rem}
Therewith, we can state our main result that will be proven in Section \ref{sec:convergence-to-envar-sols-cg}.
\begin{thm}[Convergence of a subsequence to energy-variational solutions]\label{thm:main-result-subconvergence-cg}
    Let $\Omega \subset \mathbb{R}^{\dimension}$ for $\dimension = 2,3$ be a bounded convex polyhedral domain.
    Let $\left(\fat v_0,\fat d_0\right)
     \in \V \times  H^{2}(\Omega;\mathbb{R}^{\dimension}) $ with $\vert \fat d_0  \vert = 1$ a.e.~in $\Omega$.
     Let $(k_m, h_m)_m $, $m\in \mathbb{N}$ be a decreasing sequence of our positive temporal and spatial discretization parameters converging to zero as $m \to \infty$.
    We assume this sequence fulfills:
    \begin{enumerate}
        \item There exists a \textit{weakly acute} subdivision $\mathcal{T}_{h_m}$ of $\Omega$ (in the sense of Definition \ref{def:weakly-acute}) for each $m \in \mathbb{N}$,
        \item $ k_m \in o (h_m^{1+\dimension /6})$.
    \end{enumerate}
    Then, there exists a subsequence that we do not relabel $(k_m, h_m)_m$ such that a solution to Algorithm \ref{algo:cg} exists for every $(k_m, h_m)$ and $1\leq j \leq J(k_m)$ and its linear temporal interpolate converges to an \textit{energy-variational solution} in the sense of Definition~\ref{def:envar} as $(k_m, h_m)\to 0$. 
\end{thm}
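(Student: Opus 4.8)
The plan is to follow the usual program for convergence of a Finite Element discretization toward a generalized solution: discrete well-posedness, a uniform discrete energy inequality, compactness, and limit passage into the energy-variational formulation \eqref{envarform}. \textbf{Existence of discrete iterates.} Since Algorithm \ref{algo:cg} is linear in the unknowns $(v^j,\tilde{d}^j_\circ)$, existence reduces to uniqueness for the associated square linear system. Testing the homogeneous version of \eqref{eq:projection_scheme_a} with $a=v^j$ and of \eqref{eq:projection_scheme_c} with $c=-\Deltah\tilde{d}^j$, the two coupling terms (the Ericksen stress in the momentum equation and the reactive term in the director equation) cancel via the triple-product identity \eqref{eq:triple-prodcut}. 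The skew-symmetrized convection term $\tfrac12([\nabla\cdot v^{j-1}]v^j,v^j)_2$ has vanishing diagonal part, so what remains controls $\mu\ltwonorm{\nabla v^j}^2$ and a positive mass-lumped director term; hence the homogeneous system has only the trivial solution and a unique iterate exists.

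\textbf{Discrete energy inequality.} Testing \eqref{eq:projection_scheme_a} with $a=v^j$ and \eqref{eq:projection_scheme_c} with $c=-\Deltah\tilde{d}^j$, using the definition of $\Deltah$ to turn the elastic term into $\tfrac1k(\nabla\tilde{d}^j,\nabla\tilde{d}^j-\nabla d^{j-1})_2$ and the discrete product rule \eqref{eq:discrete-integration-by-parts-in-time}, the same cancellation yields a predictor energy law of the form $\discreteDiff_t\energy(v^j,\tilde{d}^j)+\mu\ltwonorm{\nabla v^j}^2+\hnorm{d^{j-1}\times\Deltah\tilde{d}^j}^2\le 0$. Testing \eqref{eq:projection_scheme_c} nodewise shows that every contribution except the time-derivative lies in $(\fat d^{j-1})^\perp$ at each node, giving the local orthogonality $d^{j-1}(z)\cdot(\tilde{d}^j(z)-d^{j-1}(z))=0$ and hence $|\tilde{d}^j(z)|\ge|d^{j-1}(z)|=1$. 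For a weakly acute mesh (Definition \ref{def:weakly-acute}) this nodal inequality guarantees that the normalization \eqref{eq:projection-step-cg} does not increase $\ltwonorm{\nabla d^j}^2$, cf.~\cite{bartels05,bartels09}. Summing over $j$ produces uniform bounds: $(v^j)$ in $L^\infty(L^2)\cap L^2(H^1)$, $(\nabla d^j),(\nabla\tilde{d}^j)$ in $L^\infty(L^2)$, the dissipation $\sum_j k\,\hnorm{d^{j-1}\times\Deltah\tilde{d}^j}^2$ bounded, and, crucially, the pointwise bound $|d^j|\le1$ supplied by the projection.

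\textbf{Compactness.} I form the piecewise-constant and piecewise-affine temporal interpolates of the iterates and extract weakly-$*$ convergent subsequences from the above bounds. Bounding the discrete time derivative of the director in a negative-order Bochner space (using the dissipation bound together with the mass-lumping equivalences \eqref{eq:norm-equivalence-mass-lumping}) and invoking an Aubin--Lions--Simon argument upgrades weak to strong convergence of $\fat d$ in $L^2(0,T;L^2)$, and likewise for $\fat v$. The $L^\infty$-bound on the director, a consequence of the projection step, is exactly the extra regularity that makes the Ericksen stress and reactive terms controllable in the limit.

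\textbf{Limit passage (main obstacle).} This is the core of the proof, with three delicate points. \emph{(i)} Predictor and corrector must share a limit, i.e.~$\tilde{d}^j-d^j\to0$ in a norm strong enough for the scheme; this is where the step restriction $k_m\in o(h_m^{1+\dimension/6})$ enters, balancing the projection error against $\sum_j k\,\hnorm{d^{j-1}\times\Deltah\tilde{d}^j}$ through inverse estimates and the equivalence \eqref{eq:discrete-norm-equivalence-laplacian-gradient}. \emph{(ii)} Because only $\fat d\times\Delta\fat d$ is bounded and no strong $H^1$-compactness of the director is available, the Ericksen stress cannot be passed to the limit pointwise; instead I define $E$ as the limit of the discrete energies, identify $E-\energy(\fat v,\fat d)$ as the energy defect, and recover the convexified stress inside the energy-variational inequality \eqref{envarform}, using the stress reformulation recorded before Definition \ref{def:envar}. \emph{(iii)} The constraint $|\fat d|=1$ a.e.~follows from $|d^j|\le1$ together with the nodal unit-length property of the projection and the strong $L^2$-convergence, while the monotonicity of $E$ and $E\ge\energy(\fat v,\fat d)$ follow from the summed energy inequality and weak lower semicontinuity. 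Finally, I verify the pointwise director equation \eqref{weak:d} by passing to the limit with interpolated test functions, which completes the identification of the limit as an energy-variational solution in the sense of Definition \ref{def:envar}.
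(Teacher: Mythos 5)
Your outline reproduces the paper's architecture faithfully: the same cancellation between the Ericksen stress and the reactive term yields both solvability and the predictor energy law (the paper phrases existence as Lax--Milgram coercivity with a discrete Green's function for $\Deltah$, which in this finite-dimensional linear setting is equivalent to your injectivity argument); nodal orthogonality (Corollary \ref{cor:orthogonality-cg}) plus the weakly acute mesh gives the energy-decreasing projection (Lemma \ref{lem:energy-decreasing-projection-cg}); and the limit passage convexifies the stress inside the inequality with $E$ obtained from the discrete energies via Helly's selection principle, exactly as in Proposition \ref{prop:discrete-envar-cg} and Lemma \ref{lem:convergence-ericksen-stress-in-envar}. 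However, two steps are under-specified in a way that would make your identification of the limit fail as written. First, a bound on the discrete time derivative of the director in a ``negative-order Bochner space'' is not enough: Definition \ref{def:envar} demands $\fat d\in H^1(0,T;L^{3/2}(\Omega;\mathbb{R}^3))$ in \eqref{reg}, and the quantitative projection-error estimate (Lemma \ref{lem:convergence-projection-err} together with Corollary \ref{cor:identification-rkh}) consumes precisely $\norm{w^k_h}_{L^2(0,T;L^{3/2}(\Omega))}$. Obtaining this $L^2(0,T;L^{3/2})$ bound is the delicate part of Lemma \ref{lem:a-priori-estimates-projection}: the mass-lumping error must be controlled \emph{locally}, by testing \eqref{eq:projection_scheme_c} cellwise and invoking Lemma \ref{lem:global-local-norm-vs-Lp-norm}, because the global norm equivalences \eqref{eq:norm-equivalence-mass-lumping} you cite do not by themselves absorb the $h^{\dimension/6}$ mismatch between $L^2$ and $L^{3/2}$ on each cell.

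Second, you misattribute the role of the time-step restriction. Reconciling predictor and corrector costs only $k\in o(h^{\dimension/3})$ (and $k\in o(h^{\dimension/6})$ for $\fat d=\tilde{\fat d}$), see Corollary \ref{cor:identification-rkh}; the binding use of $k_m\in o(h_m^{1+\dimension/6})$ is Proposition \ref{prop:identification-of-d-times-deltad}, where the weak $L^2$-limit $\fat U$ of $\interpol^1\bigl(\underline{d}^k_h\times\Deltah\overline{\tilde{d}}^k_h\bigr)$ is identified with $\nabla\cdot(\fat d\times\nabla\fat d)$ via interpolation and inverse estimates on the test-function side. This identification step is absent from your outline, yet without it neither the pointwise equation \eqref{weak:d} nor the terms $\fat d\times\Delta\fat d$ in \eqref{envarform} acquire the meaning Definition \ref{def:envar} prescribes (weak divergence of $\fat d\times\nabla\fat d$): the convexification in your point \emph{(ii)} only supplies weak lower semicontinuity of the quadratic combination, not the identification of $\fat U$ itself. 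Both gaps are repairable with the paper's machinery, but they are exactly where the scheme-specific difficulties --- mass lumping and the condition $k_m\in o(h_m^{1+\dimension/6})$ --- actually live.
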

\subsection[Well-Posedness]{Discrete Well-Posedness}
We start this section by showing that the projection step, Equation \eqref{eq:projection-step-cg}, is well-posed and energy-decreasing.
Testing Equation \eqref{eq:projection_scheme_c} with $d^{j-1}_z \phi_z$ yields a nodal orthogonality relation (cf. \cite[p. 22]{lasarzik-reiter23}) subsummed by the following corollary.
\begin{cor}[Orthogonality]\label{cor:orthogonality-cg}
Let $(v^j,  \tilde{d}^{j}_\circ )$ be a solution to Algorithm \ref{algo:cg}. Then, the following nodal orthogonality relation holds:
\begin{align*}
    (\tilde{d}^j_z - d^{j-1}_z) \cdot d^{j-1}_z & = 0 
    & \forall z \in \mathcal{N}_h 
    .
\end{align*}
\end{cor}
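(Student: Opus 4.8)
The plan is to test the director equation \eqref{eq:projection_scheme_c} (with $a=0$) against the node-localized function $c = d^{j-1}_z \phi_z$ and to exploit that the mass-lumped product $(\cdot,\cdot)_h$ of \eqref{def:lumped_norm_product} only evaluates nodal values. First I would dispose of the boundary nodes $z \in \nodes^D$ separately: by the decomposition $d = d_\circ + \interpol^1 \fat d_0$ with $d_\circ, \tilde{d}^j_\circ \in [\cgonezero]^{\dimension}$, every iterate carries the same constant-in-time nodal boundary data, so $\tilde{d}^j_z = d^{j-1}_z = \fat d_0(z)$ and hence $\tilde{d}^j_z - d^{j-1}_z = 0$. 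The claimed orthogonality is then trivially satisfied at such $z$.

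For an interior node $z \in \nodes^i$ the function $c = d^{j-1}_z \phi_z$ is an admissible test function in $[\cgonezero]^{\dimension}$, since the $\mathbb{P}_1$ shape function of an interior node vanishes at all boundary vertices and therefore on every boundary facet. The decisive structural feature is that $c$ is concentrated at $z$, namely $c(\hat z) = d^{j-1}_z\,\delta_{z\hat z}$ for all $\hat z \in \nodes$. Writing $\beta_z := \int_\Omega \phi_z \diff x > 0$, definition \eqref{def:lumped_norm_product} then collapses the discrete time-derivative term to the single contribution $\tfrac{1}{k}\beta_z (\tilde{d}^j_z - d^{j-1}_z)\cdot d^{j-1}_z$.

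Next I would verify that the remaining two terms vanish. In the discrete Laplacian term $-(d^{j-1}\times \Deltah \tilde{d}^j,\, d^{j-1}\times c)_h$ only the index $\hat z = z$ survives in the nodal sum, and it carries the factor $d^{j-1}(z)\times d^{j-1}_z = d^{j-1}_z \times d^{j-1}_z = 0$. For the convection term, $\interpol^1(d^{j-1}\times c)$ is the interpolant of a function whose nodal values $d^{j-1}(\hat z)\times c(\hat z) = \delta_{z\hat z}\, d^{j-1}_z \times d^{j-1}_z$ are all zero, so $\interpol^1(d^{j-1}\times c) \equiv 0$ and the $L^2$-pairing vanishes identically. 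Thus \eqref{eq:projection_scheme_c} reduces to $\tfrac{1}{k}\beta_z (\tilde{d}^j_z - d^{j-1}_z)\cdot d^{j-1}_z = 0$, and dividing by $\beta_z / k > 0$ yields the asserted relation.

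I expect no genuine analytic obstacle here: the argument is mechanical once the localized test function is chosen, and the two nonlinear terms are annihilated purely by the algebraic identity $d^{j-1}(z)\times d^{j-1}_z = 0$ together with the nodal support of $c$. The only points requiring a little care are confirming the admissibility $c \in [\cgonezero]^{\dimension}$ at interior nodes and treating the boundary nodes by hand. It is worth emphasizing that the cancellation is exactly the reason the predictor step \eqref{eq:projection_scheme_c} was built with the factor $d^{j-1}\times(\cdot)$ and mass lumping in the first place, so the orthogonality is an intrinsic consequence of the discretization rather than an added constraint.
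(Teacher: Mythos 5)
Your proof is correct and follows exactly the paper's route: the paper proves the corollary in one line by testing Equation \eqref{eq:projection_scheme_c} with $c = d^{j-1}_z \phi_z$ and letting the mass lumping localize the time-derivative term while the factors $d^{j-1}_z \times d^{j-1}_z = 0$ annihilate the convection and Laplacian terms. Your additional care regarding admissibility of $c$ in $[\cgonezero]^{\dimension}$ and the separate (trivial) treatment of boundary nodes via the decomposition $d = d_\circ + \interpol^1 \fat d_0$ is sound and merely makes explicit what the paper leaves implicit.
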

This corollary guarantees that $\abs{\dir_h^{j-1} (z)}=1$ implies $\abs{\tilde{\dir}_h^j (z)} \geq 1$ for all $z\in \nodes$, which is a requirement for an energy-decreasing projection as the next lemma will show.
The following lemma stems from \cite[Lemma 2.2]{bartels2015}. For the reader's convenience, we also reiterate the proof.
\begin{lem}\label{lem:energy-decreasing-projection-cg}
    Let $\mesh$ be \textit{weakly acute} in the sense of Definition \ref{def:weakly-acute} and $d^h \in [\cgone]^{\dimension}$ such that $\abs{d^h (z)} \geq 1$ for all $z \in \nodes$.
    Then, the nodal normalization of $d^h$ is energy-decreasing with respect to the Dirichlet energy, i.e.
    \begin{equation}
        \ltwonorm{\nabla \interpol^1 \frac{d^h}{\abs{d^h}}} 
    \leq  
        \ltwonorm{\nabla d^h}.
        \label{eq:energy-decreasing-projection}
    \end{equation}
\end{lem}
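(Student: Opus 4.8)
The plan is to reduce the inequality to a node-by-node comparison by exploiting the special structure that the \emph{weakly acute} condition imposes on the stiffness matrix. First I would write any $f = \sum_{z \in \nodes} f_z \phi_z \in [\cgone]^{\dimension}$ and expand its Dirichlet energy as
\begin{equation*}
\ltwonorm{\nabla f}^2 = \sum_{z, \hat z \in \nodes} f_z \cdot f_{\hat z}\, \omega_{z, \hat z},
\end{equation*}
where $\omega_{z,\hat z} = \int_{\Omega} \nabla \phi_z \cdot \nabla \phi_{\hat z}\diff x$ are exactly the entries from Definition \ref{def:weakly-acute}. The key algebraic step is the partition-of-unity identity $\sum_{\hat z} \phi_{\hat z} \equiv 1$, which gives $\sum_{\hat z} \nabla \phi_{\hat z} = 0$ and hence $\sum_{\hat z} \omega_{z,\hat z} = 0$ for every $z$, so that $\omega_{z,z} = -\sum_{\hat z \neq z} \omega_{z,\hat z}$. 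Substituting this and using the symmetry $\omega_{z,\hat z} = \omega_{\hat z, z}$ rewrites the energy purely in terms of edge differences,
\begin{equation*}
\ltwonorm{\nabla f}^2 = -\frac{1}{2} \sum_{\substack{z,\hat z \in \nodes \\ z \neq \hat z}} \omega_{z,\hat z}\, \abs{f_z - f_{\hat z}}^2.
\end{equation*}

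Once this representation is established the role of the mesh assumption becomes transparent: by Definition \ref{def:weakly-acute} each coefficient $-\omega_{z,\hat z}$ with $z \neq \hat z$ is non-negative. Applying the identity both to $f = d^h$ and to its nodal normalization $\interpol^1 (d^h / \abs{d^h})$ --- whose nodal values are $d^h_z / \abs{d^h_z}$ --- I would therefore reduce \eqref{eq:energy-decreasing-projection} to the edgewise inequality
\begin{equation*}
\abs{\frac{d^h_z}{\abs{d^h_z}} - \frac{d^h_{\hat z}}{\abs{d^h_{\hat z}}}}^2 \leq \abs{d^h_z - d^h_{\hat z}}^2 \qquad \text{whenever } \abs{d^h_z}, \abs{d^h_{\hat z}} \geq 1,
\end{equation*}
since summing this against the non-negative weights $-\tfrac{1}{2}\omega_{z,\hat z}$ yields the claim.

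It remains to verify this elementary pointwise inequality for two vectors $a, b \in \mathbb{R}^{\dimension}$ with $\alpha \coloneqq \abs{a} \geq 1$ and $\beta \coloneqq \abs{b} \geq 1$. Expanding both sides gives $\abs{a/\alpha - b/\beta}^2 = 2 - 2 (a \cdot b)/(\alpha\beta)$ and $\abs{a-b}^2 = \alpha^2 + \beta^2 - 2 a \cdot b$, so the inequality is equivalent to $2(a\cdot b)(1 - 1/(\alpha\beta)) \leq \alpha^2 + \beta^2 - 2$. Since $\alpha\beta \geq 1$ the factor $1 - 1/(\alpha\beta)$ is non-negative, and Cauchy--Schwarz ($a \cdot b \leq \alpha\beta$) bounds the left-hand side by $2\alpha\beta - 2$; the remaining estimate $2\alpha\beta - 2 \leq \alpha^2 + \beta^2 - 2$ is just $(\alpha - \beta)^2 \geq 0$.

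I expect the only genuinely substantive step to be the edge-difference representation of the Dirichlet energy, because that is where both the symmetry of the stiffness matrix and the partition-of-unity cancellation must be combined correctly --- this is precisely the point at which the \emph{weakly acute} hypothesis is converted into the sign information $-\omega_{z,\hat z} \geq 0$ that makes the term-wise reduction legitimate. The contraction property of the nodal normalization map and the final scalar inequality are then routine.
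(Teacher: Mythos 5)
Your proposal is correct and follows essentially the same route as the paper's proof: rewriting the Dirichlet energy in terms of edge differences $-\tfrac{1}{2}\sum_{z\neq\hat z}\omega_{z,\hat z}\abs{f_z-f_{\hat z}}^2$ via symmetry and the partition-of-unity cancellation $\sum_{\hat z}\omega_{z,\hat z}=0$, then using the \emph{weakly acute} sign condition to reduce to the nodal contraction estimate. The only (harmless) difference is that you verify the contraction inequality $\abs{a/\abs{a}-b/\abs{b}}\leq\abs{a-b}$ for $\abs{a},\abs{b}\geq 1$ by direct expansion and Cauchy--Schwarz, whereas the paper invokes the standard fact that projection from outside a convex set onto its boundary does not increase Euclidean distances.
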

\begin{proof}
Let us denote the abbreviation
$$
\omega_{z,\hat{z}} \coloneqq \int_{\Omega} \nabla \phi_z \cdot \nabla \phi_{\hat{z}} \diff x
$$
for the $L^2 (\Omega)$ scalar product of two shape functions.
Then, the identity $a \cdot b = \frac{1}{2} \abs{a}^2 + \frac{1}{2} \abs{b}^2 - \frac{1}{2}\abs{a-b}^2$ allows us to reformulate the left-hand side of Inequality \eqref{eq:energy-decreasing-projection} to
\begin{multline}\label{eq:reformulation-energy-decreasing-projection}
    \ltwonorm{\nabla \interpol^1 \frac{d^h}{\abs{d^h}}}^2
    =
    \sum_{z,\hat{z} \in \nodes} \omega_{z,\hat{z}} 
        \frac{d^h_z}{\abs{d^h_z}} \cdot \frac{d^h_{\hat{z}}}{\abs{d^h_{\hat{z}}}} 
    \\=
    -\frac{1}{2} \sum_{z,\hat{z} \in \nodes} \omega_{z,\hat{z}} \abs{\frac{d^h_z}{\abs{d^h_z}} - \frac{d^h_{\hat{z}}}{\abs{d^h_{\hat{z}}}} }^2
    + \frac{1}{2} \sum_{z,\hat{z} \in \nodes} \omega_{z,\hat{z}} 
    \left ( 
        \abs{\frac{d^h_{\hat{z}}}{\abs{d^h_{\hat{z}}}} }^2 + \abs{\frac{d^h_{z}}{\abs{d^h_{z}}} }^2 
    \right )
    .
\end{multline}
The last sum hereby vanishes since the shape functions are a partition of one, i.e.
$$
\sum_{z \in \nodes} \omega_{z,\hat{z}}
=
\int_{\Omega} (\nabla (\sum_{z \in \nodes} \phi_z ), \nabla \phi_{\hat{z}}) \diff x
= \int_{\Omega} (\nabla 1, \nabla \phi_{\hat{z}}) \diff x
=0.
$$
Due to the weakly acute mesh, all remaining summands in Equation \eqref{eq:reformulation-energy-decreasing-projection} have positive prefactors. It remains to estimate the summands from above.
It is well known that projections from outside a convex set onto its boundary do not increase the Euclidean distance between two points, such that 
$$
\abs{\left[ \interpol^1   \frac{d^h}{\vert d^h \vert} \right]_z - \left[ \interpol^1   \frac{d^h}{\vert d^h \vert} \right]_{\hat{z}} }^2 
=
\abs{\frac{d^h_z}{\abs{d^h_z}} - \frac{d^h_{\hat{z}}}{\abs{d^h_{\hat{z}}}} }^2 
\leq
\abs{d^h_z - d^h_{\hat{z}} }^2
$$
holds. This yields the result.
\end{proof}
Next, we show the existence of discrete solutions.
\begin{lem}[Unconditional existence]
    \label{lem:unconditional-existence-projection}
Let $k,h>0$ and $j \in \mathbb{N}$, such that $1\leq j \leq J = \lfloor T/k \rfloor$ and $u^{j-1}=(v^{j-1},d^{j-1}_\circ )\in \mixedfemspacecg$. Then, there exists a unique solution $u^j=(v^j,\tilde{d}^j_\circ )\in \mixedfemspacecg$ solving Equation System \eqref{eqs:projection-scheme}.
\end{lem}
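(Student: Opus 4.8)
The plan is to exploit that Equation System \eqref{eqs:projection-scheme} is a \emph{linear} square system for the unknown $(v^j,\tilde d^j_\circ)$ on the finite-dimensional space $\mixedfemspacecg$, the data $(v^{j-1},d^{j-1}_\circ)$ entering only through the coefficients and the right-hand side. On a finite-dimensional space a square linear system is uniquely solvable for every right-hand side if and only if the associated homogeneous system admits only the trivial solution. Hence it suffices to prove injectivity: any $(v,\delta)\in\mixedfemspacecg$ satisfying the homogeneous versions of \eqref{eq:projection_scheme_a} and \eqref{eq:projection_scheme_c} (with all data contributions moved to the right-hand side, so that in particular the Laplacian appears as $\Deltah\delta$) must vanish.

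To prove injectivity I would test the homogeneous momentum equation \eqref{eq:projection_scheme_a} with $a=v$ and the homogeneous director equation \eqref{eq:projection_scheme_c} with $c=\Deltah\delta$; the latter choice is admissible since $\Deltah$ maps $[\cgonezero]^{\dimension}$ into itself. Three structural ingredients then make the estimate close. First, the skew-symmetrizing term $\frac{1}{2}([\nabla\cdot v^{j-1}]v,v)_2$ is precisely what is needed so that, after integration by parts, $((v^{j-1}\cdot\nabla)v,v)_2+\frac{1}{2}([\nabla\cdot v^{j-1}]v,v)_2=0$, eliminating the convective contribution. Second, the defining relation of the discrete Laplacian yields $(\delta,\Deltah\delta)_h=-\ltwonorm{\nabla\delta}^2$. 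Third, and most importantly, the two Ericksen-type coupling terms are transposes of one another: combining $([\nabla d^{j-1}]^Tq)\cdot v=q\cdot([\nabla d^{j-1}]v)$ with the cross-product identity $(d\times a)\cdot b=-(d\times b)\cdot a$, the coupling in the momentum equation and the coupling in the director equation cancel exactly once the two tested equations are added.

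Adding the two tested equations should leave the identity
\begin{equation*}
\frac{1}{k}\ltwonorm{v}^2+\mu\ltwonorm{\nabla v}^2+\frac{1}{k}\ltwonorm{\nabla\delta}^2+\hnorm{d^{j-1}\times\Deltah\delta}^2=0.
\end{equation*}
Since every summand is non-negative, each one vanishes; in particular $v=0$ and $\nabla\delta=0$, and because $\delta\in[\cgonezero]^{\dimension}$ has zero trace, the latter forces $\delta=0$. This establishes injectivity and hence the asserted existence and uniqueness.

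The main obstacle I anticipate is the bookkeeping of the two coupling terms across the different inner products: the momentum coupling is evaluated in $(\cdot,\cdot)_2$ while the director coupling mixes $(\cdot,\cdot)_2$ and the mass-lumped $(\cdot,\cdot)_h$, and both are wrapped in the nodal interpolant $\interpol^1(d^{j-1}\times\,\cdot\,)$. The delicate point is to verify that the choice $c=\Deltah\delta$ reproduces \emph{exactly} the interpolated factor $\interpol^1(d^{j-1}\times\Deltah\delta)$ that already appears inside the momentum coupling, so that the transpose/cross-product cancellation goes through verbatim rather than only up to an interpolation error. Once that identification is made, the remaining manipulations are routine, and the positive definiteness of $\hnorm{\cdot}$ guaranteed by \eqref{eq:norm-equivalence-mass-lumping} suffices to conclude.
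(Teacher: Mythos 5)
Your proof is correct, and it takes a genuinely more elementary route than the paper's. The paper invokes the Lax--Milgram theorem: it rewrites System \eqref{eqs:projection-scheme} as $\mathcal{B}(u^*,\bar u)=\langle f,\bar u\rangle$ in the transformed unknown $(v^j,-\Deltah\tilde d^j_\circ)$, recovers $\tilde d^j_\circ$ from its discrete Laplacian through a Green's function $\mathcal{G}$ (itself well-posed by Lax--Milgram), and proves coercivity quantitatively via the inverse estimate \eqref{eq:discrete-norm-equivalence-laplacian-gradient}, which yields an $h$-dependent coercivity constant; boundedness is then checked separately. You instead use the linear-algebra fact that a square linear system on the finite-dimensional space $\mixedfemspacecg$ is uniquely solvable if and only if its kernel is trivial, so you need neither the Green's function nor any quantitative coercivity or boundedness estimate. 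The energy computation you use to kill the kernel is, however, exactly the paper's coercivity computation \eqref{eq:proof-discrete-ED-mechanism-projection-free}: skew-symmetry of the stabilized convection, the duality $(\delta,\Deltah\delta)_h=-\ltwonorm{\nabla\delta}^2$, and the exact cancellation of the two Ericksen couplings --- and the point you flagged as delicate does go through verbatim, since testing \eqref{eq:projection_scheme_c} with a multiple of $\Deltah\delta$ (admissible, as $\Deltah\delta\in[\cgonezero]^{\dimension}$) inserts precisely the interpolated factor $\interpol^1(d^{j-1}\times\Deltah\delta)$ that appears in the momentum coupling. One sign slip: testing with $c=+\Deltah\delta$ produces the negative of your displayed identity and makes the couplings add rather than cancel; the correct choice is $c=-\Deltah\delta$, matching the paper's test pair $(v^j,-\Deltah\tilde d^j_\circ)$, and it gives exactly the identity you state, so this is a trivial fix. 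What the paper's heavier formulation buys is reuse: the same bilinear-form structure and test pair immediately deliver the discrete energy-dissipation inequality \eqref{eq:discrete-energy-inequality-projection} in Lemma \ref{lem:a-priori-estimates-projection}, whereas your argument is tailored to well-posedness alone.
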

\begin{proof}
The proof follows by applying the Lax--Milgram theorem. 
We therefore decompose Equations \eqref{eqs:projection-scheme} into a bilinear map $\mathcal{B}: (\mixedfemspacecg)^2 \to \mathbb{R}$ defined by
\begin{multline*}
\mathcal{B}
    \left (
    (v^j, - \Deltah \tilde{d}^j_{\circ})^T), (a,c)^T
    \right )
\\
    \coloneqq
        \frac{1}{k}( v^j,a)_2 + \mu (\nabla v^j, \nabla a)_2
        +
        ((v^{j-1}  \cdot \nabla) v^j,a)_2 
        +
        \frac{1}{2} ([\nabla \cdot v^{j-1}]v^j,a)_2 
    \\
        -
        ([{\nabla d^{j-1}}]^T [d^{j-1}\times \interpol^1 (d^{j-1} \times \Deltah \tilde{d}^{j}_{\circ} )],a)_2  
    \\
        +
        \frac{1}{k}(\tilde{d}^{j}_{\circ},c)_h
        +
        (d^{j-1}\times [{\nabla d^{j-1}}v^j], \interpol^1 [d^{j-1} \times c])_2
    \\
        -
        (d^{j-1} \times \Deltah \tilde{d}^{j}_{\circ}, d^{j-1} \times c)_h
\end{multline*}
and a right hand-side $f: ( \mixedfemspacecg )^* \to \mathbb{R}$,
\begin{multline*}
    \langle f , (a,c)^T \rangle
= 
    \frac{1}{k} 
    \left [
    (v^{j-1}, a) + (d^{j-1} - \interpol^1 \fat d_0,c)
    \right ]
\\
    +
    ([{\nabla d^{j-1}}]^T [d^{j-1}\times \interpol^1(d^{j-1} \times \projectcg \Delta \fat d_0 )],a)_2
    +
    (d^{j-1} \times \projectcg \Delta \fat d_0 , d^{j-1} \times c)_h
\end{multline*}
such that a solution $u^* \in \mixedfemspacecg$ to Equations \eqref{eqs:projection-scheme} solves
$$
\mathcal{B} (u^*, \bar{u}) = \langle f, \bar{u} \rangle
$$
for all $\bar{u} \in \mixedfemspacecg$ and vice versa.
Thereby, the term $\tilde{d}^j_{\circ} = \mathcal{G} (\Deltah \tilde{d}^j_{\circ})$ has to be understood in terms of a Green's function $\mathcal{G} : [\cgonezero]^{\dimension} \to [\cgonezero]^{\dimension}$ of the discrete Laplacian with homogeneous Dirichlet boundary conditions, i.e.~the solution to the equation system
$$
(\nabla \mathcal{G} (w), \nabla b) = ( w,b)_h \text{ for all } b \in [\cgonezero]^{\dimension}.
$$
The well-posedness of the map $\mathcal{G}$ follows again from applying the Lax-Milgram theorem.
For the coercivity of the map $\mathcal{B}$, we first observe that
\begin{align}\label{eq:proof-discrete-ED-mechanism-projection-free}
\begin{split}
    \mathcal{B}\left (
    (v^j, - \Deltah \tilde{d}^j_{\circ})^T), (v^j, - \Deltah \tilde{d}^j_{\circ})^T 
    \right )
    =&
    \frac{1}{k} \left (
    \ltwonorm{v^j}^2  + \ltwonorm{\nabla \tilde{d}^j_{\circ}}^2
    \right )
\\&
+   \mu \lebnorm{\nabla v^{j}}{2}^2 
    +   \hnorm{ d^{j-1} \times \Deltah \tilde{d}^{j}_{\circ} }^2 
    .
\end{split}
\end{align}
Note that the discrete convection term vanished due to their skew-symmetry and that two of the remaining terms cancelled each other out since
$$
-([{\nabla d^{j-1}}]^T [d^{j-1}\times \interpol^1 (d^{j-1} \times \Deltah \tilde{d}^{j}_{\circ} )], v^j)_2 
=
(\interpol^1 [d^{j-1} \times \Deltah \tilde{d}^{j}_{\circ}] , 
d^{j-1}\times[{\nabla d^{j-1}}] v^j)_2 
.
$$
Applying Inequality \eqref{eq:discrete-norm-equivalence-laplacian-gradient} onto the right-hand side of Equation \eqref{eq:proof-discrete-ED-mechanism-projection-free} yields the coercivity,
$$
\mathcal{B}\left (
    (v^j, - \Deltah \tilde{d}^j_{\circ})^T), (v^j, - \Deltah \tilde{d}^j_{\circ})^T 
    \right )
\geq
    \frac{1}{k} \left (
    \ltwonorm{v^j}^2  + c^2 h^2 \norm{\Deltah \tilde{d}^j_{\circ}}_h^2
    \right ) .
$$
The boundedness follows analogously, again using the inverse and Poincaré inequality.
\end{proof}
\subsection{\textit{A priori} Estimates}
\begin{lem}[\textit{A priori} estimates]\label{lem:a-priori-estimates-projection}
Let the assumptions of Theorem \ref{thm:main-result-subconvergence-cg} be fulfilled and $u^j=(v^j,\tilde{d}^j_\circ)\in \mixedfemspacecg$ be a solution of Algorithm \ref{algo:cg} for all $1\leq j\leq n \leq J$. Then, the following discrete Energy-Dissipation inequality holds
\begin{align}
\begin{split}\label{eq:discrete-energy-inequality-projection}
\frac{1}{2}\ltwonorm{v^n}^2  & + \frac{1}{2}\ltwonorm{\nabla \tilde{d}^n}^2
\\&
+ k \sum_{j=1}^n \mu \lebnorm{\nabla v^{j}}{2}^2 
    + k \sum_{j=1}^n \ltwonorm{ \interpol^1 [d^{j-1} \times \Deltah \tilde{d}^{j}] }^2 
    \\& 
    + \frac{1}{2} \sum_{j=1}^n \ltwonorm{\nabla [\tilde{d}^j -d^{j-1}]}^2
    + \frac{k^2}{2} \sum_{j=1}^n \ltwonorm{ \discreteDiff_t {v}^j}^2
    \\& \quad\quad\quad\quad\quad\quad\quad\quad\quad\quad\quad\quad\quad\quad
    \leq
    \frac{1}{2}  \ltwonorm{v^0}^2
    + \frac{1}{2}\ltwonorm{\nabla d^0}^2 
    \, ,
\end{split}
\end{align}
as well as
\begin{align}\label{eq:non-projected-dirichlet-energy-bound}
    \ltwonorm{\nabla {d}^n}^2
    \leq
    \ltwonorm{\nabla \tilde{d}^n}^2 
    .
\end{align}
The projected solution fulfills the unit-sphere constraint nodally, i.e.
\begin{equation}\label{eq:nodal-unit-norm-constraint-for-projected-d}
    \abs{d^n(z)}=1
\end{equation}
for all $z\in\nodes$.
Additionally, for some generic constant $C>0$, we obtain that
\begin{align}
\begin{split}
k \sum_{l=1}^n \norm{k^{-1} (\tilde{d}^j - d^{j-1})}_{L^{3/2}(\Omega)}^2
&\leq 
C 
,
\\
k \sum_{l=1}^n \norm{k^{-1} ({d}^j - d^{j-1})}_{L^{3/2}(\Omega)}^2
&\leq 
C 
,
\\
k \sum_{l=1}^n \norm{\discreteDiff_t v^j}_{(\velspace )^*}^2
&\leq 
C 
,
\label{bounds_temp_variation_inequ}
\end{split}
\end{align}
where the generic constant on the right-hand side may depend on $\mathcal{E}(v^0,d^0)$.
\end{lem}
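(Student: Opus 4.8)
The plan is to test the two equations of Algorithm~\ref{algo:cg} against the solution itself, in the spirit of the coercivity computation~\eqref{eq:proof-discrete-ED-mechanism-projection-free}, and then to convert the resulting per-step identity into the summed inequality~\eqref{eq:discrete-energy-inequality-projection} by discrete telescoping. Concretely, I would test the velocity equation~\eqref{eq:projection_scheme_a} with $a=v^j$ and the director equation~\eqref{eq:projection_scheme_c} with $c=-\Deltah\tilde d^j_\circ$. The two skew-symmetrized convection terms in~\eqref{eq:projection_scheme_a} cancel, and the Ericksen-stress contribution in~\eqref{eq:projection_scheme_a} cancels against the transport term in~\eqref{eq:projection_scheme_c}: writing $w=\interpol^1(d^{j-1}\times\Deltah\tilde d^j)$, the triple-product rule~\eqref{eq:triple-prodcut} gives $([\nabla d^{j-1}]^{T}(d^{j-1}\times w))\cdot v^j = -w\cdot(d^{j-1}\times[\nabla d^{j-1}]v^j)$, exactly matching the term produced by the director equation. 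The definition of the discrete Laplacian (together with the constant-in-time inhomogeneous Dirichlet contribution carried by $\interpol^1\fat d_0$ and $\projectcg\Delta\fat d_0$) then identifies the elastic term as $\tfrac1k(\nabla\tilde d^j,\nabla(\tilde d^j-d^{j-1}))$, so that what remains is $(\discreteDiff_t v^j,v^j)_2$, the viscous term $\mu\ltwonorm{\nabla v^j}^2$, this elastic term, and the dissipation $\hnorm{d^{j-1}\times\Deltah\tilde d^j}^2$.

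To telescope I would apply the polarization identity $a\cdot(a-b)=\tfrac12\abs{a}^2-\tfrac12\abs{b}^2+\tfrac12\abs{a-b}^2$ to the kinetic and the elastic term. This produces the endpoint differences together with the numerical dissipation $\tfrac{k^2}{2}\ltwonorm{\discreteDiff_t v^j}^2$ and $\tfrac12\ltwonorm{\nabla(\tilde d^j-d^{j-1})}^2$, while $\hnorm{\,\cdot\,}^2\ge\ltwonorm{\interpol^1[d^{j-1}\times\Deltah\tilde d^j]}^2$ by the mass-lumping equivalence~\eqref{eq:norm-equivalence-mass-lumping}. The decisive step is the passage from $\tilde d^j$ to $d^j$ across the projection~\eqref{eq:projection-step-cg}: the nodal orthogonality of Corollary~\ref{cor:orthogonality-cg} together with $\abs{d^{j-1}(z)}=1$ gives $\abs{\tilde d^j(z)}^2=\abs{d^{j-1}(z)}^2+\abs{\tilde d^j(z)-d^{j-1}(z)}^2\ge1$, whence Lemma~\ref{lem:energy-decreasing-projection-cg} yields $\ltwonorm{\nabla d^j}\le\ltwonorm{\nabla\tilde d^j}$. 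Consequently the intermediate contributions $\tfrac12\ltwonorm{\nabla\tilde d^j}^2-\tfrac12\ltwonorm{\nabla d^j}^2\ge0$ may be discarded upon summing, turning the per-step equality into~\eqref{eq:discrete-energy-inequality-projection}. The same inequality proves~\eqref{eq:non-projected-dirichlet-energy-bound}, and~\eqref{eq:nodal-unit-norm-constraint-for-projected-d} is immediate from $d^j=\interpol^1(\tilde d^j/\abs{\tilde d^j})$ and $\abs{d^0(z)}=\abs{\fat d_0(z)}=1$, which also closes the induction needed to ensure $\abs{\tilde d^j(z)}\ge1$ at every step.

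For the temporal-variation bounds~\eqref{bounds_temp_variation_inequ} I would read $\discreteDiff_t\tilde d^j$ off equation~\eqref{eq:projection_scheme_c}. The projection furnishes the crucial bound $\norm{d^{j-1}}_{\Linf}\le1$ (any $\cgone$-function with unit-length nodal values is pointwise a convex combination of them), so the transport term $d^{j-1}\times[\nabla d^{j-1}v^j]$ is estimated in $\L{3/2}$ by Hölder as $\ltwonorm{\nabla d^{j-1}}\lebnorm{v^j}{6}\lesssim\ltwonorm{\nabla d^{j-1}}\ltwonorm{\nabla v^j}$ using $\H{1}\hookrightarrow\L{6}$, while the double cross-product term is controlled in $\L{2}\hookrightarrow\L{3/2}$ by the dissipation $\ltwonorm{\interpol^1[d^{j-1}\times\Deltah\tilde d^j]}$. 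Converting the mass-lumped and nodal quantities to $\L{3/2}$-norms via~\eqref{eq:norm-equivalence-mass-lumping-p} and Lemma~\ref{lem:global-local-norm-vs-Lp-norm}, then squaring, multiplying by $k$ and summing, the transport part is bounded by $\max_j\ltwonorm{\nabla d^{j-1}}^2\cdot k\sum_j\ltwonorm{\nabla v^j}^2$ and the remainder by $k\sum_j\ltwonorm{\interpol^1[d^{j-1}\times\Deltah\tilde d^j]}^2$, both finite by~\eqref{eq:discrete-energy-inequality-projection}. The bound for $\discreteDiff_t d^j$ then follows nodewise, since the radial projection onto the sphere is $1$-Lipschitz from outside, so $\abs{\discreteDiff_t d^j(z)}\le\abs{\discreteDiff_t\tilde d^j(z)}$.

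Finally, for $\discreteDiff_t v^j$ in $(\velspace)^*$ I would test~\eqref{eq:projection_scheme_a} with $\projectV a$ for $a\in\velspace$, using that $\discreteDiff_t v^j\in V_h$ and the approximation properties~\eqref{eq:approximation-properties-convergence-projection-projectV} to transfer the $H^2$-regularity of $a$ to $\projectV a$. Writing the convection in skew-symmetric form and moving one derivative onto the smooth test function --- exploiting $\velspace\hookrightarrow\Linf$ and $\velspace\hookrightarrow\W{1}{3}$ in three dimensions together with the kinetic-energy bound on $v^{j-1}$ --- bounds the convective contribution by $\lesssim\ltwonorm{\nabla v^j}$ and avoids any product of two dissipation norms, whereas the stress term is bounded by $\ltwonorm{\interpol^1[d^{j-1}\times\Deltah\tilde d^j]}\,\ltwonorm{\nabla d^{j-1}}$ using $\norm{d^{j-1}}_{\Linf}\le1$. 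Squaring and summing yields the claim. I expect the main obstacle to be precisely this $\L{3/2}$ estimate of $\discreteDiff_t\tilde d^j$: one must reconcile the mass-lumped inner product on the left of~\eqref{eq:projection_scheme_c} with a genuine $\L{3/2}$-norm, and the limited integrability of the transport term is exactly what forces the exponent $3/2$ and hence the regularity $H^1(0,T;L^{3/2}(\Omega;\mathbb{R}^3))$ demanded in Definition~\ref{def:envar}.
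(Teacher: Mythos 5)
Your proposal is correct and follows essentially the same route as the paper: testing with $(v^j,-\Deltah\tilde d^j)$ as in the coercivity computation \eqref{eq:proof-discrete-ED-mechanism-projection-free}, telescoping via the polarization identity, using Corollary \ref{cor:orthogonality-cg} together with the weakly acute mesh and Lemma \ref{lem:energy-decreasing-projection-cg} to discard the projection steps, the nodal $1$-Lipschitz property for $\discreteDiff_t d^j$, and the $\projectV$-duality with $\H{2}\hookrightarrow\Linf$ for $\discreteDiff_t v^j$. The one step you flag but do not execute --- reconciling the mass-lumped product in \eqref{eq:projection_scheme_c} with the genuine $L^{3/2}(\Omega)$-norm --- is indeed the crux, and the paper settles it with a localized duality argument: the mass-lumping error \eqref{eq:mass-lumping-error} is bounded cellwise by testing \eqref{eq:projection_scheme_c} with $c=\sum_{z\in\nodes^K}(\tilde d^j_z-d^{j-1}_z)\phi_z$, combined with inverse estimates and Lemma \ref{lem:global-local-norm-vs-Lp-norm}, which is consistent with the tools you name.
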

Note that, under the assumptions of Theorem \ref{thm:main-result-subconvergence-cg}, the right-hand-side of Inequality \eqref{eq:discrete-energy-inequality-projection} is bounded independently of the spatial discretization parameter $h$ due to the $L^2(\Omega)$-stability of $\projectV$ and Inequality \eqref{eq:interpolation-error-estimate}.
\begin{proof}
The nodal unit-norm constraint \eqref{eq:nodal-unit-norm-constraint-for-projected-d} is fulfilled trivially by the projection step \eqref{eq:projection-step-cg}.
For the rest, we follow \cite[Proposition 3.2]{lasarzik-reiter23}. The discrete Energy-Dissipation mechanism, Inequality \eqref{eq:discrete-energy-inequality-projection}, follows by reiterating the computations prior to Equation \eqref{eq:proof-discrete-ED-mechanism-projection-free}. Inequality \eqref{eq:non-projected-dirichlet-energy-bound} follows by applying the energy-decreasing projection, that is Lemma \ref{lem:energy-decreasing-projection-cg}.
Using the $L^p$ duality we can estimate the approximate time derivative in a standard fashion by
\begin{multline*}
    \lebnorm{k^{-1} (\tilde{d}^j - d^{j-1})}{3/2}
    = 
    \sup_{\phi \in \L{3}}
    \abs{
    \frac{(k^{-1} (\tilde{d}^j - d^{j-1}), \projectcgzero \phi)_2}{\lebnorm{\phi}{3}}
    }
\\
\leq 
    \sup_{\phi \in \L{3}}
    \abs{
    \frac{(k^{-1} (\tilde{d}^j - d^{j-1}), \projectcgzero \phi)_h}{\lebnorm{\phi}{3}}
    }
\\
    +
    \sup_{\phi \in \L{3}}
    \abs{
    \frac{\int_\Omega (\identitymatrix -\interpol) k^{-1} (\tilde{d}^j - d^{j-1}) \cdot \projectcgzero \phi \diff x}{\lebnorm{\phi}{3}}
    }
    .
\end{multline*}
The first summand can easily be estimated by evaluating the discrete evolution Equation \eqref{eq:projection_scheme_c}.
The second summand comprises the error introduced by the mass-lumping and can be estimated as
\begin{multline}
    \label{eq:mass-lumping-error}
\int_\Omega (\identitymatrix -\interpol) ([\tilde{d}^j- d^{j-1}]\cdot \projectcgzero \phi) \diff x
\\ 
\lesssim
\sum_{K\in \mesh}
\norm{\tilde{d}^j- d^{j-1}}_{L^2(K)} \norm{\projectcgzero\phi}_{L^3(K)}
\norm{ 1 }_{L^6(K)}
\\
\lesssim
h^{\dimension /6}
\sum_{K\in \mesh}
\lebnorm{\sum_{z\in \nodes^{K}} [\tilde{d}^j_z- d^{j-1}_z]\Phi_z}{2} \lebnorm{\sum_{z\in \nodes^{K}} [\projectcgzero \phi]_z \Phi_z}{3}
.
\end{multline}
We now need to derive a local estimate. First, we test Equation \eqref{eq:projection_scheme_c} with $c= \sum_{z\in \nodes^{K}} (\tilde{d}^j_z - d^{j-1}_z)\phi_z $ to observe
\begin{multline*}
\norm{\sum_{z\in \nodes^{K}} (\tilde{d}^j_z - d^{j-1}_z)\phi_z}_{h}^2
= \left ( \tilde{d}^j - d^{j-1},  \sum_{z\in \nodes^{K}} (\tilde{d}^j_z - d^{j-1}_z)\phi_z \right )_h
\\
\leq
k
\norm{\nabla d^{j-1} v^j + \interpol^1 [ d^{j-1} \times \Deltah \tilde{d}^j ]}_{L^{3/2}(K \cup \mathcal{N}(K))} 
\lebnorm{  \sum_{z\in \nodes^{K}} (\tilde{d}^j_z - d^{j-1}_z)\phi_z}{3}
 \, ,
\end{multline*}
where $\mathcal{N}(K)$ are all neighbouring cells of $K\in\mesh$.
Applying the inverse estimate on the last summand yields
\begin{equation*}\label{eq:local-estimate-time-derivative}
h^{\dimension /6} \lebnorm{  \sum_{z\in \nodes^{K}} (\tilde{d}^j_z - d^{j-1}_z)\phi_z}{2} 
\lesssim k 
\norm{\nabla d^{j-1} v^j + \interpol^1 [ d^{j-1} \times \Deltah \tilde{d}^j ]}_{L^{3/2}(K \cup \mathcal{N}(K))} .
\end{equation*}
Applying first the above estimate, and then Lemma \ref{lem:global-local-norm-vs-Lp-norm} on all factors of the right hand side of Inequality \eqref{eq:mass-lumping-error}, we can find that
\begin{multline*}
\int_\Omega (\identitymatrix -\interpol) ([\tilde{d}^j- d^{j-1}]\cdot \projectcgzero\phi) \diff x
\\
\lesssim
\lebnorm{\nabla d^{j-1} v^j + \interpol^1 [ d^{j-1} \times \Deltah \tilde{d}^j ]}{3/2} \lebnorm{ \projectcgzero\phi}{3}
\\
\lesssim 
\left [
\lebnorm{\nabla d^{j-1} }{2} 
\lebnorm{v^j}{6} 
+ \lebnorm{d^{j-1} \times \Deltah \tilde{d}^j}{2} 
\right ]
\lebnorm{ \projectcgzero\phi}{3}
.
\end{multline*}
Using the Sobolev embedding $L^6(\Omega) \hookrightarrow H^1(\Omega)$ and the \textit{a priori} Estimates \eqref{eq:discrete-energy-inequality-projection} yields the result.
\\
For the projected time derivative $\discreteDiff_t d^j$, we reuse the fact that
$$
\abs{d^j_z - d^{j-1}_z}
=
\abs{\frac{\tilde{d}^{j-1}_z}{\abs{\tilde{d}^{j-1}_z}} - \frac{d^{j-1}_z}{\abs{d^{j-1}_z}}}
\leq
\abs{\tilde{d}^j_z - d^{j-1}_z}
$$
holds for all $z\in\nodes$, which has already been used in the proof of Lemma \ref{lem:energy-decreasing-projection-cg}. Combining this with Equation \eqref{eq:norm-equivalence-mass-lumping-p} yields
\begin{multline*}
\lebnorm{d^j - d^{j-1}}{3/2}^{3/2}
\leq
C
\int_{\Omega } \interpol^1 (\abs{d^j - d^{j-1}}^{3/2}) \diff x
\\
\leq 
C
\int_{\Omega } \interpol^1 (\abs{\tilde{d}^j - d^{j-1}}^{3/2}) \diff x
\leq C
\lebnorm{\tilde{d}^j - d^{j-1}}{3/2}^{3/2}.
\end{multline*}
This proves the result.
The discrete time derivative of the velocity can also be estimated using a duality argument, the previous \textit{a priori} estimates, the stability of the projection $\projectV$ and the Sobolev embedding $\H{2} \hookrightarrow  \L{\infty}$, since
\begin{align*}
(\discreteDiff_t v^j, \projectV \vv)_2 
\lesssim
\left [
\lebnorm{\nabla v^j}{2} 
+
\lebnorm{\interpol^1 [d^{j-1} \times \Deltah\tilde{d}^j ]}{2}
\right ]
\norm{\projectV \vv}_{\H{2}}
\end{align*}
for all $\vv\in \velspace$.
\end{proof}
\subsection{Further Properties}
\begin{prop}[Discrete energy inequality]
Let $u^j = (v^j,d^j_{\circ})\in \mixedfemspacecg$ be a solution to Algorithm~\eqref{algo:cg} for $1\leq j \leq J$. Then, the discrete energy-variational inequality
\begin{align}
\begin{split}\label{discrete_envar}
        \discreteDiff_t E^j
        &     
        + \mu \left ( \nabla v^j , \nabla v^j-  \nabla \projectV\vv \right )  
        +  \norm{d^{j-1} \times \Deltah \tilde{d}^{j} }^2_h  
\\&
        - (\discreteDiff_t v^j,\projectV\vv) - \left ( (v ^{j-1}\cdot \nabla)  v ^j , \projectV\vv \right ) - \frac{1}{2} \left ( ( \nabla \cdot  v ^{j-1} )v^j , \projectV\vv\right )
\\ &
        - \left ( ({\nabla d^{j-1}}]^T [d^{j-1}\times \interpol^1 [ d^{j-1} \times \Deltah \tilde{d}^{j} ] ), \projectV\vv \right )
\\&
        + \mathcal{K}(\projectV\vv) \left ( \frac{1}{2}\Vert v^{j-1}\Vert_{L^2(\Omega)}^2 + \frac{1}{2}\Vert \nabla d^{j-1}\Vert_{L^2(\Omega)}^2  -E^{j-1} \right ) 
  \leq 0
   \, ,
\end{split}
\end{align}
with the variable $E^j$ and the regularity weight $\mathcal{K}$ given by 
\begin{align*}
 E^j := \frac{1}{2}\ltwonorm{v^j}^2 + \frac{1}{2}\ltwonorm{
 \nabla d^j}^2 ,
 \qquad
 \mathcal{K}(\vv ) : ={}& \frac{1}{2}
\Vert  \vv \Vert_{L^{\infty} (\Omega)} ^2 
 \, ,
\end{align*}
holds for all $\vv\in \velspace$.
\label{prop:discrete-envar-cg}
\end{prop}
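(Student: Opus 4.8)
The plan is to obtain \eqref{discrete_envar} by adding the discrete energy--dissipation law (the case $\vv=0$) to the momentum equation \eqref{eq:projection_scheme_a} tested against the fixed discrete test velocity $\projectV\vv$, and to exploit that $E^j$ is defined to coincide with the discrete energy. First I would reproduce the mechanism already encoded in \eqref{eq:proof-discrete-ED-mechanism-projection-free}: test \eqref{eq:projection_scheme_a} with $a=v^j$ and \eqref{eq:projection_scheme_c} with $c=-\Deltah\tilde{d}^j$ (understood through the decomposition $\Deltah\tilde{d}^j=\Deltah\tilde{d}^j_\circ+\projectcg\Delta\fat d_0$, using $\tilde d^j-d^{j-1}\in[\cgonezero]^{\dimension}$ and the definition of $\Deltah$), then sum the two identities. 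Under $a=v^j$ the two convection terms vanish by skew-symmetry, and the discrete Ericksen-stress term of \eqref{eq:projection_scheme_a} cancels against the director-convection term of \eqref{eq:projection_scheme_c}; this is exactly the cancellation used in the proof of Lemma \ref{lem:unconditional-existence-projection}, resting on the pointwise identity $(d^{j-1}\times a)\cdot b=-a\cdot(d^{j-1}\times b)$ together with the fact that the mass-lumped product $(\cdot,\cdot)_h$ and the interpolation $\interpol^1$ act on matching cross products on both sides. Rewriting $(\discreteDiff_t v^j,v^j)$ and $\tfrac1k(\nabla\tilde{d}^j,\nabla(\tilde{d}^j-d^{j-1}))$ via $x\cdot(x-y)=\tfrac12|x|^2-\tfrac12|y|^2+\tfrac12|x-y|^2$ turns them into discrete time derivatives of $\tfrac12\ltwonorm{v^j}^2$ and $\tfrac12\ltwonorm{\nabla\tilde{d}^j}^2$ plus non-negative remainders, while the director dissipation surfaces as $\norm{d^{j-1}\times\Deltah\tilde{d}^j}_h^2$.

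Next I would invoke the projection. Corollary \ref{cor:orthogonality-cg} gives $|\tilde{d}^j(z)|\ge 1$, so Lemma \ref{lem:energy-decreasing-projection-cg} (here the weakly acute mesh is used) yields $\ltwonorm{\nabla d^j}\le\ltwonorm{\nabla\tilde{d}^j}$ and hence $\discreteDiff_t\bigl(\tfrac12\ltwonorm{\nabla\tilde{d}^j}^2\bigr)\ge\discreteDiff_t\bigl(\tfrac12\ltwonorm{\nabla d^j}^2\bigr)$. Discarding the non-negative remainders, the summed identity becomes the per-step inequality $\discreteDiff_t E^j+\mu\ltwonorm{\nabla v^j}^2+\norm{d^{j-1}\times\Deltah\tilde{d}^j}_h^2\le 0$, which is precisely \eqref{discrete_envar} in the case $\vv=0$. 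Finally, \eqref{eq:projection_scheme_a} tested with $a=\projectV\vv$ holds with equality; adding this identity (equal to zero) to the inequality just obtained supplies all the remaining $\projectV\vv$-dependent contributions, since $-\mu(\nabla v^j,\nabla\projectV\vv)$ in the first line together with the second and third lines constitute, up to an overall sign, the left-hand side of \eqref{eq:projection_scheme_a} tested with $\projectV\vv$ and therefore vanish. The convexification term needs no estimate: because $E^{j-1}=\tfrac12\ltwonorm{v^{j-1}}^2+\tfrac12\ltwonorm{\nabla d^{j-1}}^2$ by definition, the bracket multiplying $\mathcal{K}(\projectV\vv)$ is identically zero and may be appended for free.

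I expect the main obstacle to be the cross-term cancellation in the first step: one must verify that mass-lumping and the interpolant $\interpol^1$ are applied to identical arguments so that the Ericksen and director-convection contributions annihilate exactly, rather than merely up to a consistency error, exactly as in Lemma \ref{lem:unconditional-existence-projection}. The second delicate point is bookkeeping around the projection, which converts the elastic part of the identity into an inequality precisely at the transition from $\tilde{d}^j$ to $d^j$; one must make sure that this is the only place where equality is lost and that the dropped remainder terms $\tfrac{k}{2}\ltwonorm{\discreteDiff_t v^j}^2$ and $\tfrac{1}{2k}\ltwonorm{\nabla(\tilde{d}^j-d^{j-1})}^2$ are non-negative, so that the final step-wise bound retains the sign required by \eqref{discrete_envar}.
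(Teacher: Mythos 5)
Your proposal is correct and follows essentially the same route as the paper: the paper obtains \eqref{discrete_envar} by adding Equation \eqref{eq:projection_scheme_a} tested with $-\projectV\vv$ to the discrete Energy--Dissipation inequality \eqref{eq:discrete-energy-inequality-projection} --- which is itself derived exactly as you describe, via the testing $(a,c)=(v^j,-\Deltah\tilde d^j)$, the skew-symmetry and Ericksen/director-convection cancellations of \eqref{eq:proof-discrete-ED-mechanism-projection-free}, and the energy-decreasing projection of Lemma \ref{lem:energy-decreasing-projection-cg} --- and then appending the constructive zero multiplying $\mathcal{K}(\projectV\vv)$, whose bracket vanishes identically since $E^{j-1}$ equals the discrete energy by definition. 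The two delicate points you flag, namely the exact (not merely consistent) cancellation through matching $\interpol^1$-arguments in the mass-lumped cross products, and the single loss of equality at the projection step with the non-negative remainders $\tfrac{k}{2}\ltwonorm{\discreteDiff_t v^j}^2$ and $\tfrac{1}{2k}\ltwonorm{\nabla(\tilde d^j-d^{j-1})}^2$ dropped, are precisely where the paper's argument lives as well.
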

\begin{proof}
Adding Equation \eqref{eq:projection_scheme_a} tested with $- \projectV \vv^j$ to Inequality \eqref{eq:discrete-energy-inequality-projection}, we find the variational-energy inequality
\begin{align*}
\begin{split}
        \frac{1}{2k}\Vert v^j & \Vert_{L^2(\Omega)}^2
        + \frac{1}{2k}\Vert \nabla d^j\Vert_{L^2(\Omega)}^2  
        + \ltwonorm{\interpol^1 [d^{j-1} \times \Deltah \tilde{d}^{j}] }^2 
\\& 
        + \frac{1}{2k}\ltwonorm{\nabla [\tilde{d}^j -d^{j-1}]}^2
        + \frac{k}{2}  \ltwonorm{ \discreteDiff_t {v}^j}^2
        + \mu \left ( \nabla v^j ,  \nabla v^j -  \nabla \projectV\vv  \right )_2 
 \\&  
        - (\discreteDiff_t v^j,\projectV\vv) - \left ( (v ^{j-1}\cdot \nabla)  v ^j ,\projectV \vv \right )_2 - \frac{1}{2} \left ( ( \nabla \cdot  v ^{j-1} )v^j ,\projectV \vv\right )_2
 \\ &
        - \left ( [{\nabla d^{j-1}}]^T [d^{j-1}\times \interpol^1 (d^{j-1} \times \Deltah \tilde{d}^{j}  )], \projectV\vv \right )_2
\\&
        \leq
        \frac{1}{2k}  \ltwonorm{v^{j-1}}^2
        + \frac{1}{2k} \ltwonorm{\nabla d^{j-1}}^2 .
\end{split}
\end{align*}
Adding the constructive zero
$$
\mathcal{K}(\projectV\vv) \left ( \frac{1}{2}\Vert v^{j-1}\Vert_{L^2(\Omega)}^2 + \frac{1}{2}\Vert \nabla d^{j-1}\Vert_{L^2(\Omega)}^2  -E^{j-1} \right )= 0
$$
implies Inequality \eqref{discrete_envar}. 
\end{proof}
Next, we consider the asymptotic behaviour of the \textit{projection error} which we define by
\begin{equation}\label{eq:def-projection-error}
r^j \coloneqq d^j - \tilde{d}^j=
 \interpol^1   \frac{\tilde{d}^j}{\vert{\tilde{d}^j}\vert} 
- 
\tilde{d}^j.
\end{equation}
\begin{lem}\label{lem:convergence-projection-err}
Let $(v^j,\tilde{d}^j_\circ)\in \mixedfemspacecg$ be a solution of Algorithm \eqref{algo:cg} for $1\leq j\leq J$. Then, the following error estimate holds,
\begin{align*}
\lebnorm{\interpol^1   \left ( \frac{\tilde{d}^j}{\vert{\tilde{d}^j}\vert} \right ) - \tilde{d}^j}{1}
=
\lebnorm{r^j}{1} &\lesssim  k^2 \norm{k^{-1} (\tilde{d}^j - d^{j-1})}_h^2.
\end{align*}
\end{lem}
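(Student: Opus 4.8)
The plan is to reduce everything to the nodal level and exploit the orthogonality relation from Corollary~\ref{cor:orthogonality-cg}. Since both $\tilde{d}^j$ and its nodal normalization $d^j = \interpol^1 (\tilde{d}^j / \abs{\tilde{d}^j})$ belong to $[\cgone]^{\dimension}$, the projection error $r^j$ is again a $\cgone$ function, completely determined by its nodal values $r^j_z = \tilde{d}^j_z / \abs{\tilde{d}^j_z} - \tilde{d}^j_z$ for $z \in \nodes$. First I would establish a pointwise bound on $\abs{r^j_z}$ that is quadratic in the increment $\tilde{d}^j_z - d^{j-1}_z$, and only afterwards lift it to an $L^1(\Omega)$ estimate through the mass-lumping norm equivalence.

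For the nodal bound the key observation is a Pythagorean identity. By the nodal unit-norm constraint \eqref{eq:nodal-unit-norm-constraint-for-projected-d} we have $\abs{d^{j-1}_z} = 1$, and by Corollary~\ref{cor:orthogonality-cg} the increment $\tilde{d}^j_z - d^{j-1}_z$ is orthogonal to $d^{j-1}_z$. Decomposing $\tilde{d}^j_z = d^{j-1}_z + (\tilde{d}^j_z - d^{j-1}_z)$ and applying the Pythagorean theorem therefore yields $\abs{\tilde{d}^j_z}^2 = 1 + \abs{\tilde{d}^j_z - d^{j-1}_z}^2$. Since $r^j_z = \tilde{d}^j_z (1/\abs{\tilde{d}^j_z} - 1)$, this gives $\abs{r^j_z} = \big\lvert \abs{\tilde{d}^j_z} - 1 \big\rvert = \sqrt{1 + \abs{\tilde{d}^j_z - d^{j-1}_z}^2} - 1$, and the elementary inequality $\sqrt{1+t} - 1 \leq t/2$ for $t \geq 0$ produces the clean nodal estimate $\abs{r^j_z} \leq \tfrac{1}{2} \abs{\tilde{d}^j_z - d^{j-1}_z}^2$.

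It then remains to assemble the global bound. Using the $p=1$ case of the mass-lumping norm equivalence \eqref{eq:norm-equivalence-mass-lumping-p} together with the definition of the discrete inner product, I would estimate
\begin{align*}
\lebnorm{r^j}{1}
&\leq \int_\Omega \interpol^1 \abs{r^j} \diff x
= \sum_{z \in \nodes} \abs{r^j_z} \int_\Omega \phi_z \diff x \\
&\leq \frac{1}{2} \sum_{z \in \nodes} \abs{\tilde{d}^j_z - d^{j-1}_z}^2 \int_\Omega \phi_z \diff x
= \frac{1}{2} \hnorm{\tilde{d}^j - d^{j-1}}^2 ,
\end{align*}
which equals $\tfrac{k^2}{2} \hnorm{k^{-1}(\tilde{d}^j - d^{j-1})}^2$ and is exactly the asserted estimate.

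Every step is an explicit nodal computation, so there is no genuine obstacle here; the only point requiring care is the Pythagorean identity, which rests on simultaneously invoking the orthogonality of Corollary~\ref{cor:orthogonality-cg} and the nodal normalization of $d^{j-1}$ from \eqref{eq:nodal-unit-norm-constraint-for-projected-d}. Both are consequences of the preceding analysis, and once they are in place the quadratic scaling of the projection error in the time increment is immediate.
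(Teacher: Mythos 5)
Your proof is correct and follows essentially the same route as the paper's: reduce to nodal values, combine Corollary \ref{cor:orthogonality-cg} with the nodal unit-norm constraint \eqref{eq:nodal-unit-norm-constraint-for-projected-d} to eliminate the cross term in $\abs{\tilde{d}^j_z}^2$, and sum the resulting quadratic nodal bound via the mass-lumped norm \eqref{def:lumped_norm_product}. The only deviation is cosmetic: where the paper invokes the inequality $\lvert \abs{x}-1\rvert \leq \lvert \abs{x}^2 -1 \rvert$ and then expands the square, you state the exact Pythagorean identity $\abs{\tilde{d}^j_z}^2 = 1 + \abs{\tilde{d}^j_z - d^{j-1}_z}^2$ and apply $\sqrt{1+t}-1 \leq t/2$, which even yields the slightly sharper constant $\tfrac{1}{2}$.
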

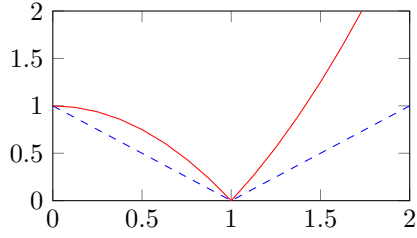
\begin{figure}
\centering
\begin{tikzpicture}
\begin{axis}[
        width=0.5\textwidth,
        height=0.2\textheight,
        xmin=0, xmax=2, 
        ymin=0, ymax=2, 
        domain=0:3   
]
\addplot [blue,dashed]    {abs(abs(x)-1)};
\addplot [red, no marks]    {abs(abs(x)^2-1)};
\end{axis}
\end{tikzpicture}
    \caption{Visualization of the functions $g(x)= \abs{\abs{x}-1}$ (blue, dashed) and $f(x) = \lvert {\abs{x}^2-1} \rvert$ (red).}
    \label{fig:visualization-square-root-abs}
\end{figure}
\begin{proof}
Since $r^j \in [\cgone]^{\dimension}$, it is sufficient to first consider the error on a nodal basis. Let $z\in \nodes$ and recall the definition of the projection, Equation \eqref{eq:projection-step-cg}.
Then, we observe
\begin{equation}
    \abs{r^j_z}
=
    \abs{d^j_z -\tilde{d}^j_z}
=
    \abs{d^j_z (1-  \abs{\tilde{d}^j_z})} 
\leq
    \abs{1- \abs{\tilde{d}^j_z}}
    .  \label{eq:projection-error-nodal-decomposition}
\end{equation}
This shows us that the projection error is dominated by the deviation of our intermediate solution $\tilde{d}^j$ from the unit-sphere constraint.
We apply the inequality (cf. Figure \ref{fig:visualization-square-root-abs})
$$
\lvert \abs{x}-1 \rvert \leq \lvert {\abs{x}^2-1} \rvert
$$
for all $x\in \mathbb{R}$, onto Inequality \eqref{eq:projection-error-nodal-decomposition} in order to obtain
\begin{equation}
\label{eq:application-taylor-sqrt}
    \abs{r^j_z}
\leq
    \abs{\abs{d^{j-1}_z +(\tilde{d}^j_z - d^{j-1}_z)}^2 - 1}
    \leq
    \abs{\tilde{d}^j_z-d^{j-1}_z}^2 
    +
    2
    \abs{d^{j-1}_z \cdot (\tilde{d}^j_z - d^{j-1}_z)}
\end{equation}
where we used that $d^{j-1}$ fulfills the unit-sphere constraint nodally. 
The last summand in Inequality \eqref{eq:application-taylor-sqrt} vanishes due to the nodal orthogonality relation, that is Corollary \ref{cor:orthogonality-cg}. Now, we can simply compute
\begin{align*}
    \lebnorm{r^j}{1}
    &\leq
    \sum_{z \in \nodes} \int_\Omega \phi_z \diff x \abs{r^j_z}
    \leq
    k^2
    \sum_{z \in \nodes} \int_\Omega \phi_z \diff x \abs{k^{-1}(\tilde{d}^j_z-d^{j-1}_z)}^2
\\
    &=
    k^2
    \hnorm{k^{-1} (\tilde{d}^j-d^{j-1})}^2
    \lesssim
    k^2 \ltwonorm{k^{-1}(\tilde{d}^j-d^{j-1})}^2.
\end{align*}
\end{proof}
\section{Convergence of Algorithm \ref{algo:cg}}
\label{sec:convergence-to-envar-sols-cg}
For the convergence analysis, we follow \cite[Section 3]{lasarzik-reiter23} closely.
\subsection{Interpolation in Time}
\label{sec:interpolation-in-time-cg}
For $1\leq j \leq J$ and $t\in ((j-1)k, jk)$, let the piecewise constant interpolates in time be defined by
\begin{align*}
    \overline{d}^k_h (t) &= d^j, & \overline{\tilde{d}}^k_h(t) &= \tilde{d}^j,  \\
    \underline{d}^k_h (t)&= d^{j-1}, & \underline{\tilde{d}}^k_h (t)&= \tilde{d}^{j-1} \, ,
\end{align*}
as well as the continuous, piecewise linear interpolates in time by
\begin{align*}
    d_k^h (t) &= d^{j-1} + \frac{t-(j-1)k}{k} (d^j - d^{j-1}), 
    \\
    \tilde{d}_k^h (t) &= \tilde{d}^{j-1} + \frac{t-(j-1)k}{k} (\tilde{d}^j - \tilde{d}^{j-1}).
\end{align*}
We also denote the discontinuous, piecewise constant interpolate for our \textit{approximate time derivative} by
\begin{equation}
w^k_h (t) = \frac{\tilde{d}^j-d^{j-1}}{k}
\label{eq:def-wkh}
\end{equation}
for $t\in ((j-1)k, jk)$.
For a smooth function in time and space $\phi \in \Cont_c^{\infty} ([0,T] \times \Omega)$ we define the piecewise constant interpolates by
\begin{align*}
    \overline{\phi}^k (t) &= \phi (jk), 
    &
    \overline{\phi}^k_h (t) &= \interpol^1 \phi (jk), 
    \\
    \underline{\phi}^k (t)&= \phi ((j-1)k),
    &
    \underline{\phi}^k_h (t) &= \interpol^1 \phi ((j-1)k) \, ,
\end{align*}
for $t\in ((j-1)k, jk)$.
\subsection{Converging Subsequences}
\label{sec:subsequences-cg}
The \textit{a priori} estimates --- Lemma \ref{lem:a-priori-estimates-projection} --- allow to infer converging subsequences that are not relabeled such that  
\begin{align}
\begin{split}\label{eqs:converging_subsequences}
\overline{v}^k_h,\underline{v}^k_h, v_h^k &\weakstarto \fat{v} \text{ in } L^{\infty} (0,T;\Ha), \\
\overline{v}^k_h,\underline{v}^k_h,v_h^k &\weakto \fat{v} \text{ in }  L^2(0,T;H^1_0(\Omega)), \\
\partial_t v_k^h &\weakto \partial_t \fat{v} \text{ in } L^2(0,T; (\velspace )^*) 
,\\
\overline{d}^k_h,\underline{d}^k_h,d_h^k 
&\weakstarto
\fat{d} \text{ in } L^{\infty} (0,T;H^1(\Omega)) \cap L^{\infty} ( (0,T)\times \Omega)) 
,\\
\overline{\tilde{d}}^k_h,\underline{\tilde{d}}^k_h, \tilde{d}_h^k 
& 
\weakstarto
 \tilde{\fat{d}} \text{ in } L^{\infty} (0,T;H^1(\Omega)) 
,\\
w^k_h &\weakto \fat{ W} \text{ in } L^2(0,T;L^{3/2}(\Omega)),
\\
\partial_t d^k_h &\weakto  \partial_t \fat{d} \text{ in } L^2(0,T;L^{3/2}(\Omega)),
\\
\interpol^1 ( \underline{d}^k_h \times \Deltah \overline{\tilde{d}}^k_h)
&\weakto \fat{ U} \text{ in } L^2(0,T;\L{2}), 
\\
\underline{E}^k_h(t) & \to E(t) \text{ for all } t \in [0,T]
.
\end{split}
\end{align}
The pointwise convergence $\underline{E}^k_h(t) \to E(t)$ follows from an application of Helly's selection principle \cite[Ex. 8.3]{brezis} for monotonic functions since $\underline{E}^k_h(t)\leq \mathcal{E} (\fat v, \fat d) (0)$ is non-increasing due to \eqref{eq:discrete-energy-inequality-projection}.
\\
In a standard fashion, we can also derive strong convergence for the velocity field applying the Aubin-Lions lemma onto the embeddings $(\velspace) \stackrel{c}{\hookrightarrow} \Ha \hookrightarrow (\velspace)^*$. Together with a standard interpolation estimate that yields
\begin{equation}\label{strong-convergence velocity}
    \overline{v}^k_h,\underline{v}^k_h,v_h^k \to \fat v \text{ in }  L^p(0,T;\Ha)
\end{equation}
for all $p\in[2,\infty)$.
The next corollary comprises the asymptotic behaviour of the projection error --- that is a direct consequence of Lemma \ref{lem:convergence-projection-err}, the definition of $w^k_h$ (see Equation \eqref{eq:def-wkh}), and the inverse estimate.
\begin{cor}
\label{cor:identification-rkh}
Under the previous assumptions, we have
$$
\norm{\overline{r}^k_h}_{L^1(0,T;L^1(\Omega))}
\lesssim 
\frac{k^2}{h^{\dimension /3}}
\norm{w^k_h}_{L^2(0,T;L^{3/2}(\Omega))}^2
.
$$
This implies
\begin{align}
    \overline{r}^k_h 
    &\to 0 \in L^1(0,T;L^1(\Omega)) \text{ for } k\in o(h^{\dimension /6})
    ,
    \label{eq:convergence-rkh}
    \\
    \frac{1}{k} \overline{r}^k_h 
    &\to 0 \in L^1(0,T;L^1(\Omega)) \text{ for } k\in o(h^{\dimension /3})
    ,
    \label{eq:convergence-rkh-over-k}
\end{align}
as $k,h\to 0$.
\end{cor}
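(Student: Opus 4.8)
The plan is to reduce the Bochner estimate to the nodal bound of Lemma~\ref{lem:convergence-projection-err} and then to convert the mass-lumped spatial norm successively into the $L^2(\Omega)$-norm and into the $L^{3/2}(\Omega)$-norm that appears in $w^k_h$. First I would exploit that $\overline{r}^k_h(t) = r^j$ is constant on each time interval $((j-1)k,jk)$, so that the Bochner norm collapses to a Riemann sum,
\[
\norm{\overline{r}^k_h}_{L^1(0,T;L^1(\Omega))} = \sum_{j=1}^J k\, \lebnorm{r^j}{1}.
\]
Inserting the estimate $\lebnorm{r^j}{1} \lesssim k^2 \norm{k^{-1}(\tilde{d}^j - d^{j-1})}_h^2$ from Lemma~\ref{lem:convergence-projection-err} then gives
\[
\norm{\overline{r}^k_h}_{L^1(0,T;L^1(\Omega))} \lesssim k^2 \sum_{j=1}^J k\, \norm{k^{-1}(\tilde{d}^j - d^{j-1})}_h^2.
\]

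Next I would pass from the mass-lumped norm to the genuine $L^2(\Omega)$-norm via the equivalence~\eqref{eq:norm-equivalence-mass-lumping}, at the cost of the $h$-independent constant $c_L^2$, and then apply the inverse estimate to trade integrability for a negative power of $h$. With $l=m=0$, $p=2$ and $q=3/2$ the inverse estimate yields the scaling $\lebnorm{f_h}{2}^2 \lesssim h^{-\dimension/3}\lebnorm{f_h}{3/2}^2$ for $f_h \in [\cgone]^{\dimension}$, since the exponent $\min(0,\dimension/2 - 2\dimension/3) = -\dimension/6$ doubles to $-\dimension/3$ after squaring. Applying this with $f_h = k^{-1}(\tilde{d}^j - d^{j-1})$, which is exactly the constant value of $w^k_h$ on $((j-1)k,jk)$, reassembles the remaining sum into a Bochner norm,
\[
\sum_{j=1}^J k\, \lebnorm{w^k_h}{3/2}^2 = \norm{w^k_h}_{L^2(0,T;L^{3/2}(\Omega))}^2,
\]
which produces the asserted estimate with prefactor $k^2/h^{\dimension/3}$.

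Finally, the two convergence statements follow from this estimate together with the a priori bound on the discrete time derivative. The first line of~\eqref{bounds_temp_variation_inequ} guarantees $\norm{w^k_h}_{L^2(0,T;L^{3/2}(\Omega))}^2 \leq C$ uniformly in $k$ and $h$, so $\norm{\overline{r}^k_h}_{L^1(0,T;L^1(\Omega))} \lesssim k^2 h^{-\dimension/3}$, which vanishes precisely when $k \in o(h^{\dimension/6})$; this is~\eqref{eq:convergence-rkh}. Dividing through by $k$ gives $\norm{k^{-1}\overline{r}^k_h}_{L^1(0,T;L^1(\Omega))} \lesssim k\, h^{-\dimension/3}$, which tends to zero exactly when $k \in o(h^{\dimension/3})$, establishing~\eqref{eq:convergence-rkh-over-k}.

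Since every ingredient is already at hand, I do not expect a genuine obstacle. The only point requiring care is the bookkeeping of the $h$-powers: one must verify that squaring the $L^2$-versus-$L^{3/2}$ inverse estimate delivers exactly $h^{-\dimension/3}$ rather than $h^{-\dimension/6}$, and then match this factor against the two prescribed smallness regimes $k\in o(h^{\dimension/6})$ and $k\in o(h^{\dimension/3})$ for the respective limits.
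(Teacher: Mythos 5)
Your proposal is correct and coincides with the paper's own (unwritten, one-line) argument: the paper derives the corollary exactly as a direct consequence of Lemma \ref{lem:convergence-projection-err}, the norm equivalence \eqref{eq:norm-equivalence-mass-lumping}, the inverse estimate with exponent $\min(0,\dimension/2-2\dimension/3)=-\dimension/6$ (hence $h^{-\dimension/3}$ after squaring), the definition \eqref{eq:def-wkh} of $w^k_h$, and the uniform bound in the first line of \eqref{bounds_temp_variation_inequ}. Your $h$-power bookkeeping and the matching of the two smallness regimes $k\in o(h^{\dimension/6})$ and $k\in o(h^{\dimension/3})$ are exactly right.
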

The preceeding corollary implies that for $k\in o(h^{\dimension /6})$ we can identify
$$
\fat{d} =\tilde{\fat{d}}
$$
in a weak sense.
Further, for $k\in o(h^{\dimension /3})$, we can identify the limit of our approximate time derivative in a weak sense by
$$
\fat{W} = \partial_t \fat{d},
$$
since for $t\in ((j-1)k, jk)$, we can observe the decomposition
\begin{equation}
w^k_h (t) = \frac{\tilde{d}^j - d^{j-1}}{k} = \frac{{d}^j - d^{j-1}}{k} +\frac{\tilde{d}^j - d^{j}}{k} 
= \partial_t d^k_h (t) + \frac{1}{k} \overline{r}^k_h (t)
.
\label{eq:decomposition-wkh}
\end{equation}
The fact that all temporal interpolates have the same limit can be derived in a standard fashion. We here do this exemplary, i.e.
\begin{equation}
\norm{d^k_h - \overline{d}^k_h}_{L^1(0,T;\L{1})}
\leq
\norm{\overline{r}^k_h}_{L^1(0,T;\L{1})}
+
k
\norm{w^k_h}_{L^1(0,T;\L{1})}
.
\label{eq:l1l1limit-temporal-interpolates}
\end{equation}
Corollary \ref{cor:identification-rkh} and the convergence of subsequences \eqref{eqs:converging_subsequences} deliver the result.
Left to identify is the limit $\fat{U}$.
\begin{prop}
\label{prop:identification-of-d-times-deltad}
Under the previous assumptions and additionally assuming
$$
k\in o (h^{1+ \dimension /6}) \, ,
$$
we can identify 
$$
\fat{U} = \nabla \cdot (\fat{d} \times \nabla \fat{d})
$$
in a weak sense.
\end{prop}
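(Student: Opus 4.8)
The plan is to identify $\fat U$ by testing the sequence $\interpol^1(\underline{d}^k_h \times \Deltah \overline{\tilde{d}}^k_h)$ against an arbitrary $\phi \in \Cont_c^\infty((0,T)\times\Omega;\mathbb{R}^{\dimension})$ and showing that its weak limit obeys the weak-divergence identity
\[
\int_0^T\int_\Omega \fat U \cdot \phi \diff x \diff t = -\int_0^T\int_\Omega (\fat d \times \nabla \fat d):\nabla\phi \diff x \diff t,
\]
which is exactly the statement $\fat U = \nabla\cdot(\fat d\times\nabla\fat d)$ in the weak sense of Definition \ref{def:envar}. Since $\interpol^1(\underline{d}^k_h\times\Deltah\overline{\tilde{d}}^k_h)\weakto\fat U$ in $L^2(0,T;\L{2})$, the left-hand side is already the limit of $\int_0^T(\interpol^1(\underline{d}^k_h\times\Deltah\overline{\tilde{d}}^k_h),\phi)_2\diff t$, so the entire task is to compute this limit and match it to the right-hand side.

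First I would rewrite the pairing in the mass-lumped inner product: replacing $(\cdot,\phi)_2$ by $(\cdot,\interpol^1\phi)_h$ produces an interpolation error and a quadrature error, both of which pair against the $L^2(0,T;\L{2})$-bounded factor $\interpol^1(\underline{d}^k_h\times\Deltah\overline{\tilde{d}}^k_h)$ and, by \eqref{eq:interpolation-error-estimate} and the inverse estimate, vanish as $h\to0$. It is essential to keep this combination intact, since separating the factors would expose $\Deltah\overline{\tilde{d}}^k_h$, whose norm only satisfies the inverse bound $\ltwonorm{\Deltah\overline{\tilde{d}}^k_h}\lesssim h^{-1}\ltwonorm{\nabla\overline{\tilde{d}}^k_h}$ from \eqref{eq:discrete-norm-equivalence-laplacian-gradient}. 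Next I would use the nodal triple-product identity \eqref{eq:triple-prodcut} to move $\underline{d}^k_h$ across the cross product, turning the pairing into $-(\Deltah\overline{\tilde{d}}^k_h,\interpol^1(\underline{d}^k_h\times\interpol^1\phi))_h$, and then invoke the definition of the discrete Laplacian (valid because $\interpol^1(\underline{d}^k_h\times\interpol^1\phi)\in[\cgonezero]^{\dimension}$ for compactly supported $\phi$) to arrive at $(\nabla\overline{\tilde{d}}^k_h,\nabla\interpol^1(\underline{d}^k_h\times\interpol^1\phi))$; the inhomogeneous boundary contribution hidden in $\projectcg\Delta\fat d_0$ is removed by a standard consistency estimate and vanishes because $\phi$ is supported away from $\partial\Omega$.

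At this stage a discrete product rule together with \eqref{eq:interpolation-error-estimate} lets me replace $\nabla\interpol^1(\underline{d}^k_h\times\interpol^1\phi)$ by $(\nabla\underline{d}^k_h)\times\phi + \underline{d}^k_h\times\nabla\phi$ up to an $O(h)$ remainder, and split the integral into two pieces. The first piece $\int_0^T(\nabla\overline{\tilde{d}}^k_h,(\nabla\underline{d}^k_h)\times\phi)$ is the discrete incarnation of the pointwise identity $\nabla\fat d\times\nabla\fat d=0$: writing $\overline{\tilde{d}}^k_h=\underline{d}^k_h+(\overline{\tilde{d}}^k_h-\underline{d}^k_h)$, the $\underline{d}^k_h$-with-itself contribution cancels by antisymmetry of the cross product against the symmetric gradient contraction, and the remainder is bounded by $\norm{\nabla(\overline{\tilde{d}}^k_h-\underline{d}^k_h)}_{L^2(0,T;\L{2})}$ times the bounded $\ltwonorm{\nabla\underline{d}^k_h}$. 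Here the extra numerical dissipation $\tfrac12\sum_j\ltwonorm{\nabla[\tilde{d}^j-d^{j-1}]}^2$ in \eqref{eq:discrete-energy-inequality-projection} is decisive, as it yields $\norm{\nabla(\overline{\tilde{d}}^k_h-\underline{d}^k_h)}_{L^2(0,T;\L{2})}^2\lesssim k$, so this piece vanishes at rate $\sqrt k$. For the second piece I would pass to the limit using the weak convergence $\nabla\overline{\tilde{d}}^k_h\weakto\nabla\fat d$ against the strong convergence $\underline{d}^k_h\to\fat d$ in $L^2(0,T;\L{2})$ (Aubin--Lions, all temporal interpolates sharing the same limit via \eqref{eq:l1l1limit-temporal-interpolates}) and $\nabla\interpol^1\phi\to\nabla\phi$; a relabelling of indices via \eqref{eq:triple-prodcut} turns $\int(\nabla\fat d,\fat d\times\nabla\phi)$ into $-\int(\fat d\times\nabla\fat d):\nabla\phi$, giving the claimed limit.

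The hard part is controlling two competing effects at once: the discrete Laplacian forces one to work with the bounded combination $\interpol^1(\underline{d}^k_h\times\Deltah\overline{\tilde{d}}^k_h)$ rather than its factors, yet every interpolation, mass-lumping and product-rule remainder generated along the way carries inverse-estimate growth -- a factor $h^{-1}$ from $\Deltah$ in \eqref{eq:discrete-norm-equivalence-laplacian-gradient} and a further $h^{-\dimension/6}$ from an $L^3$--$L^2$ inverse estimate in the triple Hölder splitting, of the same kind as in \eqref{eq:mass-lumping-error}. Forcing all of these to vanish produces the balance $k/h^{1+\dimension/6}\to0$, which is precisely the hypothesis $k\in o(h^{1+\dimension/6})$. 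Conceptually, however, the heart of the argument is the antisymmetric cancellation $\nabla\fat d\times\nabla\fat d=0$, recovered in the limit only because the scheme's additional dissipation pins $\overline{\tilde{d}}^k_h$ and $\underline{d}^k_h$ to a common gradient limit.
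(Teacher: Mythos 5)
Your proposal is correct, and its opening moves coincide with the paper's proof: pass to the mass-lumped pairing, move $\underline{d}^k_h$ across the cross product nodally via the triple product rule \eqref{eq:triple-prodcut}, invoke the definition of $\Deltah$ with the test function $\interpol^1[\underline{d}^k_h \times \overline{\phi}^k_h] \in [\cgonezero]^{\dimension}$, and strip the nodal interpolation at $O(h)$ cost via \eqref{eq:interpolation-error-estimate}. Where you genuinely diverge is in the treatment of the resulting term $(\nabla \overline{\tilde{d}}^k_h, \nabla[\underline{d}^k_h \times \overline{\phi}^k_h])_2$. The paper swaps $\underline{d}^k_h$ for $\overline{\tilde{d}}^k_h$ inside the full product, so that the pointwise cancellation $a \cdot (a \times b) = 0$ reduces the expression to the discrete analogue $(\nabla \overline{\tilde{d}}^k_h, \overline{\tilde{d}}^k_h \times \nabla \overline{\phi}^k_h)_2$, and it controls the swap by the crude inverse-estimate bound $\ltwonorm{\nabla[(\underline{d}^k_h - \overline{\tilde{d}}^k_h) \times \overline{\phi}^k_h]} \lesssim \frac{k}{h^{1+\dimension/6}} \lebnorm{w^k_h}{3/2} \sobnorm{\overline{\phi}^k_h}{1}{\infty}$; this single step is where the hypothesis $k \in o(h^{1+\dimension/6})$ is actually consumed in the paper. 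You instead split by the product rule, annihilate the $(\nabla \underline{d}^k_h) \times \phi$ piece by the same antisymmetry applied at the level of $\underline{d}^k_h$, and bound the cross term by the scheme's numerical dissipation: indeed $\norm{\nabla(\overline{\tilde{d}}^k_h - \underline{d}^k_h)}_{L^2(0,T;\L{2})}^2 = k \sum_j \ltwonorm{\nabla[\tilde{d}^j - d^{j-1}]}^2 \lesssim k$ by \eqref{eq:discrete-energy-inequality-projection}, which is mesh-independent; the limit of the $\underline{d}^k_h \times \nabla \phi$ piece then follows from the weak convergence of $\nabla \overline{\tilde{d}}^k_h$ against the strong convergence \eqref{eq:director_strong_convergence}. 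This buys something real: your route invokes the time-step restriction only through the standing identifications of Corollary \ref{cor:identification-rkh} (at most $k \in o(h^{\dimension/3})$, and $k \in o(h^{\dimension/6})$ for $\fat{d} = \tilde{\fat{d}}$ and \eqref{eq:l1l1limit-temporal-interpolates}), so within this proposition your argument is in fact sharper than the paper's.

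One inaccuracy worth flagging: your closing paragraph asserts that the balance $k/h^{1+\dimension/6}$ emerges from your own estimates, attributing an $h^{-1}$ to $\Deltah$ and an $h^{-\dimension/6}$ to an $L^3$--$L^2$ inverse estimate. It does not: every mass-lumping, interpolation and product-rule remainder you generate is $O(h)$ against $L^2$-bounded quantities, and your dissipation bound is $O(\sqrt{k})$ with no inverse-estimate loss. The $k/h^{1+\dimension/6}$ balance belongs to the paper's brute-force bound on $\nabla[(k w^k_h) \times \overline{\phi}^k_h]$, which your product-rule split deliberately avoids. Since the hypothesis is assumed anyway, this mislabels the source of the restriction but does not affect the correctness of your proof.
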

\begin{proof}
The definition of the discrete Laplacian yields for a smooth test function $\phi \in C_c^{\infty} (\Omega \times (0,T))$ that
\begin{equation*}
    ( \underline{{{d}}}^k_h \times \Deltah \overline{{\tilde{d}}}^k_h  , \overline{\phi}^k_h  )_h
    =
    -(  \Deltah \overline{{\tilde{d}}}^k_h  , \underline{{{d}}}^k_h \times \overline{\phi}^k_h  )_h
    =
    (  \nabla  \overline{{\tilde{d}}}^k_h  , \nabla \interpol^1[\underline{{{d}}}^k_h \times  \overline{\phi}^k_h  ] )_2  
    .
\end{equation*}
Subtracting the discrete analogue of $\nabla \cdot (\fat{d} \times \nabla \fat{d})$ from the right-hand side yields
\begin{multline*}
\abs{(  \nabla  \overline{{\tilde{d}}}^k_h  , \nabla \interpol^1[\underline{{{d}}}^k_h \times  \overline{\phi}^k_h  ] )_2  
-
(  \nabla  \overline{{\tilde{d}}}^k_h  ,\overline{{\tilde{d}}}^k_h \times  \nabla  \overline{\phi}^k_h  )_2  
}
\\
\leq
\abs{(  \nabla  \overline{{\tilde{d}}}^k_h  , \nabla (\interpol^1- \identitymatrix )[\underline{{{d}}}^k_h \times  \overline{\phi}^k_h  ] )_2  
}
+
\abs{
(  \nabla  \overline{{\tilde{d}}}^k_h  ,
\nabla [(\underline{{{d}}}^k_h  - \overline{{\tilde{d}}}^k_h) \times  \overline{\phi}^k_h]  )_2  
}
\\
\lesssim
\ltwonorm{ \nabla  \overline{{\tilde{d}}}^k_h}
\Big (
   h
   \ltwonorm{\nabla \underline{{{d}}}^k_h} \lebnorm{\nabla \overline{\phi}^k_h  }{\infty}
   \\
   +
   \frac{k}{h^{1+\dimension /6}}
   \lebnorm{w^k_h}{3/2} \sobnorm{ \overline{\phi}^k_h  }{1}{\infty}
\Big )
.
\end{multline*}
Under the assumed time step restriction $k \in o (h^{1+\dimension/6})$, Corollary \ref{cor:identification-rkh} and the converging subsequences \eqref{eqs:converging_subsequences} deliver the result.
\end{proof}
Using the compact embeddings of the Aubin--Lions--Simon lemma \cite{simons87},
\begin{align}
\begin{aligned}\label{d_lions_aubin}
    \{ \fat d \in L^2(0,T;\H{1}) : \partial_t \fat d \in L^{2}(0,T;\L{3/2}) \}
    &\stackrel{c}{\hookrightarrow} L^2(0,T;\L{2}),
    \\
     \{ \fat d \in L^{\infty} (0,T;\H{1}) : \partial_t \fat d \in L^{2}(0,T;\L{3/2}) \}
    &\stackrel{c}{\hookrightarrow} C^0(0,T;\L{2}) \, ,
\end{aligned}
\end{align}
we can even infer strong subconvergence of the director field
\begin{align*}
    \overline{d}^k_h,\underline{d}^k_h,d_k^h 
    \to\fat  d \text{ in } L^2 (0,T;L^2(\Omega)) .
\end{align*}
Inferring strong convergence for all three temporal interpolates can be justified by applying a standard interpolation inequality (\textit{cf.} \cite[p. 192 ff.]{bennett-sharpley88}) on their respective difference, e.g.
\begin{equation*}
\norm{d^k_h - \underline{d}^k_h}_{L^2(0,T;\L{2})}^2
\leq
\norm{d^k_h - \underline{d}^k_h}_{L^1 (0,T;\L{1})}
\norm{d^k_h - \underline{d}^k_h}_{L^\infty (0,T;\L{\infty})}
\, ,
\end{equation*}
where the right hand side vanishes due to Equation \eqref{eq:l1l1limit-temporal-interpolates}.
Then, a Riesz-Thorin interpolation argument and the uniform bound in $ L^{\infty} (0,T;L^{\infty}(\Omega))$  yield strong convergence of
\begin{equation}\label{eq:director_strong_convergence}
    \overline{d}^k_h,\underline{d}^k_h,d_k^h
    \to \fat d \text{ in } L^p (0,T;L^p(\Omega)) \text{ for all }p\in [1,\infty).
\end{equation}
Finally, since $\fat{d} \in C^0(0,T;\L{2})$, we can conclude that $\fat{d} \in C_{w}(0,T;\H{1})$ due to the Lions--Magenes lemma \cite[Lemma II.5.9]{boyer-fabrie13}.
\subsection{Convergence of the Sphere Constraint, Initial and Boundary Conditions}
\label{sec:ics-bcs-convergence-cg}
The convergence of the boundary and initial conditions as well as the divergence-zero condition can be handled using standard Finite Element approximation arguments, see e.g.~\cite{lasarzik-reiter23}.
Further, the unit-norm constraint is fulfilled asymptotically since
$$
\lebnorm{\abs{d^j}^2 - 1}{2}
=
\lebnorm{\abs{d^j}^2 - \interpol^1 \abs{d^j}^2}{2}
\leq
C h \lebnorm{\nabla d^j}{2} \lebnorm{ d^j}{\infty}
$$
vanishes as $h\to 0$.
This leads to the following corollary.
\begin{cor}
Let the assumptions of Theorem \ref{thm:main-result-subconvergence-cg} be fulfilled and let $(v^j, \tilde{d}^j)$ be a solution to Algorithm \ref{algo:cg} for all $1\leq j \leq J$.
Then, the temporal interpolate $d^k_h$ admits the convergence
\begin{align*}
    \lebnorm{ \abs{d^k_h (t)}^2 -1 }{2} \in \mathcal{O}(h)
\end{align*}
a.e.~in $[0,T]$.
\end{cor}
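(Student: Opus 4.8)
The plan is to exploit the exact nodal unit-norm constraint satisfied by the projected director field and to reinterpret the deviation $\abs{d^k_h}^2-1$ as a nodal interpolation error. First I would recall that, by the projection step \eqref{eq:projection-step-cg}, the fully discrete field satisfies $\abs{d^j(z)}=1$ for every node $z\in\nodes$ and every $1\le j\le J$, which is exactly \eqref{eq:nodal-unit-norm-constraint-for-projected-d}. Since the nodal interpolant of a function taking the value $1$ at all nodes is the constant $1$, this yields $\interpol^1\abs{d^j}^2\equiv 1$ and hence the pointwise identity $\abs{d^j}^2-1=(\identitymatrix-\interpol^1)\abs{d^j}^2$. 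Thus the defect from the unit sphere is nothing but the $\cgone$-interpolation error of the piecewise-quadratic function $\abs{d^j}^2$.

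To turn this into an $\mathcal{O}(h)$ bound I would argue cellwise. On each cell $K\in\mesh$ the error $(\identitymatrix-\interpol^1)\abs{d^j}^2$ is a quadratic vanishing at all vertices of $K$, so a scaled Poincaré--Friedrichs inequality gives $\lebnorm{(\identitymatrix-\interpol^1)\abs{d^j}^2}{2}\lesssim h\,\ltwonorm{\nabla\abs{d^j}^2}$ after summing over $K$, and the product rule $\nabla\abs{d^j}^2=2(\nabla d^j)^T d^j$ together with the stability of the interpolant yields the displayed estimate $\lesssim h\,\ltwonorm{\nabla d^j}\,\lebnorm{d^j}{\infty}$. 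Two a priori facts then close the grid-time estimate: the energy inequality \eqref{eq:discrete-energy-inequality-projection} (combined with \eqref{eq:non-projected-dirichlet-energy-bound}) bounds $\ltwonorm{\nabla d^j}$ uniformly in $h,k,j$, while $\lebnorm{d^j}{\infty}\le 1$ because on every cell $d^j$ is a convex combination of the unit nodal vectors $d^j(z)$ and therefore lies in the closed unit ball. Hence $\ltwonorm{\abs{d^j}^2-1}\le Ch$ at all grid times, with $C$ independent of the discretization parameters.

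It remains to pass from the grid values $d^j$ to the continuous-in-time interpolate $d^k_h$, which is the only genuinely delicate point. For $t\in((j-1)k,jk)$ the field $d^k_h(t)$ is a convex combination of $d^{j-1}$ and $d^j$, so $\lebnorm{d^k_h(t)}{\infty}\le 1$ and $\ltwonorm{\nabla d^k_h(t)}$ stays uniformly bounded; consequently the interpolation-error part $\ltwonorm{(\identitymatrix-\interpol^1)\abs{d^k_h(t)}^2}$ is still $\mathcal{O}(h)$ by the argument above. The new contribution is that $\interpol^1\abs{d^k_h(t)}^2$ is no longer identically $1$: evaluating nodally and using $\abs{d^{j-1}(z)}=\abs{d^j(z)}=1$ gives $1-\abs{d^k_h(t)(z)}^2=\theta(1-\theta)\,\abs{d^j(z)-d^{j-1}(z)}^2$ with $\theta=(t-(j-1)k)/k$, so the remaining term $\ltwonorm{\interpol^1\abs{d^k_h(t)}^2-1}$ is controlled by $\ltwonorm{d^j-d^{j-1}}$. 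The main obstacle is precisely to show that this temporal defect is dominated by $\mathcal{O}(h)$: here I would invoke the a priori control of the discrete time derivative \eqref{bounds_temp_variation_inequ} together with the nodal inequality $\abs{d^j(z)-d^{j-1}(z)}\le\abs{\tilde{d}^j(z)-d^{j-1}(z)}$ already used in Lemma \ref{lem:energy-decreasing-projection-cg}, and exploit the time-step restriction $k\in o(h^{1+\dimension/6})$ of Theorem \ref{thm:main-result-subconvergence-cg} to conclude that $\ltwonorm{d^j-d^{j-1}}$ is of lower order than $h$. Combining the two contributions gives $\ltwonorm{\abs{d^k_h(t)}^2-1}\in\mathcal{O}(h)$ for a.e.\ $t\in[0,T]$.
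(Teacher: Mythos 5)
Your first two paragraphs coincide with the paper's own proof: the paper likewise uses \eqref{eq:nodal-unit-norm-constraint-for-projected-d} to conclude $\interpol^1\abs{d^j}^2\equiv 1$, rewrites $\abs{d^j}^2-1=(\identitymatrix-\interpol^1)\abs{d^j}^2$, and applies the first-order interpolation estimate to obtain $\lebnorm{\abs{d^j}^2-1}{2}\leq Ch\,\ltwonorm{\nabla d^j}\lebnorm{d^j}{\infty}$, with both factors bounded via \eqref{eq:discrete-energy-inequality-projection}, \eqref{eq:non-projected-dirichlet-energy-bound} and the barycentric convexity argument giving $\lebnorm{d^j}{\infty}\le 1$. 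Note, however, that the paper stops there: it proves the bound at the grid times $d^j$ only and passes to the corollary for the temporal interpolate $d^k_h$ without further comment.

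Your third paragraph, where you attempt to be more careful than the paper, is where a genuine gap sits. Your nodal identity $1-\abs{d^k_h(t)(z)}^2=\theta(1-\theta)\abs{d^j_z-d^{j-1}_z}^2$ is correct (indeed it holds pointwise, not just nodally, by the parallelogram identity $\abs{(1-\theta)a+\theta b}^2=(1-\theta)\abs{a}^2+\theta\abs{b}^2-\theta(1-\theta)\abs{a-b}^2$, so this extra term is genuinely unavoidable), and using $\abs{d^j_z-d^{j-1}_z}\le 2$ together with the norm equivalence \eqref{eq:norm-equivalence-mass-lumping} it reduces the temporal defect to $\ltwonorm{d^j-d^{j-1}}$. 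But the estimates you cite do not make this $\mathcal{O}(h)$. A single summand of the bounded sum in \eqref{bounds_temp_variation_inequ} gives only the per-step bound $\norm{d^j-d^{j-1}}_{L^{3/2}(\Omega)}\lesssim k^{1/2}$, and interpolating against the uniform $L^\infty$ bound yields $\ltwonorm{d^j-d^{j-1}}\lesssim k^{3/8}$; under the assumed restriction $k\in o(h^{1+\dimension/6})$ this is $o(h^{1/2})$ for $\dimension=2$ and $o(h^{9/16})$ for $\dimension=3$ --- sufficient for the asymptotic fulfillment of the constraint (which is all the convergence analysis actually uses), but strictly weaker than the claimed rate $\mathcal{O}(h)$; closing your argument at rate $h$ would require roughly $k\lesssim h^{8/3}$. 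The nodal inequality $\abs{d^j_z-d^{j-1}_z}\le\abs{\tilde{d}^j_z-d^{j-1}_z}$ does not help, since \eqref{bounds_temp_variation_inequ} provides the same $k^{1/2}$-per-step information for $\tilde{d}^j-d^{j-1}$. So your assertion that the temporal defect ``is of lower order than $h$'' is unsubstantiated; since a.e.\ $t$ lies strictly between grid points, your route delivers the clean $\mathcal{O}(h)$ statement only at the grid times --- equivalently for the piecewise constant interpolates $\overline{d}^k_h$, $\underline{d}^k_h$ --- which is exactly the content (and, arguably, the same silent limitation) of the paper's shorter proof.
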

\subsection{Director Equation}
For a smooth and compactly supported test function $\phi \in C_c^{\infty} (\Omega \times [0,T])$, we observe that the trace of its interpolation is also zero, i.e.~$\tr (\overline{\phi}^k_h) = 0$. Therefore, we can use $\overline{\phi}^k_h$ as a test function for the Equation \eqref{eq:projection_scheme_c}.
Integrating in time leads us to
$$
\int_0^T
(w^k_h,\overline{\phi}^k_h)_h
 + (\underline{{d}}^k_h \times [{\nabla \underline{{d}}^k_h}\overline{v}^k_h], \interpol^1 [\underline{{d}}^k_h \times \overline{\phi}^k_h])_2
        {-}(\underline{{d}}^k_h \times \Deltah \overline{\tilde{d}}^k_h,\underline{{d}}^k_h \times \overline{\phi}^k_h)_h
        \diff \tau = 0 .
$$
The only thing left to do is the transition to a $L^2$-scalar product.
The convergence can then be derived in standard fashion from the weak convergences \eqref{eqs:converging_subsequences}, their identified limits and the strong convergences \eqref{strong-convergence velocity} and \eqref{eq:director_strong_convergence}.
We tackle each term independently. The first by applying the interpolation estimate and the inverse estimate, such that
\begin{equation*}
    \lvert 
        \int_0^T
        (w^k_h,\overline{\phi}^k_h)_h - (w^k_h,\overline{\phi}^k_h)_2
        \diff \tau
    \rvert
    \lesssim
    h
    \norm{w^k_h}_{L^2(0,T;\L{3/2})}
    \norm{\nabla \overline{\phi}^k_h}_{L^2(0,T;\L{3})} .
\end{equation*}
Accordingly, we observe for the third term by repeatedly using the interpolation error and inverse estimate 
\begin{multline*}
    \lvert 
        \int_0^T
        (\underline{{d}}^k_h \times \Deltah \overline{\tilde{d}}^k_h,\underline{{d}}^k_h \times \overline{\phi}^k_h)_h
         -
         (\interpol^1 [\underline{{d}}^k_h \times \Deltah \overline{\tilde{d}}^k_h],\underline{{d}}^k_h \times \overline{\phi}^k_h)_2
        \diff \tau
    \rvert
    \\
    \leq
    \lvert 
        \int_0^T
        (\underline{{d}}^k_h \times \Deltah \overline{\tilde{d}}^k_h,\underline{{d}}^k_h \times \overline{\phi}^k_h)_h
         -
         (\interpol^1 [\underline{{d}}^k_h \times \Deltah \overline{\tilde{d}}^k_h],\interpol^1 [\underline{{d}}^k_h \times \overline{\phi}^k_h])_2
        \diff \tau
    \rvert
    \\+
    \lvert 
        \int_0^T
        (\interpol^1 [\underline{{d}}^k_h \times \Deltah \overline{\tilde{d}}^k_h], ( \interpol^1 -\identitymatrix) [\underline{{d}}^k_h \times \overline{\phi}^k_h])_2
        \diff \tau
    \rvert
    \\
    \lesssim
    h
    \norm{\interpol^1 [\underline{{d}}^k_h \times \Deltah \overline{\tilde{d}}^k_h]}_{L^2(0,T;\L{2})}
    \norm{\nabla \overline{\phi}^k_h}_{L^2(0,T;\L{2})} 
    \norm{\underline{{d}}^k_h}_{L^{\infty}(0,T;\H{1})} 
    .
\end{multline*}
The second term follows in the exact same fashion.
\subsection{Variational Inequality}
\label{sec:variational-inequality-cg}
Testing Inequality \eqref{discrete_envar} with the temporal interpolate $\overline{\phi}^k $ of a smooth test function $\phi \in C_c^{\infty} ((0,T))$, $\phi \geq 0$ and integrating in time yields
\begin{align}
\nonumber
    -\int_0^T   
    \underline{E}^k_h \discreteDiff_t\hat{\phi}^k 
    \diff \tau
        &     
        + \overline{\phi}^k 
        \left[ 
        \mu \left ( \nabla \overline{v}^k_h , \nabla \overline{v}^k_h-  \nabla \projectV \overline{\vv}^k \right )  
        +  \ltwonorm{ \interpol^1 [\underline{d}^k_h\times \Deltah \overline{\tilde{d}}^k_h] }^2  
        \right]
        \diff \tau
\\ \nonumber
        &+
        \int_0^T  
        \discreteDiff_t \hat{\phi}^k
        ( \underline{v}^k_h, \projectV \underline{\vv}^k)_2
         +
        \overline{\phi}^k 
        ( \underline{v}^k_h,\discreteDiff_t \projectV \overline{\vv}^k)_2
        \diff \tau
\\& \label{eq:discrete-variational-inequ-integrated-in-time}
        -
        \int_0^T 
        \overline{\phi}^k [
            \left ( (\underline{v}^k_h\cdot \nabla)  \overline{v}^k_h , \projectV\overline{\vv}^k \right )_2 
        +
        \frac{1}{2} 
            \left ( ( \nabla \cdot  \underline{v}^k_h )\overline{v}^k_h , \projectV\overline{\vv}^k\right )_2
        ]
        \diff \tau
\\ & \nonumber
        - \int_0^T   \overline{\phi}^k  \left ( [\nabla \underline{d}^k_h]^T [\underline{d}^k_h\times  \interpol^1 ( \underline{d}^k_h \times \Deltah \overline{\tilde{d}}^k_h )], \projectV\overline{\vv}^k \right )_2
        \diff \tau
\\& \nonumber
        + \int_0^T   \overline{\phi}^k 
        \mathcal{K}(\projectV\overline{\vv}^k) \left ( \frac{1}{2}\Vert \underline{v}^k_h\Vert_{L^2(\Omega)}^2 + \frac{1}{2}\Vert \nabla \underline{d}^k_h\Vert_{L^2(\Omega)}^2  -\underline{E}^k_h \right ) 
        \diff \tau
  \leq 0 \, ,
\end{align}
for all $\vv \in \Cont^1([0,T];(\velspace)^*) \cap C^0 ([0,T];\velspace)$. Hereby, we applied the discrete integration by parts rule, Equation \eqref{eq:discrete-integration-by-parts-in-time}, onto the terms $\int_0^T \overline{\phi}^k \discreteDiff_t \overline{E}^k_h \diff \tau$ and $\int_0^T \overline{\phi}^k [ -  (\discreteDiff_t \overline{v}^k_h,\projectV\vv)] \diff \tau$.
We used that since $\phi$ has compact support on $(0,T)$, terms of the form
$$
\int_0^T   \discreteDiff_t (\overline{\phi}^k   \overline{a}^k) \diff \tau
= \phi^J   a^J - \phi^0 a^0
=0
$$
vanish.
\\
Regarding the convergence, the main challenge lies in the Ericksen stress tensor. We use terms from the discrete Energy-Dissipation mechanism and our potential $\mathcal{K}$ as a regularisation in order to convexify the terms. We collect the following lemma which will be proven later.
\begin{lem}
Let $d^m, G^m, U^m, \tilde{v}^m$ be functions, such that 
\begin{align*}
    d^m &\to \fat{d} \in L^p(0,T; L^p( \Omega;\mathbb{R}^{\dimension } )) \text{ for all }1\leq p < \infty
    ,\\
    G^m &\weakto \nabla \fat{d} \in L^{2}(0,T;L^2( \Omega;\mathbb{R}^{\dimension \times \dimension} ) )
    ,\\
    U^m &\weakto \fat{U} \in L^2(0,T;L^2( \Omega;\mathbb{R}^{\dimension } ))
    ,\\
    \tilde{v}^m &\to \vv \in L^\infty(0,T;L^{\infty}( \Omega;\mathbb{R}^{\dimension } ))
    .
\end{align*}
as $m\to\infty$. Further we assume the uniform bounds
$$
\sup_{m} \norm{d^m}_{L^\infty((0,T) \times \Omega)} 
\leq 1
, \quad
\sup_{m} \norm{\tilde{v}^m}_{L^\infty(0,T;\H{2})} 
\leq C
$$
for a generic constant $C>0$.
Then, we can infer the existence of subsequences that we do not relabel such that
\begin{multline*}
    \int_0^T
    \phi
    [\lebnorm{\fat{U} }{2}^2  
        - \left ( [\nabla \fat{d}]^T [\fat{d} \times \fat{U} ], \vv \right )_{2}
        + \mathcal{K}(\vv) 
        \frac{1}{2}\lebnorm{\nabla \fat{d}}{2}^2 ]
        \diff \tau
\\ 
\leq
\liminf_{m\to \infty}
\int_0^T
\phi
    [\lebnorm{U^m}{2}^2  
    - \left ( [G^m]^T [d^{m}\times U^m ], \tilde{v}^m  \right )_{2}
    + \mathcal{K}(\tilde{v}^m ) \frac{1}{2}\lebnorm{G^m}{2}^2]
    \diff \tau
     \, ,
\end{multline*}
for all $\phi \in L^{\infty} (0,T)$, $\phi \geq 0$.
\label{lem:convergence-ericksen-stress-in-envar}
\end{lem}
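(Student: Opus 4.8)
The plan is to read the bracketed expression on the right-hand side as a \emph{convex} quadratic functional of the pair $(U^m,G^m)$ and to obtain the inequality from weak lower semicontinuity, while the coefficients $d^m$ and $\tilde v^m$ are handled by strong convergence. Using the transpose identity $([G]^T(d\times U))\cdot \tilde v=(d\times U)\cdot(G\tilde v)$, I would freeze the coefficients and define the integrand
\[
q^m(U,G)\coloneqq \abs{U}^2-(d^m\times U)\cdot(G\tilde v^m)+\tfrac14\norm{\tilde v^m}_{L^\infty(\Omega)}^2\abs{G}^2,
\]
together with $\mathcal{J}^m(U,G)\coloneqq\int_0^T\phi\int_\Omega q^m(U,G)\diff x\diff\tau$, so that the right-hand side of the asserted inequality is exactly $\mathcal{J}^m(U^m,G^m)$, and, reading $\mathcal{K}(\vv)\tfrac12\abs{\nabla\fat d}^2=\tfrac14\norm{\vv}_{L^\infty(\Omega)}^2\abs{\nabla\fat d}^2$, the left-hand side is $\mathcal{J}(\fat d,\nabla\fat d)$ evaluated at the limits. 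The first step is to verify that $q^m$ is a positive semidefinite quadratic form in $(U,G)$: since $\abs{d^m}\le 1$ and $\abs{\tilde v^m(x)}\le\norm{\tilde v^m}_{L^\infty(\Omega)}$ one has $\abs{(d^m\times U)\cdot(G\tilde v^m)}\le\abs{U}\,\norm{\tilde v^m}_{L^\infty(\Omega)}\abs{G}$, and Young's inequality gives $q^m\ge0$; as $\phi\ge0$, the functional $\mathcal{J}^m$ is a nonnegative quadratic form.

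The second step exploits the exact expansion of a quadratic form around the limit point,
\[
\mathcal{J}^m(U^m,G^m)=\mathcal{J}^m(\fat U,\nabla\fat d)+\big\langle D\mathcal{J}^m(\fat U,\nabla\fat d),\,(U^m-\fat U,\,G^m-\nabla\fat d)\big\rangle+\mathcal{J}^m(U^m-\fat U,\,G^m-\nabla\fat d),
\]
in which the remainder is nonnegative by positive semidefiniteness and may be dropped before taking $\liminf_{m\to\infty}$. For the frozen term I would show $\mathcal{J}^m(\fat U,\nabla\fat d)\to\mathcal{J}(\fat U,\nabla\fat d)$: since $\fat U,\nabla\fat d\in L^2$ are fixed, the integrand converges almost everywhere (using $d^m\to\fat d$ a.e.\ along a subsequence and $\tilde v^m\to\vv$ in $L^\infty$) and is dominated by $C\phi(\abs{\fat U}^2+\abs{\fat U}\abs{\nabla\fat d}+\abs{\nabla\fat d}^2)\in L^1$, so dominated convergence applies. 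For each of the four terms of the linear functional $\langle D\mathcal{J}^m(\fat U,\nabla\fat d),\cdot\rangle$ I would pair the weakly null sequence $U^m-\fat U\weakto0$ or $G^m-\nabla\fat d\weakto0$ in $L^2$ against a coefficient that converges \emph{strongly} in $L^2$; the latter holds because these coefficients are products of $d^m$ (a.e.\ convergent and uniformly bounded), $\tilde v^m$ (strongly $L^\infty$-convergent) and the fixed $L^2$ functions $\fat U,\nabla\fat d$, so dominated convergence yields strong $L^2$ convergence and every weak-times-strong pairing tends to zero. Combining these gives $\liminf_m\mathcal{J}^m(U^m,G^m)\ge\mathcal{J}(\fat U,\nabla\fat d)$, which is the claim.

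The key point — and the reason for the convexity detour — is that one cannot simply substitute $\fat d$ for $d^m$ in the cross term of $\mathcal{J}^m(U^m,G^m)$: the resulting error contains $\int_\Omega((d^m-\fat d)\times U^m)\cdot(G^m\vv)\diff x$, a product of the \emph{two} only weakly convergent $L^2$ sequences $U^m$ and $G^m$ against a factor $d^m-\fat d$ that tends to zero merely in $L^p(\Omega)$ for finite $p$ (not in $L^\infty$), and such a term need not vanish because $U^m\cdot G^m$ may concentrate. Evaluating the frozen-coefficient functional at the \emph{fixed} limit arguments $(\fat U,\nabla\fat d)$ avoids this difficulty entirely, since there $d^m$ is only ever paired with the fixed $L^2$ functions $\fat U$ and $\nabla\fat d$, where dominated convergence is available. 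I therefore expect this interaction of the two weak limits — resolved through joint convexity rather than term-by-term replacement — to be the main obstacle, whereas the positive semidefiniteness of $q^m$ and the vanishing of the linear terms are routine.
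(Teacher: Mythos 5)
Your proof is correct, but it implements the key convexity idea by a genuinely different mechanism than the paper. The paper makes the joint convexity \emph{explicit}: it completes the square, writing $\lebnorm{U^m}{2}^2 - ([G^m]^T[d^m\times U^m],\tilde v^m)_2 = \lebnorm{U^m+\tfrac12 d^m\times[G^m\tilde v^m]}{2}^2 - \tfrac14\lebnorm{d^m\times[G^m\tilde v^m]}{2}^2$, splits the negative remainder via $-\abs{d\times w}^2 = -\abs{\abs{d}w}^2 + \abs{d\cdot w}^2$, and absorbs the surviving negative part into the potential $\mathcal{K}$ through the explicit positive-semidefinite decomposition of $\lebnorm{\tilde v^m}{\infty}^2\identitymatrix - \abs{d^m}^2\,\tilde v^m\otimes\tilde v^m$ into two squared matrices; the whole integrand then becomes a sum of four squared $L^2$-norms whose arguments converge weakly (products of the weakly convergent $U^m,G^m$ with the uniformly bounded, strongly convergent $d^m,\tilde v^m$), and weak lower semicontinuity of the norm is applied term by term. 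You instead keep the integrand as an abstract nonnegative quadratic form $q^m(U,G)$ with frozen coefficients --- your Young-inequality verification of $q^m\geq 0$ uses $\abs{d^m}\leq 1$ at exactly the point where the paper's square-root construction needs $\lebnorm{\tilde v^m}{\infty}^2-\abs{d^m}^2\abs{\tilde v^m}^2\geq 0$ --- and run the standard proof of weak lower semicontinuity of convex quadratic functionals: exact second-order expansion about the limit pair $(\fat U,\nabla\fat d)$, nonnegative remainder dropped, the four linear terms killed by weak-times-strong pairings, and the frozen term handled by dominated convergence. The conclusions coincide, and your closing paragraph correctly isolates the true obstruction (the interaction of the two weak limits $U^m$, $G^m$ with the merely $L^p$-convergent $d^m$) that both routes circumvent. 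What the paper's route buys is the explicit exhibition of the convexified defect terms such as $\lebnorm{d^m\cdot G^m\tilde v^m}{2}^2$, which shows concretely how the choice $\mathcal{K}(\vv)=\tfrac12\lebnorm{\vv}{\infty}^2$ was calibrated; what your route buys is robustness, since only positive semidefiniteness is needed and no explicit sum-of-squares has to be found.

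One bookkeeping point you should make explicit: your dominated-convergence steps use a.e.\ convergence of $d^m$, which the $L^p$ hypothesis yields only along a subsequence, and passing to a subsequence can a priori raise a $\liminf$. So first select a subsequence along which the right-hand side attains its $\liminf$ over the full sequence, then extract the a.e.\ convergent sub-subsequence and run your argument there; this is routine and consistent with the lemma's extraction of unrelabeled subsequences, but as written your proof silently conflates the two limits.
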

In order to apply this lemma, we set $\underline{d}^k_h = d^m$, $G^m = \nabla \underline{d}^k_h$ and $U^m =\interpol^1 [\underline{d}^k_h \times \Deltah \overline{\tilde{d}}^k_h ]$ and refer to the previous argumentation in this section. For the strong convergence of the test function $\projectV\overline{\vv}^k = \tilde{v}^m$, we here make use of the Gagliardo-Nirenberg estimate
\begin{equation*}
    \lebnorm{\projectV\overline{\vv}^k-\overline{\vv}^k}{\infty}
    \leq
    C
    \lebnorm{\projectV\overline{\vv}^k-\overline{\vv}^k}{2}^{1 - \dimension /4}
    \Hsobnorm{\projectV\overline{\vv}^k-\overline{\vv}^k}{2}^{\dimension /4}
    .
\end{equation*}
Then, the convergence follows with the stability properties of the projection $\projectV$ for all $\overline{\vv}^k\in  \V \cap\H{2}$, see Equation \eqref{eq:approximation-properties-convergence-projection-projectV}.
All other terms in the discrete variational inequality \eqref{eq:discrete-variational-inequ-integrated-in-time}  converge in a standard fashion based on the weak convergences \eqref{eqs:converging_subsequences} and the strong convergences \eqref{strong-convergence velocity}, \eqref{eq:director_strong_convergence}.
Finally, we end up with the inequality
\begin{align*}
    -\int_0^T   
    E \phi^{\prime} 
    \diff \tau
        &     
        + {\phi} 
        \left[ 
        \mu \left ( \nabla \fat{v} , \nabla \fat{v}-  \nabla  {\vv} \right )  
        +  \ltwonorm{  \fat{d}\times \Delta \fat{d} }^2  
        \right]
        \diff \tau
\\ 
        &+
        \int_0^T  
        \phi^{\prime} 
        ( \fat{v},  {\vv})_2
         +
        {\phi} 
        ( \fat{v},\partial_t  {\vv})_2
        \diff \tau
        -
        \int_0^T 
        {\phi} 
            \left ( (\fat{v}\cdot \nabla)  \fat{v} , {\vv} \right )_2 
        \diff \tau
\\ & 
        - \int_0^T   {\phi}  \left ( [\nabla \fat{d}]^T [\fat{d}\times  ( \fat{d} \times \Delta {\fat{d}} )], {\vv} \right )_2
        \diff \tau
\\& 
        + \int_0^T   {\phi} 
        \mathcal{K}(\vv) \left ( \frac{1}{2}\Vert \fat{v}\Vert_{L^2(\Omega)}^2 + \frac{1}{2}\Vert \nabla \fat{d}\Vert_{L^2(\Omega)}^2  -{E} \right ) 
        \diff \tau
  \leq 0.
\end{align*}
Applying \cite[Lemma 2.4]{lasarzik22} and density arguments deliver the energy-variational Inequality \eqref{envarform}.
We finish this section with the proof of the preceeding lemma.
\begin{proof}[Proof of Lemma {\ref{lem:convergence-ericksen-stress-in-envar}}]
    We follow \cite[Sec. 3.4]{lasarzik-reiter23}.
    First, we apply a standard \textit{completing the square} approach
    \begin{multline*}
    \lebnorm{U^m }{2}^2  
    - \left ( [G^m]^T [d^m\times U^m ], \tilde{v}^m \right )_{2}
    \\
    =
     \lebnorm{U^m + \frac{1}{2} d^m \times [G^m \tilde{v}^m]}{2}^2
    - \frac{1}{4}\lebnorm{d^m \times [G^m \tilde{v}^m]}{2}^2 
    .
    \end{multline*}
    Then, we decompose the last term on the right hand-side by
    \begin{align*}
         - \lebnorm{d^m \times [G^m \tilde{v}^m]}{2}^2 
        = &
         -\int_\Omega (\tilde{v}^m )^T (G^m)^T (\abs{d^m}^2 \identitymatrix -d^m\otimes d^m) G^m \tilde{v}^m \diff x
    \\
    =&
         -\int_\Omega \abs{d^m}^2  (\tilde{v}^m )^T (G^m)^T  G^m \tilde{v}^m \diff x
         \\&+
         \int_\Omega (\tilde{v}^m )^T (G^m)^T (d^m\otimes d^m) G^m \tilde{v}^m \diff x
    \\
    =&
         -\lebnorm{\abs{d^m} G^m \tilde{v}^m }{2}^2
         +
         \lebnorm{d^m \cdot G^m \tilde{v}^m}{2}^2.
    \end{align*}
    The second term can now again be handled by weak lower semicontinuity. The sum of the first term and our discrete potential can be written using the following Tensor identities
    \begin{align*}
        \nabla d : \nabla d
        =
        I: ([\nabla d]^T \nabla d)
        , \quad 
         ([\nabla d] v)^2
         =
         v\otimes v : ([\nabla d]^T \nabla d)
    \end{align*}
    such that the addition of potential $\mathcal{K}$ makes the resulting matrix positive semi-definite, i.e.
    \begin{multline*}
        2 \mathcal{K}(\tilde{v}^m ) \lebnorm{G^m}{2}^2
            -\lebnorm{\abs{d^m} G^m \tilde{v}^m }{2}^2
    \\
    =
            \lebnorm{ \tilde{v}^m }{\infty}^2 \lebnorm{G^m}{2}^2
            -\lebnorm{\abs{d^m} G^m \tilde{v}^m }{2}^2
    \\ =
    \int_\Omega (\lebnorm{ \tilde{v}^m }{\infty}^2 \identitymatrix - \abs{d^m}^2 \tilde{v}^m \otimes \tilde{v}^m)
    : ([G^m]^T G^m) \diff x
    .
    \end{multline*}
    It is easy to confirm that the matrix $(\lebnorm{ \tilde{v}^m }{\infty}^2 \identitymatrix - \abs{d^m}^2 \tilde{v}^m \otimes \tilde{v}^m)$ is positive semidefinite. Then, there exists an orthogonal decomposition. Even simpler, we observe the explicit decomposition into
    \begin{multline*}
        \lebnorm{\tilde{v}^m }{\infty}^2 \identitymatrix -  \abs{d^m}^2 \tilde{v}^m \otimes \tilde{v}^m
    \\=
        \left (\sqrt{\lebnorm{\tilde{v}^m }{\infty}^2 - \abs{d^m}^2\abs{\tilde{v}^m}^2} \right )^2 \identitymatrix
    +
     \abs{d^m}^2 \left (\abs{\tilde{v}^m}\identitymatrix -\frac{\tilde{v}^m\otimes \tilde{v}^m}{\abs{\tilde{v}^m}} \right )^2
    .
    \end{multline*}
    Using this, we can rewrite everything as
    \begin{multline}
    \lebnorm{U^m}{2}^2  
    - \left ( [G^m]^T [d^m\times U^m ], \tilde{v}^m \right )_{2}
    + \mathcal{K}(\tilde{v}^m) \frac{1}{2}\lebnorm{G^m}{2}^2
    \\ 
    =
     \lebnorm{U^m + \frac{1}{2} d^m \times [G^m \tilde{v}^m]}{2}^2
     + \frac{1}{4}\lebnorm{d^m \cdot G^m \tilde{v}^m}{2}^2
    \\
    +\frac{1}{4}\lebnorm{\left (\sqrt{\lebnorm{\tilde{v}^m }{\infty}^2 - \abs{d^m}^2\abs{\tilde{v}^m}^2} \right )
    G^m }{2}^2 
    \\
    + \frac{1}{4}\lebnorm{\abs{d^m} \left (\abs{\tilde{v}^m}\identitymatrix -\frac{\tilde{v}^m\otimes \tilde{v}^m}{\abs{\tilde{v}^m}} \right ) G^m}{2}^2 
     .
     \label{eq:reformulated-convex-terms}
    \end{multline}
    The norm is known to be weakly lower semicontinuous. It suffices to consider the convergence properties of its contents on the right hand-side of \eqref{eq:reformulated-convex-terms}.
    All of them are bounded and therefore allow to infer converging subsequences. 
    Their limits can be identified by using the assumed strong and weak convergences \eqref{eqs:converging_subsequences}.
\end{proof}
\section{A Discontinuous Finite Element Scheme}\label{sec:discrete-system-dg}
We introduce our Finite Element approximation based on a piecewise constant approximation of the director field.
First, we discretize our constant-in-time Dirichlet boundary conditions prescribed by the initial condition. 
Since we assumed the regularity $\fat{d}_0 \in C^{2} (\bar{\Omega})$ we can interpolate the initial condition.
We define the discrete boundary condition $\dbc \in [\dgzero]^{\dimension}$ in terms of the facet interpolator at the barycenter, i.e.
\begin{equation*}
    \dbc \coloneqq
    \sum_{F\in\facets^D} \interpol^F (\fat{d}_0) \chi_{T(F)}
    =
    \sum_{F\in\facets^D} \fat{d}_0 (x_F) \chi_{T(F)}
\end{equation*}
with $T(F)$ being the cell $T\in\mesh$ bordering the facet $F \in \facets^D$.
Further, the definition of the discrete Laplacian needs to be adapted to the discontinuous case.
\begin{defn}
We define the discrete Laplacian $\Deltah : [\dg^0]^{\dimension} \to [\dg^0]^{\dimension}$ as solution to the equation
\begin{align}
\begin{split} \label{eq:discrete-laplacian-dg}
    (-\Deltah d, b)_2 
    =&
    (\adlifting{d}{\dbc}, \adlifting{b }{0} )_2
    \\&
    + \sum_{F \in \facets^i } \frac{\alpha }{h_F} \int_F \jump{d} \cdot \jump{b} \diff s
    +\sum_{F \in \facets^D} \frac{\alpha}{h_F} \int_F (d - \dbc)\cdot b \diff s
\end{split}
\end{align}
for all $b \in [\dg^0]^{\dimension}$, where $\adlifting{d}{\dbc}$ is the discrete gradient of $d\in [\dg^0]^{\dimension}$ defined in Equation \eqref{eq:def-discrete-lifting-reconstructed-gradient}.
\label{def:discrete-laplacian-dg}
\end{defn}
The solution to Equation \eqref{eq:discrete-laplacian-dg} can also be found using a mixed method. This is advantageous regarding the implementation, since it allows to use a formulation that is realizable in standard Finite Element software.
Instead of solving Equation \eqref{eq:discrete-laplacian-dg}, we solve for $(d_h, q_h, \psi_h) \in [\dg^0]^{\dimension} \times [\dg^0]^{\dimension} \times [\dg^0]^{\dimension \times \dimension}$ such that the equation system
\begin{subequations}
\begin{align}
    (q_h, b)_2 
    &=
    \mathcal{B}(b,\psi_h)
    +
    \sum_{F \in \facets^i}
    \frac{\alpha}{h_F} \int_F\jump{d}\cdot \jump{b} \diff s  
    +
    \sum_{F \in \facets^D}
    \frac{\alpha}{h_F} \int_F (d- \dbc)\cdot b \diff s 
    \, ,
    \label{eq:mixed-method-discrete-laplacian1}
    \\
    \label{eq:mixed-method-discrete-laplacian2}
    (\psi_h, \tau)_2
    &=
    \mathcal{B}(\dirh, \tau)
    +\sum_{F \in \facets^D}
    \int_F 
    \dbc \cdot \tau \normal
    \diff s
    \, ,
\end{align}
\label{eq:mixed-method-discrete-laplacian}
\end{subequations}
is fulfilled for all $(b,\tau)\in [\dg^0]^{\dimension} \times [\dg^0]^{\dimension \times \dimension}$ with 
\begin{equation*}
    \mathcal{B}(b, \tau)
    \coloneqq
    -
    \sum_{F \in \facets^i}
    \abs{F}
    \jump{b}_F\cdot \avg{\tau }_F  \normal_F 
    -
    \sum_{F \in \facets^D}
    \abs{F}
    b\cdot \tau \normal 
    .
\end{equation*}
Note that $\mathcal{B}$ is simply the Finite Element formulation of the global lifting operator, cf.~Equations \eqref{eq:def-discrete-lifting-local} and \eqref{eq:def-discrete-lifting-reconstructed-gradient}.
The existence and uniqueness of a solution $(q_h, \psi_h)$ for a prescribed $d_h \in [\dg^0]^{\dimension}$, as well as of a solution $(d_h, \psi_h)$ for a given $q_h \in [\dg^0]^{\dimension}$ follows from an application of the Lax-Milgram theorem for $\alpha > 0$.
By simple computations it follows that a solution to Equations \eqref{eq:mixed-method-discrete-laplacian} also fulfills Equation \eqref{eq:discrete-laplacian-dg}. We state this as a corollary.
\begin{cor}
\label{lem:identification-mixed-method}
    The triple 
    $$
    {(d_h, q_h, \psi_h) \in [\dg^0]^{\dimension} \times [\dg^0]^{\dimension} \times [\dg^0]^{\dimension \times \dimension} }
    $$
    solves Equations \eqref{eq:mixed-method-discrete-laplacian} if and only if $\psi_h = \adlifting{d_h}{\dbc}$ and $q_h = -\Deltah \dirh $.
\end{cor}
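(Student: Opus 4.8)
The plan is to deduce the corollary from a single algebraic identity showing that the bilinear form $\mathcal{B}$ is exactly the $L^2$-representation of the global lifting operator. Precisely, I would first prove that for every $b \in [\dg^0]^{\dimension}$, every $\tau \in [\dg^0]^{\dimension \times \dimension}$, and every boundary datum $g_h$,
\[
(\adlifting{b}{g_h}, \tau)_2 = \mathcal{B}(b, \tau) + \sum_{F \in \facets^D} \int_F g_h \cdot \tau \normal_F \diff s .
\]
This is immediate from Definitions \eqref{eq:def-discrete-lifting-local} and \eqref{eq:def-discrete-lifting-reconstructed-gradient}: writing $(\adlifting{b}{g_h}, \tau)_2 = \sum_{F \in \facets} \int_\Omega r^0_F(\jump{b}) : \tau \diff x$, the interior facets contribute $-\int_F \jump{b} \cdot \avg{\tau} \normal_F \diff s$ and the boundary facets contribute $\int_F (g_h - b) \cdot \tau \normal_F \diff s$. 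Since $b$ and $\tau$ are piecewise constant, every facet integral reduces to $\abs{F}$ times a constant value, and a rearrangement reproduces $\mathcal{B}(b,\tau)$ plus the claimed boundary term.

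With this identity at hand, Equation \eqref{eq:mixed-method-discrete-laplacian2} (taking $b = d_h$ and $g_h = \dbc$) reads $(\psi_h, \tau)_2 = (\adlifting{d_h}{\dbc}, \tau)_2$ for all $\tau \in [\dg^0]^{\dimension \times \dimension}$. Because $(\cdot,\cdot)_2$ is a genuine inner product on the finite-dimensional space $[\dg^0]^{\dimension \times \dimension}$ and both arguments lie in it, non-degeneracy yields $\psi_h = \adlifting{d_h}{\dbc}$, which is the first asserted identity.

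For Equation \eqref{eq:mixed-method-discrete-laplacian1} I would use the same identity with $g_h = 0$, giving $\mathcal{B}(b, \psi_h) = (\adlifting{b}{0}, \psi_h)_2$, and then substitute the already-identified $\psi_h = \adlifting{d_h}{\dbc}$. The right-hand side of \eqref{eq:mixed-method-discrete-laplacian1} becomes
\begin{multline*}
(\adlifting{d_h}{\dbc}, \adlifting{b}{0})_2
+ \sum_{F \in \facets^i} \frac{\alpha}{h_F} \int_F \jump{d} \cdot \jump{b} \diff s \\
+ \sum_{F \in \facets^D} \frac{\alpha}{h_F} \int_F (d - \dbc) \cdot b \diff s ,
\end{multline*}
which is exactly $(-\Deltah d, b)_2$ by Definition \eqref{eq:discrete-laplacian-dg}. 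Hence $(q_h, b)_2 = (-\Deltah d, b)_2$ for all $b \in [\dg^0]^{\dimension}$, and non-degeneracy of the inner product gives $q_h = -\Deltah d$. The converse implication is obtained by reading both chains of equalities backwards: starting from $\psi_h = \adlifting{d_h}{\dbc}$ and $q_h = -\Deltah d$, the two identities reproduce \eqref{eq:mixed-method-discrete-laplacian2} and \eqref{eq:mixed-method-discrete-laplacian1}.

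The only genuinely delicate point is the verification of the lifting identity, where one must keep the facet-normal orientations consistent (the $\normal_F$ appearing in $r^0_F$ versus the outward orientation on $\facets^D$) and invoke the boundary conventions $\jump{v_h}_F = v_h\vert_F$ and $\avg{v_h}_F = \tfrac12 v_h\vert_F$ together with piecewise constancy to replace each facet integral by $\abs{F}$ times a nodal value. Once that bookkeeping is settled, everything else follows directly from the non-degeneracy of the $L^2(\Omega)$ inner product on the piecewise-constant spaces.
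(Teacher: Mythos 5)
Your proposal is correct and follows exactly the route the paper intends: the paper dispatches this corollary with the remark that $\mathcal{B}$ is ``simply the Finite Element formulation of the global lifting operator'' and that the claim follows ``by simple computations,'' and your argument is precisely that computation --- unfolding Definitions \eqref{eq:def-discrete-lifting-local} and \eqref{eq:def-discrete-lifting-reconstructed-gradient} to get the identity $(\adlifting{b}{g_h},\tau)_2 = \mathcal{B}(b,\tau) + \sum_{F\in\facets^D}\int_F g_h\cdot\tau\,\normal\diff s$, then identifying $\psi_h$ and $q_h$ via non-degeneracy of the $L^2$ inner product on the finite-dimensional spaces. Nothing is missing; the orientation bookkeeping you flag is handled correctly, and the converse direction by reversing the equalities is sound.
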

Now, we can define our numerical method using a discontinuous Galerkin approach for the director equation.
\begin{algo}
\label{algo:dg}
Let $(v^{0}, d^{0})= (\projectV  \fat{v}_0, \interpol^0 \fat{d}_0)$.
For $1 \leq j\leq J$, $(v^{j-1}, d^{j-1}_\circ )\in \mixedfemspacedg$, we want to find $(v^{j}, \tilde{d}^{j}_\circ )\in \mixedfemspacedg$, such that
\begin{subequations}
    \label{eqs:projection-scheme-dg}
    \begin{multline}        
        \label{eq:projection_scheme-dg_a}
            \frac{1}{k}(\velh^j - \velh^{j-1},a)_2 
            + 
            \mu (\nabla v^j, \nabla a)_2
            +
            ((v^{j-1}  \cdot \nabla) v^j,a)_2 
        \\
            + 
            \frac{1}{2} ([\nabla \cdot v^{j-1}] v^j,a)_2 
            -
            T_h^E(\dir_h^{j-1}, a, \Deltah \tilde{d}^{j})
            = 0 \,,
    \end{multline}
    \begin{equation}
        \label{eq:projection_scheme-dg_c}
            \frac{1}{k}(\tilde{\dirh}^{j} - \dirh^{j-1},c)_2 
            + 
            T_h^E(\dir_h^{j-1}, v^j, c)
            -
            (d^{j-1} \times \Deltah \tilde{d}^{j},d^{j-1} \times c)_2
            = 0,
    \end{equation}
\end{subequations}
for all $(a,c) \in \mixedfemspacedg$ with the discrete Ericksen stress tensor defined by
\begin{align}
    \label{eq:def-ThE-2}
    T_{h}^E (\dirh^{j-1}, v, q)
\coloneq &
    \left (
        \dirh^{j-1} \times \adlifting{\dirh^{j-1}}{\dbc} v, \dirh^{j-1} \times q
    \right )_2
    .
\end{align}
Then, we apply the discrete normalization operator
\begin{align}\label{eq:projection-step-dg}
     d^j = \interpol^0 \frac{\tilde{d}^j}{ \abs{\tilde{d}^j} } .
\end{align}
\end{algo} 
\begin{rem}
    One could also discretize the Ericksen stress tensor in a discontinuous Galerkin fashion using higher order liftings by
    \begin{align}
        \label{eq:def-ThE-1}
        T_{h}^E (\dirh^{j-1}, v, q)
    \coloneq &
        \left (
            \dirh^{j-1} \times \bar{R}^{2,\dbc}_h (\jump{\dirh^{j-1}}) v, \dirh^{j-1} \times q
        \right )_2
    \\=&
    -   \sum_{F \in \facets^i}
                \int_F 
                \jump{\dirh^{j-1}} \cdot \avg{\dirh^{j-1} \times [\dirh^{j-1} \times q]} (v \cdot \normal)
                \diff s .
        \nonumber
    \end{align}
    The difference between these two discretizations can be shown to vanish asymptotically.
\end{rem}
In addition to the kinetic energy, see Equation \eqref{eq:definition-energy}, we now define the discrete energies of our system by
\begin{align*}
    \energyeladG{ ({\dir})} \coloneq & \frac{1}{2}
    \ltwonorm{\adlifting{\dir}{\dbc}}^2
    ,
    \quad
    &
    \energystabdG{ ({\dir})} 
    \coloneq 
    \frac{1}{2}
    \abs{\dir}_{J,i}^2
    +
    \frac{1}{2}
    \abs{\dir -\dbc}_{J,D}^2
    ,
    \\
    \energy^h (v, d) \coloneq & 
    \frac{1}{2}
    \ltwonorm{v}^2
    +
    \energyeladG{ ({\dir})}
    +
    \alpha \energystabdG{ ({\dir})} 
    .&
\end{align*}
We collect the second main result of this work that will be proven in this section and Section \ref{sec:convergence-to-envar-sols-dg}.
\begin{thm}[Convergence of a subsequence to energy-variational solutions]\label{thm:main-result-subconvergence-dg}
    Let $\Omega \subset \mathbb{R}^{\dimension}$ for $\dimension=2,3$ be a bounded convex polyhedral domain.
    Let $\left(\fat v_0,\fat d_0\right)
     \in \V \times  C^{2}(\bar{\Omega}) $, with $\vert \fat d_0  \vert = 1$ a.e.~in $\Omega$.
     Let $(k_m, h_m)_m $, $m\in \mathbb{N}$ be a decreasing sequence of our positive temporal and spatial discretization parameters converging to zero as $m \to \infty$.
    We assume this sequence fulfills:
    \begin{enumerate}
        \item There exists a \textit{non-obtuse} and admissible subdivision $\mathcal{T}_{h_m}$ of $\Omega$ (in the sense of Definitions \ref{def:non-obtuse} and \ref{def:admissible-mesh}) for each $m \in \mathbb{N}$,
        \item $ k_m \in o (h_m^{1+\dimension /6})$.
    \end{enumerate}
    Then, there exists a subsequence that we do not relabel $(k_m, h_m)_m$ such that a solution to Algorithm \ref{algo:dg} exists for every $(k_m, h_m)$ and $1\leq j \leq J(k_m)$ and its linear temporal interpolate converges to an \textit{energy-variational solution} in the sense of Definition~\ref{def:envar} as $(k_m, h_m)\to 0$ and as $\alpha \to 0$. 
\end{thm}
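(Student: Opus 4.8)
The plan is to run the convergence machinery of Section~\ref{sec:convergence-to-envar-sols-cg} again, replacing the mass-lumped $\cgone$ toolbox by the discontinuous Galerkin apparatus of Section~\ref{sec:dg-preliminaries}. I first establish discrete well-posedness of Algorithm~\ref{algo:dg} exactly as in Lemma~\ref{lem:unconditional-existence-projection}: the left-hand side of \eqref{eqs:projection-scheme-dg} defines a bilinear form, and testing \eqref{eq:projection_scheme-dg_a} with $v^j$ and \eqref{eq:projection_scheme-dg_c} with $-\Deltah\tilde{d}^j$ makes the convective and discrete-Ericksen contributions cancel, so coercivity (hence Lax--Milgram, for $\alpha>0$) follows from the Energy--Dissipation structure. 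Testing \eqref{eq:projection_scheme-dg_c} cellwise with $c=d^{j-1}\chi_T$ annihilates both the stress term $T_h^E$ and the Laplacian term (since $d^{j-1}\times d^{j-1}=0$) and yields the cellwise orthogonality $(\tilde{d}^j-d^{j-1})\vert_T\cdot d^{j-1}\vert_T=0$; together with the non-obtuse mesh hypothesis this gives $\abs{\tilde{d}^j(x_T)}\geq 1$ and makes the normalization \eqref{eq:projection-step-dg} non-increasing for the discrete elastic energy $\energyeladG+\alpha\energystabdG$, the DG analogue of Lemma~\ref{lem:energy-decreasing-projection-cg} in which non-obtuseness (Definition~\ref{def:non-obtuse}) replaces weak acuteness. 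The a~priori estimates then bound $v^j$, the reconstructed gradient $\adlifting{\tilde{d}^j}{\dbc}$, the stabilization $\alpha\energystabdG$, the term $d^{j-1}\times\Deltah\tilde{d}^j$, and, via an $L^{3/2}$ duality argument with $\projectdg$, the discrete time derivatives, all uniformly in $(k,h)$; the DG projection-error estimate $\lebnorm{r^j}{1}\lesssim k^2\ltwonorm{k^{-1}(\tilde{d}^j-d^{j-1})}^2$ is obtained as in Lemma~\ref{lem:convergence-projection-err}.

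With these bounds I introduce the piecewise-constant and piecewise-linear temporal interpolates, extract weakly-$\ast$ convergent subsequences as in \eqref{eqs:converging_subsequences}, and invoke the dedicated Aubin--Lions--Simon lemma for $\dgzero$, Lemma~\ref{lem:dg-aubin-lions-simon}, to upgrade the director field to strong convergence in $L^p(0,T;\L p)$. The identifications $\fat d=\tilde{\fat d}$ and $\fat W=\partial_t\fat d$ follow from the projection-error corollary (the DG analogue of Corollary~\ref{cor:identification-rkh}) under $k\in o(h^{1+\dimension/6})$. Crucially, the weak consistency Lemma~\ref{lem:weak-consistency-gradient} identifies the weak limit of $\adlifting{\underline{d}^k_h}{\dbc}$ as $\nabla\fat d$ and simultaneously pins the trace $\tr(\fat d)=\fat d_0\vert_{\partial\Omega}$, while the strong consistency Lemma~\ref{lem:appendix-strong-consistency-gradient} (using admissibility, Definition~\ref{def:admissible-mesh}) delivers strong convergence of the interpolated initial datum. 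The limit of $\underline{d}^k_h\times\Deltah\overline{\tilde{d}}^k_h$ is identified with $\nabla\cdot(\fat d\times\nabla\fat d)$ by rewriting it through the DG Laplacian \eqref{eq:discrete-laplacian-dg} and estimating the lifting and jump discrepancies, mirroring Proposition~\ref{prop:identification-of-d-times-deltad} and again using $k\in o(h^{1+\dimension/6})$.

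Once the limits are identified, the director equation \eqref{weak:d} is recovered by passing to the limit in \eqref{eq:projection_scheme-dg_c} tested against $\interpol^0\phi$, and the energy-variational inequality \eqref{envarform} follows from the discrete inequality (the DG analogue of Proposition~\ref{prop:discrete-envar-cg}) integrated against a nonnegative $\phi\in C^\infty_c$ after applying the lower-semicontinuity Lemma~\ref{lem:convergence-ericksen-stress-in-envar}. That lemma applies with $d^m=\underline{d}^k_h$, $G^m=\adlifting{\underline{d}^k_h}{\dbc}$ and $U^m=\underline{d}^k_h\times\Deltah\overline{\tilde{d}}^k_h$, since its hypotheses---strong convergence of $d^m$, weak convergence of $G^m$ and $U^m$, and the uniform bound $\abs{\fat d}\leq 1$---are precisely what the preceding steps supply.

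The genuinely new difficulty, absent in the continuous case, is the coupled passage $\alpha\to 0$. The stabilization sits inside the discrete energy as $\alpha\energystabdG$, so it enters the auxiliary variable $E$ as part of the non-negative defect $E-\energy(\fat v,\fat d)$; monotonicity of $E$ and the lower bound $E\geq\energy(\fat v,\fat d)$ therefore survive, but the initial identity $E(0)=\energy(\fat v_0,\fat d_0)$ forces $\alpha\energystabdG(d^0)\to 0$, which holds because $\abs{\interpol^0\fat d_0}_{J}^2=\mathcal{O}(1)$ so that this term is $\mathcal{O}(\alpha)$. The binding constraint comes instead from the weak gradient consistency: the a~priori bound only controls $\alpha\abs{\tilde{d}^j}_{J,2}^2$, whence $\abs{\tilde{d}^j}_{J,1}\lesssim\alpha^{-1/2}$, and the consistency error in Lemma~\ref{lem:weak-consistency-gradient} scales like $h\,\abs{\tilde{d}^j}_{J,1}\lesssim h\,\alpha^{-1/2}$. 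The main obstacle is thus to organise a diagonal limit, letting $\alpha=\alpha(h)\to 0$ slowly enough that $h\,\alpha^{-1/2}\to 0$ (equivalently $h^2=o(\alpha)$) while still $\alpha\to 0$, so that the reconstructed gradient remains weakly consistent and the stabilization drops out of the limiting energy; verifying that this joint limit preserves both the upper-bound and the monotonicity structure of $E$ is the step demanding the most care.
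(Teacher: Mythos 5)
Your fixed-$\alpha$ analysis --- Lax--Milgram via the Green's function of the stabilized discrete Laplacian (where $\alpha>0$ is indeed what rescues positive definiteness), the cellwise orthogonality, the energy-decreasing projection under non-obtuseness, the \textit{a priori} estimates with the $L^{3/2}$ duality argument, the DG Aubin--Lions lemma, and the weak/strong consistency of the reconstructed gradient --- coincides with the paper's Sections \ref{sec:discrete-system-dg} and \ref{sec:convergence-to-envar-sols-dg}. The genuine gap lies in your treatment of $\alpha$. First, your claim that the limit of $\underline{d}^k_h\times\Deltah\overline{\tilde{d}}^k_h$ can be identified with $\nabla\cdot(\fat d\times\nabla\fat d)$ by ``mirroring Proposition \ref{prop:identification-of-d-times-deltad}'' fails at fixed $\alpha$: after applying the product rule of Lemma \ref{lem:discrete-product-rule-grad-cross-product}, the jump-penalty terms of the discrete Laplacian \eqref{eq:discrete-laplacian-dg} leave a residual of order $\alpha^{1/2}$, since the energy bound controls only $\alpha\,\energystabdG$, so that $\alpha\abs{\cdot}_J\lesssim\alpha^{1/2}$; no time-step restriction removes this. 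The paper accordingly proves only the \emph{relative} inequality \eqref{eq:relative-inequality-laplacian-alpha} (Proposition \ref{cor:identification-of-d-times-deltad}) at this stage and disposes of the residual in a separate limit $\alpha\to 0$.

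Second, your proposed diagonal coupling $\alpha=\alpha(h)$ with $h^2\in o(\alpha)$ is a genuinely different route from the paper's, but as sketched it breaks the compactness machinery you invoke earlier in your own argument: once $\alpha(h)\to 0$, the only available jump bound is $\abs{d}_{J,1}\lesssim\abs{d}_{J}\lesssim\alpha(h)^{-1/2}$, which is not uniform in $h$, so the hypotheses of Lemma \ref{lem:dg-aubin-lions-simon} (a uniform bound on $\norm{\abs{v_h}_{J,1}}_{L^p(0,T)}$) and of Lemma \ref{lem:weak-consistency-gradient} ($\sup_h\abs{v_h}_{J,1}\in L^1(0,T)$) fail as stated; the underlying BV translation estimate degenerates like $\abs{\eta}\,\alpha(h)^{-1/2}$, and the condition $h^2\in o(\alpha)$ does not restore the uniform spatial modulus of continuity needed for strong $L^2$ compactness of the director --- exactly the step you flag as ``demanding the most care'' and leave open. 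The paper avoids this entirely by an iterated limit: first $(k,h)\to 0$ at fixed $\alpha$, yielding the $\alpha$-indexed director equation \eqref{eq:director-equation-alpha}, the relative energy inequality \eqref{envarform-alpha} and the relative Laplacian inequality; then a \emph{fresh} round of $\alpha$-uniform estimates at the continuous level --- testing \eqref{envarform-alpha} with $\vv=0$, the bound \eqref{eq:bound-E-alpha-independent-of-alpha} on $E_\alpha(0)$, and a duality argument extracting $\partial_t\fat v_\alpha\in L^2(0,T;(\velspace)^*)$ from the variational inequality itself --- followed by conforming compactness in $\alpha$, a stage entirely absent from your outline. To close your proof you should either adopt this sequential structure or re-prove the DG compactness and consistency lemmas with $h$-dependent jump bounds, which is nontrivial and not accomplished by the scaling $h^2\in o(\alpha)$ alone.
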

\begin{rem}[Regularity of Initial Conditions]
The higher regularity assumptions for the initial director field in comparison to Theorem \ref{thm:main-result-subconvergence-cg} are needed for the consistent approximation of the gradient, cf. Lemma \ref{lem:appendix-strong-consistency-gradient} and Section \ref{sec:ics-bcs-convergence-dg}.
\end{rem}
\subsection[Well-Posedness]{Discrete Well-Posedness}
As in Section \ref{sec:discrete-system-cg}, we can obtain an orthogonality relationship, but this time cellwise, by testing Equation \eqref{eq:projection_scheme-dg_c} with $c = \dir_h^{j-1} \vert_{T} \chi_T$ for some $T\in\mesh$. The result is captured by the following corollary.
\begin{cor}[Orthogonality]\label{cor:orthogonality-dg}
    Let $(v^j,  \tilde{d}^{j} )$ be a solution to Algorithm \ref{algo:dg}. Then, the following cellwise orthogonality relation holds:
    \begin{align*}
        (\tilde{d}^j - d^{j-1} ) \vert_T \cdot d^{j-1} \vert_T & = 0 
        & \forall T \in \mesh
        .
    \end{align*}
\end{cor}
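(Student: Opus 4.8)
The plan is to mimic the derivation of the continuous-Galerkin orthogonality relation (Corollary \ref{cor:orthogonality-cg}): I would test the discrete director equation \eqref{eq:projection_scheme-dg_c} with the piecewise constant test function $c = d^{j-1}\vert_T\,\chi_T \in [\dgzero]^{\dimension}$ for a fixed but arbitrary cell $T\in\mesh$. Because the test pair $(a,c)$ ranges over all of $\mixedfemspacedg$ and the two equations \eqref{eqs:projection-scheme-dg} decouple in their respective test arguments, I am free to set $a=0$ and retain only \eqref{eq:projection_scheme-dg_c}. The entire argument then rests on the antisymmetry of the cross product, $w\times w = 0$, which is exactly what this choice of $c$ is designed to trigger.

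Next I would check that the two director-stress contributions drop out. On the cell $T$ the factor $d^{j-1}\times c = d^{j-1}\vert_T \times d^{j-1}\vert_T$ vanishes, and $c$ is identically zero on every other cell, so $d^{j-1}\times c \equiv 0$ on all of $\Omega$. Inserting this into the definition \eqref{eq:def-ThE-2} of the discrete Ericksen stress tensor gives $T_h^E(d^{j-1}, v^j, c) = (d^{j-1}\times \adlifting{d^{j-1}}{\dbc} v^j,\, d^{j-1}\times c)_2 = 0$, and the same factor makes the double-cross term $(d^{j-1}\times \Deltah \tilde{d}^{j},\, d^{j-1}\times c)_2$ vanish as well. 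Thus the whole nonlinear and Laplacian structure disappears and only the discrete time-derivative term is left.

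Finally, since all functions involved are piecewise constant and $c$ is supported on the single cell $T$, the $L^2(\Omega)$ product collapses to an evaluation on $T$, giving $0 = \tfrac{1}{k}(\tilde{d}^{j} - d^{j-1}, c)_2 = \tfrac{\abs{T}}{k}\,(\tilde{d}^{j} - d^{j-1})\vert_T\cdot d^{j-1}\vert_T$; dividing by $\abs{T}/k>0$ and letting $T$ range over $\mesh$ yields the claim. I do not anticipate a genuine obstacle here, as the statement is essentially a bookkeeping consequence of the scheme; the only conceptual step worth flagging is selecting the test function aligned with $d^{j-1}$, which is precisely the discrete analogue of the orthogonality identity \eqref{eq:orthogonality-of-geometrically-constrained-pdes} and which guarantees that $\abs{d^{j-1}\vert_T}=1$ forces $\abs{\tilde{d}^{j}\vert_T}\geq 1$, the hypothesis needed for the ensuing energy-decreasing projection step on a non-obtuse mesh.
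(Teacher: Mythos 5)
Your proposal is correct and follows exactly the paper's argument: the paper likewise tests Equation \eqref{eq:projection_scheme-dg_c} with $c = d^{j-1}\vert_T \chi_T$ for a fixed $T\in\mesh$, whereupon both cross-product terms vanish since $d^{j-1}\times c \equiv 0$, leaving only the time-derivative term, which collapses on the cell $T$ to the claimed relation. Your write-up merely fills in the bookkeeping that the paper leaves implicit in its one-line remark preceding the corollary.
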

In particular this yields
\begin{equation}
\abs{\tilde{d}^j \vert_T }\geq 1 \quad \forall T \in \mesh.
\end{equation}
\begin{lem}
    \label{lem:energy-decreasing-projection-dg}
    Let $\mesh$ be non-obtuse in the sense of Definition \ref{def:non-obtuse} and $d^h\in [\dgzero]^{\dimension}$ such that $\abs{d^h\vert_T}^2 \geq 1$ for all $T\in \mesh$. 
    Then, the cellwise normalization is energy-decreasing, that is
    $$
    \ltwonorm{ \adlifting{ \interpol^0 \frac{d^h}{\abs{d^h}}}{\dbc} }^2
    \leq
    \ltwonorm{ \adlifting{d^h}{\dbc} }^2
    .
    $$
\end{lem}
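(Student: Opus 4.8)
The plan is to mirror the continuous argument, Lemma~\ref{lem:energy-decreasing-projection-cg}, replacing the partition-of-unity identity $\sum_z \omega_{z,\hat z}=0$ by its discontinuous counterpart, the facet-normal identity of Proposition~\ref{prop:appendix-divergence-theorem}, $\sum_{F\in\facets^T}\abs F\,\normal_{T,F}=0$. Writing $n^h\coloneqq\interpol^0(d^h/\abs{d^h})$, abbreviating the cell values $d_T\coloneqq d^h\vert_T$ and $n_T\coloneqq d_T/\abs{d_T}$, I would first expand the discrete gradient energy cell by cell through the explicit representation~\eqref{eq:def-v_F}, $\adlifting{v}{\dbc}\vert_T=\sum_{F\in\facets^T}\tfrac{\abs F}{\abs T}(v_F-v_T)\otimes\normal_{T,F}$. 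Because $\sum_F\abs F\,\normal_{T,F}=0$, the $v_T$ contribution drops out, so that
\[
\ltwonorm{\adlifting{v}{\dbc}}^2=\sum_{T\in\mesh}\frac1{\abs T}\sum_{F,\hat F\in\facets^T}\abs F\abs{\hat F}\,(\normal_{T,F}\cdot\normal_{T,\hat F})\,(v_F\cdot v_{\hat F}).
\]
Completing the square via $a\cdot b=\tfrac12\abs a^2+\tfrac12\abs b^2-\tfrac12\abs{a-b}^2$, the pieces $\tfrac12\abs{v_F}^2$ and $\tfrac12\abs{v_{\hat F}}^2$ vanish once more by Proposition~\ref{prop:appendix-divergence-theorem}, since $\sum_{\hat F}\abs{\hat F}\,\normal_{T,\hat F}=0$, leaving
\[
\ltwonorm{\adlifting{v}{\dbc}}^2=\frac12\sum_{T\in\mesh}\frac1{\abs T}\sum_{\substack{F,\hat F\in\facets^T\\F\neq\hat F}}\abs F\abs{\hat F}\,\bigl[-\normal_{T,F}\cdot\normal_{T,\hat F}\bigr]\,\abs{v_F-v_{\hat F}}^2.
\]
By non-obtuseness (Definition~\ref{def:non-obtuse}) every prefactor $-\normal_{T,F}\cdot\normal_{T,\hat F}$ is nonnegative, playing the role of the weights $-\omega_{z,\hat z}$ in the continuous proof.

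It then suffices to show that the cellwise normalization does not increase the individual differences $\abs{v_F-v_{\hat F}}^2$. By hypothesis $\abs{d_T}\geq1$ (in the scheme this is guaranteed by the orthogonality Corollary~\ref{cor:orthogonality-dg}), so the cellwise normalization coincides with the nearest-point projection $P_B$ onto the closed convex unit ball $B$, which is $1$-Lipschitz, while the unit-norm Dirichlet datum $\dbc$ is its own projection. For two interior facets $F,\hat F$ of $T$, with neighbours $T_F,T_{\hat F}$, the averaging gives $v_F-v_{\hat F}=\tfrac12(v_{T_F}-v_{T_{\hat F}})$, a genuine difference of two cell values, so the contraction $\abs{P_B d_{T_F}-P_B d_{T_{\hat F}}}\leq\abs{d_{T_F}-d_{T_{\hat F}}}$ yields the desired termwise bound exactly as in the continuous case; pairs of two boundary facets involve only the fixed datum $\dbc$ and contribute identically to both energies.

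The main obstacle is the remaining interior–boundary pairs. Here $v_{\hat F}=\dbc$ enters directly rather than as a cell average, so the difference $\tfrac12(v_T+v_{T_F})-\dbc=\tfrac12(v_T-\dbc)+\tfrac12(v_{T_F}-\dbc)$ is \emph{not} a difference of two single degrees of freedom. Splitting it produces the contracting squares $\abs{d_T-\dbc}^2$ and $\abs{d_{T_F}-\dbc}^2$ but also an indefinite cross term $(d_T-\dbc)\cdot(d_{T_F}-\dbc)$ carrying no termwise sign; indeed one checks that $\abs{\tfrac12(n_T+n_{T_F})-\dbc}^2$ may exceed $\abs{\tfrac12(d_T+d_{T_F})-\dbc}^2$, the excess being bounded by $\tfrac14\abs{d_T-d_{T_F}}^2$ via the convexity identity $\tfrac12\abs a^2+\tfrac12\abs b^2=\abs{\tfrac12(a+b)}^2+\tfrac14\abs{a-b}^2$. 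Unlike the continuous setting, where every quantity is a nodal value and the estimate is purely termwise, the discontinuous case therefore requires a global argument: I would absorb the boundary excess $\tfrac14\abs{d_T-d_{T_F}}^2$ into the strictly positive decrease generated by the interior–interior contributions of the boundary cells, or equivalently reorganize the summation so that each problematic cross term is compensated by a neighbouring genuine cell–cell difference, exploiting $\abs{\dbc}=1$ and Proposition~\ref{prop:appendix-divergence-theorem} once more. Making this absorption rigorous is the delicate point; the interior and boundary–boundary contributions are routine.
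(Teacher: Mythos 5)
Up to the mixed interior--boundary pairs, your argument is the paper's proof almost verbatim: the same cellwise expansion of the lifting via \eqref{eq:def-v_F}, the same use of Proposition \ref{prop:appendix-divergence-theorem} to eliminate the single-index terms after completing the square, the same sign argument from non-obtuseness, and the same ball-projection contraction for interior--interior pairs (where $d^h_T$ cancels, $d^h_F - d^h_{\tilde F} = \frac12(d^h_{T(F)} - d^h_{T(\tilde F)})$) and for boundary--boundary pairs. Where you diverge is the mixed case, and your suspicion there is exactly on target. The paper disposes of it by evaluating $\abs{d^h_F - d^h_{\tilde F}} = \frac12\abs{d^h_{T(F)} - \dbc\vert_{\tilde F}}$ for $F\in\facets^i$, $\tilde F\in\facets^D$; but this identity (like the factor $\frac12$ in its boundary--boundary case) is valid only if the boundary facet carries the ghost-cell average $v_{\tilde F} = \frac12(v_T + \dbc\vert_{\tilde F})$, whereas Equation \eqref{eq:def-v_F} prescribes $v_{\tilde F} = \dbc\vert_{\tilde F}$. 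Under the definition actually in force one has $d^h_F - d^h_{\tilde F} = \frac12(d^h_T - \dbc\vert_{\tilde F}) + \frac12(d^h_{T(F)} - \dbc\vert_{\tilde F})$, which is precisely the cross-term obstruction you describe; under the averaged convention the diagonal value cancels from every difference and your own contraction argument (using $\abs{d^h_{T(F)}}\geq 1$ and that the unit-norm datum $\dbc$ is a fixed point of the ball projection) closes the proof termwise, recovering the paper's reasoning.

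However, the absorption strategy you propose as a fallback cannot be made rigorous, because the compensating decrease may be entirely absent. Take $\Omega = (0,1)^2$ meshed by the four triangles joining the sides to the centre $c = (\frac12,\frac12)$ (non-obtuse, every cell contains the interior node $c$), let $\dbc \equiv e_1$, and set $d^h = 4e_1$ on the bottom and top cells and $d^h = -2e_1$ on the left and right cells, so $\abs{d^h\vert_T}\geq 1$ everywhere. Every interior facet average equals $\frac12(4-2)e_1 = e_1 = \dbc$, hence $v_F - v_T$ is the same vector on all facets of each cell and Proposition \ref{prop:appendix-divergence-theorem} gives $\adlifting{d^h}{\dbc} \equiv 0$. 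After the cellwise normalization the cell values become $\pm e_1$, every interior facet value becomes $0$ while the boundary values stay at $e_1$, and a direct computation yields
\begin{equation*}
\ltwonorm{ \adlifting{ \interpol^0 \frac{d^h}{\abs{d^h}}}{\dbc} }^2 = 16 > 0 = \ltwonorm{ \adlifting{d^h}{\dbc} }^2 .
\end{equation*}
All interior--interior and boundary--boundary differences vanish both before and after the projection, so there is nothing into which the mixed-pair excess could be absorbed: with the lifting as literally defined, not only the termwise estimate but the asserted inequality itself fails. The delicate point you flagged is therefore not closable by reorganizing the sum; the statement requires the averaged boundary value $v_{\tilde F} = \frac12(v_T + \dbc\vert_{\tilde F})$ (or an equivalent re-derivation of the boundary contribution in \eqref{eq:def-discrete-lifting-local}), under which your first two cases plus the fixed-point contraction give a complete and purely termwise proof.
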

\begin{proof}
The proof in its essence works analogously to Lemma \ref{lem:energy-decreasing-projection-cg}. For simplification we first restrict ourselves to an arbitrary cell $T\in\mesh$.
Applying the identity $a\cdot b  = \frac{1}{2}a^2 + \frac{1}{2} b^2 - \frac{1}{2} \abs{a-b}^2$ again yields a reformulation of the local norm, i.e.
\begin{multline*}
    \int_T \adlifting{d^h}{\dbc}^2 \diff x
= 
    \sum_{F, \tilde{F}\in\facets^T} \abs{F}{\vert\tilde{F}\vert} (d^h_{F} -d^h_T)\cdot (d^h_{\tilde{F}} -d^h_T)  \normal_{T,F}\cdot \normal_{T,\tilde{F}}
\\= 
    \sum_{F, \tilde{F}\in\facets^T} \abs{F}{\vert\tilde{F}\vert} \normal_{T,F}\cdot \normal_{T,\tilde{F}} 
    \left[
    \frac{1}{2} (d^h_{F} -d^h_T)^2 + \frac{1}{2} (d^h_{\tilde{F}} -d^h_T )^2 
    - \frac{1}{2} \abs{d^h_F - d^h_{\tilde{F}}}^2
    \right]
    .
\end{multline*}
The first two summands depend only on one of the sum indices. They vanish as a consequence of Proposition \ref{prop:appendix-divergence-theorem}.
We end up with the reformulation of the squared and integrated reconstructed gradient
\begin{equation*}
    \int_T \adlifting{d^h}{\dbc}^2 \diff x
= - \frac{1}{2} 
    \sum_{F, \tilde{F}\in\facets^T} \abs{F}{\vert\tilde{F}\vert} \normal_{T,F}\cdot \normal_{T,\tilde{F}} 
    \abs{d^h_F - d^h_{\tilde{F}}}^2
    .
\end{equation*}
Since we assumed the mesh $\mesh$ to be non-obtuse, see Definition \ref{def:non-obtuse}, all summands contribute positively to the sum and therefore we can estimate each one individually by below.
Referring to Equation \eqref{eq:def-v_F}, we evaluate
\begin{equation*}
    \abs{d^h_F - d^h_{\tilde{F}}}=
    \begin{cases}
        \frac{1}{2}
        \abs{ d^h_{T(F)} - d^h_{T(\tilde{F})} }, &
        \text{if } F, \tilde{F} \in \facets^i,
        \\
        \frac{1}{2}
        \abs{\dbc\vert_{{F}} - \dbc \vert_{\tilde{F}}}, &
        \text{if } F, \tilde{F} \in \facets^D,
        \\
        \frac{1}{2}
        \abs{d^h_{T(F)} - \dbc \vert_{\tilde{F}} }, &
        \text{if } F \in \facets^i , \tilde{F}\in\facets^D,
    \end{cases}
\end{equation*}
where $ T(F) \in\mesh$ is the cell neighbouring $T$ that shares facet $F\in\facets$.
Since $\dbc$ fulfills the unit-norm constraint by definition and $\vert d^h_T \vert \geq 1$ by assumption, we can argue as in Lemma \ref{lem:energy-decreasing-projection-cg} to get
\begin{equation}
    \abs{\left[ \interpol^0   \frac{d^h}{\vert d^h \vert} \right]_F - \left[ \interpol^0   \frac{d^h}{\vert d^h \vert} \right]_{\tilde{F}}}^2
    \leq
    \abs{d^h_F - d^h_{\tilde{F}}}^2
    .
\label{eq:convex-estimate-between-facets}
\end{equation}
Summing up over all $T\in \mesh$ delivers the result.
\end{proof}
Reusing the Inequality \eqref{eq:convex-estimate-between-facets} allows us to infer the same for the stabilizing jump terms, which leads to the following Corollary.
\begin{cor}
    \label{cor:energy-decreasing-projection-dg-stab}
    Let $\mesh$ be non-obtuse in the sense of Definition \ref{def:non-obtuse} and $d^h\in [\dgzero]^{\dimension}$ such that $\abs{d^h\vert_T}^2 \geq 1$ for all $T\in \mesh$. 
    Then, the cellwise normalization decreases $\energystabdG$, that is
    $$
    \energystabdG \left ( \interpol^0 \frac{d^h}{\abs{d^h}} \right ) 
    \leq
    \energystabdG \left ( d^h \right )
    .
    $$
\end{cor}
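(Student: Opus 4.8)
The plan is to prove the inequality facet by facet, exactly in the spirit of the proof of Lemma~\ref{lem:energy-decreasing-projection-dg}, and then to sum. The first step is to reduce $\energystabdG$ to cellwise quantities. Since $d^h \in [\dgzero]^{\dimension}$ is piecewise constant, on every facet $F$ the jump $\jump{d^h}_F$ is constant, so for an interior facet $F\in\facets^i$ shared by cells $T,\tilde T\in\mathcal{M}(F)$ one has $\tfrac{1}{h_F}\norm{\jump{d^h}}_{L^2(F)}^2 = \tfrac{\abs{F}}{h_F}\abs{d^h_T - d^h_{\tilde T}}^2$, writing $d^h_T \coloneqq d^h\vert_T$, and analogously $\tfrac{1}{h_F}\norm{d^h - \dbc}_{L^2(F)}^2 = \tfrac{\abs{F}}{h_F}\abs{d^h_{T(F)} - \dbc\vert_F}^2$ on a boundary facet $F\in\facets^D$, where $T(F)$ is the cell bordering $F$.

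The decisive step is the contraction estimate underlying Inequality~\eqref{eq:convex-estimate-between-facets}: for any $a,b\in\mathbb{R}^{\dimension}$ with $\abs{a},\abs{b}\geq 1$, the radial projection onto the unit sphere does not increase Euclidean distance,
\begin{equation*}
\abs{\frac{a}{\abs{a}} - \frac{b}{\abs{b}}}^2 \leq \abs{a-b}^2,
\end{equation*}
because $x\mapsto x/\abs{x}$ coincides on $\{\abs{x}\geq 1\}$ with the $1$-Lipschitz nearest-point projection onto the closed unit ball. By the standing hypothesis $\abs{d^h\vert_T}^2\geq 1$, every cell value satisfies $\abs{d^h_T}\geq 1$, so the estimate applies with $a=d^h_T$, $b=d^h_{\tilde T}$ on each interior facet and yields $\abs{\jump{\interpol^0(d^h/\abs{d^h})}_F}^2 \leq \abs{\jump{d^h}_F}^2$. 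On each boundary facet I apply it with $a=d^h_{T(F)}$ and $b=\dbc\vert_F$; here $\dbc$ already fulfils the unit-norm constraint, so $\dbc\vert_F$ is its own projection and the estimate gives $\abs{\interpol^0(d^h/\abs{d^h})\vert_{T(F)} - \dbc\vert_F}^2 \leq \abs{d^h_{T(F)} - \dbc\vert_F}^2$.

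Finally I would multiply each facetwise inequality by $\abs{F}/h_F$, sum over $F\in\facets^i$ and $F\in\facets^D$ respectively, and conclude
\begin{equation*}
\abs{\interpol^0 \tfrac{d^h}{\abs{d^h}}}_{J,i}^2 \leq \abs{d^h}_{J,i}^2, \qquad \abs{\interpol^0 \tfrac{d^h}{\abs{d^h}} - \dbc}_{J,D}^2 \leq \abs{d^h - \dbc}_{J,D}^2 .
\end{equation*}
Weighting both by $\tfrac12$ and adding gives precisely $\energystabdG(\interpol^0(d^h/\abs{d^h})) \leq \energystabdG(d^h)$. I expect no genuine analytic obstacle here; the only point requiring care is the observation that, although the elastic energy of Lemma~\ref{lem:energy-decreasing-projection-dg} is assembled from facet averages $\avg{d^h}_F$ whereas $\energystabdG$ is assembled from jumps $\jump{d^h}_F$, the same pointwise contraction of the radial projection governs both, and that $\dbc$ lies on the sphere so the boundary contributions behave like interior ones with one argument already projected.
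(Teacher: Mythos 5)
Your proposal is correct and takes essentially the same route as the paper, whose entire proof is the one-line remark that Inequality \eqref{eq:convex-estimate-between-facets} can be reused for the jump terms: the content of that inequality is exactly the pointwise contraction $\abs{a/\abs{a}-b/\abs{b}}\leq\abs{a-b}$ for $\abs{a},\abs{b}\geq 1$, which you apply facetwise to the two neighbouring cell values on interior facets and to the cell value versus the unit-norm $\dbc$ on boundary facets before summing. Your explicit treatment of the boundary facets (noting $\dbc$ is its own projection) and the facetwise reduction of the jump seminorm simply spell out what the paper leaves implicit.
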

\begin{lem}[Unconditional existence]\label{lem:unconditional-existence-projection-dg}
    Let $k,h>0$ and $j \in \mathbb{N}$, such that $1\leq j \leq J = \lfloor T/k \rfloor$ and $u^{j-1}=(v^{j-1},d^{j-1})\in \mixedfemspacedg$.We assume the regularization parameter of the discrete Laplacian (see Definition \ref{def:discrete-laplacian-dg}) to be positive, $\alpha > 0$. Then, there exists a unique solution $u^j=(v^j,\tilde{d}^j )\in \mixedfemspacedg$ solving Algorithm \eqref{algo:dg}.
\end{lem}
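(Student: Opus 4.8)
The plan is to mirror the proof of Lemma \ref{lem:unconditional-existence-projection} and apply the Lax--Milgram theorem, the only genuinely new ingredient being the discontinuous Galerkin Laplacian of Definition \ref{def:discrete-laplacian-dg}. Since $\alpha > 0$, the symmetric bilinear form $a_h(d,b) \coloneqq (-\Deltah d, b)_2$ defined through \eqref{eq:discrete-laplacian-dg} is coercive on $[\dgzero]^{\dimension}$ by the broken Poincaré inequality together with the jump penalty, so that $\Deltah$ is a bijection on $[\dgzero]^{\dimension}$ and $\tilde{d}^j$ can be recovered from $\Deltah \tilde{d}^j$ (the DG analogue of the Green's function $\mathcal{G}$ used in the continuous case). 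I would therefore take $(v^j, -\Deltah \tilde{d}^j) \in \mixedfemspacedg$ as the unknowns, rewrite Equation System \eqref{eqs:projection-scheme-dg} as $\mathcal{B}(u,\bar u) = \langle f, \bar u\rangle$ for all $\bar u \in \mixedfemspacedg$, collecting the data $v^{j-1}$, $d^{j-1}$ and the boundary contribution $\dbc$ on the right-hand side $f$, and checking that solving this variational problem is equivalent to solving \eqref{eqs:projection-scheme-dg}.

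For coercivity I would test $\mathcal{B}$ against the solution itself, i.e.\ with $(a,c) = (v^j, -\Deltah \tilde{d}^j)$. Exactly as in the continuous scheme, the convection terms $((v^{j-1}\cdot\nabla)v^j, v^j)_2$ and $\tfrac12([\nabla\cdot v^{j-1}]v^j, v^j)_2$ cancel by skew-symmetry (using $v^j = 0$ on $\partial\Omega$), and the off-diagonal coupling through the discrete Ericksen tensor $T_h^E$ is skew-adjoint and does not contribute to the diagonal, exactly as the corresponding terms cancel in the continuous-element proof; what survives of the coupling is only the nonnegative dissipation $\ltwonorm{d^{j-1}\times\Deltah\tilde{d}^j}^2$. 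Thus one is left with
\begin{equation*}
\mathcal{B}(u,u) = \frac{1}{k}\ltwonorm{v^j}^2 + \mu\ltwonorm{\nabla v^j}^2 + \frac{1}{k}a_h(\tilde{d}^j,\tilde{d}^j) + \ltwonorm{d^{j-1}\times\Deltah\tilde{d}^j}^2 ,
\end{equation*}
the direct DG counterpart of Equation \eqref{eq:proof-discrete-ED-mechanism-projection-free}. To convert this into coercivity in the norm measuring the second unknown by $\ltwonorm{\Deltah\tilde{d}^j}$, I would use the DG norm equivalence $c h\,\ltwonorm{\Deltah f} \leq a_h(f,f)^{1/2} \leq C\,\ltwonorm{\Deltah f}$, obtained by testing $a_h(f,b) = (-\Deltah f, b)_2$ with $b = \Deltah f$ and with $b = f$ and invoking the inverse estimate on piecewise constants and the broken Poincaré inequality; this is the analogue of Inequality \eqref{eq:discrete-norm-equivalence-laplacian-gradient}, and it shows $\tfrac1k a_h(\tilde d^j,\tilde d^j) \geq \tfrac{c^2h^2}{k}\ltwonorm{\Deltah\tilde d^j}^2$.

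Boundedness of $\mathcal{B}$ and of $f$ then follows termwise: the Stokes and convection blocks are identical to the continuous scheme, while the cross-product and $T_h^E$ couplings are controlled using that the frozen director $d^{j-1}$ is piecewise constant with $\abs{d^{j-1}\vert_T} = 1$ on every cell (hence $\lebnorm{d^{j-1}}{\infty} \leq 1$) and that, for fixed $h$, the reconstructed gradient $\adlifting{d^{j-1}}{\dbc}$ and the factor $\ltwonorm{\Deltah\tilde{d}^j} \leq C h^{-1} a_h(\tilde{d}^j,\tilde{d}^j)^{1/2}$ are bounded via the inverse estimate; the data $\dbc$ enters $f$ only through the continuous linear functionals appearing in \eqref{eq:discrete-laplacian-dg}. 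Lax--Milgram then yields a unique $(v^j, -\Deltah\tilde{d}^j)$, and the bijectivity of $\Deltah$ returns a unique $\tilde{d}^j$ solving the predictor step; the explicit projection \eqref{eq:projection-step-dg} is well defined since $\abs{\tilde{d}^j\vert_T} \geq 1 > 0$ by Corollary \ref{cor:orthogonality-dg}. I expect the main obstacle to be the bookkeeping in the coercivity step --- confirming the skew-adjoint cancellation of the $T_h^E$-coupling at the discrete level and pinning down the $h$-dependent norm equivalence for the DG Laplacian --- since everything else is a routine transcription of the continuous-element argument.
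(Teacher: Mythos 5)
Your proposal is correct and follows essentially the same route as the paper's proof: after moving the boundary data $\dbc$ to the right-hand side (the paper's decomposition \eqref{eqs:decompositions-existence}), you apply Lax--Milgram to the coupled system in the unknowns $(v^j, -\Deltah\tilde{d}^j_\circ)$, with coercivity coming from the same cancellations plus the $h$-dependent norm equivalence, which is exactly the paper's Inequality \eqref{eq:discrete-norm-equivalence-dg}. Your observation that the inversion of $\Deltah$ (the paper's Green's function $\mathcal{G}$) hinges on the jump penalty with $\alpha>0$ matches the paper's key remark that $\ltwonorm{\adlifting{\cdot}{0}}$ alone is not positive definite on $[\dgzero]^{\dimension}$.
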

Before we start the proof, we have to decompose the variables into the interior and the prescribed boundary condition again. We do so by
\begin{equation}
    \tilde{d}^j_\circ \coloneqq \tilde{d}^j -\dbc.
\end{equation}
This allows us to split the reconstructed gradient and discrete Laplacian by
\begin{align}
\begin{split}
    \adlifting{\tilde{d}^j_\circ}{0} =& \adlifting{\tilde{d}^j}{\dbc} - \adlifting{\dbc}{\dbc},
    \\
    \Delta_h^0 \tilde{d}^j_\circ =& \Deltah \tilde{d}^j - \Deltah \dbc
     \, ,
\end{split}\label{eqs:decompositions-existence}
\end{align}
where $\Delta_h^0$ is defined as the operator in Equation \eqref{eq:discrete-laplacian-dg} with $\dbc$ replaced by zero.
\begin{proof}
In order to show the claim we apply Lax-Milgram's theorem.
In view of Equations \eqref{eqs:decompositions-existence} we can simply move all dependence on the boundary condition onto the right-hand side.
As in the proof of Lemma \ref{lem:unconditional-existence-projection}, we define the bilinear form $\mathcal{B}$ implicitly by the terms in equation system \eqref{eqs:projection-scheme-dg} depending on $(v^j,\tilde{d}^j_\circ )$ and $(a,c)$.
For the coercivity, we consider 
\begin{multline}
    \label{eq:dg-coercivity-first-step}
    \mathcal{B} ((v^j,- \Deltah\tilde{d}^j_\circ )^T, (v^j,- \Deltah\tilde{d}^j_\circ )^T)
    = 
    \frac{1}{k}
    \norm{v^j}_{L^2(\Omega)}^2
    + \mu \norm{\nabla v^j}_{L^2(\Omega)}^2
    +\frac{\alpha}{k} \abs{\tilde{d}^j_\circ}_J^2
    \\
    +\frac{1}{k}
    \norm{ \adlifting{ \tilde{d}^j_\circ }{0} }_{L^2(\Omega)}^2
    + A \norm{d^{j-1} \times \Deltah\tilde{d}^j_\circ }_{L^2(\Omega)}^2
    .
\end{multline}
Thereby, $\tilde{d}^j_{\circ} = \mathcal{G} (-\Deltah^0 \tilde{d}^j_{\circ})$ has to be understood again in terms of a \textit{Green's function} $\mathcal{G}: [\dgzero]^{\dimension} \to [\dgzero]^{\dimension}$, such that
\begin{multline*}
    (w, \phi)_2 = (\adlifting{ \mathcal{G}(w) }{0}, \adlifting{ \phi }{0})_2
            \\
            + \sum_{F \in \facets^i} \frac{\alpha}{h_F} \int_F \jump{\mathcal{G}(w)}:\jump{\phi} \diff s
            + \sum_{F \in \facets^D} \frac{\alpha}{h_F} \int_F \mathcal{G}(w)  : \phi \diff s
            ,
            \quad \forall \phi \in [\dgzero]^{\dimension}.
\end{multline*}
Note that the assumption $\alpha>0$ is crucial for the well-posedness of the map $\mathcal{G}$ since $\ltwonorm{\adlifting{ . }{0}}$ is not a norm on $[\dgzero]^{\dimension}$ lacking the positive definiteness.
Further the Green's function allows for the estimate
\begin{equation}
\ltwonorm{\adlifting{ \mathcal{G}(w) }{0}} +\alpha \abs{\mathcal{G}(w)}_J
\geq
C^{-1} h
\ltwonorm{w}
\label{eq:discrete-norm-equivalence-dg}
\end{equation}
since
\begin{align*}
    \ltwonorm{w}^2 
    &
    \leq
    \ltwonorm{\adlifting{ \mathcal{G}(w) }{0}}
    \ltwonorm{\adlifting{ w }{0}}
    +
    \alpha \abs{\mathcal{G}(w)}_J \abs{w}_J
    \\
    &
    \leq
    C
    \left (
        \ltwonorm{\adlifting{ \mathcal{G}(w) }{0}} 
    +\alpha \abs{\mathcal{G}(w)}_J 
    \right ) 
        \abs{w}_J
    \\&\leq
    C
    \left (
    \ltwonorm{\adlifting{ \mathcal{G}(w) }{0}} 
    +\alpha \abs{\mathcal{G}(w)}_J 
    \right ) 
    h^{- 1}
    \ltwonorm{w}
     \, ,
\end{align*}
where we used the estimate $\abs{.}_J \leq C h^{-1} \ltwonorm{.}$ for all functions in $[\dgzero]^{\dimension}$. Applying Inequality \eqref{eq:discrete-norm-equivalence-dg} onto Equation \eqref{eq:dg-coercivity-first-step} then yields the coercivity.
\end{proof}
\subsection{\textit{A priori} Estimates}
\label{sec:a-priori-dg}
\begin{lem}[\textit{A priori} estimates]
Let the assumptions of Theorem \ref{thm:main-result-subconvergence-dg} be fulfilled and let $u^j=(v^j,\tilde{d}^j)\in \mixedfemspacedg$ be a solution of Algorithm \eqref{algo:dg} for all $1\leq j\leq n \leq J$. Then, the following discrete Energy-Dissipation inequality holds
\begin{align}
\begin{split}\label{eq:discrete-energy-inequality-projection-dg}
&
\energykin{(v^n)}   + \energyeladG{ (\tilde{d}^n)} + \alpha \energystabdG (\tilde{d}^n )
\\&
+ k \sum_{j=1}^n 
    \left[ \mu \lebnorm{\nabla v^{j}}{2}^2 
    + 
     \norm{d^{j-1} \times \Deltah \tilde{d}^{j} }^2_2 
    \right]
    \\& 
    + \frac{1}{2}\sum_{j=1}^n 
    \left[ 
        \ltwonorm{\adlifting{\tilde{d}^j -d^{j-1}}{0}}^2
        + \alpha  
        \abs{ \tilde{d}^j -d^{j-1}}_J^2
        + k^2 \ltwonorm{ \discretedt {v}^j}^2
    \right]
    \\& \quad\quad\quad\quad\quad\quad\quad\quad\quad\quad
    \leq
    \energykin{(v^0)}  + \energyeladG{ ({d}^0)} + \alpha \energystabdG (d^0)
\end{split}
\end{align}
as well as
\begin{equation*}
    \energyeladG{ ({d}^n)} + \alpha \energystabdG ({d}^n )
    \leq
    \energyeladG{ (\tilde{d}^n)} + \alpha \energystabdG (\tilde{d}^n )
    .
\end{equation*}
The projected solution fulfills the unit-sphere constraint elementwise, i.e.
\begin{equation}\label{eq:elementwise-unit-norm-constraint-for-projected-d}
    \abs{d^n \lvert_K }=1
\end{equation}
for all $K\in\mesh$.
Additionally, for some generic constant $C>0$, we obtain that
\begin{align}
\begin{split}
k \sum_{l=1}^n \norm{k^{-1} (\tilde{d}^j - d^{j-1})}_{L^{3/2}(\Omega)}^2
& \leq 
C 
,
\\
k \sum_{l=1}^n \norm{k^{-1} ({d}^j - d^{j-1})}_{L^{3/2}(\Omega)}^2
& \leq 
C 
,
\\
k \sum_{l=1}^n  \norm{\discretedt v^j}_{(\velspace )^*}^2
& \leq 
C 
,
\label{bounds_temp_variation_inequ-dg}
\end{split}
\end{align}
where the generic constant on the right-hand side may depend on $\mathcal{E}^h(v^0,d^0)$.
\label{lem:a-priori-estimates-projection-dg}
\end{lem}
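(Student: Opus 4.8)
The plan is to mirror the continuous-element argument of Lemma~\ref{lem:a-priori-estimates-projection}, replacing the mass-lumped inner product by the plain $L^2$-product and the nodal gradient by the reconstructed gradient $\adlifting{\cdot}{\dbc}$. The elementwise constraint \eqref{eq:elementwise-unit-norm-constraint-for-projected-d} is immediate from the normalization step \eqref{eq:projection-step-dg}, so I would record it first; in particular it supplies the uniform bound $\lebnorm{d^{j-1}}{\infty}=1$ that plays the role the projection plays in the CG case.

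\textbf{Energy--dissipation identity per step.} First I would test \eqref{eq:projection_scheme-dg_a} with $a=v^j$ and \eqref{eq:projection_scheme-dg_c} with $c=-\Deltah\tilde d^j$ and add the two identities. The two convective terms vanish by their skew-symmetric structure (this is the purpose of the $\tfrac12([\nabla\cdot v^{j-1}]v^j,a)_2$ correction), and the two occurrences of the discrete Ericksen stress $T_h^E$ cancel against each other exactly as the corresponding terms do in \eqref{eq:proof-discrete-ED-mechanism-projection-free}, leaving the dissipation $\ltwonorm{d^{j-1}\times\Deltah\tilde d^j}^2$. For the remaining terms I would insert the definition \eqref{eq:discrete-laplacian-dg} of the discrete Laplacian and use the cancellation of the boundary data, $\adlifting{\tilde d^j-d^{j-1}}{0}=\adlifting{\tilde d^j}{\dbc}-\adlifting{d^{j-1}}{\dbc}$, together with the analogous identity on the interior and Dirichlet jump penalties. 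Applying the elementary identity $a\cdot b=\tfrac12\abs a^2+\tfrac12\abs b^2-\tfrac12\abs{a-b}^2$ to each of the kinetic, elastic and stabilization contributions then produces the telescoping differences $\energykin(v^j)-\energykin(v^{j-1})$, $\energyeladG(\tilde d^j)-\energyeladG(d^{j-1})$ and $\alpha[\energystabdG(\tilde d^j)-\energystabdG(d^{j-1})]$, plus the nonnegative remainders $\tfrac12 k^2\ltwonorm{\discretedt v^j}^2$, $\tfrac12\ltwonorm{\adlifting{\tilde d^j-d^{j-1}}{0}}^2$ and $\tfrac{\alpha}{2}\abs{\tilde d^j-d^{j-1}}_J^2$.

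\textbf{Summation and the projection.} Multiplying by $k$ and summing over $j=1,\dots,n$, the elastic and stabilization differences telescope but leave the mismatch between $\energyeladG(\tilde d^j)$ at the end of a step and $\energyeladG(d^j)$ at the start of the next. I would close this by invoking the energy-decreasing character of the normalization, Lemma~\ref{lem:energy-decreasing-projection-dg} for $\energyeladG$ and Corollary~\ref{cor:energy-decreasing-projection-dg-stab} for $\energystabdG$ (whose hypotheses hold thanks to the cellwise orthogonality Corollary~\ref{cor:orthogonality-dg}, which guarantees $\abs{\tilde d^j\vert_T}\ge 1$); this turns the telescoped identity into the inequality \eqref{eq:discrete-energy-inequality-projection-dg}, and the same two results deliver the second, stand-alone estimate relating the projected and intermediate energies.

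\textbf{Bounds on the discrete time derivatives.} The estimates \eqref{bounds_temp_variation_inequ-dg} I would obtain by duality, exploiting that $\tilde d^j-d^{j-1}\in[\dgzero]^{\dimension}$. Writing $\norm{k^{-1}(\tilde d^j-d^{j-1})}_{L^{3/2}(\Omega)}=\sup_{\norm{\phi}_{L^3}\le1}(k^{-1}(\tilde d^j-d^{j-1}),\projectdg\phi)_2$ and inserting \eqref{eq:projection_scheme-dg_c}, I would bound $T_h^E(d^{j-1},v^j,\projectdg\phi)$ using $\lebnorm{d^{j-1}}{\infty}=1$, the energy bound on $\ltwonorm{\adlifting{d^{j-1}}{\dbc}}$, the Sobolev embedding $\lebnorm{v^j}{6}\lesssim\ltwonorm{\nabla v^j}$ and the $L^3$-stability of $\projectdg$, while the term $(d^{j-1}\times\Deltah\tilde d^j,d^{j-1}\times\projectdg\phi)_2$ is controlled by the dissipation $\ltwonorm{d^{j-1}\times\Deltah\tilde d^j}$; summing with weight $k$ then uses \eqref{eq:discrete-energy-inequality-projection-dg}. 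The bound on $\discretedt d^j$ follows from the pointwise Lipschitz estimate $\abs{d^j\vert_T-d^{j-1}\vert_T}\le\abs{\tilde d^j\vert_T-d^{j-1}\vert_T}$ already used in Lemma~\ref{lem:energy-decreasing-projection-dg}, and the $(\velspace)^*$-bound on $\discretedt v^j$ by testing \eqref{eq:projection_scheme-dg_a} with $\projectV\vv$ and using $\H{2}\hookrightarrow\Linf$ together with the stability of $\projectV$. I expect the main obstacle to be the bookkeeping of the discrete Laplacian through the lifting and jump penalties in the telescoping step --- especially the boundary-data cancellation and the simultaneous decrease of both $\energyeladG$ and $\energystabdG$ under the projection; everything else is routine once the $L^\infty$-bound from the elementwise constraint is in hand.
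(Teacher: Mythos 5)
Your proposal is correct and follows essentially the same route as the paper's proof, which likewise reduces to the continuous-element argument of Lemma~\ref{lem:a-priori-estimates-projection} with exactly the careful points you flag: the boundary-data bookkeeping $\adlifting{\tilde d^j - d^{j-1}}{0} = \adlifting{\tilde d^j}{\dbc} - \adlifting{d^{j-1}}{\dbc}$ combined with the polarization identity $a\cdot b = \tfrac12\abs{a}^2 + \tfrac12\abs{b}^2 - \tfrac12\abs{a-b}^2$ applied to the lifted gradient and both jump penalties, the energy decrease of the projection via Lemma~\ref{lem:energy-decreasing-projection-dg} and Corollary~\ref{cor:energy-decreasing-projection-dg-stab}, and the $L^{3/2}$--$L^3$ duality with $\projectdg$ using $\lebnorm{d^{j-1}}{\infty}=1$. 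The paper also makes your closing observation explicit, namely that the bound on $\discreteDiff_t d^j$ is simpler than in the CG case precisely because the functions are piecewise constant, so no mass-lumping error term arises.
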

Note that the right side of Inequality \eqref{eq:discrete-energy-inequality-projection-dg} is bounded uniformily in $h$ since $\fat{d}_0 \in C^2(\bar{\Omega})$, such that the discrete gradient converges strongly in $L^2(\Omega)$, see Lemma \ref{lem:appendix-strong-consistency-gradient}.
For the jump terms we further note that
\begin{equation*}
    \abs{\interpol^0 \fat{d}_0}_J \leq C \norm{\fat{d}_0}_{C^1(\bar{\Omega})}
    .
\end{equation*}
\begin{proof}
The elementwise unit-norm constraint \eqref{eq:elementwise-unit-norm-constraint-for-projected-d} is fulfilled trivially by the projection step \eqref{eq:projection-step-dg}.
The rest follows in essence as in Lemma \ref{lem:a-priori-estimates-projection}, this time applying Lemma \ref{lem:energy-decreasing-projection-dg} for the projection of the discrete director field.
The computation one has to do with care is the usage of boundary conditions in the discrete gradient and laplacian, i.e.
\begin{multline*}
    \frac{1}{k} (\tilde{d}^j  - d^{j-1}, - \Delta_h \tilde{d}^j)_2
=
    \frac{1}{k} (\adlifting{\tilde{d}^j}{\dbc}, \adlifting{\tilde{d}^j  - d^{j-1}}{0})_2
\\
    +
    \frac{\alpha}{k} \sum_{F\in\facets^i} \frac{1}{h_F} \int_F \jump{\tilde{d}^j} \cdot \jump{\tilde{d}^j  - d^{j-1}}
    +
    \frac{\alpha}{k} \sum_{F\in\facets^D} \frac{1}{h_F} \int_F \jump{\tilde{d}^j - \dbc} \cdot \jump{\tilde{d}^j  - d^{j-1}}
    .
\end{multline*}
Applying the identity $a\cdot b = \frac{1}{2} a^2 + \frac{1}{2} b^2 - \frac{1}{2} \abs{a-b}^2$ on the first summand of the right hand side yields
\begin{multline*}
    \frac{1}{k} (\adlifting{\tilde{d}^j}{\dbc}, \adlifting{\tilde{d}^j  - d^{j-1}}{0})
=
    \frac{1}{2k} \ltwonorm{\adlifting{\tilde{d}^j}{\dbc}}^2
\\
    -
    \frac{1}{2k} \ltwonorm{ \adlifting{d^{j-1}}{\dbc}}^2
    +
    \frac{1}{2k} \ltwonorm{\adlifting{\tilde{d}^j  - d^{j-1}}{0}}^2
    \, ,
\end{multline*}
where we used that
\begin{align*}
 \adlifting{\tilde{d}^j  - d^{j-1}}{0}
=&
    \adlifting{\tilde{d}^j  }{\dbc}
    - \adlifting{d^{j-1}}{\dbc}.
\end{align*}
For the second and third term, we proceed in the same fashion, reapplying the $a\cdot b = \frac{1}{2} a^2 + \frac{1}{2} b^2 - \frac{1}{2} \abs{a-b}^2$, to obtain
\begin{multline*}
    \sum_{F\in\facets^i} \frac{1}{h_F} \int_F \jump{\tilde{d}^j} \cdot \jump{\tilde{d}^j  - d^{j-1}}
    +
     \sum_{F\in\facets^D} \frac{1}{h_F} \int_F \jump{\tilde{d}^j  - \dbc} \cdot \jump{\tilde{d}^j - d^{j-1}}
\\=
 \sum_{F\in\facets^i} \frac{1}{2 h_F} (\jump{\tilde{d}^j}^2 - \jump{d^{j-1}}^2 + \jump{\tilde{d}^j- d^{j-1}}^2)
\\+
 \sum_{F\in\facets^D} \frac{1}{2 h_F} (\jump{\tilde{d}^j -\dbc}^2 - \jump{d^{j-1}-\dbc}^2 + \jump{\tilde{d}^j- d^{j-1}}^2).
\end{multline*}
For the remaining \textit{a priori} estimates, we reiterate the $L^p (\Omega)$ duality arguments from Lemma \ref{lem:a-priori-estimates-projection}. For the director field that is
\begin{multline*}
    \lebnorm{k^{-1} (\tilde{d}^j - d^{j-1})}{3/2}
= 
    \sup_{\phi \in \L{3}}
    \abs{
    \frac{(k^{-1} (\tilde{d}^j - d^{j-1}), \projectdg \phi)_2}{\lebnorm{\phi}{3}}
    }
\\\lesssim
    \lebnorm{\dirh^{j-1}}{\infty}^2
    \lebnorm{\adlifting{\dirh^{j-1}}{\dbc}}{2} \lebnorm{ v^j}{6} 
\\
    +
    \lebnorm{\dirh^{j-1}}{\infty}
    \lebnorm{d^{j-1} \times \Delta_h \tilde{d}^j}{2}
     \, ,
\end{multline*}
where we used the $\L{p}$ stability of the projection $\projectdg$.
The estimation of the projected discrete time derivative $\discreteDiff_t d^j $ follows immediately as in Lemma \ref{lem:a-priori-estimates-projection} due to the function being piecewise constant.
The same holds for the estimate of the discrete time derivative of the velocity.
\end{proof}
\subsection{Further Properties}
\begin{prop}[Discrete energy inequality]
Let $u^j = (v^j,d^j)\in \mixedfemspacedg$ be a solution to Algorithm~\eqref{algo:dg} for $1\leq j \leq J$. Then, the discrete energy-variational inequality
\begin{align}
\begin{split}\label{eq:discrete_envar-dg}
    & \discreteDiff_t E^j       
        + \mu \left ( \nabla v^j , \nabla v^j-  \nabla \projectV\vv \right )  
        +  \ltwonorm{d^{j-1} \times \Deltah \tilde{d}^{j} }^2 
\\&
        - (\discreteDiff_t v^j,\projectV\vv) - \left ( (v ^{j-1}\cdot \nabla)  v ^j , \projectV\vv \right ) - \frac{1}{2} \left ( ( \nabla \cdot  v ^{j-1} )v^j , \projectV\vv\right )
\\ &
        - T^E_h (d^{j-1}, \projectV \vv,\Deltah \tilde{d}^{j})
\\&
        + \mathcal{K}(\projectV\vv) \left ( \frac{1}{2}\Vert v^{j-1}\Vert_{L^2(\Omega)}^2 + \frac{1}{2}\Vert \adlifting{d^{j-1}}{\dbc} \Vert_{L^2(\Omega)}^2  -E^{j-1} \right ) 
    \leq  0
\end{split}
\end{align}
holds for all $\vv\in \velspace$
with the energy $E^j$ and the  regularity weight $\mathcal{K}$ given by 
\begin{align*}
    E^j := \frac{1}{2}\ltwonorm{v^j}^2 + \frac{1}{2}\ltwonorm{
    \adlifting{d^j}{\dbc} }^2 
    +\alpha \energystabdG (d^j)
    \qquad \mathcal{K}(\vv ) : ={}& \frac{1}{2}
\Vert  \vv \Vert_{L^{\infty} (\Omega)} ^2 \, 
.
\end{align*} 
\label{prop:discrete-envar-dg}
\end{prop}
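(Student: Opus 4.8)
The plan is to mirror the proof of Proposition~\ref{prop:discrete-envar-cg}: I would first assemble the one-step discrete energy balance underlying the \textit{a priori} estimate of Lemma~\ref{lem:a-priori-estimates-projection-dg}, then superimpose the momentum equation tested against $\projectV\vv$, and finally append the $\mathcal{K}$-term.

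First I would test \eqref{eq:projection_scheme-dg_a} with $a = v^j$ and \eqref{eq:projection_scheme-dg_c} with $c = -\Deltah\tilde{d}^j$ and add the two identities. The convective terms drop out by their skew-symmetric form, while the discrete Ericksen stress in the momentum equation cancels against the discrete transport term in the director equation --- both being instances of $T_h^E(d^{j-1},v^j,\cdot)$ --- exactly the cancellation already carried out in Lemma~\ref{lem:a-priori-estimates-projection-dg}. Using the definition of the discrete Laplacian~\eqref{eq:discrete-laplacian-dg} together with the identity $2a\cdot b = \abs{a}^2 + \abs{b}^2 - \abs{a-b}^2$ on the term $\tfrac1k(\tilde{d}^j - d^{j-1}, -\Deltah\tilde{d}^j)_2$, I would rewrite the elastic contribution as $\discreteDiff_t[\energyeladG(\tilde{d}^j) + \alpha\energystabdG(\tilde{d}^j)]$ plus the nonnegative defect $\tfrac{1}{2k}\big(\ltwonorm{\adlifting{\tilde{d}^j - d^{j-1}}{0}}^2 + \alpha\abs{\tilde{d}^j - d^{j-1}}_J^2\big)$; the inertial term likewise yields $\discreteDiff_t\tfrac12\ltwonorm{v^j}^2 + \tfrac{k}2\ltwonorm{\discreteDiff_t v^j}^2$, and the director-stress term produces the dissipation $\ltwonorm{d^{j-1}\times\Deltah\tilde{d}^j}^2$. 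This is precisely the computation of Lemma~\ref{lem:a-priori-estimates-projection-dg}, read off at a single time level rather than summed.

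Next I would pass from the intermediate field $\tilde{d}^j$ to the projected field $d^j$ in the energy. Corollary~\ref{cor:orthogonality-dg} guarantees $\abs{\tilde{d}^j \vert_T } \geq 1$ on every cell, so on the non-obtuse mesh Lemma~\ref{lem:energy-decreasing-projection-dg} and Corollary~\ref{cor:energy-decreasing-projection-dg-stab} give $\energyeladG(d^j) + \alpha\energystabdG(d^j) \leq \energyeladG(\tilde{d}^j) + \alpha\energystabdG(\tilde{d}^j)$. Combined with the previous step this yields the one-step inequality $\discreteDiff_t E^j + \mu\ltwonorm{\nabla v^j}^2 + \ltwonorm{d^{j-1}\times\Deltah\tilde{d}^j}^2 + (\text{nonnegative defects}) \leq 0$, with $E^j$ as defined in the statement. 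I would then add Equation~\eqref{eq:projection_scheme-dg_a} tested with $-\projectV\vv$ --- which is zero --- and discard the nonnegative defect terms, since dropping them from the left-hand side only weakens the inequality. This assembles every line of \eqref{eq:discrete_envar-dg} except the last, the viscous contributions combining into $\mu(\nabla v^j, \nabla v^j - \nabla\projectV\vv)$ and the coupling appearing as the discrete Ericksen stress $T_h^E(d^{j-1},\projectV\vv,\Deltah\tilde{d}^j)$ tested against $\projectV\vv$ with the sign recorded in \eqref{eq:discrete_envar-dg}.

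Finally I would append the term $\mathcal{K}(\projectV\vv)\big(\tfrac12\ltwonorm{v^{j-1}}^2 + \tfrac12\ltwonorm{\adlifting{d^{j-1}}{\dbc}}^2 - E^{j-1}\big)$. The sole genuine departure from the continuous-element case is that this quantity is no longer identically zero: because $E^{j-1}$ carries the stabilization energy, it equals $-\alpha\,\mathcal{K}(\projectV\vv)\,\energystabdG(d^{j-1}) \leq 0$, so adding it to the left-hand side preserves the inequality and reproduces \eqref{eq:discrete_envar-dg}. I expect the main obstacle to be the careful bookkeeping of the boundary contributions carried by $\adlifting{\cdot}{\dbc}$ and $\Deltah$ when splitting $\tfrac1k(\tilde{d}^j - d^{j-1}, -\Deltah\tilde{d}^j)_2$ into the telescoped elastic and stabilization energies and the jump defect --- the delicate step already performed in Lemma~\ref{lem:a-priori-estimates-projection-dg} --- together with verifying the $T_h^E$ cancellation.
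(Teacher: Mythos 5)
Your proposal is correct and follows essentially the same route as the paper: the paper's proof simply declares the argument completely analogous to Proposition \ref{prop:discrete-envar-cg} --- add the momentum equation tested with $-\projectV\vv$ to the one-step energy balance underlying Lemma \ref{lem:a-priori-estimates-projection-dg} (including the energy-decreasing projection of Lemma \ref{lem:energy-decreasing-projection-dg} and Corollary \ref{cor:energy-decreasing-projection-dg-stab}) --- and then appends the term $\mathcal{K}(\projectV\vv)\left(\frac{1}{2}\ltwonorm{v^{j-1}}^2 + \frac{1}{2}\ltwonorm{\adlifting{d^{j-1}}{\dbc}}^2 - E^{j-1}\right)$, which is exactly your construction. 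In particular, you correctly isolate the only genuine deviation from the continuous-element case, namely that this appended quantity equals $-\alpha\,\mathcal{K}(\projectV\vv)\,\energystabdG(d^{j-1}) \leq 0$ rather than vanishing identically, so adding it preserves the inequality --- precisely the point the paper's one-line proof records.
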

\begin{proof}
    The proof works completely analogously to the one of Proposition \ref{prop:discrete-envar-cg}, this time adding $ \frac{1}{2}\ltwonorm{v^{j-1} }^2 + \frac{1}{2}\ltwonorm{
        \adlifting{d^{j-1} }{\dbc} }^2 - E^{j-1} \leq 0$.
\end{proof}
As in Section \ref{sec:discrete-system-cg}, we define the projection error by
\begin{align}\label{eq:def-projection-error-dg}
r^j \coloneqq d^j - \tilde{d}^j= \interpol^0 \left (\frac{\tilde{d}^j}{ \abs{\tilde{d}^j} } \right ) - \tilde{d}^j \, ,
\end{align}
for which we obtain analogous asymptotic behaviour.
\begin{lem}\label{lem:convergence-projection-err-dg}
Let $(v^j,\tilde{d}^j)\in \mixedfemspacedg$ be a solution to Algorithm \eqref{algo:dg}. Then, the error estimate
\begin{align*}
\lebnorm{\interpol^0 \left (\frac{\tilde{d}^j}{ \abs{\tilde{d}^j} } \right ) - \tilde{d}^j}{1}
=
\lebnorm{r^j}{1} 
\leq
C k^2 \ltwonorm{k^{-1}(\tilde{d}^j-d^{j-1})}^2
\end{align*}
holds.
\end{lem}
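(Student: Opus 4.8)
The plan is to transcribe the argument of Lemma~\ref{lem:convergence-projection-err} to the piecewise-constant setting, where everything decouples exactly over the cells of $\mesh$ and, in contrast to the $\cgone$ case, no mass-lumping error has to be controlled.

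First I would reduce to a cellwise estimate. Since $r^j \in [\dgzero]^{\dimension}$ is constant on each cell $T \in \mesh$ and $\interpol^0$ acts as the identity on piecewise-constant functions, we have $d^j\vert_T = \tilde{d}^j\vert_T / \abs{\tilde{d}^j\vert_T}$ on every $T$. Invoking the elementwise unit-norm property $\abs{d^j\vert_T}=1$ from Equation~\eqref{eq:elementwise-unit-norm-constraint-for-projected-d}, I would bound
$$
\abs{r^j\vert_T} = \abs{d^j\vert_T}\,\bigl\vert 1 - \abs{\tilde{d}^j\vert_T} \bigr\vert = \bigl\vert 1 - \abs{\tilde{d}^j\vert_T} \bigr\vert,
$$
so the projection error on $T$ is precisely the deviation of $\tilde{d}^j\vert_T$ from the unit sphere.

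Next, exactly as in the continuous case, I would apply the elementary inequality $\bigl\vert\abs{x}-1\bigr\vert \leq \bigl\vert\abs{x}^2 - 1\bigr\vert$ (cf. Figure~\ref{fig:visualization-square-root-abs}) to pass to the squared deviation and expand $\tilde{d}^j\vert_T = d^{j-1}\vert_T + (\tilde{d}^j - d^{j-1})\vert_T$. Because $d^{j-1}$ also fulfills the elementwise unit-norm constraint, the constant term cancels and I obtain
$$
\abs{r^j\vert_T} \leq \bigl\vert \abs{\tilde{d}^j\vert_T}^2 - 1 \bigr\vert \leq \abs{(\tilde{d}^j - d^{j-1})\vert_T}^2 + 2\,\bigl\vert d^{j-1}\vert_T \cdot (\tilde{d}^j - d^{j-1})\vert_T \bigr\vert.
$$
The decisive step is that the cross term vanishes by the \emph{cellwise} orthogonality relation of Corollary~\ref{cor:orthogonality-dg}, which leaves the pointwise bound $\abs{r^j\vert_T} \leq \abs{(\tilde{d}^j - d^{j-1})\vert_T}^2$.

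Finally I would integrate. Since all quantities are constant on each cell, multiplying by $\abs{T}$ and summing over $T \in \mesh$ turns the left-hand side into $\lebnorm{r^j}{1}$ and the right-hand side into $\ltwonorm{\tilde{d}^j - d^{j-1}}^2 = k^2\,\ltwonorm{k^{-1}(\tilde{d}^j - d^{j-1})}^2$, which is the assertion (in fact with $C=1$). I do not anticipate any genuine obstacle: the only point needing care is the bookkeeping of the cellwise orthogonality, and, unlike in the $\cgone$ case, no norm equivalence between $\ltwonorm{\cdot}$ and a discrete inner product is required because the piecewise-constant $L^1$ and $L^2$ norms are literal cell-volume-weighted sums.
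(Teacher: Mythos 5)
Your proposal is correct and follows essentially the same route as the paper's proof, which likewise decomposes $\lebnorm{r^j}{1}$ cellwise and reiterates the estimates \eqref{eq:projection-error-nodal-decomposition} and \eqref{eq:application-taylor-sqrt} from the $\cgone$ case together with the cellwise orthogonality of Corollary~\ref{cor:orthogonality-dg}. Your write-up merely spells out the details the paper leaves implicit (including the observation that, unlike the $\cgone$ case, no mass-lumping norm equivalence is needed, so one even gets $C=1$).
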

\begin{proof}
Since $r^j \in [\dgzero]^{\dimension}$ we can simply decompose the integral by
$$
    \lebnorm{r^j}{1}
=
    \sum_{T\in\mesh}  \abs{r^j \vert_T} \abs{T}.
$$
Applying the cellwise orthogonality, that is Corollary \ref{cor:orthogonality-dg}, and reiterating Equations \eqref{eq:projection-error-nodal-decomposition} and \eqref{eq:application-taylor-sqrt}, we obtain
$$
\abs{r^j \vert_T}
\leq
C k^2 \abs{k^{-1}(\tilde{d}^j - d^{j-1}) \vert_T }^2.
$$
Plugging that back into the summation over the cells yields the result.
\end{proof}
Lastly, we collect a result that is later needed to identify the limit of the discrete Laplacian in the convergence analysis.
\begin{lem}[Product rule for the discrete gradient]
    \label{lem:discrete-product-rule-grad-cross-product}
Let $d \in [\dgzero]^{\dimension}$ and let $\phi \in C^{\infty}_c (\Omega;\mathbb{R}^{\dimension})$ arbitrary with $\phi_h \coloneqq \interpol^0 \phi \in [\dgzero]^{\dimension}$. Then, the inequality
\begin{multline}
    \left (
    d \times \adlifting{d}{\dbc}, \adlifting{ \phi_h }{0}
    \right )_2 
    +
    \left (
    \adlifting{ d }{\dbc}, \adlifting{ d \times \phi_h }{0}
    \right )_2 
\\
    \lesssim
    h
    \norm{\nabla \phi}_\infty
    \sqrt{\energystabdG(d)}
    \ltwonorm{\adlifting{d}{\dbc}}
\label{eq:discrete-product-rule-grad-cross-product}
\end{multline}
holds.
\end{lem}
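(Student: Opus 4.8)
The plan is to turn both inner products into sums over interior facets via the defining integration-by-parts property of the lifting operator, and then to exploit the exact algebraic cancellations that are responsible for the continuous identity $\nabla\fat d:(\fat d\times\nabla\fat d)=0$. Write $A\coloneqq\adlifting d{\dbc}$ for brevity, and note that since $\phi\in C^\infty_c(\Omega)$, the interpolate $\phi_h=\interpol^0\phi$ vanishes on every cell touching $\partial\Omega$ once $h$ is small, so that $\phi_h$, $d\times\phi_h$ and $\phi_h\times A$ are zero on all boundary cells. First I would insert the definition \eqref{eq:def-discrete-lifting-local}--\eqref{eq:def-discrete-lifting-reconstructed-gradient} of $\adlifting{\phi_h}{0}$, tested against the piecewise-constant matrix field $d\times A$, and of $\adlifting{d\times\phi_h}{0}$, tested against $A$. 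Because both liftings carry the homogeneous datum and $\phi_h$, $d\times\phi_h$ vanish near $\partial\Omega$, every boundary-facet term drops and I obtain
\begin{equation*}
\left(d\times A,\adlifting{\phi_h}{0}\right)_2+\left(A,\adlifting{d\times\phi_h}{0}\right)_2
=-\sum_{F\in\facets^i}\abs F\left[\jump{\phi_h}_F\cdot\avg{d\times A}_F\normal_F+\jump{d\times\phi_h}_F\cdot\avg{A}_F\normal_F\right],
\end{equation*}
all quantities being constant on each cell.

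Next I would expand the averages and jumps with the product rules $\avg{a\times b}=\avg a\times\avg b+\tfrac14\jump a\times\jump b$ and $\jump{a\times b}=\avg a\times\jump b+\jump a\times\avg b$, the cross-product analogues of \eqref{eq:jump-identities}. This produces four facet contributions. The two terms carrying the factor $\avg d_F$, namely $\jump{\phi_h}_F\cdot(\avg d_F\times\avg A_F\normal_F)$ and $(\avg d_F\times\jump{\phi_h}_F)\cdot\avg A_F\normal_F$, cancel \emph{identically} by the triple-product rule \eqref{eq:triple-prodcut}; this is exactly what eliminates any dependence on $\norm{d}{}$ (which the right-hand side does not contain). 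The single genuinely dangerous contribution left is the one proportional to $\avg{\phi_h}_F$, that is $-\sum_{F\in\facets^i}\abs F(\jump d_F\times\avg{\phi_h}_F)\cdot\avg A_F\normal_F$, which is only $O(1)$ in $\phi$ and must be shown to cancel as well.

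To dispose of it I would reverse the average product rule, writing $\avg{\phi_h}_F\times\avg A_F=\avg{\phi_h\times A}_F-\tfrac14\jump{\phi_h}_F\times\jump A_F$, and re-assemble the leading piece through the defining relation of $A=\adlifting d{\dbc}$ (boundary facets again dropping since $\phi_h\times A=0$ near $\partial\Omega$) as $\left(A,\phi_h\times A\right)_2$. The crucial observation is the pointwise identity $A:(\phi_h\times A)=\sum_j a_j\cdot(\phi_h\times a_j)=0$, where the $a_j$ are the columns of $A$ --- the discrete mirror of $\nabla\fat d:(\fat d\times\nabla\fat d)=0$. Hence the entire $O(1)$-in-$\phi$ part vanishes and only terms carrying a factor $\jump{\phi_h}_F$ survive.

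What then remains is a sum of facet terms of the form $\sum_{F\in\facets^i}\abs F\,\jump d_F\cdot(\jump{\phi_h}_F\times\jump A_F)\,\normal_F$, together with the companion term obtained by interchanging the slots of $\jump d_F$ and $\jump{\phi_h}_F$. Using $\abs{\jump{\phi_h}_F}\le\norm{\nabla\phi}_\infty\abs{x_T-x_{T'}}\lesssim h\norm{\nabla\phi}_\infty$ and a facet Cauchy--Schwarz inequality, splitting the weights as $h_F^{-1/2}\abs F^{1/2}\abs{\jump d_F}$ against $h_F^{1/2}\abs F^{1/2}\abs{\jump A_F}$, the first group assembles to $\abs d_{J,i}\le\sqrt{2\energystabdG(d)}$, and the second, after the quasi-uniform scaling $h_F\abs F\sim\abs T$ and $\abs{\jump A_F}^2\lesssim\abs{A_T}^2+\abs{A_{T'}}^2$, to $\ltwonorm{A}$; collecting factors yields the asserted bound $\lesssim h\norm{\nabla\phi}_\infty\sqrt{\energystabdG(d)}\ltwonorm{\adlifting d{\dbc}}$. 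The main obstacle is precisely this exact cancellation of the two leading contributions: without both the triple-product identity and the vanishing of $A:(\phi_h\times A)$ one is left with terms of size $\norm{d}{}\,\norm{\phi}_\infty\ltwonorm{A}$, far larger than the claim, so the heart of the argument is recognising these leading terms as the discrete manifestation of the continuous product-rule identity.
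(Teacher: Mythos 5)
Your proposal is correct and takes essentially the same route as the paper: both proofs turn the two inner products into interior-facet sums via the defining relation of the lifting, cancel the $\avg{d}$-contributions with the triple-product rule, eliminate the remaining leading part through the pointwise skew-symmetry identity $(w\times M):M=0$ --- which the paper packages as the ``constructive zero'' $\left(\phi_h\times\adlifting{d}{\dbc},\adlifting{d}{\dbc}\right)_2=0$ and you invoke as $\left(\adlifting{d}{\dbc},\phi_h\times\adlifting{d}{\dbc}\right)_2=0$ --- and then estimate the surviving jump-jump-jump terms with the identical splitting $h_F^{-1/2}\abs{F}^{1/2}\abs{\jump{d}}$ against $h_F^{1/2}\abs{F}^{1/2}\abs{\jump{\adlifting{d}{\dbc}}}$. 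The only deviation is that you discard the boundary-facet terms by taking $h$ small relative to $\supp \phi$, whereas the paper keeps them and bounds them by $h\norm{\nabla\phi}_\infty\abs{d-\dbc}_{J,D}\ltwonorm{\adlifting{d}{\dbc}}$ using only $\phi\vert_{\partial\Omega}=0$ --- the one-line patch your argument needs for the inequality to hold for every $h$ rather than only below a $\phi$-dependent threshold, and the reason the boundary part of $\energystabdG(d)$ appears on the right-hand side at all.
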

\begin{proof}
    First of all, we note that the following term vanishes
    $$
    \left (
    \phi \times \adlifting{ d }{\dbc},  
    \adlifting{ d }{\dbc}
    \right )_2 
    =0 ,
    $$
    as this is a consequence of the cellwise algebraic computation
    $$
    (w \times M) : M = -M: (w \times M) = - (w \times M) : M
    $$
    for matrices $M$ and a vector $w$.
    Using Equations \eqref{eq:triple-prodcut}, \eqref{eq:jump-identities} and \eqref{eq:def-discrete-lifting-local} implies
    \begin{align}
    \begin{split}
        0 =&-
            \left (
            \phi \times \adlifting{d}{\dbc}, \adlifting{d}{\dbc}
            \right )_2 
        \\ =&
            \sum_{F\in \facets^i } 
            \int_F 
                \jump{d} \cdot \avg{\phi \times \adlifting{d}{\dbc} }   \normal
            \diff s 
            \\&
            +
            \sum_{F\in \facets^D} 
            \int_F 
            (d-\dbc) \cdot
                (\phi \times \adlifting{d}{\dbc} \normal) 
            \diff s
        \\ =&
            \sum_{F\in \facets^i} \int_F 
            \jump{d} 
            \cdot
            \left (
            \avg{\phi} \times \avg{ \adlifting{d}{\dbc}} \normal
            + \frac{1}{4} \jump{\phi} \times \jump{ \adlifting{d}{\dbc}}
            \normal
            \right )
            \diff s
            \\&
            +
            \sum_{F\in \facets^D} 
            \int_F
                (d-\dbc) \cdot ( \phi \times \adlifting{d}{\dbc}  \normal  )
            \diff s
        \\ =&
            \sum_{F\in \facets^i} \int_F 
            \left (
                \jump{d} \times \avg{\phi} \cdot  \avg{ \adlifting{d}{\dbc} } \normal
            +\frac{1}{4} \jump{d} \times \jump{\phi} \cdot  \jump{ \adlifting{d}{\dbc} \cdot \normal} 
            \right )
            \diff s
            \\&
            +
            \sum_{F\in \facets^D} \int_F            
            ( (d-\dbc) \times\phi )  \cdot \adlifting{d}{\dbc}  \normal
            \diff s
        \\ \eqcolon&
            A +\frac{1}{4}J + B(d-\dbc)
            .
    \label{eq:dg0-constructive-zero}   
    \end{split}
    \end{align}
    Now, we can start working on the first term in Inequality \eqref{eq:discrete-product-rule-grad-cross-product} by observing
    \begin{multline*}
       - \left (
            d \times \adlifting{d}{\dbc}, \adlifting{ \phi_h}{0} 
        \right )_2    
    \\=
        \sum_{F\in \facets^i } \int_F \jump{\phi_h } \cdot \avg{d \times \adlifting{d}{\dbc}} \normal 
        \diff s
        +
        \sum_{F\in \facets^D} \int_F
        \phi_h 
        \cdot ( d \times \adlifting{d}{\dbc} \normal )
        \diff s
    \\=
        \sum_{F\in \facets^i } \int_F
            \jump{\phi_h } \cdot  (\avg{d} \times \avg{\adlifting{d}{\dbc} } \normal )
        + \frac{1}{4}
            \jump{\phi_h }  \cdot ( \jump{d} \times \jump{\adlifting{d}{\dbc} \normal} )
        \diff s
        \\
        +
        \sum_{F\in \facets^D} \int_F
            \phi_h \cdot (d \times \adlifting{d}{\dbc} \normal )
        \diff s
    \\=
        \sum_{F\in \facets^i } \int_F \jump{\phi_h }\times  \avg{d} \cdot  \avg{\adlifting{d}{\dbc} } \normal
        + \frac{1}{4}
          \jump{\phi_h } \times \jump{d} \cdot  \jump{\adlifting{d}{\dbc} \normal} 
         \diff s
         \\
        +
        \sum_{F\in \facets^D} \int_F
            (\phi_h \times d) \cdot   \adlifting{d}{\dbc} \normal 
        \diff s
    \\=
        - \sum_{F\in \facets^i } \int_F 
            (\avg{d} \times  \jump{\phi_h })  \cdot  \avg{\adlifting{d}{\dbc}} \normal 
        + \frac{1}{4}
            (\jump{d} \times \jump{\phi_h })  \cdot  \jump{\adlifting{d}{\dbc}\normal} 
        \diff s
        \\
        -
        \sum_{F\in \facets^D} \int_F
            (d \times \phi_h) \cdot   \adlifting{d}{\dbc} \normal 
        \diff s
    \\ \eqcolon
        -C - \frac{1}{4}J - B(d)
        .
    \end{multline*}
    Now, we decompose the second term of \eqref{eq:discrete-product-rule-grad-cross-product} to
    \begin{multline*}
        -\left (
            \adlifting{d}{\dbc}, \adlifting{ d \times \phi_h }{0}
        \right ) 
    \\=
        \sum_{F\in \facets^i} \int_F 
        \jump{ d \times \phi_h }
        \cdot
        \avg{\adlifting{d}{\dbc} } \normal  
        \diff s
        +
        \sum_{F\in \facets^D} \int_F 
        (d \times \phi_h )
        \cdot
        \adlifting{d}{\dbc}  \normal    
        \diff s
    \\=
        \sum_{F\in \facets^i} 
        \int_F 
            (\avg{ d} \times \jump{\phi_h })
            \cdot \avg{\adlifting{d}{\dbc} } \normal
        +
            (\jump{ d} \times \avg{\phi_h } )
            \cdot \avg{\adlifting{d}{d}} \normal
            \diff s
        \\
        +
        \sum_{F\in \facets^D} \int_F 
        (d \times \phi_h )
        \cdot
        \adlifting{d}{\dbc} \normal    
        \diff s
    =
        C + A + B(d)
        .
    \end{multline*}
    Adding the two,  we end up with
    \begin{align*}
        \left (
            d \times \adlifting{d}{\dbc}, \adlifting{\phi_h }{0}
        \right )  
        &+  
        \left (
            \adlifting{d}{\dbc}, \adlifting{ d \times \phi_h }{0}
        \right ) 
    \\ &=
        +C + \frac{1}{4}J + B(d) 
        - C - A - B(d)
    \\ &=
          -A + \frac{1}{4}J
     =
        \frac{1}{2}J + B(d-\dbc)
        ,
    \end{align*}
    where we made use of Equation \eqref{eq:dg0-constructive-zero} in the last step. 
    However, since $\abs{F} \propto h^{\dimension -1}$ we can estimate the terms denoted by $J$ by
    \begin{multline*}
        J
    =
        \sum_{F\in \facets^i} \abs{F} 
        (\jump{d} \times  \jump{\phi_h }) \cdot  \jump{\adlifting{d}{\dbc} } \normal
    \lesssim
    h
    \norm{\nabla \phi}_\infty
            \abs{d}_{J,i}
            \norm{\adlifting{d}{\dbc} }_{L^2(\Omega)}
            .
    \end{multline*}
    Thereby, we used that the amount of facets and neighbours of each element is bounded and that
    $$
    \abs{\jump{\phi_h }_F}
    =
    \abs{\jump{\interpol^0 \phi }_F}
    =
    \abs{\phi (x_{T_1}) - \phi (x_{T_2})}
    \lesssim
    h
    \norm{\nabla \phi}_\infty
    ,
    $$
    where $x_{T_1},x_{T_2}$ are the cell centers of the cells sharing facet $F$.
    The boundary term $B$ can be estimated by 
    \begin{multline*}
        B(d - \dbc) = 
        \sum_{F\in \facets^D} \int_F 
        ( [d - \dbc] \times [\phi_h - 0] )
        \cdot
        \adlifting{d}{\dbc} \normal    
        \diff S
    \\
    \lesssim 
    h
    \norm{\nabla \phi}_\infty
    \abs{d - \dbc}_{J,D}
    \ltwonorm{\adlifting{d}{\dbc}}
    ,
    \end{multline*}
    where we used that the boundary trace of $\phi$ is zero.
    Summing up yields the result.
\end{proof}
\section{Convergence of Algorithm \ref{algo:dg}}
\label{sec:convergence-to-envar-sols-dg}
The interpolation of functions in time and space follows the same notation as in Section \ref{sec:interpolation-in-time-cg}, where the interpolation operator applied on smooth functions is adapted to the space $\dgzero$, i.e.~$\interpol^1$ is replaced by $\interpol^0$.
\subsection{Converging Subsequences}
We continue as in Section \ref{sec:subsequences-cg}. The analogous \textit{a priori} estimates are inferred by the bounds in Lemma \ref{lem:a-priori-estimates-projection-dg}.
Note that everything is indexed by the stabilization parameter $\alpha>0$, which so far has been chosen arbitrarily but fixed.
Regarding the approximation of the velocity field nothing has changed in comparison to the previous algorithm. In spite of the discontinuous Galerkin approach for the director equation, the following quantities are still conforming approximations to their respective function spaces, such that we can infer subsequences that we do not relabel, i.e.~ 
\begin{align}
    \begin{split}
    \label{eqs:converging_subsequences-dg-alpha}
    \overline{v}^k_h,\underline{v}^k_h, v_h^k &\weakstarto {\fat v}_{\alpha} \text{ in } L^{\infty} (0,T;\Ha), 
    \\
    \overline{v}^k_h,\underline{v}^k_h,v_h^k &\weakto \fat{v}_{\alpha} \text{ in }  L^2(0,T;H^1_0(\Omega)), 
    \\
    \partial_t v_k^h &\weakto \partial_t \fat{v}_{\alpha} \text{ in } L^2(0,T; (\velspace )^*) ,
    \\
    \overline{v}^k_h,\underline{v}^k_h,v_h^k & \to \fat{v}_{\alpha}  \text{ in }  L^p(0,T;\Ha)  \text{ for all } p\in[2,\infty),
    \\
    \overline{d}^k_h,\underline{d}^k_h,d_h^k 
    &\weakstarto
    \fat{d}_{\alpha} \text{ in } 
    L^{\infty} ( (0,T) \times \Omega)
    ,\\
    w^k_h &\weakto \fat{ W}_{\alpha}  \text{ in } L^2(0,T;L^{3/2}(\Omega)),
    \\
    \partial_t d^k_h &\weakto \partial_t \fat{ d}_{\alpha}  \text{ in } L^2(0,T;L^{3/2}(\Omega)),
    \\
     \underline{d}^k_h \times \Deltah \overline{\tilde{d}}^k_h
    &\weakto \fat{ U}_{\alpha}  \text{ in } L^2(0,T;\L{2}), 
    \\
    \underline{E}^k_h(t) & \to E_{\alpha} (t) \text{ for all } t \in [0,T]
    \,
    ,
    \end{split}
\end{align}
where $w^k_h$ is again defined by Equation \eqref{eq:def-wkh}.
Next, we consider the non-conforming approximation of the initial gradient, which we from now on denote by 
\begin{align*}
\overline{G}^k_h \coloneqq \adlifting{\overline{d}^k_h}{\dbc},
\quad
\underline{G}^k_h \coloneqq \adlifting{\underline{d}^k_h}{\dbc},
\quad
G^k_h \coloneqq \adlifting{d^k_h}{\dbc},
\\
\overline{\tilde{G}}^k_h \coloneqq \adlifting{\overline{\tilde{d}}^k_h}{\dbc},
\quad
\underline{\tilde{G}}^k_h \coloneqq \adlifting{\underline{\tilde{d}}^k_h}{\dbc},
\quad
\tilde{G}^k_h \coloneqq \adlifting{\tilde{d}^k_h}{\dbc} \, ,
\end{align*}
for better readability. The theorem of Banach--Alaoglu--Bourbaki again allows us to derive converging subsequences, i.e.
\begin{equation}
    \overline{G}^k_h, \underline{G}^k_h,G^k_h \weakstarto \fat{G}_{\alpha} \text{ in } L^{\infty} (0,T;L^{2} (\Omega))
    .
    \label{eq:convergence-dg-discrete-grad}
\end{equation}
In order to identify $\fat{G}_{\alpha}$, we apply Lemma \ref{lem:weak-consistency-gradient} and obtain
$$
\fat{G}_{\alpha} = \nabla \fat{d}_{\alpha}.
$$
We can repeat this identification process for the other temporal interpolates as well as for $\fat{\tilde{G}}_{\alpha} = \nabla \fat{\tilde{d}}_{\alpha}$.
Lemma \ref{lem:dg-aubin-lions-simon} allows us to infer even strong convergence of the director field,
$$
\overline{d}^k_h,\underline{d}^k_h,d_h^k  \to \fat{d}_{\alpha} 
\text{ in }
L^2(0,T;\L{2}) .
$$
Note that the strong convergence holds indeed for all temporal interpolates since their respective difference vanishes as in Equation \eqref{eq:l1l1limit-temporal-interpolates}. 
As prior, a Riesz-Thorin interpolation argument allows us to obtain the strong convergence
\begin{equation}\label{eq:director_strong_convergence-dg}
    \overline{d}^k_h,\underline{d}^k_h,d_k^h
    \to \fat d \text{ in } L^p (0,T;L^p(\Omega)) \text{ for all }p\in [1,\infty).
\end{equation}
Applying the Aubin--Lions--Simon lemma, cf. Equation \eqref{d_lions_aubin}, and the Lions--Magenes lemma \cite[Lemma II.5.9]{boyer-fabrie13} allow us now to derive that the limit function $\fat{d}_{\alpha}$ is even weakly continuous, i.e.~$\fat{d}_{\alpha} \in C_w (0,T;\H{1})$.
\\
Regarding the projection error, we observe that its behaviour follows immediately from Lemma \ref{lem:convergence-projection-err-dg} and the inverse estimate. For completeness, we collect the result in the following corollary.
\begin{cor}
\label{cor:identification-rkh-dg}
Under the previous assumptions, we have
$$
\norm{\overline{r}^k_h}_{L^1(0,T;L^1(\Omega))}
\lesssim 
\frac{k^2}{h^{\dimension /3}}
\norm{w^k_h}_{L^2(0,T;L^{3/2}(\Omega))}^2
.
$$
This implies
\begin{align}
    \overline{r}^k_h 
    &\to 0 \in L^1(0,T;L^1(\Omega)) \text{ for } k\in o(h^{\dimension /6})
    ,
    \label{eq:convergence-rkh-dg}
    \\
    \frac{1}{k} \overline{r}^k_h 
    &\to 0 \in L^1(0,T;L^1(\Omega)) \text{ for } k\in o(h^{\dimension /3})
    \label{eq:convergence-rkh-over-k-dg}
\end{align}
as $k,h\to 0$.
\end{cor}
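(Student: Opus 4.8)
The plan is to derive the quantitative estimate as a direct consequence of Lemma~\ref{lem:convergence-projection-err-dg} together with the inverse estimate, and then to read off the two convergence rates. First I fix a time index $j$ and recall that, by definition~\eqref{eq:def-wkh}, the approximate time derivative satisfies $w^k_h = k^{-1}(\tilde{d}^j - d^{j-1})$ on the slab $((j-1)k,jk)$. Lemma~\ref{lem:convergence-projection-err-dg} therefore reads $\lebnorm{r^j}{1} \lesssim k^2 \ltwonorm{w^k_h}^2$. Since $w^k_h$ restricted to this slab lies in $[\dgzero]^{\dimension}$, the inverse estimate lets me trade the $L^2$-norm for the weaker $L^{3/2}$-norm: taking $l=m=0$, $p=2$, $q=3/2$ the relevant exponent is $\min\left(0,\tfrac{\dimension}{2}-\tfrac{2\dimension}{3}\right)=-\tfrac{\dimension}{6}$, so that $\ltwonorm{w^k_h} \lesssim h^{-\dimension/6}\lebnorm{w^k_h}{3/2}$ and hence $\lebnorm{r^j}{1} \lesssim k^2 h^{-\dimension/3}\lebnorm{w^k_h}{3/2}^2$.

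Next I integrate in time. Because both $\overline{r}^k_h$ and $w^k_h$ are piecewise constant in time on the slabs, multiplying the previous bound by $k$ and summing over $1\leq j\leq J$ turns the left-hand side into $\norm{\overline{r}^k_h}_{L^1(0,T;L^1(\Omega))}$ and the right-hand side into $C\,k^2 h^{-\dimension/3}\,\norm{w^k_h}_{L^2(0,T;L^{3/2}(\Omega))}^2$, which is precisely the claimed inequality.

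The two convergence statements then follow by inserting the uniform \emph{a priori} bound $\norm{w^k_h}_{L^2(0,T;L^{3/2}(\Omega))}^2\leq C$ furnished by \eqref{bounds_temp_variation_inequ-dg} in Lemma~\ref{lem:a-priori-estimates-projection-dg}. This gives $\norm{\overline{r}^k_h}_{L^1(0,T;L^1(\Omega))}\lesssim (k\,h^{-\dimension/6})^2$, which tends to $0$ exactly when $k\in o(h^{\dimension/6})$, and $\norm{k^{-1}\overline{r}^k_h}_{L^1(0,T;L^1(\Omega))}\lesssim k\,h^{-\dimension/3}$, which tends to $0$ when $k\in o(h^{\dimension/3})$. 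There is no genuine obstacle here, as all the analytic content already sits in Lemma~\ref{lem:convergence-projection-err-dg}; the corollary is a bookkeeping exercise. The only point demanding care is the correct power of $h$ produced by the inverse estimate, namely $-\dimension/3$ after squaring, since it is exactly this exponent that fixes the two threshold rates $o(h^{\dimension/6})$ and $o(h^{\dimension/3})$.
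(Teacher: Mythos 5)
Your proposal is correct and follows exactly the paper's intended route: the paper itself states that the corollary "follows immediately from Lemma \ref{lem:convergence-projection-err-dg} and the inverse estimate," and your computation — the inverse estimate with exponent $-\dimension/6$ trading $\ltwonorm{w^k_h}$ for $h^{-\dimension/6}\lebnorm{w^k_h}{3/2}$, slab-wise summation in time, and the uniform bound from \eqref{bounds_temp_variation_inequ-dg} — is precisely the bookkeeping the paper leaves implicit. The identification of the threshold rates $o(h^{\dimension/6})$ and $o(h^{\dimension/3})$ from the resulting bound $k^2 h^{-\dimension/3}$ is also exactly as in the paper.
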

This implies for $k\in o(h^{\dimension /3})$ that we can identify
$$
\fat{d}_{\alpha} =\tilde{\fat{d}}_{\alpha}, \quad \fat{W}_{\alpha} = \partial_t \fat{d}_{\alpha}
,
$$
see eventually Equation \eqref{eq:decomposition-wkh}. 
\subsection{Convergence of the Sphere Constraint, Energy, Initial and Boundary Conditions}
\label{sec:ics-bcs-convergence-dg}
For the analysis of the velocity field, we refer back to Section \ref{sec:ics-bcs-convergence-cg} since there have been no changes.
The initial discrete gradient of the director field $\adlifting{d^0}{\dbc}$ converges strongly to the gradient of the initial condition $\nabla \fat{d}_0$ due to Lemma \ref{lem:appendix-strong-consistency-gradient}.
On the stabilizing terms $\alpha \energystabdG (\overline{d}^k_h)$ one can again apply  Helly's selection principle \cite[Ex. 8.3]{brezis} to obtain
$$
 \energystabdG (\overline{d}^k_h (t) )  \to E_{J,\alpha} (t) \text{ for all } t \in [0,T].
$$
Due to Lemma \ref{lem:a-priori-estimates-projection-dg} the unit-norm constraint is fulfilled almost everywhere. The fulfillment in the limit then follows from the strong convergence \eqref{eq:director_strong_convergence-dg}. The convergence of the boundary condition of the director field follows from Lemma \ref{lem:weak-consistency-gradient}.
\subsection{Director Equation}
We consider a smooth and compactly supported test function $\phi \in C_c^{\infty} ( [0,T] \times \Omega ; \mathbb{R}^{\dimension} )$.
The tested discrete director Equation \eqref{eq:projection_scheme-dg_c} integrated in time leads us to
\begin{equation*}
\int_0^T
    ( w^k_h , \overline{\phi}^k_h )_2
    + 
    (\underline{{d}}^k_h \times [\underline{{G}^k_h}\overline{v}^k_h] , \underline{{d}}^k_h \times \overline{\phi}^k_h )_2
    -
    (\underline{{d}}^k_h \times \Deltah \overline{\tilde{d}}^k_h,\underline{{d}}^k_h \times \overline{\phi}^k_h)_2
\diff \tau = 0 .
\end{equation*}
The convergence can be derived in standard fashion from the convergences \eqref{eqs:converging_subsequences-dg-alpha}, \eqref{eq:convergence-dg-discrete-grad} and \eqref{eq:director_strong_convergence-dg}. We end up with:
\begin{equation}
\label{eq:director-equation-alpha}
    \int_0^T
        ( \partial_t \fat{d}_{\alpha}, \phi )_2
        + 
        (\fat{d}_{\alpha} \times [(\fat{v}_{\alpha} \cdot \nabla) \fat{d}_{\alpha} ] , \fat{d}_{\alpha} \times \phi )_2
        -
        (\fat{U}_{\alpha}, \fat{d}_{\alpha} \times \phi)_2
    \diff \tau = 0 .
\end{equation}
\subsection{Relative Inequality for the Laplacian}
\label{sec:convergence-dg-laplacian}
We cannot yet identify $\fat{U}_{\alpha}$ as in Section \ref{sec:convergence-to-envar-sols-cg}. However, we can still find a relative inequality depending on $\alpha$. 
\begin{prop}
    \label{cor:identification-of-d-times-deltad}
    Under the previous assumptions and additionally assuming
    $$
    k\in o (h^{1+ \dimension /6}) \, ,
    $$
    the following relative inequality holds
    \begin{equation}
       \int_0^T  (\fat{U}_{\alpha},\phi)_2
        +
        ( \fat{d}_{\alpha} \times \nabla \fat{d}_{\alpha}, \nabla \phi )_2
        \diff \tau
    \lesssim
        \alpha^{1/2} 
        \norm{\nabla \phi}_{L^{2} (0,T; \L{\infty})}
    \label{eq:relative-inequality-laplacian-alpha}
    \end{equation}
    for all $\phi\in \Cont_c^{\infty}([0,T]\times {\Omega})$.
\end{prop}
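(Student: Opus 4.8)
The plan is to mirror the identification carried out in the continuous case (Proposition \ref{prop:identification-of-d-times-deltad}), but to isolate the jump-stabilization contribution of the discrete Laplacian as the source of the $\alpha^{1/2}$ defect. Working at fixed $\alpha>0$ and sending $h,k\to 0$, I first write $\int_0^T(\fat{U}_{\alpha},\phi)_2\diff\tau$ as the limit of $\int_0^T(\underline{d}^k_h\times\Deltah\overline{\tilde{d}}^k_h,\overline{\phi}^k_h)_2\diff\tau$, using the weak convergence $\underline{d}^k_h\times\Deltah\overline{\tilde{d}}^k_h\weakto\fat{U}_{\alpha}$ from \eqref{eqs:converging_subsequences-dg-alpha} together with the strong convergence $\overline{\phi}^k_h=\interpol^0\phi\to\phi$. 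By the antisymmetry of the cross product and the definition of the discrete Laplacian (Definition \ref{def:discrete-laplacian-dg}) with test function $b=\underline{d}^k_h\times\overline{\phi}^k_h$, this integrand splits as
\[
(\underline{d}^k_h\times\Deltah\overline{\tilde{d}}^k_h,\overline{\phi}^k_h)_2
=
(\overline{\tilde{G}}^k_h,\adlifting{\underline{d}^k_h\times\overline{\phi}^k_h}{0})_2
+\alpha R_\alpha,
\]
where $R_\alpha$ collects the facet jump terms $\sum_{F\in\facets^i}h_F^{-1}\int_F\jump{\overline{\tilde{d}}^k_h}\cdot\jump{\underline{d}^k_h\times\overline{\phi}^k_h}\diff s$ and the analogous boundary contribution.

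For the lifting term I would first exchange $\underline{d}^k_h$ for $\overline{\tilde{d}}^k_h$. Since $\overline{\tilde{d}}^k_h-\underline{d}^k_h=k\,w^k_h$, the resulting defect is controlled by $k\,\ltwonorm{\overline{\tilde{G}}^k_h}\,\ltwonorm{\adlifting{w^k_h\times\overline{\phi}^k_h}{0}}$; bounding the lifting by the jump seminorm and invoking the inverse estimate $\ltwonorm{w^k_h}\lesssim h^{-\dimension/6}\lebnorm{w^k_h}{3/2}$ produces a factor $k/h^{1+\dimension/6}$, which vanishes under the hypothesis $k\in o(h^{1+\dimension/6})$ exactly as in Corollary \ref{cor:identification-rkh-dg}. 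With matching arguments in both slots I then apply the product rule for the discrete gradient (Lemma \ref{lem:discrete-product-rule-grad-cross-product}) to rewrite $(\overline{\tilde{G}}^k_h,\adlifting{\overline{\tilde{d}}^k_h\times\overline{\phi}^k_h}{0})_2=-(\overline{\tilde{d}}^k_h\times\overline{\tilde{G}}^k_h,\adlifting{\overline{\phi}^k_h}{0})_2+E$, where the remainder is bounded by $E\lesssim h\,\norm{\nabla\phi}_\infty\sqrt{\energystabdG(\overline{\tilde{d}}^k_h)}\,\ltwonorm{\overline{\tilde{G}}^k_h}\lesssim h\,\alpha^{-1/2}\norm{\nabla\phi}_\infty$ and hence vanishes as $h\to 0$ for fixed $\alpha$. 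Passing to the limit through the strong consistency $\adlifting{\overline{\phi}^k_h}{0}\to\nabla\phi$ (Lemma \ref{lem:appendix-strong-consistency-gradient}), the strong convergence of the director and the weak convergence $\overline{\tilde{G}}^k_h\weakto\nabla\fat{d}_{\alpha}$ identifies this contribution as $-(\fat{d}_{\alpha}\times\nabla\fat{d}_{\alpha},\nabla\phi)_2$, which is precisely the second term on the left-hand side of \eqref{eq:relative-inequality-laplacian-alpha}.

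The heart of the argument, and the step I expect to be the main obstacle, is the estimate of the stabilization remainder $\alpha R_\alpha$, since a naive bound would leave a spurious $\alpha^{-1/2}\norm{\phi}_\infty$ term. Expanding $\jump{\underline{d}^k_h\times\overline{\phi}^k_h}$ with the jump identities \eqref{eq:jump-identities}, the contribution containing $\jump{\overline{\phi}^k_h}$ is harmless because $\abs{\jump{\overline{\phi}^k_h}}\lesssim h\norm{\nabla\phi}_\infty$; combined with $\sum_{F\in\facets^i}h_F^{-1}\int_F\abs{\jump{\overline{\tilde{d}}^k_h}}\diff s\lesssim h^{-1}\abs{\overline{\tilde{d}}^k_h}_J$ and the uniform bound $\sqrt{\alpha}\,\abs{\overline{\tilde{d}}^k_h}_J\lesssim\sqrt{\alpha\energystabdG(\overline{\tilde{d}}^k_h)}\lesssim 1$ from Lemma \ref{lem:a-priori-estimates-projection-dg}, this piece is $\lesssim\alpha^{1/2}\norm{\nabla\phi}_\infty$. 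For the remaining contribution I would use the triple-product rule \eqref{eq:triple-prodcut} to recast $\jump{\overline{\tilde{d}}^k_h}\cdot(\jump{\underline{d}^k_h}\times\avg{\overline{\phi}^k_h})=\avg{\overline{\phi}^k_h}\cdot(\jump{\overline{\tilde{d}}^k_h}\times\jump{\underline{d}^k_h})$ and exploit the algebraic cancellation $\jump{\underline{d}^k_h}\times\jump{\underline{d}^k_h}=0$ to replace $\jump{\overline{\tilde{d}}^k_h}$ by $\jump{\overline{\tilde{d}}^k_h-\underline{d}^k_h}=k\jump{w^k_h}$; the resulting factor $k$, paired again with $\abs{w^k_h}_J\lesssim h^{-1-\dimension/6}\lebnorm{w^k_h}{3/2}$ and $\sqrt{\alpha}\,\abs{\underline{d}^k_h}_J\lesssim 1$, yields a term of order $\alpha^{1/2}k/h^{1+\dimension/6}$ that vanishes under the time-step restriction. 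Integrating in time and applying the Cauchy--Schwarz inequality to turn $\int_0^T\norm{\nabla\phi}_\infty\diff\tau$ into $\norm{\nabla\phi}_{L^2(0,T;\L{\infty})}$ then gives the asserted relative inequality \eqref{eq:relative-inequality-laplacian-alpha}. The boundary facet terms are treated identically, and for $\phi$ compactly supported away from $\partial\Omega$ they vanish once $h$ is small.
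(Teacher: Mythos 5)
Your proposal is correct and reproduces the paper's overall architecture --- split the discrete Laplacian via Definition \ref{def:discrete-laplacian-dg} into a lifting part plus $\alpha$-weighted stabilization jumps, move the cross product off the gradient with the product rule (Lemma \ref{lem:discrete-product-rule-grad-cross-product}) at cost $h\,\alpha^{-1/2}\norm{\nabla\phi}_\infty$ using the \textit{a priori} bound $\alpha\,\energystabdG(\overline{\tilde{d}}^k_h)\lesssim 1$, extract the surviving $\alpha^{1/2}$ from the jump terms, and kill all time-lag defects with $k\in o(h^{1+\dimension/6})$ --- but you arrange the two delicate steps differently. The paper first replaces $\underline{d}^k_h$ by $\overline{\tilde{d}}^k_h$ in the $L^2$ pairing, estimating $([\underline{d}^k_h-\overline{\tilde{d}}^k_h]\times\Deltah\overline{\tilde{d}}^k_h,\phi_h)_2$ through the inverse-type bound $h\ltwonorm{\Deltah f}\lesssim\ltwonorm{\adlifting{f}{\dbc}}+\alpha\sqrt{\energystabdG(f)}$, so that once the Laplacian definition is invoked the stabilization terms carry matching arguments $\jump{\overline{\tilde{d}}^k_h}\cdot\jump{\overline{\tilde{d}}^k_h\times\phi_h}$ and the dangerous cross contribution vanishes in one stroke via the identity $\jump{a}\cdot\jump{a\times b}=\jump{a}\cdot(\avg{a}\times\jump{b})$. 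You instead apply the Laplacian definition first, leaving the mismatched jumps $\jump{\overline{\tilde{d}}^k_h}\cdot\jump{\underline{d}^k_h\times\phi_h}$, and recover the same cancellation a posteriori by the triple product \eqref{eq:triple-prodcut} together with $\jump{\underline{d}^k_h}\times\jump{\underline{d}^k_h}=0$, which converts the mismatch into a $k\jump{w^k_h}$ factor of order $\alpha^{1/2}k/h^{1+\dimension/6}$; analogously you push the time-lag swap into the lifting slot, paying $k\,h^{-1-\dimension/6}$ through lifting stability and the inverse estimate rather than through the Laplacian norm bound. Both orderings produce identical powers of $h$, $k$ and $\alpha$ and both close under the stated time-step restriction; the paper's variant is marginally tidier (one algebraic identity, one defect estimate), while yours trades that for elementary jump algebra --- your instinct that the stabilization remainder is the heart of the matter, and that a naive bound would leak $\alpha^{-1/2}$, is exactly right.

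One point to tighten: after your exchange, the principal term is $(\overline{\tilde{d}}^k_h\times\overline{\tilde{G}}^k_h,\adlifting{\phi_h}{0})_2$, but the strong convergence \eqref{eq:director_strong_convergence-dg} is only stated for the projected interpolates, not for $\overline{\tilde{d}}^k_h$, so ``the strong convergence of the director'' does not immediately apply. Either revert to $\underline{d}^k_h$ via one further defect term, as the paper does, estimated by
\begin{equation*}
\left([\overline{\tilde{d}}^k_h-\underline{d}^k_h]\times\overline{\tilde{G}}^k_h,\adlifting{\phi_h}{0}\right)_2
\lesssim
\frac{k}{h^{\dimension/6}}\,
\norm{w^k_h}_{L^2(0,T;\L{3/2})}\,
\norm{\overline{\tilde{G}}^k_h}_{L^{\infty}(0,T;\L{2})}\,
\norm{\nabla\phi}_{L^{2}(0,T;\L{\infty})},
\end{equation*}
or note directly that $\norm{\overline{\tilde{d}}^k_h-\underline{d}^k_h}_{L^2(0,T;\L{2})}=k\norm{w^k_h}_{L^2(0,T;\L{2})}\lesssim k\,h^{-\dimension/6}\to 0$ under your hypotheses. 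This is a cosmetic repair using estimates you already deploy elsewhere; with it, the argument is complete.
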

\begin{proof}
First, we note the following decomposition of our discrete terms:
\begin{align*}
    (\underline{d}^k_h \times \Deltah \overline{\tilde{d}^k_h} , \phi_h)_2 
    +&
    (\underline{d}^k_h  \times \overline{\tilde{G}}^k_h, \adlifting{ \phi_h  }{0} )_2
    \\
    =&
    ([\underline{d}^k_h-\overline{\tilde{d}^k_h}] \times \Deltah \overline{\tilde{d}^k_h} , \phi_h)_2 
    \\
    &+
    (\overline{\tilde{d}^k_h} \times \Deltah \overline{\tilde{d}^k_h} , \phi_h)_2 
    +
    (\overline{\tilde{d}^k_h}  \times \overline{\tilde{G}}^k_h, \adlifting{ \phi_h  }{0} )_2
    \\
    &+
    ([\overline{\tilde{d}^k_h}-\underline{d}^k_h]  \times \overline{\tilde{G}}^k_h, \adlifting{ \phi_h  }{0} )_2
\end{align*}
for the interpolate $\phi_h = \interpol^0 \phi$ of some test function $\phi \in \Cont_c^{\infty}(\Omega;\mathbb{R}^{\dimension})$.
The definition of the discrete Laplacian \eqref{eq:discrete-laplacian-dg} allows to deduce the estimate
\begin{equation*}
    h \ltwonorm{\Deltah f}
    \lesssim
    \ltwonorm{\adlifting{f}{\dbc}}
    + \alpha \sqrt{\energystabdG (f)}
\end{equation*}
for all $f\in [\dgzero]^{\dimension}$, see also Inequality \eqref{eq:discrete-norm-equivalence-dg}. We apply this and obtain
\begin{multline*}
\int^T_0 \lebnorm{(\underline{d}^k_h - \overline{\tilde{d}^k_h}) \times \Deltah \overline{\tilde{d}^k_h}}{1}
\diff \tau
\leq
\int^T_0 
k
\lebnorm{w^k_h }{2} \lebnorm{\Deltah \overline{\tilde{d}^k_h}}{2}
\diff \tau
\\
\lesssim
\frac{k}{h^{\dimension /6}}
\int^T_0 
\lebnorm{w^k_h }{3/2} \lebnorm{\Deltah \overline{\tilde{d}^k_h}}{2}
\diff \tau
\lesssim
(1+\alpha^{1/2})
\frac{k}{h^{1+\dimension /6}}
.
\end{multline*}
Assuming $k\in o (h^{1+\dimension /6})$ is accordingly sufficient for the right-hand side to vanish. Next, we observe the term 
\begin{multline}
    (\overline{\tilde{d}^k_h} \times \Deltah \overline{\tilde{d}^k_h} , \phi_h)_2 
=    
    (-\Deltah \overline{\tilde{d}^k_h} , \overline{\tilde{d}^k_h}  \times \phi_h )_2 
    \\
=
    (\overline{\tilde{G}}^k_h, \adlifting{ \overline{\tilde{d}^k_h}  \times \phi_h  }{0} )_2
    + \sum_{F \in \facets^i} \frac{\alpha }{h_F} \int_F \jump{\overline{\tilde{d}^k_h}  } \cdot \jump{\overline{\tilde{d}^k_h}  \times \phi_h} \diff s
    \\
    +\sum_{F \in \facets^D} \frac{\alpha}{h_F} \int_F (\overline{\tilde{d}^k_h} - \dbc)\cdot (\overline{\tilde{d}^k_h}  \times \phi_h) \diff s
    .
\label{eq:energy-inequality-first-step}
\end{multline}
We can use the product rule for the reconstructed gradient, Lemma \ref{lem:discrete-product-rule-grad-cross-product}, to estimate the first term on the right-hand side by
\begin{equation*}
    (\overline{\tilde{G}}^k_h, \adlifting{ \overline{\tilde{d}^k_h}  \times \phi_h  }{0} )_2
    =
    -
    (\overline{\tilde{d}^k_h}  \times \overline{\tilde{G}}^k_h, \adlifting{ \phi_h  }{0} )_2
    + \frac{1}{\sqrt{\alpha}} \mathcal{O} (h) \lebnorm{\nabla \phi}{\infty}
    .
\end{equation*}
Further, we note that
\begin{multline*}
    ([\overline{\tilde{d}^k_h} -\underline{d}^k_h ]  \times \overline{\tilde{G}}^k_h, \adlifting{ \phi_h  }{0} )_2
    \\
    \lesssim
    k 
    \norm{w_k^h}_{L^2(0,T;\L{2})} 
    \norm{\overline{\tilde{G}}^k_h}_{L^{\infty}(0,T;\L{2})} 
    \norm{\adlifting{ \phi_h  }{0}}_{L^{2}(0,T;\L{\infty})}
    \\
    \lesssim
    \frac{k}{h^{\dimension /6}} 
    \norm{w_k^h}_{L^2(0,T;\L{3/2})} 
    \norm{\overline{\tilde{G}}^k_h}_{L^{\infty}(0,T;\L{2})} 
    \norm{\nabla \phi}_{L^{2}(0,T;\L{\infty})}
    .
\end{multline*}
The remaining terms can be estimated using the identity $\jump{a}\cdot \jump{a\times b} = \jump{a}\cdot (\avg{a} \times \jump{b})$:
\begin{multline*}
    \sum_{F \in \facets^i} \frac{\alpha }{h_F} \int_F \jump{\overline{\tilde{d}^k_h}  } \cdot \jump{\overline{\tilde{d}^k_h}  \times \phi_h} \diff s
    \lesssim
    \alpha
    \abs{\overline{\tilde{d}^k_h}}_{J,i}
    \ltwonorm{\overline{\tilde{d}^k_h}}
    \lebnorm{\nabla \phi}{\infty}
    \\\lesssim
    \alpha^{1/2}
    \ltwonorm{\overline{\tilde{d}^k_h}}
    \lebnorm{\nabla \phi}{\infty}
\end{multline*}
and by
\begin{multline*}
    \sum_{F \in \facets^D} \frac{\alpha}{h_F} \int_F (\overline{\tilde{d}^k_h} - \dbc)\cdot \overline{\tilde{d}^k_h}  \times \phi_h \diff s
    =
    \sum_{F \in \facets^D} \frac{\alpha}{h_F} \int_F (\overline{\tilde{d}^k_h} - \dbc)\cdot \dbc  \times \phi_h \diff s
    \\ \lesssim
    \alpha
    \abs{\overline{\tilde{d}^k_h}- \dbc}_{J,D}
    \lebnorm{\nabla \phi}{\infty}
    \lesssim
    \alpha^{1/2}
    \ltwonorm{\overline{\tilde{d}^k_h}}
    \lebnorm{\nabla \phi}{\infty}
    .
\end{multline*}
Since $\overline{\tilde{d}^k_h} = k w^k_h + \underline{d}^k_h \in L^2 (0,T;L^2(\Omega))$ for $k\in o(h^{\dimension/6})$, adding everything up yields
\begin{multline*}
    \int_0^T (\underline{{d}^k_h} \times \Deltah \overline{\tilde{d}^k_h} , \phi_h)_2
    +
    (\underline{d}^k_h  \times \overline{\tilde{G}}^k_h, \adlifting{ \phi_h  }{0} )_2
    \diff \tau
    \\
    \lesssim
    \left (
        \frac{1}{\sqrt{\alpha}} \mathcal{O} (h)
    +
    (1+\alpha^{1/2})
    \frac{k}{h^{1+\dimension /6}}
    +
    \alpha^{1/2}
    \right )
    \left (
        \int_0^T
    \sobnorm{\phi}{1}{\infty}^2
    \right )^{1/2}
    .
\end{multline*}
Based on the converging subsequences, the time-step restriction $k\in o (h^{1+\dimension /6})$ and Lemma \ref{lem:appendix-strong-consistency-gradient}, we can now take the limit $k,h \to 0$ to obtain the result.
\end{proof}
\subsection{Variational Inequality}
As prior, testing Inequality \eqref{eq:discrete_envar-dg} with the temporal interpolate $\overline{\phi}^k $ of a smooth test function $\phi \in C_c^{\infty} ([0,T])$, $\phi \geq 0$ and integrating in time yields
\begin{align}
\begin{split}
    \label{eq:discrete-variational-inequ-integrated-in-time-dg}
    -\int_0^T   
    \underline{E}^k_h \discreteDiff_t\hat{\phi}^k 
    \diff \tau
        &     
        + \overline{\phi}^k 
        \left[ 
        \mu \left ( \nabla \overline{v}^k_h , \nabla \overline{v}^k_h-  \nabla \projectV \overline{\vv}^k \right )  
        +  \ltwonorm{ \underline{d}^k_h\times \Deltah \overline{\tilde{d}}^k_h }^2  
        \right]
        \diff \tau
        \\
        &+
        \int_0^T  
        \discreteDiff_t \hat{\phi}^k
        ( \underline{v}^k_h, \projectV \underline{\vv}^k)_2
         +
        \overline{\phi}^k 
        ( \underline{v}^k_h,\discreteDiff_t \projectV \overline{\vv}^k)_2
        \diff \tau
        \\&
        -
        \int_0^T 
        \overline{\phi}^k [
            \left ( (\underline{v}^k_h\cdot \nabla)  \overline{v}^k_h , \projectV\overline{\vv}^k \right )_2 
        +
        \frac{1}{2} 
            \left ( ( \nabla \cdot  \underline{v}^k_h )\overline{v}^k_h , \projectV\overline{\vv}^k\right )_2
        ]
        \diff \tau
\\ &
        - \int_0^T   \overline{\phi}^k  \left ( [ \underline{G}^k_h]^T [\underline{d}^k_h\times  ( \underline{d}^k_h \times \Deltah \overline{\tilde{d}}^k_h )], \projectV\overline{\vv}^k \right )_2
        \diff \tau
\\&
        + \int_0^T   \overline{\phi}^k 
        \mathcal{K}(\projectV\vv) \left ( \frac{1}{2}\Vert \underline{v}^k_h\Vert_{L^2(\Omega)}^2 + \frac{1}{2}\Vert \underline{G}^k_h\Vert_{L^2(\Omega)}^2  -\underline{E}^k_h \right ) 
        \diff \tau
  \leq 0
\end{split}
\end{align}
for all $\vv \in \Cont^1([0,T];(\velspace)^*) \cap C^0 ([0,T];\velspace)$.
All terms can be handled as in Section \ref{sec:variational-inequality-cg}, again applying Lemma \ref{lem:convergence-ericksen-stress-in-envar} and \cite[Lemma 2.4]{lasarzik22}.
Then, we end up with the relative energy inequality
\begin{multline}
    \left
    [ 
         E_{\alpha} -  \int_{\Omega} \fat{v}_{\alpha} \cdot\vv  
        \diff x 
     \right 
     ] 
     \Big \vert_{s-}^t 
     \\ + \int_s^t   \int_{\Omega} \fat{v}_{\alpha} \cdot \partial_t \vv 
      -  ( \fat{v}_{\alpha} \cdot \nabla) \fat{v}_{\alpha} \cdot  \vv 
      \diff x \diff \tau  
      + \int_s^t   \int_{\Omega} ( \fat{d}_\alpha \times ( \vv \cdot \nabla ) \fat{d}_\alpha) \cdot ( \fat{U}_{\alpha})  \diff x \diff \tau 
    \\
    +\int_s^t \int_{\Omega} \mu 
    \left (  \nabla \fat{v}_{\alpha} 
      \right )
      :
      (\nabla \fat{v}_{\alpha}- \nabla \vv) 
      \diff x \diff \tau 
    + \vert \fat{U}_{\alpha} \vert^2 
         \diff x \diff \tau 
\\
         + \int_s^t 
     \mathcal{K}( \vv ) 
     \left [
        \mathcal{E} (\fat{v}_{\alpha} , \fat{d}_\alpha ) - E_{\alpha} 
     \right ]  
     \diff \tau  \leq 0 
     ,
     \label{envarform-alpha}
\end{multline}
for all $s,t\in (0,T)$ and $\vv \in \Cont^1([0,T];(\V)^*) \cap L^2(0,T;\V \cap \H{2})$.
\subsection{Convergence for \texorpdfstring{$\alpha \to 0$}{vanishing Regularisation}}
Our first step is to reiterate all \textit{a priori} estimates from Section \ref{sec:a-priori-dg}. Therefore, we test Inequality \eqref{envarform-alpha} with $\vv =0$ and obtain
\begin{align}
\begin{split}
    &
    \left
    [ 
         E_{\alpha} 
     \right 
     ] 
     \Big \vert_{s-}^t 
    +\int_s^t \int_{\Omega} \mu 
     \abs{ \nabla \fat{v}_{\alpha} 
       }^2
      \diff x \diff \tau 
    + \vert \fat{U}_{\alpha} \vert^2 
         \diff x \diff \tau     
     \diff \tau  \leq 0 ,
    \\
    &
    \energy (\fat{v}_{\alpha}, \fat{d}_\alpha) (t) \leq E_{\alpha} (t)
    \text{ a.e. in }(0,T)
     . 
    \end{split}
     \label{a-priori-alpha}
\end{align}
Since the inequality
\begin{equation}
    \energystabdG (d^0) \lesssim \norm{\fat{d}_0}_{C^1 (\bar{\Omega})}
    \label{eq:bound-d0-stab-energy}
\end{equation}
holds, $E_{\alpha} (0)$ can be bounded from above independently of $\alpha>0$: 
\begin{multline}
    \label{eq:bound-E-alpha-independent-of-alpha}
E_{\alpha} (0)
=
\lim_{k,h \to 0} \frac{1}{2}\ltwonorm{v^0}^2 + \frac{1}{2}\ltwonorm{
    \adlifting{d^0}{\dbc} }^2 
    +\alpha \energystabdG (d^0)
\\
    \lesssim 
\energy (\fat{v}_0, \fat{d}_0)
+
\norm{\fat{d}_0}_{C^1 (\bar{\Omega})}
.
\end{multline}
Thereby, we used that the discrete gradient converges strongly to the gradient of the initial condition due to its regularity $\fat{d}_0 \in C^2 (\bar{\Omega})$, cf. Lemma \ref{lem:appendix-strong-consistency-gradient}.
The director Equation \eqref{eq:director-equation-alpha} is fulfilled in a weak sense such that the bounds in \eqref{a-priori-alpha} suffice to derive
\begin{equation*}
\norm{\partial_t \fat{d}_\alpha}_{ L^2(0,T; L^{3/2}(\Omega)) }
\leq  C
 \, ,
\end{equation*}
where the generic constant $C>0$ may depend on $\fat{v}_0, \fat{d}_0 $.
For the temporal derivative of the velocity field, we apply an abstract integration by parts rule \cite[Thm. II.5.12]{boyer-fabrie13} for $\vv \in \Cont^1([0,T];(\V)^*) \cap L^2(0,T;\V \cap \H{2})$ to obtain
\begin{equation*}
     -
     \left
    [ 
        \int_{\Omega} \fat{v}_{\alpha} \cdot\vv  
        \diff x 
     \right 
     ] 
     \Big \vert_{s}^t 
     +
     \int_s^t   \int_{\Omega} \fat{v}_{\alpha} \cdot \partial_t \vv \diff \tau 
     =
     -\int_s^t   \int_{\Omega} (\partial_t  \fat{v}_{\alpha} )\cdot \vv \diff \tau 
     .
\end{equation*}
Plugging this into Inequality \eqref{envarform-alpha}, estimating by above and using density arguments let us derive
\begin{multline*}
    -\int_s^t   \int_{\Omega} (\partial_t  \fat{v}_{\alpha} )\cdot \vv \diff \tau 
    \leq 
    E_{\alpha} (0)
    +
    E_{\alpha} (0) \int_s^t 
     \mathcal{K}( \vv ) 
     \diff \tau
     \\
      + \int_s^t   \int_{\Omega} 
      \abs{ 
        ( \fat{v}_{\alpha} \cdot \nabla) \fat{v}_{\alpha} \cdot  \vv 
      +\mu 
    \nabla \fat{v}_{\alpha} 
      :
      \nabla \vv
      -  ( \fat{d}_\alpha \times ( \vv \cdot \nabla ) \fat{d}_\alpha) \cdot ( \fat{U}_{\alpha})  
      }      
    \diff x \diff \tau 
\\  
    \lesssim 
    E_{\alpha} (0) 
    \left ( 
    1+
    \norm{\vv}_{L^1(0,T;\L{\infty})}
    +    
    \norm{\vv}_{L^2(0,T;\L{\infty})}
    +
    \norm{\nabla \vv}_{L^2(0,T;\L{2})}
    \right )
     \, ,
\end{multline*}
for all $\vv \in L^2(0,T;\velspace)$.
Testing with $\vv =-\tilde{ \fat w}  / \norm{\tilde{ \fat w} }$ for
\\
$ {\tilde{\fat w} \in L^2(0,T;\velspace) \backslash \{ 0 \}}$, multiplying with $\norm{\tilde{ \fat w} }$ and applying the Sobolev embedding $H^2(\Omega) \hookrightarrow \L{\infty}$ lead to the inequality
\begin{equation*}
    \int_0^T   \int_{\Omega} (\partial_t  \fat{v}_{\alpha} )\cdot \tilde{ \fat w}  \diff \tau 
    \lesssim
    E_{\alpha} (0) \norm{\tilde{ \fat w}}_{L^2(0,T; \velspace)}
    .
\end{equation*}
Since $E_{\alpha} (0)$ has an upper bound independent of $\alpha$, see Equation \eqref{eq:bound-E-alpha-independent-of-alpha}, we can even argue that there exists a constant $M>0$ independent of $\alpha$ such that
\begin{equation*}
    \int_0^T   \int_{\Omega} (\partial_t  \fat{v}_{\alpha} )\cdot \tilde{ \fat w}  \diff \tau 
    \leq
    M
    \norm{\tilde{ \fat w}}_{L^2(0,T; \velspace)}
\end{equation*}
for all $\tilde{ \fat w} \in L^2(0,T; \velspace ) $.
Using a duality argument delivers the expected estimate for the time derivative of the velocity field,
$$
\sup_{\alpha}
\norm{\partial_t \fat{v}_\alpha}_{ L^2(0,T; (\velspace )^*) }
\leq M.
$$
Reiterating converging subsequences in the prior fashion, but this time as $\alpha \to 0$, we obtain
\begin{align}
    \begin{split}
    \label{eqs:converging_subsequences-alpha}
    {\fat v}_{\alpha} &\weakstarto {\fat v} \text{ in } L^{\infty} (0,T;\Ha), 
    \\
    {\fat v}_{\alpha} &\weakto {\fat v} \text{ in }  L^2(0,T;H^1_0(\Omega)), 
    \\
    \partial_t {\fat v}_{\alpha} &\weakto \partial_t \fat{v} \text{ in } L^2(0,T; (\velspace )^*) ,
    \\
    {\fat v}_{\alpha} & \to \fat{v}  \text{ in }  L^p(0,T;\Ha)  \text{ for all } p\in[2,\infty),
    \\
    \fat{d}_{\alpha}
        &\weakstarto
            \fat{d} \text{ in } L^{\infty} (0,T;H^1(\Omega)) \cap L^{\infty} ([0,T] \times \Omega) 
            ,
    \\
    \partial_t \fat{d}_{\alpha}  &\weakto \partial_t \fat{d}  \text{ in } L^2(0,T;L^{3/2}(\Omega)),
    \\
    \fat{d}_{\alpha}
        &\to
            \fat{d} \text{ in } L^{p} (0,T;\L{p}) \text{ for all } p \in [1,\infty)
            ,
    \\
    \fat{ U}_{\alpha}
        &\weakto \fat{ U} \text{ in } L^2(0,T;\L{2}), 
    \\
    E_{\alpha} (t) & \to E (t) \text{ for all } t \in [0,T]
    ,
    \\
    E_{J, \alpha} (t) & \to E_J (t) \text{ for all } t \in [0,T]
    .
    \end{split}
\end{align}
Taking the limit in the variational Inequality \eqref{envarform-alpha} works as prior.
The boundary conditions and divergence-zero condition as well as the unit-norm constraint are all preserved.
Due to the regularity of the initial condition $\fat{d}_0 $, the gradient converges strongly and we can further argue that
\begin{multline*}
    \energy (\fat{v}_0, \fat{d}_0)
    \leq
    E_{\alpha} (0)
    =
    \lim_{k,h \to 0} \frac{1}{2}\ltwonorm{v^0}^2 + \frac{1}{2}\ltwonorm{
        \adlifting{d^0}{\dbc} }^2 
        + \alpha \energystabdG (d^0)
    \\
    =
    \frac{1}{2}\ltwonorm{\fat{v}_0}^2 + \frac{1}{2}\ltwonorm{
        \nabla \fat{d}_0 }^2 
        + \alpha E_{J,\alpha} (0)
    \leq 
    \energy (\fat{v}_0, \fat{d}_0)
    +
    \alpha E_{J,\alpha} (0)
    .  
\end{multline*}
Inequality \eqref{eq:bound-d0-stab-energy} and taking the limit $\alpha \to 0$ lead to the desired result,
$$
\energy (\fat{v}_0, \fat{d}_0)
\leq
E (0)
=
\lim_{\alpha \to 0}
E_{\alpha} (0)
\leq 
\energy (\fat{v}_0, \fat{d}_0)
+
\lim_{\alpha \to 0}
\alpha E_{J,\alpha} (0)
=
\energy (\fat{v}_0, \fat{d}_0)
.
$$
Lastly, we want to identify the quantity $\fat{U}$.
Taking the limit $\alpha \to 0$ in Inequality \eqref{eq:relative-inequality-laplacian-alpha} allows us to derive that
$$
\fat{U} = \nabla \cdot (\fat{d} \times \nabla \fat{d})
$$
in the weak sense.
\section{Computational Studies}\label{sec:computational-studies}
We introduce the coupling parameters $A, v_{el}$ as a means of controlling the scaling between the director and velocity field, i.e.~one would replace Equations \eqref{system_a} and \eqref{system_c} by
    \begin{align*}
    \partial_t \fat v - \mu \Delta \fat v  + (\fat v \cdot \nabla) \fat v  + \nabla \fat p 
    + A v_{el} ( \fat d \times \Delta \fat d ) \cdot (\fat d \times \nabla \fat d)  &= 0,
    \\
         \partial_t \fat d 
         + v_{el} (\fat v \cdot \nabla) \fat d
         + A \fat d \times (\fat d \times  \Delta \fat d )
         &=0.
    \end{align*}
The code of the implementation is publicly available \cite{mysoftware}. It is based on the python API of the Finite Element package FEniCSx \cite{fenicsx}.
For this section, we set the physical parameters if not otherwise mentioned to $\mu, v_{el}, A=1$. The regularisation parameter for Algorithm \ref{algo:dg} is set to $\alpha =0.005$ if not otherwise mentioned.
After the projection step, the nodal or elementwise unit-sphere constraint is observed to be fulfilled almost up to machine precision.
\subsection{Magical Spiral\texorpdfstring{, e.g.~{\cite{badia-et-al11,de_gennes90}}}{}}
\label{sec:spiral}
In this numerical experiment, we consider a centered disk with a hole as domain $\Omega = \{x \in \mathbb{R}^2 :  1 < \abs{x} < 2\}$. We define the inner boundary by $\Gamma_i = \{x \in \mathbb{R}^2 :  \abs{x} =1 \}$ and the outer boundary by $\Gamma_o = \{x \in \mathbb{R}^2 :  \abs{x} =2 \}$. We apply \textit{no-slip} (homogeneous) Dirichlet boundary conditions on the velocity field. The constant-in-time Dirichlet boundary conditions for the director field are given by
$$
\fat{d} (x) = \frac{(x_1,x_2)^T}{\abs{(x_1,x_2)^T}}\text{ for all } x\in \Gamma_i
, \quad
\fat{d} (x) =  \frac{(x_2,-x_1)^T}{\abs{(x_1,x_2)^T}}\text{ for all } x\in \Gamma_o.
$$
The initial values are set to $\fat{v}_0 (x) = 0$, $\fat{d}_0 (x) = {x}/{\abs{x}}$ for all $x\in\Omega$.
In this setting an exact stationary solution is known \cite[p. 158 ff.]{de_gennes90} and can be described in terms of the angle $\psi$ between the local director field and the radial direction, given for $x \in \Omega$ by 
\begin{equation}
    \label{def:psi}
    \psi (x) = \frac{\pi}{2} \frac{\log (\abs{x})}{\log (2)}.
\end{equation}
The evolution of the director and velocity field can be seen for both algorithms in Figure \ref{fig:spiral}. We observe that the director field finds a stable, spiral like configuration in order to agree with the boundary conditions. This induces two similar velocity fields.
\\
The convergence of both algorithms can be confirmed in Table \ref{tab:error-magical-spiral} and Figure \ref{fig:error-magical-spiral}.
In Figure \ref{fig:spiral-comparison}, we observe the effect of omitting the projection step and the choice of the regularization parameter $\alpha$. For both algorithms, we can see that excluding the projection step diminishes the accuracy of the method significantly. 
In Figure \ref{fig:spiral-comparison-dg}, we see that the projection step in Algorithm \ref{algo:dg} allows to choose a much smaller, but not arbitrarily small regularisation parameter $\alpha$ while keeping the same accuracy: Both, the choice $\alpha=0.1$ and $\alpha=0.01$ deliver the highest accuracy, when the projection step is present. In contrast, when the projection is omitted, a higher regularization leads to better results.
In general, the outcome is less sensitive with respect to the parameter $\alpha$, when the projection step is included.
Figure \ref{fig:spiral-comparison-cg} shows that Algorithm \ref{algo:cg} reaches the same accuracy as the fixed-point solver in \cite{lasarzik-reiter23,reiter-ecmi} at a much coarser temporal discretization, see also Table \ref{tab:error-magical-spiral}. 
Overall, the continuous approach of Algorithm \ref{algo:cg} seems to be more efficient than Algorithm \ref{algo:dg}.
\subsection{Annihilation of two Defects\texorpdfstring{, e.g.~{\cite{becker-feng-prohl08,lasarzik-reiter23}}}{}}
\label{sec:defects}
As second benchmark, we consider the unit cube $\Omega = (-0.5, 0.5)^3$.
We model two so called defects --- discontinuities of the director field --- along the lines $\{x\in \mathbb{R}^3 : \abs{x_1}=0.25, x_2 = 0\}$. Since the interpolation points of the spaces $\dgzero$ and $\cgone$ do not coincide we enlarge the location of the discontinuities by an $h$-tube. This makes sure that the discontinuities are present in both discrete initial conditions.
The initial director field then reads as
\begin{align*}
\fat d_0 
=
\begin{cases}
    (0,0,1)^T
    &\text{ if } (x_1,x_2)^T \in B_h ((0.25,0)^T) \cup B_h ((-0.25,0)^T) \\
    \frac{\tilde{\fat d}_0}{\abs{\tilde{\fat d}_0}}, 
    &\text{ otherwise with }
    \tilde{ \fat d}_0 (x) = (4 x_1^2+4 x_2^2 - 0.25, 2 x_2, 0)^T \, ,
\end{cases}
\end{align*}
where $B_h (x)$ is the open circle with radius $h$ and midpoint $x$ in two spatial dimensions.
We accordingly employ constant-in-time inhomogeneous Dirichlet boundary conditions for the director field prescribed by $\fat d_0$.
The initial velocity field is chosen to be $\fat v_0 = 0$ with \textit{no-slip} homogeneous Dirichlet boundary conditions.
We further set the parameter $v_{el} = 0.25$. 
\\
The results can be found in Figure \ref{fig:defects}. 
Both algorithms show the same qualitative behaviour: The defects in the director field slowly vanish. The director field takes a stable long-term configuration. In both cases, the induced velocity field shows four swirls, see Figure \ref{fig:defects-velocity}.
This is in agreement with the mentioned references.
The quantitative difference between the two Algorithms can be explained however: The interpolation points used for the initial condition do not coincide such that the defects have a different support (see Figure \ref{fig:defects-1}) and therewith a different influence strength on the dynamics of the evolution. 
\begin{figure}
    \centering
    \begin{subfigure}[b]{.49\textwidth}
        \centering
        \includegraphics*[width = \textwidth]{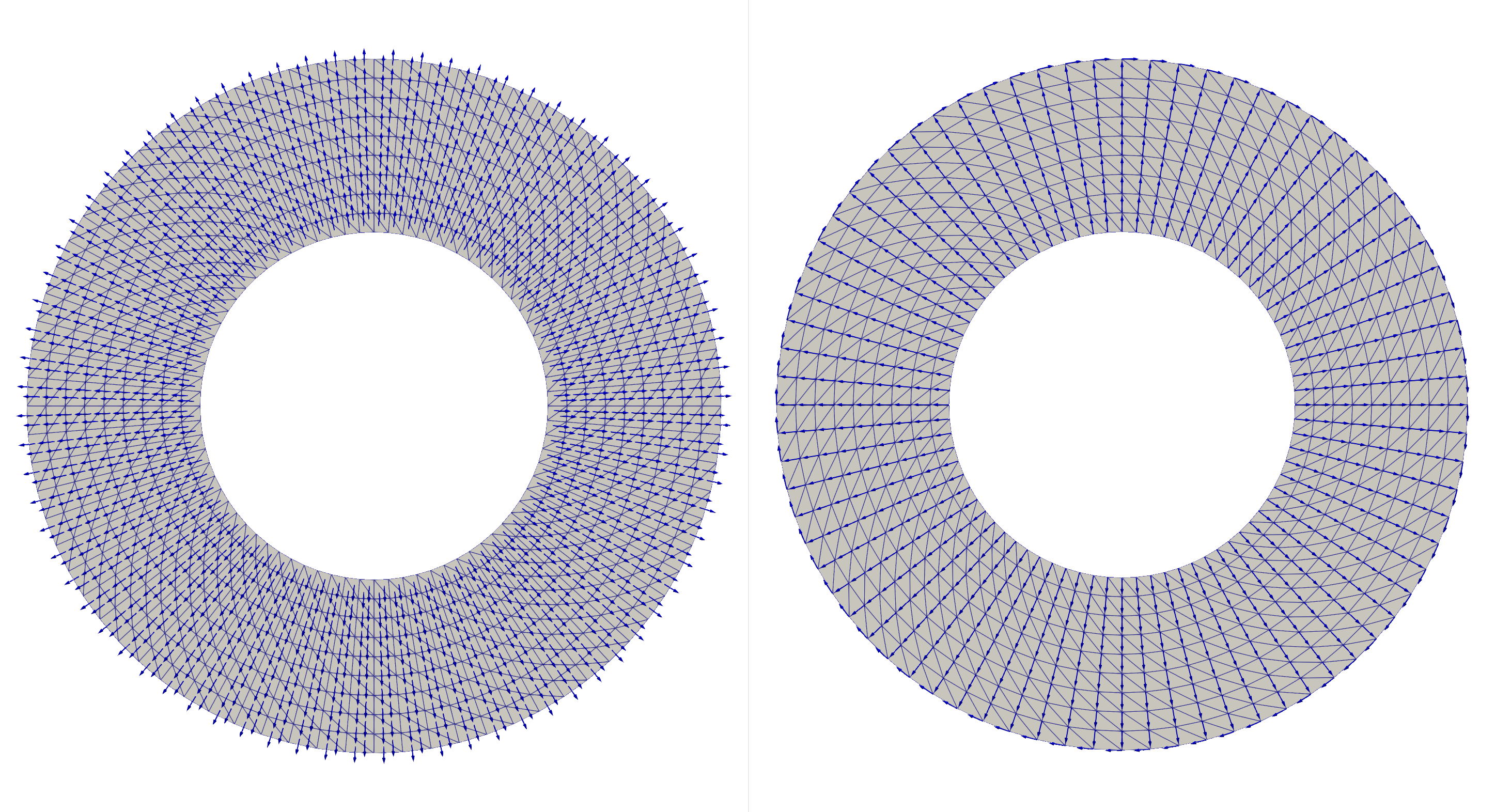} 
        \caption{Director field, $t=0$.}       
    \end{subfigure}
    \begin{subfigure}[b]{.49\textwidth}
        \centering
        \includegraphics*[width = \textwidth]{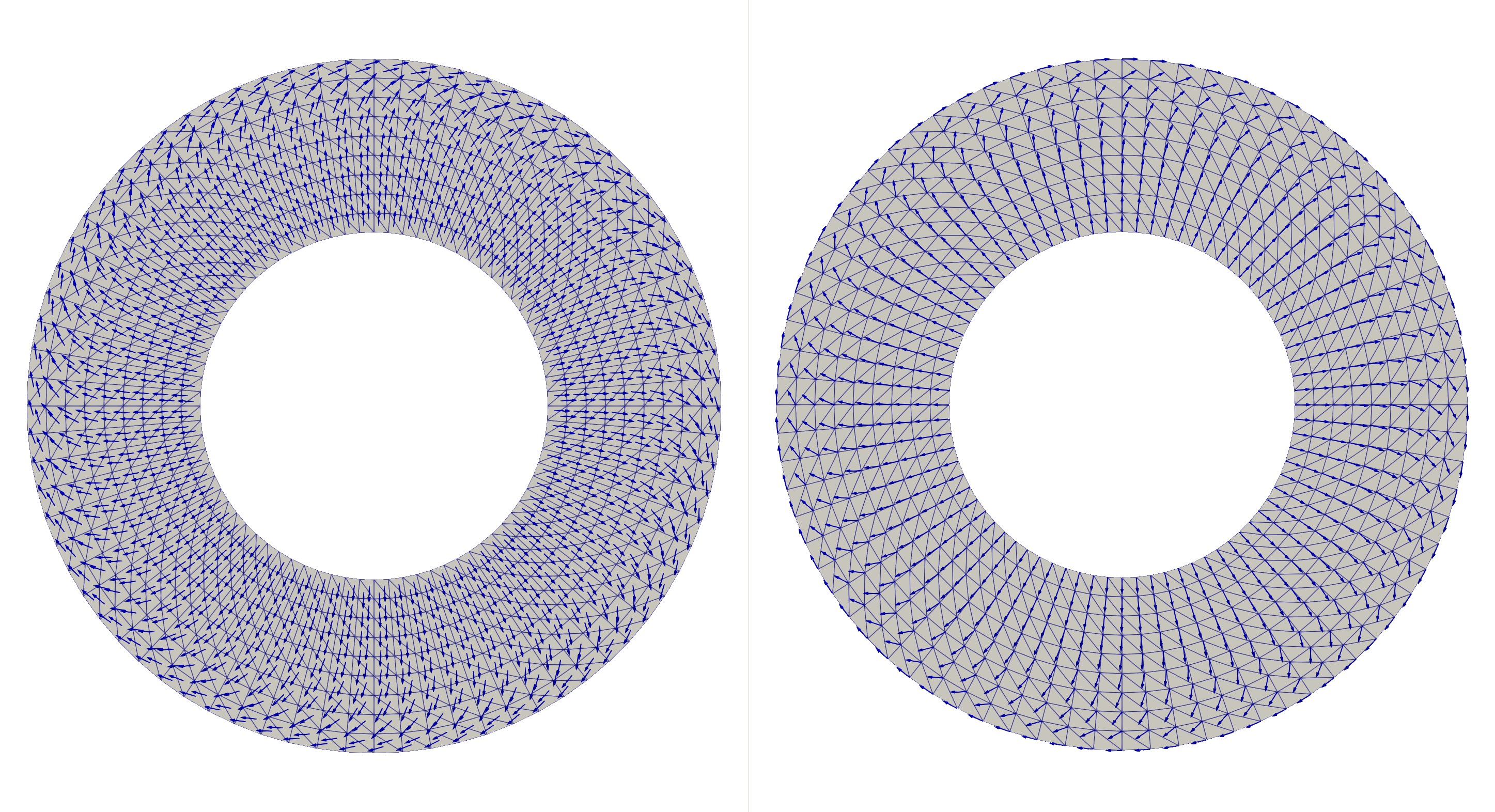}    
        \caption{Director field, $t=0.05$.}       
    \end{subfigure}
    \begin{subfigure}[b]{.49\textwidth}
        \centering
        \includegraphics*[width = \textwidth]{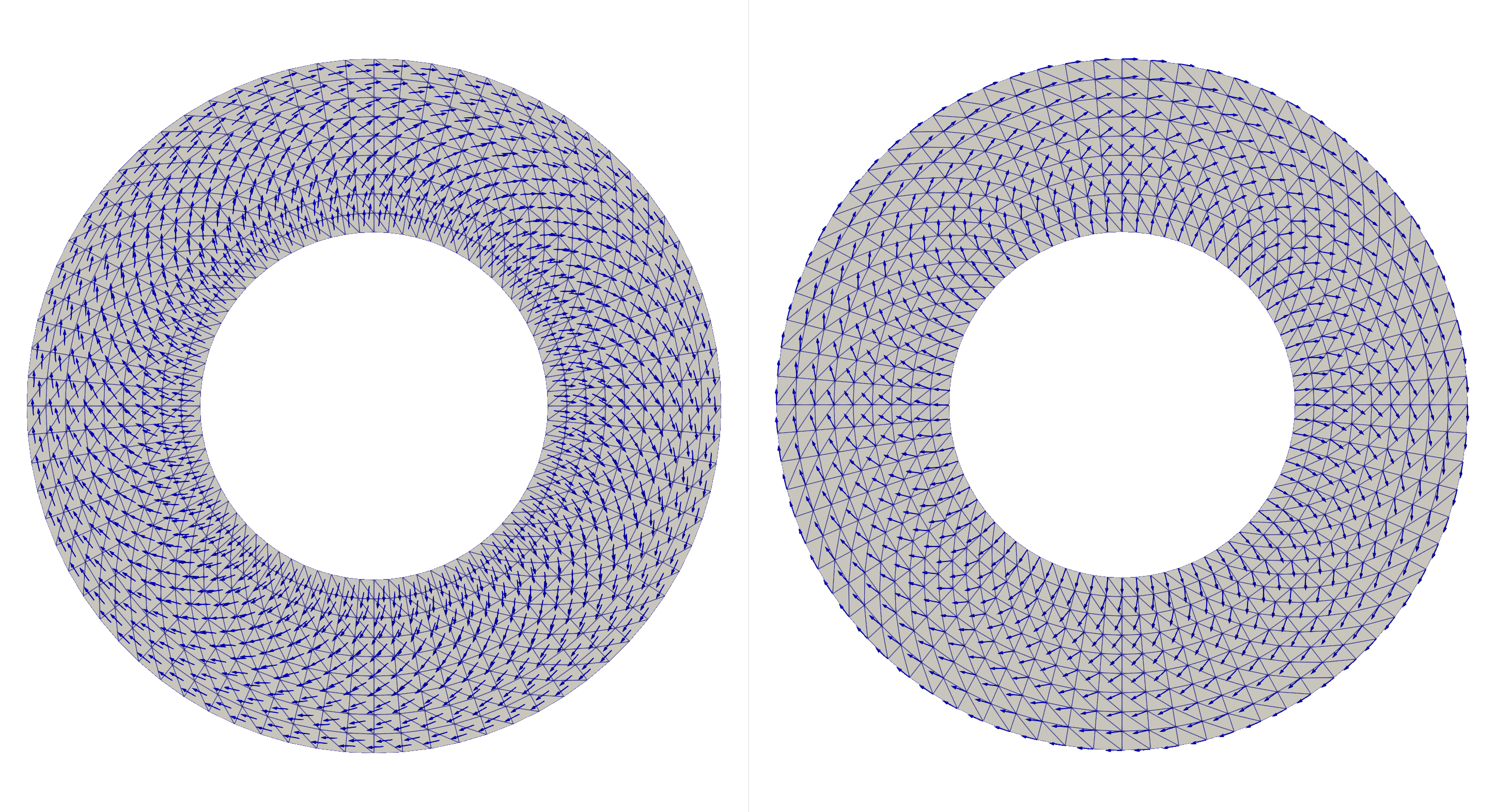}        
        \caption{Director field, $t=1.5$.}   
    \end{subfigure}
    \begin{subfigure}[b]{.49\textwidth}
        \centering
        \includegraphics*[width = \textwidth]{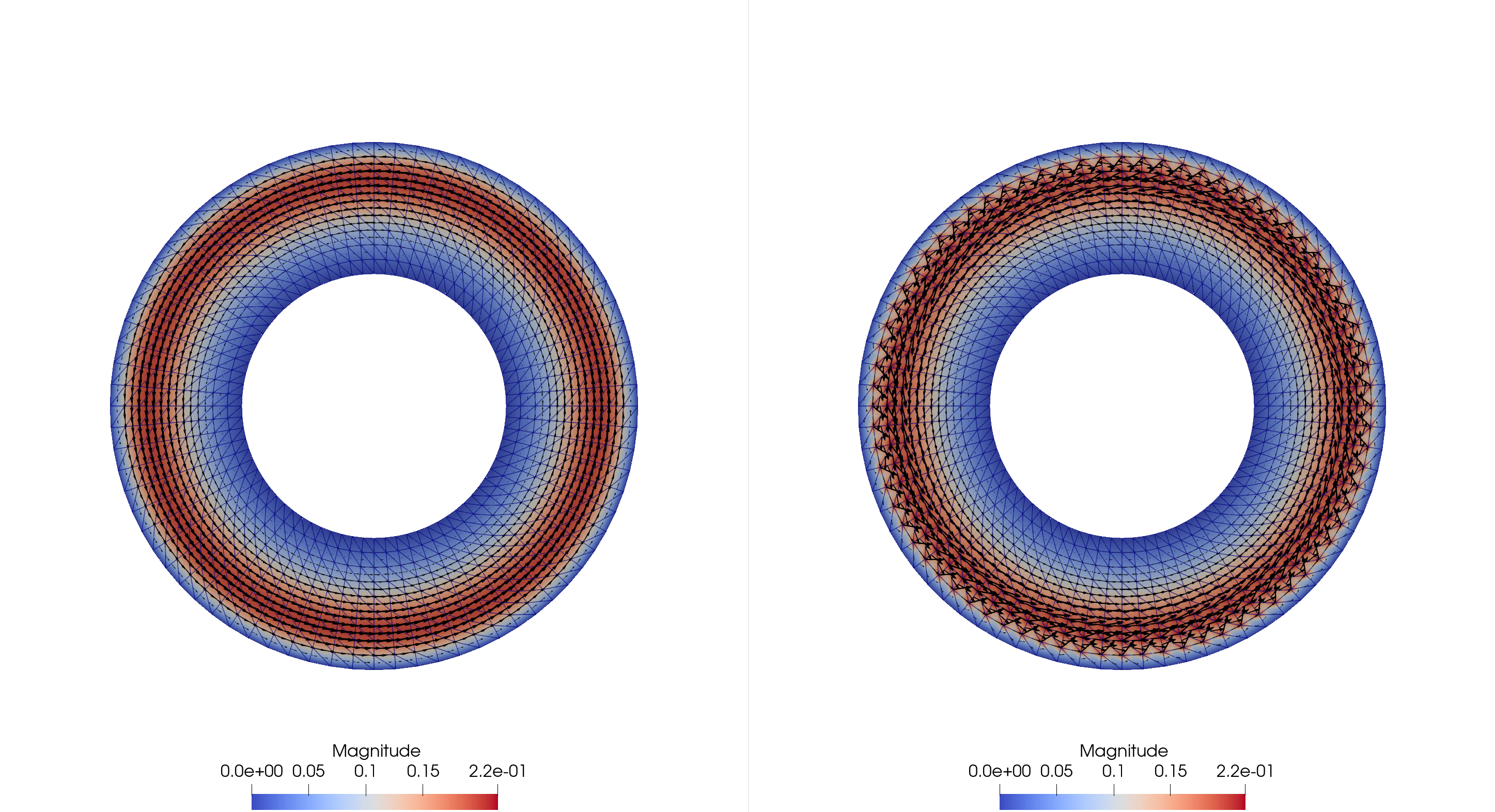}        
        \caption{Velocity field, $t=0.05$.}   
    \end{subfigure}
    \caption{Experiment \ref{sec:spiral}: Evolution of Algorithm \ref{algo:dg} (left) and Algorithm \ref{algo:cg} (right) for $h=0.1$ and $k=0.01$}
    \label{fig:spiral}
\end{figure}
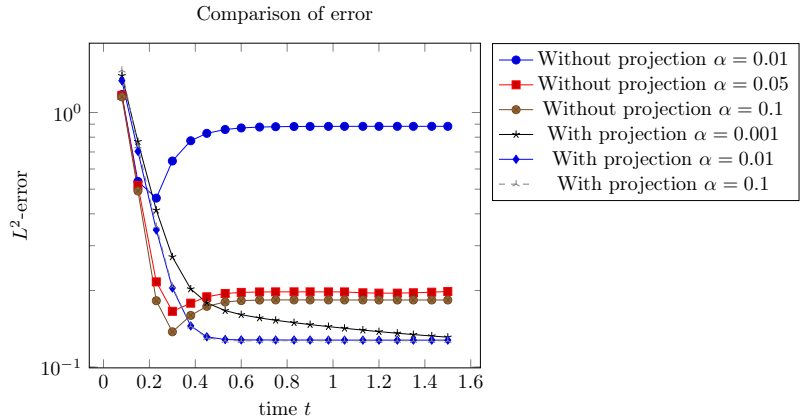
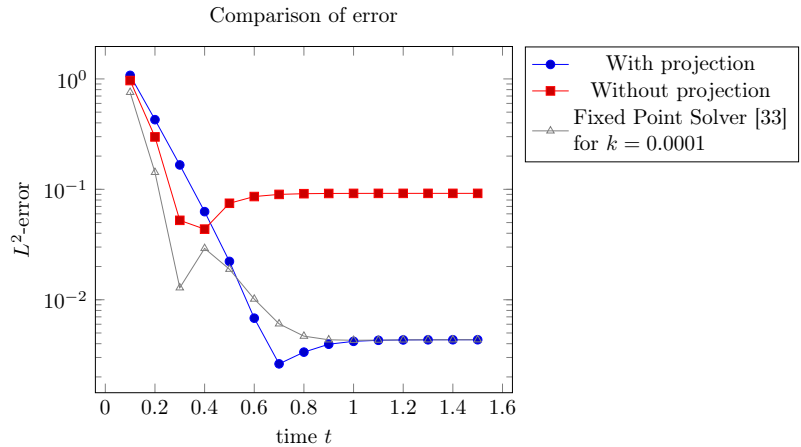
\begin{figure}
    \centering
    \begin{subfigure}[b]{\textwidth}
        \resizebox{.85 \textwidth}{!}{
        \begin{tikzpicture}
        \begin{axis}[
                title={Comparison of error},
                enlargelimits=true, 
                xlabel =time $t$,
                ylabel =$L^2$-error,
                ymode = log,
                legend pos=outer north east,
                legend style={cells={align=left}},
            ]
            \addplot 
            table
            [x=time, y=ldga01, col sep=comma]{spiral-dg-comp.csv};
            \addlegendentry{Without projection $\alpha = 0.01$}
            \addplot 
            table
            [x=time, y=ldga05, col sep=comma]{spiral-dg-comp.csv};
            \addlegendentry{Without projection $\alpha = 0.05$} 
            \addplot 
            table
            [x=time, y=ldga1, col sep=comma]{spiral-dg-comp.csv};
            \addlegendentry{Without projection $\alpha = 0.1$}           
            \addplot 
            table
            [x=time, y=lpdga001, col sep=comma]{spiral-dg-comp.csv};
            \addlegendentry{With projection $\alpha = 0.001$} 
            \addplot 
            table
            [x=time, y=lpdga01, col sep=comma, mark=o]{spiral-dg-comp.csv};
            \addlegendentry{With projection $\alpha = 0.01$} 
            \addplot[mark=Mercedes star, dashed, color=gray] 
            table
            [x=time, y=lpdga1, col sep=comma]{spiral-dg-comp.csv};
            \addlegendentry{With projection $\alpha = 0.1$} 
        \end{axis}
        \end{tikzpicture}
        }
        \caption{Algorithm \ref{algo:dg} with and without projection step for different choices of parameter $\alpha$.}        
        \label{fig:spiral-comparison-dg}
    \end{subfigure}
    \begin{subfigure}[b]{\textwidth}
    \resizebox{.85 \textwidth}{!}{
        \begin{tikzpicture}
        \begin{axis}[
                title={Comparison of error},
                enlargelimits=true, 
                xlabel =time $t$,
                ylabel =$L^2$-error,
                ymode = log,
                legend pos=outer north east,
                legend style={cells={align=left}},
            ]
            \addplot 
            table
            [x=time, y=LhP, col sep=comma]{spiral-comparison.csv};
            \addlegendentry{With projection} 
            \addplot 
            table
            [x=time, y=Lh, col sep=comma]{spiral-comparison.csv};
            \addlegendentry{Without projection} 
            \addplot[color=gray, mark = triangle] 
            table
            [x=time, y=FP, col sep=comma]{spiral-comparison.csv};
            \addlegendentry{Fixed Point Solver \cite{lasarzik-reiter23}\\for $k= 0.0001$} 
        \end{axis}
        \end{tikzpicture}
        }
        \caption{Algorithm \ref{algo:cg} with and without projection step compared to a non-linear method \cite{lasarzik-reiter23}.}
        \label{fig:spiral-comparison-cg}
    \end{subfigure}    
        \caption{Error comparison for Experiment \ref{sec:spiral}. The error is computed as difference of the angles of the director field in the $L^2(\Omega)$-norm. We set $h=0.1$ and $k=0.01$ if not mentioned otherwise.}
        \label{fig:spiral-comparison}
\end{figure}
\begin{table}
    \centering
    \begin{tabular}{c|ccccc}
        \toprule
        k &    0.0005 &    0.0010 &    0.0050 &    0.0100 &    0.0500 \\
        h      &           &           &           &           &           \\
        \midrule
        0.1000 
        &  0.004342
        &  0.004335
        &  0.004342
        &  0.004344
        &  0.004327
        \\
        0.0625 
        &  0.001621
        &  0.001624
        &  0.001623
        &  0.001623
        &  0.001609
        \\
        0.0500 
        &  0.001029
        &  0.001014
        &  0.001023
        &  0.001024
        &  0.001011
        \\
        0.0250 
        &  0.000249
        &  0.000249
        &  0.000249
        &  0.000250
        &  0.000237
        \\
        \bottomrule
        \end{tabular}
    \bigskip
    \begin{tabular}{c|ccccc}
    \toprule
    k &    0.0005 &    0.0010 &    0.0050 &    0.0100 &    0.0500 \\
    h      &           &           &           &           &           \\
    \midrule
    0.1000 
    & 0.116146
    & 0.114545
    & 0.115187
    & 0.115199
    & 0.115271
    \\
    0.0625 
    & 0.069534
    & 0.069583
    & 0.069783
    & 0.068301
    & 0.069548
    \\
    0.0500 
    & 0.055044
    & 0.054834
    & 0.055043
    & 0.055042
    & 0.055047
    \\
    0.0250 
    & 0.027896
    & 0.026979
    & 0.026997
    & 0.026997
    & 0.027001
    \\
    \bottomrule
    \end{tabular}
    \caption{$L^2$-error of the approximate solution compared to the exact stationary solution at $t=1.5$
    for Algorithm \ref{algo:cg} (top) and Algorithm \ref{algo:dg} with $\alpha = 0.005$ (bottom).}    
    \label{tab:error-magical-spiral}
\end{table}
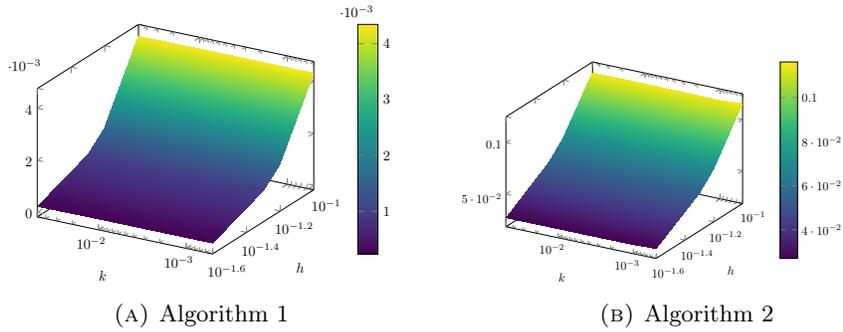
\begin{figure}
    \begin{subfigure}[b]{0.49\textwidth}
        \centering
            \resizebox{.85 \textwidth}{!}{
            \begin{tikzpicture}
                \begin{axis}[
                    mesh/ordering=y varies,
                    colormap/viridis,
                    view={-60}{30},
                    colorbar,
                    ymode=log,
                    xmode=log,
                    xlabel={$h$},
                    ylabel={$k$}
                    ]           
                    \addplot3 [
                    surf,
                    shader=interp, 
                    ] coordinates {
                    (0.1000 ,0.0005, 0.004341979)   (0.1000 ,0.0010, 0.004334837)   (0.1000 ,0.0050, 0.004342155)   (0.1000 ,0.01, 0.004343988) (0.1000 ,0.0500, 0.004326729)

                    (0.0625 ,0.0005, 0.001621184)   (0.0625,0.0010,   0.00162383)  (0.0625,  0.0050, 0.001622806)  (0.0625,  0.01,  0.00162316)   (0.0625, 0.0500,  0.001608931)

                    (0.0500 ,0.0005, 0.001029153)   (0.0500 ,0.0010, 0.001013531)   (0.0500 , 0.0050, 0.001023478)  (0.0500 ,0.01, 0.001024277) (0.0500 ,0.0500, 0.001010666)

                    (0.0250 ,0.0005, 0.000248616)   (0.0250 ,0.0010, 0.000249012)   (0.0250 , 0.0050, 0.000249487)  (0.0250 ,0.01, 0.000249856) (0.0250 ,0.0500, 0.000237118)
                    };
                \end{axis}
                \end{tikzpicture} 
            }
        \caption{Algorithm \ref{algo:cg}}
    \end{subfigure}
    \hspace*{\fill}
    \begin{subfigure}[b]{0.49\textwidth}
        \resizebox{.85 \textwidth}{!}{
        \begin{tikzpicture}
            \begin{axis}[
                mesh/ordering=y varies,
                    colormap/viridis,
                    view={-60}{30},
                    colorbar,
                    ymode=log,
                    xmode=log,
                    xlabel={$h$},
                    ylabel={$k$}
                ]           
                \addplot3 [
                surf,
                shader=interp, 
                ] coordinates {
                (0.1000 ,0.0005, 0.116145603)  (0.1000 ,0.0010, 0.114545099)   (0.1000 ,0.0050, 0.115187124)   (0.1000 ,0.01, 0.115199259) (0.1000 ,0.0500, 0.115271483)

                (0.0625 ,0.0005, 0.069534)  (0.0625,0.0010,   0.069583282)  (0.0625,  0.0050, 0.069783495)  (0.0625,  0.01,  0.068300686)   (0.0625, 0.0500,  0.069547692)

                (0.0500 ,0.0005, 0.055043856)  (0.0500 ,0.0010, 0.054834209)   (0.0500 , 0.0050, 0.055042601)  (0.0500 ,0.01, 0.055042493) (0.0500 ,0.0500, 0.055046717)

                (0.0250 ,0.0005, 0.02789639)   (0.0250 ,0.0010, 0.026978605)   (0.0250 , 0.0050, 0.026996926969009686)  (0.0250 ,0.01, 0.02699668) (0.0250 ,0.0500, 0.027000759)
                };
            \end{axis}
            \end{tikzpicture} 
        }
        \caption{Algorithm \ref{algo:dg}}
    \end{subfigure}
    \caption{$L^2$-error of the approximate solution compared to the exact stationary solution at $t=1.5$. Visualization of Table \ref{tab:error-magical-spiral}.}
    \label{fig:error-magical-spiral}
\end{figure}
\begin{figure}
    \centering
    \begin{subfigure}[b]{0.49\textwidth}
        \centering
        \includegraphics*[width =\textwidth]{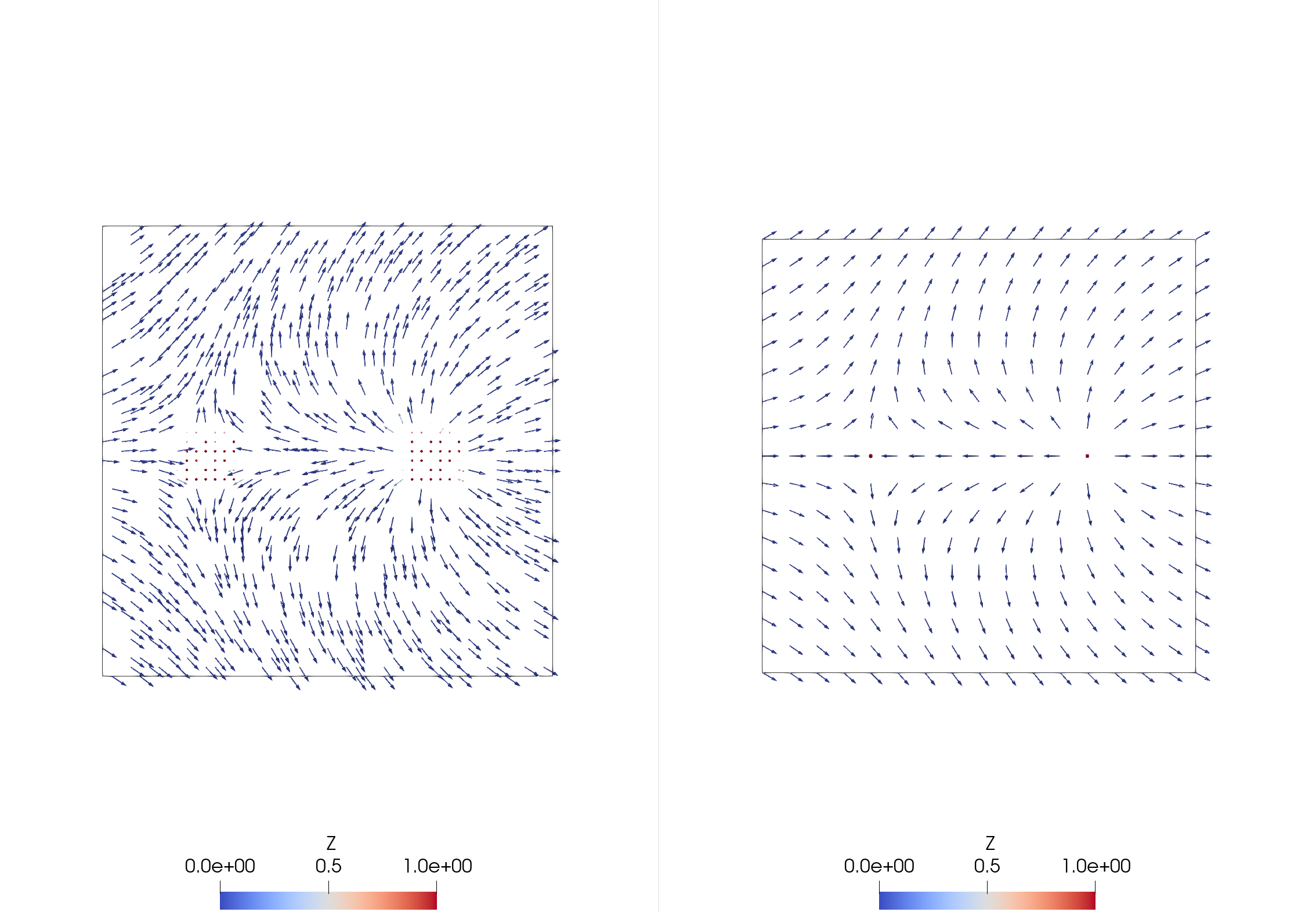}
        \caption{Director field, $t=0$. The color marks the (absolute) value of the z-component.}
        \label{fig:defects-1}
    \end{subfigure}
    \begin{subfigure}[b]{0.49\textwidth}
        \centering
        \includegraphics*[width = \textwidth]{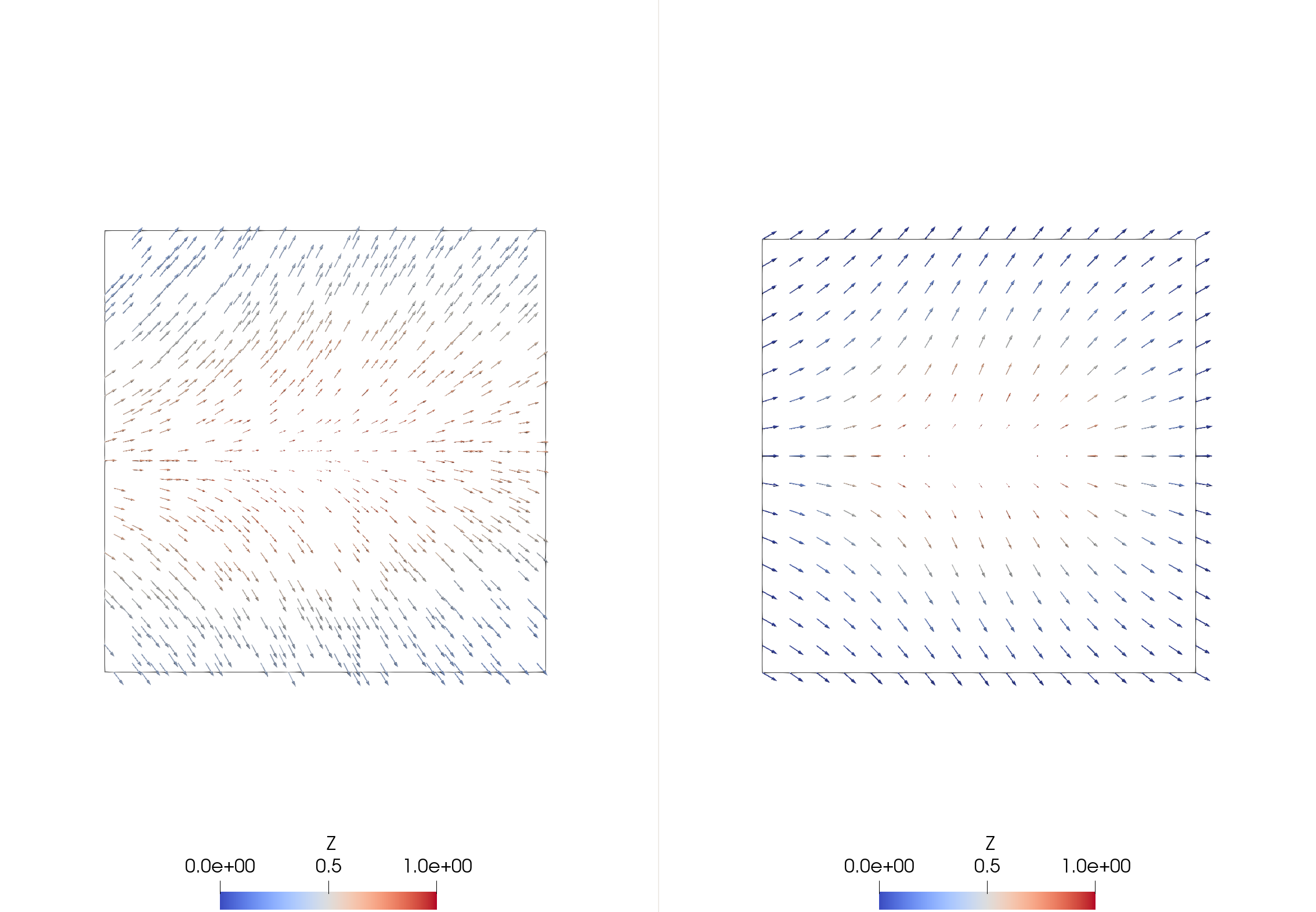}
        \caption{Director field, $t=0.05$. The color marks the (absolute) value of the z-component.}
    \end{subfigure}
    \begin{subfigure}[b]{0.49\textwidth}
        \centering
        \includegraphics*[width =\textwidth]{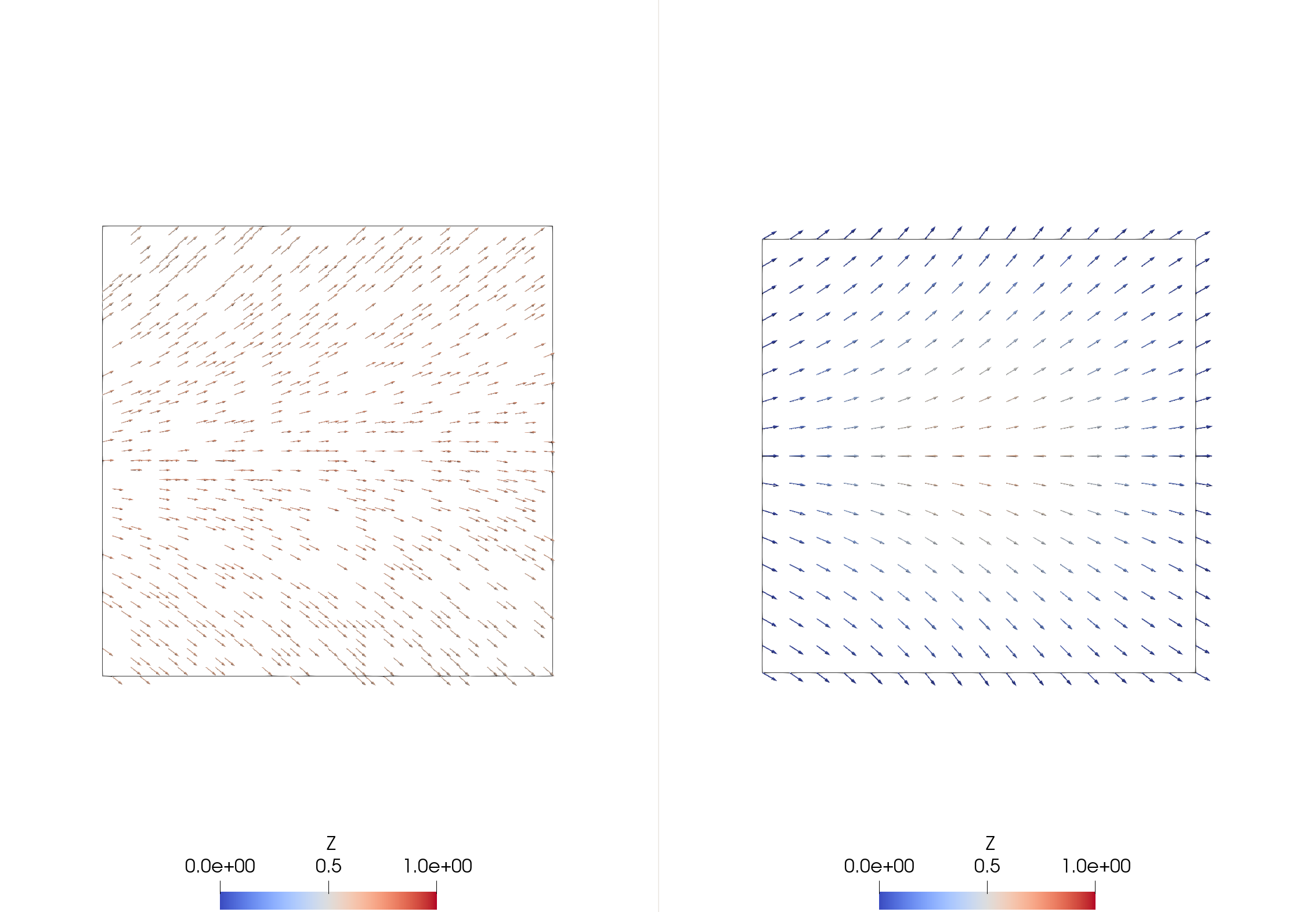}
        \caption{Director field, $t=1.0$. The color marks the (absolute) value of the z-component.}
    \end{subfigure}
    \begin{subfigure}[b]{0.49\textwidth}
        \centering
        \includegraphics*[width = \textwidth]{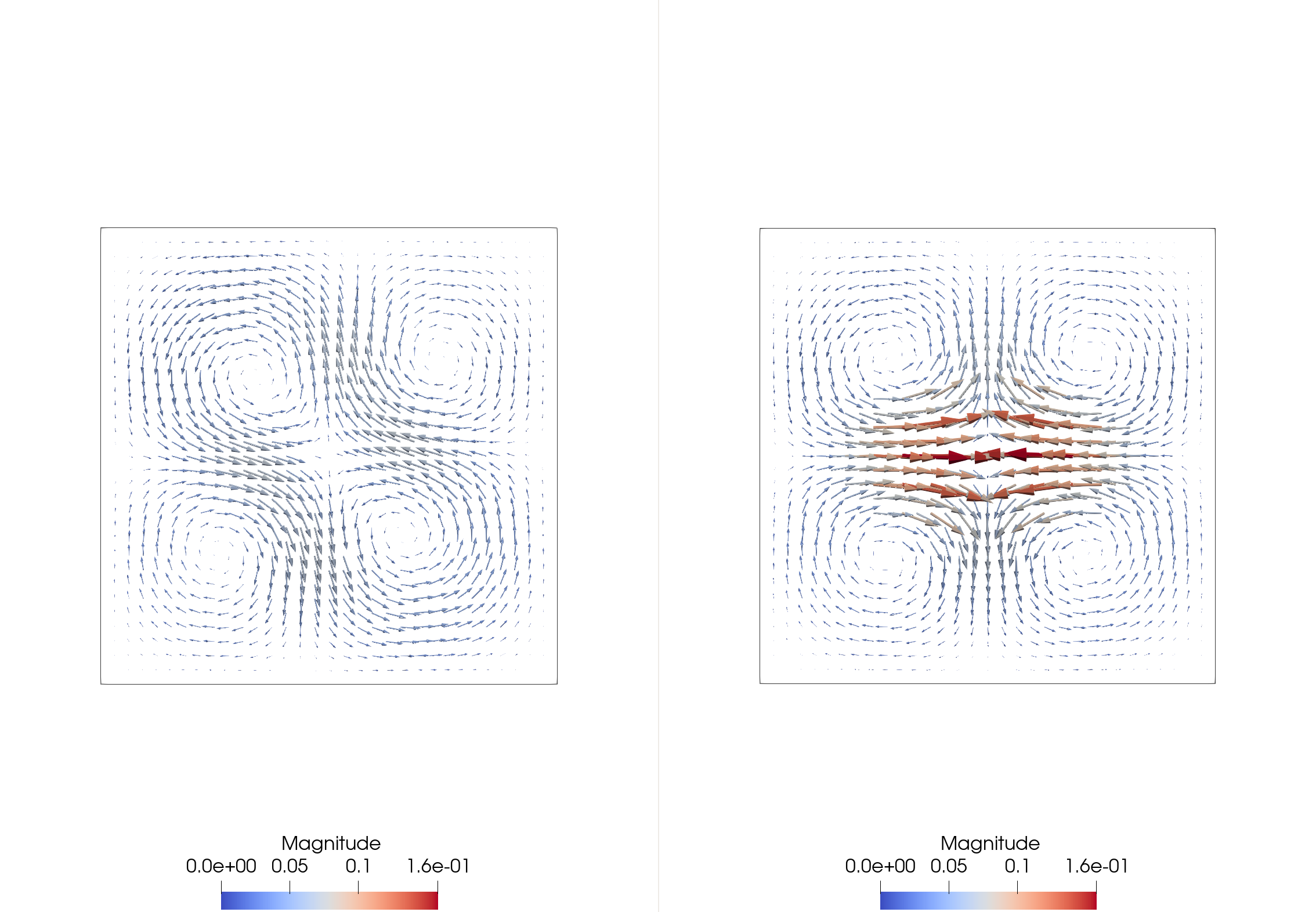}
        \caption{Velocity field, $t=0.03$. The color marks the magnitude.}
        \label{fig:defects-velocity}
    \end{subfigure}
    \caption{Experiment \ref{sec:defects}: Evolution of the director and velocity fields of the approximate solutions of Algorithm \ref{algo:dg} (left) and Algorithm \ref{algo:cg} (right) in the plane $z=0$ for $k=0.001, h=1/16$. The arrows show only the first two components of the vector fields.}
    \label{fig:defects}
\end{figure}
\section{Acknowledgements}
In the first place, I would like to thank Dr.~Robert Lasarzik for his guidance, support, and the fruitful discussions throughout the research process of this article.
Further, I gratefully acknowledge the financial support received in the form of a Ph.D. scholarship from the \textit{Friedrich-Naumann-Foundation for Freedom} (dt.: Friedrich-Naumann-Stiftung für die Freiheit) with funds from the Federal Ministry of Education and Research (BMBF) and funding by the Deutsche Forschungsgemeinschaft (DFG, German Research Foundation) under Germany's Excellence Strategy - \textit{The Berlin Mathematics Research Center MATH+} (EXC-2046/1, project ID: 390685689).
Lastly, I would like to thank the \textit{Weierstrass Institute for Applied Analysis and Stochastics} for being given the opportunity to use their high-performance computers for the numerical experiments considered in this work.
\printbibliography
\end{document}